\numberwithin{equation}{section}
\def\<{\langle}
\def\>{\rangle}
\newcommand{\be}{\begin{equation}}
\newcommand{\ee}{\end{equation}}
\newcommand{\bea}{\begin{eqnarray}}
\newcommand{\eea}{\end{eqnarray}}
\newcommand{\beas}{\begin{eqnarray*}}
\newcommand{\eeas}{\end{eqnarray*}}
\theoremstyle{plain}
\newtheorem{theorem}{Theorem}[section]
\newtheorem{lemma}[theorem]{Lemma}
\newtheorem{proposition}[theorem]{Proposition}
\newtheorem{corollary}[theorem]{Corollary}
\newtheorem{definition}[theorem]{Definition}
\theoremstyle{definition}
\newtheorem{remark}[theorem]{Remark}
\newtheorem{example}[theorem]{Example}
\newcommand{\tnorm}[1]{\left\vert\kern-0.25ex\left\vert\kern-0.25ex\left\vert #1 
    \right\vert\kern-0.25ex\right\vert\kern-0.25ex\right\vert}
\newcommand{\numberthis}{\addtocounter{equation}{1}\tag{\theequation}}
\def\\ud sum{\displaystyle\sum}
\def\Prod{\displaystyle\prod}
\numberwithin{equation}{section}
\def\ue{\mathrm e}
\def\ud{\mathrm d}
\setlist[enumerate]{label=(\roman*),align=left, leftmargin=*}
\crefname{subsection}{subsection}{subsections}
\newcommand \Dotfill {\leavevmode \leaders \hb@xt@ 6pt{\hss \hss }\hfill \kern \z@}
\def\@tocline#1#2#3#4#5#6#7{\relax
  \ifnum #1>\c@tocdepth 
  \else
    \par \addpenalty\@secpenalty\addvspace{#2}%
    \begingroup \hyphenpenalty\@M
    \@ifempty{#4}{%
      \@tempdima\csname r@tocindent\number#1\endcsname\relax
    }{%
      \@tempdima#4\relax
    }%
    \parindent\z@ \leftskip#3\relax \advance\leftskip\@tempdima\relax
    \rightskip\@pnumwidth plus4em \parfillskip-\@pnumwidth
    #5\leavevmode\hskip-\@tempdima
      \ifcase #1
       \or\or \hskip 1.65em \or \hskip 3.3em \else \hskip 4.95em \fi%
      #6\nobreak\relax
    \Dotfill
    \hbox to\@pnumwidth{\@tocpagenum{#7}}\par
    \nobreak
    \endgroup
  \fi}
\def\l@section{\@tocline{1}{0pt}{1pc}{}{\scshape}}
\renewcommand{\tocsection}[3]{%
\indentlabel{\@ifnotempty{#2}{\ignorespaces#1 #2.\hskip 0.7em}}#3}
\def\l@subsection{\@tocline{2}{0pt}{1pc}{5pc}{}}
\def\l@subsubsection{\@tocline{3}{0pt}{1pc}{7pc}{}}
\begin{document}

\frenchspacing

\title[Existence, uniqueness and propagation of chaos for general BSDE\MakeLowercase{s}]{Existence, uniqueness and propagation of chaos\\for general McKean--Vlasov and mean-field BSDE\MakeLowercase{s}}

\author[A. Papapantoleon]{Antonis Papapantoleon}
\author[A. Saplaouras]{Alexandros Saplaouras}
\author[S. Theodorakopoulos]{Stefanos Theodorakopoulos}

\address{Delft Institute of Applied Mathematics, EEMCS, TU Delft, 2628 Delft, The Netherlands \& Department of Mathematics, School of Applied Mathematical and Physical Sciences, National Technical University of Athens, 15780 Zografou, Greece \& Institute of Applied and Computational Mathematics, FORTH, 70013 Heraklion, Greece}
\email{a.papapantoleon@tudelft.nl}

\address{Department of Mathematics, School of Applied Mathematical and Physical Sciences, National Technical University of Athens, 15780 Zografou, Greece}
\email{alsapl@mail.ntua.gr}

\address{Department of Mathematics, School of Applied Mathematical and Physical Sciences, National Technical University of Athens, 15780 Zografou, Greece}
\email{steftheodorakopoulos@mail.ntua.gr}

\thanks{AS gratefully acknowledges the financial support from the Hellenic Foundation for Research and Innovation Grant No. 235 (2nd Call for H.F.R.I. Research Projects to support Post-Doctoral Researchers).
ST gratefully acknowledges the financial support from the Hellenic Foundation for Research and Innovation Grant No. 05724 (3rd Call for H.F.R.I. Scholarships for PhD candidates).}

\keywords{Backward stochastic differential equations, mean-field interactions, McKean--Vlasov interactions, backward propagation of chaos, existence and uniqueness, \textit{a priori} estimates, Wasserstein distance.}  

\subjclass[2020]{60F25, 60H20, 60G42, 60G44, 60G51}

\date{}

\begin{abstract}

We consider backward stochastic differential equations (BSDEs) with mean-field and McKean--Vlasov interactions in their generators in a general setting, where the drivers are square--integrable martingales, with a focus on the independent increments case, and the filtrations are (possibly) stochastically discontinuous.
In other words, we consider discrete- and continuous-time systems of mean-field BSDEs and McKean--Vlasov BSDEs in a unified setting.
We provide existence and uniqueness results for these BSDEs using new \textit{a priori} estimates that utilize the stochastic exponential. 
Then, we provide propagation of chaos results for systems of particles that satisfy BSDEs, \textit{i.e.} we show that the asymptotic behaviour of the solutions of mean-field systems of BSDEs, as the multitude of the systems grows to infinity, converges to I.I.D solutions of McKean--Vlasov BSDEs.
We introduce a new technique for showing the backward propagation of chaos, that makes repeated use of the \textit{a priori} estimates, inequalities for the Wasserstein distance and the ``conservation of solutions'' under different filtrations, and does not demand the solutions of the mean field systems to be exchangeable or symmetric.
Finally, we deduce convergence rates for the propagation of chaos, under advanced integrability conditions on the solutions of the BSDEs.

\end{abstract}

\maketitle

\tableofcontents


\section{Introduction}

The notion of ``propagation of chaos'' received renewed attention, especially from the mathematical economics and mathematical finance communities, when it was used in a series of lectures by \citet{lasry2006jeux,lasry2006jeux1,lasry2007mean} in order to simplify the study of mean-field games.
They introduced ideas from statistical physics in the study of Nash equilibria for stochastic differential games with symmetric interactions, along with \citet{huang2007large,huang2006large}.
In general, problems with a large number of players are notoriously difficult to control. 
However, as statistical physics has shown us, under the appropriate assumptions, the most important being symmetry, one can study the asymptotic behaviour of a system as the number of players grows to infinity much more easily.
The first instance of this phenomenon appears already in the (Strong) Law of Large Numbers, which states that, under certain condintions, the asymptotic behaviour of the random empirical mean converges to the deterministic mean as the number of samples grows to infinity.
Of course, there is not a single way to mathematically express the notion that players (or agents) in economics and finance, or particles in statistical physics, interact with one another and a choice must be made. 
Perhaps inspired by the Law of Large Numbers, the interaction that has been extensively studied in the statistical physics literature is that which emerges from the empirical measure of the states of the particles. 
Hence, an interaction that involves the empirical measure of the states of the players in a game is called mean-field interaction.

The theory of propagation of chaos was initiated in the 1950's by \citet{kac1956foundations}.
In the process of investigating particle system approximations for some nonlocal partial differential equations arising in thermodynamics, he made an important observation about a characteristic of large systems.
Assume that the behaviour of the particles is symmetric and they interact in a weak way, \textit{i.e.} such that its magnitude decreases inversely proportional to the size of the system, maybe due to cancellations of the contributions of different particles.
Then, if the initial positions of the particles are chaotic, here understood as independent and identically distributed, this initial state of the system could be seen asymptotically to propagate (or spread) to the other points in time, when its size grows to infinity.
This idea of propagation of chaos has been used ever since in various topics with several applications; see \citet{sznitman1991topics} for an important treatise and \citet{chaintron2022propagation,Chaintron_Diez_2022, jabin2016mean,jabin2018quantitative} and \citet{malrieu2003convergence} for more recent results and applications. 

In this work, combining the ideas above, we want to study the so-called backward propagation of chaos, \textit{i.e.} the asymptotic behavior of solutions of mean-field systems of backward stochastic differential equations (BSDEs) in a general setting. 
The backward propagation is understood as having chaotic behaviour on the terminal conditions, instead of having this on the initial conditions. 
We model the motions of $N-$particles in a closed system as the solution $\textbf{Y}^N:=(Y^{i,N})_{1\leq i \leq N}$ of the following BSDE system
\begin{align}
\begin{multlined}[0.9\textwidth]
Y^{i,N}_t = \xi^{i,N} + \int^{T}_{t}f \left(s,Y^{i,N}_s,Z^{i,N}_s  c^i_s,\Gamma^{(\mathbb{F}^{1,\dots,N},\overline{X}^i,\Theta)}(U^{i,N})_s,L^N(\textbf{Y}^N_s) \right) \, \ud C^{\overline{X}^i}_s\\ 
- \int^{T}_{t}Z^{i,N}_s \,  \ud X^{i,\circ}_s - \int^{T}_{t}\int_{\mathbb{R}^n}U^{i,N}_s(x) \, \widetilde{\mu}^{(\mathbb{F}^{1,\dots,N},X^{i,\natural})}(\ud s,\ud x) - \int^{T}_{t} \,\ud M^{i,N}_s,  
\quad    i\in\{1,\dots,N\};
\label{mfBSDE_instantaneous}
\end{multlined}
\end{align}
the notation will be clarified in the next section.
The interaction of the solutions $\{Y^{i,N}\}_{1\leq i \leq N}$ appears in the last argument of the generator $f$ as the respective (random) empirical measure $L^N(\textbf{Y}^N_\cdot)$. 
Motivated by the discussion above, one expects that, by an appropriate application of the law of large numbers, the empirical measures converge to a deterministic law as $N\to\infty$, thus rendering the interactions weaker and weaker.  
Obviously, the absence of the empirical measure translates the above system to $N$ non-interacting equations of the same type.
In order to identify the asymptotic behaviour of the solution $\textbf{Y}^N$ of the aforementioned mean-field system as $N$ tends to $\infty$, we will need the McKean--Vlasov BSDE of the form
\begin{align}
\begin{multlined}[0.9\textwidth]
Y_t = \xi + \int^{T}_{t}f\left(s,Y_s,Z_s  c^{(\mathbb{G},\overline{X})}_s,\Gamma^{(\mathbb{G},\overline{X},\Theta)}(U)_s,\mathcal{L}(Y_s)\right) \, \ud C^{(\mathbb{G},\overline{X})}_s\\
- \int^{T}_{t}Z_s \,  \ud X^\circ - \int^{T}_{t}\int_{\mathbb{R}^n}U_s \, \widetilde{\mu}^{(\mathbb{G},X^{\natural})}(\ud s,\ud x) - \int^{T}_{t} \,\ud M_s.
\label{MVBSDE_instantaneous}
\end{multlined}
\end{align}
Here again the notation will be clarified in the next section; 
let us only mention that for a random variable $R$, its law will be denoted by $\mathcal{L}(R)$.

We will first show existence and uniqueness results for mean-field systems of BSDEs of the form \eqref{mfBSDE_instantaneous} and for McKean--Vlasov BDSEs of the form \eqref{MVBSDE_instantaneous} in the general setting of \citet{papapantoleon2018existence,Papapantoleon_Possamai_Saplaouras_2023}.
In other words, the driving processes are general square-integrable martingales in general filtrations that are not necessarily stochastically continuous; hence, we can consider discrete- and continuous-time equations as well as continuous and purely discontinuous processes as drivers in a unified setting.
Moreover, we assume that the generator $f$ of the BSDE is stochastic Lipschitz, while it can also depend on the initial segments of the paths of the solution $\textbf{Y}$, \textit{i.e.} on $\textbf{Y}|_{[0,\cdot]}$ instead of $\textbf{Y}_\cdot$.
In fact, the results of \cref{4} are equally valid in the case where the dependence in the generator comes from $\textbf{Y}|_{[0,\cdot-]}$ instead of $\textbf{Y}|_{[0,\cdot]}$, respectively when we replace $Y_\cdot$ with $Y_{\cdot-}$; see the notation introduced in \eqref{inpath}.
We refer the interested reader to \cite[Section 4]{papapantoleon2018existence} and \cite[Section 4]{Papapantoleon_Possamai_Saplaouras_2023} for examples and applications of this general setting.

Before we proceed to a comparison with the known literature, we owe a clarification regarding the way the terms mean-field and McKean--Vlasov are used. 
Indeed, in the literature so far, the term mean-field has been used to describe a rather general class of equations, which do not necessarily share a common type of dependence, \textit{e.g.}, the dependence of the system does not solely rely on the empirical measure.
Moreover, the term McKean--Vlasov has been occasionally used instead of the term mean-field, increasing thus the confusion around the class under consideration. 
In the previous paragraphs, we have indicated the way we understand a mean-field system, as well as a McKean--Vlasov equation. 
However, the reader should keep in mind that in the following works, not all of them understand the respective equations in the same way we declared previously. 

The term mean-field BSDE appeared for the first time in the works of \citet{Buckdahn_Djehiche_Li_Peng_2009} and \citet{buckdahn2009mean}.
 The aim of the former work, which was generalised by \citet{lauriere2022backward}, 
 is to analyse the properties of a specific approximation of the solution of a mean-field BSDE,
 while the aim of the latter one is to provide a probabilistic interpretation of semilinear McKean--Vlasov PDEs by means of a mean-field BSDE.
 Since then, the literature of both types of BSDEs has expanded along the classical directions, \textit{e.g.},
 for controlled forward-backward dynamics see \citet{CarmonaDelarue2015MVFBSDE};
 for a generator exhibiting quadratic behavior see  
 \citet{HaoWenXiong2022Quadratic}, while for one exhibiting local monotonicity see \citet{Boufoussi2023};
 when a type of reflection is introduced see \citet{BRIAND20207021,LI201447,LiLuo2012,briand2021particles,PengLuo2024MFBSDE};
 for solutions in $L^p-$spaces see \citet{ChenXingZhang2020};
 when the fractional Brownian motion acts as the driver of the BSDE see \citet{Shi2021}.
 Finally, it is worth mentioning that the literature related to graphon-type systems has been also developed, \textit{e.g.}, see \citet{BayraktarWuZhang2023} and the references therein.
 For applications in risk measures see \citet{ChenDumitrMincaSulem2023MFBSDE}, while for numerical approximations based on neural networks 
\citet{Germain2022_MVBSDE_NN}.
See also the volumes of \citet{Carmona_Delarue_2018_I,Carmona_Delarue_2018_II}.
All the above works are presented under the umbrella of Brownian drivers; we abusively include the work dealing with fractional Brownian motion.
On the contrary, the literature related with jumps is extremely restricted. 
Indeed, for the case of L\'evy drivers there are, to the best of our knowledge, only the works of \citet{djehiche2021propagation}, which further deals with the reflected case, and \citet{AminiCaoSulem2021GraphonMFBSDE}. 
The results presented in the current work generalize \citet{lauriere2022backward}, as well as \citet{djehiche2021propagation} in the non-reflected case, to the case the driving martingales of the stochastic integrals are square-integrable martingales, possibly both purely-discontinuous. 
The generality of our results allows to further explore in the future the classical directions of generalizations.



Then, we provide backward propagation of chaos results, \textit{i.e.} we show that as the size $N$ of the mean-field system of BSDEs \eqref{mfBSDE_instantaneous} grows to infinity, then this system converges to $N$ non-interacting McKean--Vlasov BSDEs of the form \eqref{MVBSDE_instantaneous}.
More precisely, for the propagation of chaos property the current framework allows for square--integrable martingales with independent increments as integrators of stochastic integrals, c\`adl\`ag predictable increasing processes as integrators of  Lebesgue--Stieltjes integrals, and it also allows the dependence of the generator from the initial segments of the paths of the solution $\textbf{Y}$.
In fact, the results of \cref{sec:propagation} are also valid without change in case the dependence in the generator comes from $\mathbf{Y}|_{[0,\cdot-]}$ instead of $\mathbf{Y}|_{[0,\cdot]}$; see again the notation introduced in \eqref{inpath}. 
Similarly, in \cref{6}, we can replace $Y_\cdot$ with $Y_{\cdot-}$.
In order to ease the presentation, the driver $f$ is assumed now to be deterministic, but it can also be assumed stochastic, under the obvious modifications in the proofs, as it is customary to work with copies of a stochastic driver and data of a prototype probability space.


Although the propagation of chaos property has been extensively studied for (forward) stochastic differential equations (SDEs), see \textit{e.g.} the review articles \citet{chaintron2022propagation,Chaintron_Diez_2022}, only a handful of papers have been published so far on the backward propagation of chaos; see \citet{Buckdahn_Djehiche_Li_Peng_2009,Hu_Ren_Yang_2023,djehiche2021propagation,buckdahn2009mean, LiDu2024, BayraktarWuZhang2023} and \citet{lauriere2022backward}.
Most of these works employ the coupling techniques pioneered by \citet{sznitman1991topics} in the proof of the propagation of chaos property; some recent references can be found in \citet{cardaliaguet2019master} and \citet{delarue2020master}.

The present work generalizes the aforementioned articles on the backward propagation of chaos in several directions, while also our method of proof is different, which allows us to obtain results under weaker conditions.
A crucial step in the proofs of backward propagation of chaos is to control the quantities that involve the solutions of the mean-field systems using quantities that involve the solutions of the McKean--Vlasov BSDEs, and thus be able to use that the latter are independent and identically distributed. 
In that respect, our method combines the triangle inequality of the Wasserstein distance, the simple inequality \eqref{empiricalineq} and the \textit{a priori} estimates of \cref{lem:a_priori_estimates}, while the consideration of these solutions under a common probability space is made possible by the ``conservation of solutions'' result of \cref{lem:conserv_laws}. 
Then, when we consider all the solutions of each mean-field system simultaneously a new phenomenon emerges, which we call ``backward propagation of chaos for the system''.
This result, together with another application of the aforementioned combination of techniques, enables us to finally deduce the backward propagation of chaos.

Moreover, in the study of backward propagation of chaos,  the terminal conditions $\{\xi^{i,N}\}_{i \in \mathscr{N}}$ for the $N$ mean-field system are either assumed to be the same with the terminal conditions $\{\xi^{i}\}_{i \in \mathscr{N}}$ of the corresponding first $N$ McKean--Vlasov equations or, less commonly, exchangeable, such that the differences $\{\xi^{i,N} - \xi^i\}_{i \in \mathscr{N}}$ are identically distributed;
we abusively use the notation $\mathscr{N} := \{1,\dots,N\}$, for the arbitrary $N\in\mathbb{N}$.
In the exchangeable case, it is then assumed that under some appropriate norms we have at the limit that $\xi^{i,N}  \xrightarrow[]{N \rightarrow \infty} \xi^i$, for every $i \in \mathbb{N}$. 
In the present work, the method of the proofs does not require the pairs $\{(Y^{i,N},Y^{i})\}_{i \in \mathscr{N}}$ to be identically distributed, for every $N \in \mathbb{N}$. 
Hence, it is sufficient to assume that 
\[
    \|\xi^{i,N} - \xi^i\|^2\xrightarrow[N \rightarrow \infty]{|\cdot|} 0,
\] for every $i \in \mathbb{N}$, and that the convergence happens in a uniform rate, \textit{i.e.} 
\[
    \frac{1}{N} \sum_{i = 1}^N \|\xi^{i,N} - \xi^i\|^2 \xrightarrow[N \rightarrow \infty]{|\cdot|} 0,
\]
under an appropriate norm, that will be defined in the next section.

Finally, we provide rates of convergence for the propagation of chaos in case the BSDEs depend on the instantaneous value of the solutions, \textit{i.e.} on $Y_\cdot$, under advanced integrability conditions for the solutions of the BSDEs.
These results generalize those found in \citet{lauriere2022backward} from the Brownian framework to processes with independent increments. 
\citet{lauriere2022backward} prove also that the requirements of the theorems, \textit{i.e.} the advanced integrability conditions on the solutions, can be satisfied under an additional specific linear growth condition for the generator.
Their method of proof relies on a delicate application of Girsanov's theorem in combination with a version of Gronwall's inequality. 
In the present work, we do not study under what conditions for the generator these integrability requirements can be satisfied, but we can say that they are trivially satisfied when the generator is bounded; see \cref{bounded condition}. 
The method of \citet{lauriere2022backward} is not directly applicable in our setting, mainly due to the extra dependence of the generator on the process $U(\cdot)$.

This paper is organized as follows:
In \cref{sec:prelims}, we provide notation and preliminary results that are required for the remainder of this work.
In \cref{sec:apriori}, we provide new \textit{a priori} estimates for BSDEs utilizing the stochastic exponential.
In \cref{4}, we provide existence and uniqueness results for McKean--Vlasov BSDEs and systems of mean-field BSDEs.
In \cref{sec:propagation}, we prove the propagation of chaos property for BSDEs that depend on the initial segment of the solution.
In \cref{6}, we prove the propagation of chaos property and deduce rates of convergence for BSDEs that depend on the instantaneous value of the solution.
Finally, the appendices contain certain proofs and other auxilliary results.

\section{Preliminaries}
\label{sec:prelims}

In this section, we are going to introduce the notation that will be used throughout this work, as well as to give a short overview of known results which will be useful later. 
Let $(\Omega,\mathcal{G},\mathbb{G},\mathbb{P})$ denote a complete stochastic basis in the sense of \citet[I.1.3]{jacod2013limit}. 
Once there is no ambiguity about the reference filtration, we are going to suppress  the dependence on $\mathbb{G}$.
The letters $p,n$ and $d$ will denote arbitrary natural numbers. 
Let $q \in \mathbb{N}^*$, then we will denote by $|\cdot|$ the Euclidean norm on $\mathbb{R}^q$.
Moreover, we remind the reader that we will abusively use the notation $\mathscr{N} := \{1,\dots,N\}$, for $N\in \mathbb{N}$.


\subsection{Martingales}

Let us denote by $\mathcal{H}^2(\mathbb{G};\mathbb{R}^p)$ the set of square integrable $\mathbb{G}-$martingales, \emph{i.e.}, 
\begin{align*}
\mathcal{H}^2(\mathbb{G};\mathbb{R}^p)
    := \big\{ X:\Omega\times\mathbb{R}_+ {}\longrightarrow \mathbb{R}^p, X \text{ is a $\mathbb{G}-$martingale with }\sup_{t\in\mathbb{R}_+}\mathbb E[\vert X_t\vert^2]<\infty \big\}, 
\end{align*}
equipped with its usual norm 
\begin{align*}
    \Vert X\Vert^2_{\mathcal{H}^2(\mathbb{G};\mathbb{R}^p)}:= \mathbb{E}[ \vert X_\infty \vert^2]= \mathbb{E}\left[{{\rm Tr}[\langle X\rangle_\infty^{\mathbb{G}}]}\right],
\end{align*}
where $\langle X \rangle^{\mathbb{G}}$ denotes the $\mathbb{G}-$predictable quadratic variation of $X$, which is the $\mathbb{G}-$predictable compensator of the $\mathbb{G}-$optional quadratic variation $[X]$.

Let us also define a notion of orthogonality between two square integrable martingales. 
More precisely, we say that $X\in \mathcal{H}^2(\mathbb{G};\mathbb{R}^p)$ and $Y \in \mathcal{H}^2(\mathbb{G};\mathbb{R}^q)$ are (mutually) orthogonal if and only if $\langle X,Y\rangle^{\mathbb{G}}=0$, and denote this relation by $X \perp_{\mathbb{G}} Y$. 
Moreover, for a subset $\mathcal{X}$ of $\mathcal{H}^2(\mathbb{G};\mathbb{R}^p)$, we denote the space of martingales orthogonal to each component of every element of $\mathcal{X}$ by $\mathcal{X}^{\perp_{\mathbb{G}}}$, \emph{i.e.},
\begin{align*}
\mathcal{X}^{\perp_{\mathbb{G}}} 			
  &:=\big\{Y\in\mathcal{H}^2(\mathbb{G};\mathbb{R}),\   \langle Y,X\rangle^{\mathbb{G}}=0\text{ for every } X\in\mathcal{X}\big\}.
\end{align*}
A martingale $X\in\mathcal{H}^2(\mathbb{G};\mathbb{R}^p)$ will be called a \emph{purely discontinuous} martingale if $X_0=0$ and if each of its components is orthogonal to all continuous martingales of $\mathcal{H}^2(\mathbb{G};\mathbb{R}).$ 
Using \cite[Corollary I.4.16]{jacod2013limit} we can decompose $\mathcal{H}^2(\mathbb{G};\mathbb{R}^p)$ as follows
\begin{align}\label{SqIntMartDecomp}
\mathcal{H}^{2}(\mathbb{G};\mathbb{R}^p)
=\mathcal{H}^{2,c}(\mathbb{G};\mathbb{R}^p)\oplus\mathcal{H}^{2,d}(\mathbb{G};\mathbb{R}^p),
\end{align}
where $\mathcal{H}^{2,c}(\mathbb{G};\mathbb{R}^p)$ denotes the subspace of $\mathcal{H}^2(\mathbb{G};\mathbb{R}^p)$ consisting of all continuous square--integrable martingales and $\mathcal{H}^{2,d}(\mathbb{G};\mathbb{R}^p)$ denotes the subspace of $\mathcal{H}^2(\mathbb{G};\mathbb{R}^p)$ consisting of all purely discontinuous square--integrable martingales. 

Let us also provide a classical example of the decomposition of the space of square--integrable martingales; we will later expand this result to a more general setting.
Using \cite[Theorem I.4.18]{jacod2013limit}, any square--integrable $\mathbb{G}-$martingale $X\in\mathcal{H}^{2}(\mathbb{G};\mathbb{R}^p)$ admits a unique (up to $\mathbb P-$indistinguishability) decomposition
\begin{align}\label{deco_2.1}
X_\cdot=X_0+X^c_\cdot+X^d_\cdot,
\end{align}
where $X^c_0=X^d_0=0$. 
The process $X^c\in\mathcal{H}^{2,c}(\mathbb{G};\mathbb{R}^p)$ will be called the \emph{continuous martingale part of $X$}, while the process $X^d\in\mathcal{H}^{2,d}(\mathbb{G};\mathbb{R}^p)$ will be called the \emph{purely discontinuous martingale part of $X$}. 
The pair $(X^c, X^d)$ is called the \emph{natural pair of $X$ (under $\mathbb{G}$).}


\subsection{It\=o stochastic integrals}

Using \cite[Section III.6.a]{jacod2013limit}, in order to define the stochastic integral with respect to a square--integrable martingale $X \in \mathcal{H}^2(\mathbb{G};\mathbb{R}^p)$, we need to select a $\mathbb{G}-$predictable, non--decreasing and right continuous process $C^{\mathbb{G}}$ with the property that 
\begin{align}\label{comp_property}
\langle X\rangle^{\mathbb{G}} 
    = \int_{(0,\cdot]} \frac{\ud \langle X\rangle_s^{\mathbb{G}}}{\ud C^{\mathbb{G}}_s}\ud C_s^{\mathbb{G}},
\end{align} 
where the equality is understood component-wise.
In other words, $\frac{\ud \langle X\rangle^{\mathbb{G}}}{\ud C^{\mathbb{G}}}$ is a predictable process with values in the set of  all symmetric, non--negative definite $p\times p$ matrices.
Then, we define the set of integrable processes to be
\begin{align*}
\mathbb H^2(\mathbb{G},X;\mathbb{R}^{d \times p}) 
    &:= \left\{ Z:(\Omega\times\mathbb{R}_+,\mathcal{P}^{\mathbb{G}}) 
    {}\longrightarrow (\mathbb{R}^{d\times p},\mathcal{B}(\mathbb{R}^{d\times p})),\  
    \mathbb{E}\left[{\int_0^\infty {\rm Tr}\left[Z_t \frac{\ud \langle X\rangle_s^{\mathbb{G}}}{\ud C_s^{\mathbb{G}}}Z_t^{\top}\right] \ud C_t^{\mathbb{G}}} \right] <\infty \right\},
\end{align*}
where $\mathcal{P}^{\mathbb{G}}$ denotes the $\mathbb{G}-$predictable $\sigma-$field on $\Omega\times\mathbb{R}_+$; see \cite[Definition I.2.1]{jacod2013limit}. 
The associated stochastic integrals will be denoted either by $Z\cdot X$ or by $\int Z_s \textup{d} X_s$.
In case we need to underline the filtration under which the It\=o stochastic integral is defined, we will write either $(Z\cdot X)^{\mathbb{G}}$ or $(\int Z_s \ud X_s)^{\mathbb{G}}$.
The most important relation of the stochastic integral is the following formula for its predictable quadratic variation (see \cite[Theorem III.6.4.c)]{jacod2013limit}) 
\begin{align*}
\left(Z \frac{\ud \langle X\rangle^{\mathbb{G}}}{\ud C^{\mathbb{G}}} Z^\top\right)\cdot C^{\mathbb{G}} = \langle Z\cdot X\rangle^{\mathbb{G}}.
\end{align*}
Hence, we have the analogon of the usual It\=o isometry 
\begin{align*}
\|{Z}\|_{\mathbb{H}^2(\mathbb{G},X;\mathbb{R}^{d \times p})}^2
    := \mathbb{E}\left[{\int_0^\infty {\rm Tr}\left[Z_t \frac{\ud \langle X\rangle_s^{\mathbb{G}}}{\ud C_s^{\mathbb{G}}}Z_t^\top\right]\ud C_t^{\mathbb{G}}} \right]
= \mathbb{E}\left[{\rm Tr}[\langle Z\cdot X \rangle^{\mathbb{G}}_\infty]\right].
\end{align*}
We will denote the space of It\=o stochastic integrals of processes in 
$\mathbb H^2(\mathbb{G},X)$ with respect to $X$ by $\mathcal{L}^2(\mathbb{G},X)$.
In particular, for $X^c\in\mathcal{H}^{2,c}(\mathbb{G};\mathbb{R}^d)$ we remind the reader that, by \cite[Theorem III.4.5]{jacod2013limit}, $Z\cdot X^c\in\mathcal{H}^{2,c}(\mathbb{G};\mathbb{R}^d)$ for every $Z\in\mathbb H^2(X^c,\mathbb{G})$, \emph{i.e.}, $\mathcal{L}^2(X^c,\mathbb{G})\subset\mathcal{H}^{2,c}(\mathbb{G};\mathbb{R}^d)$.


\subsection{Integrals with respect to an integer--valued random measure}

Let us now expand the space, and accordingly the predictable $\sigma$--algebra, in order to construct measures that depend also on the height of the jumps of a stochastic process, that is
\begin{align*}
\big(\widetilde{\Omega},\widetilde{\mathcal{P}}^{\mathbb{G}}\big):=
    \big(\Omega\times\mathbb{R}_{+}\times \mathbb{R}^{n},\mathcal{P}^{\mathbb{G}}\otimes \mathcal{B}({\mathbb{R}^{n}})\big).
\end{align*} 
A measurable function 
$U:\big(\widetilde{\Omega},\widetilde{\mathcal{P}}^{\mathbb{G}}\big)\longmapsto \left(\mathbb{R}^{d},\mathcal{B}({\mathbb{R}^{d}}) \right)$ 
is called \emph{$\widetilde{\mathcal{P}}^{\mathbb{G}}-$measurable function} and, abusing notation, the space of these functions will also be denoted by $\widetilde{\mathcal{P}}^{\mathbb{G}}$.
In particular, we will denote by $\widetilde{\mathcal{P}}^{\mathbb{G}}_+$ the space of non--negative $\widetilde{\mathcal{P}}^{\mathbb{G}}-$measurable functions.\footnote{We will adopt analogous notation for any non--negative measurable function, for example, for a $\sigma-$algebra $\mathcal{A}$, the set $\mathcal{A}_+$ denotes the set of non--negative $\mathcal{A}-$measurable functions. }

We say that $\mu$ is a random measure if $\mu := \{\mu\left( \omega;\ud t,\ud x \right)\}_{\omega\in\Omega}$ is 
a family of non--negative measures defined on $\left(\mathbb{R}_{+}\times\mathbb{R}^{n},\mathcal{B}(\mathbb{R}_{+})\otimes\mathcal{B}(\mathbb{R}^{n})\right)$, satisfying identically
$\mu\left(\omega;\{0\}\times\mathbb{R}^{n}\right) = 0$. 
Consider a function $U \in \widetilde{\mathcal{P}}^{\mathbb{G}}$, then we define the process
\begin{align*}
U * \mu_\cdot(\omega) :=
			\begin{cases}
			\displaystyle \int_{(0,\cdot]\times\mathbb{R}^{n}} 
U\left(\omega,s,x\right) \mu(\omega; \ud s, \ud x),\textrm{ if } \int_{(0,\cdot]\times\mathbb{R}^{n}} 
|U\left(\omega,s,x\right)| \mu(\omega;\ud s, \ud x)<\infty,\\
		\displaystyle	\infty,\textrm{ otherwise}.
			\end{cases}    
\end{align*}
Let $X\in\mathcal{H}^{2}(\mathbb{G};\mathbb{R}^n)$, we associate to it the $\mathbb{G}-$optional integer--valued random measure $\mu^{X}$ on $\mathbb{R}_+\times\mathbb{R}^{n}$ defined by its jumps via the formula
\begin{align}\label{random measure}
\mu^X(\omega;\ud t,\ud x):= \sum_{s>0} \mathds{1}_{ \{ \Delta X_s(\omega) \neq 0\} } \delta_{(s,\Delta X_s(\omega))}(\ud t,\ud x),
\end{align}
where, for any $z\in\mathbb{R}_{+}\times\mathbb{R}^{n} $, $\delta_z$ denotes the Dirac measure at the point $z$; see also \cite[Proposition II.1.16]{jacod2013limit} which verifies that $\mu^X$ is $\mathbb{G}-$optional and $\mathcal{P}^{\mathbb{G}}$--$\sigma$--finite. 
Notice that $\mu^X(\omega;\mathbb{R}_+\times\{0\})=0.$
Moreover, for a $\mathbb{G}-$predictable stopping time $\sigma$ we define the random variable 
\begin{align*} 
\int_{\mathbb{R}^{n}}U(\omega,\sigma,x)\mu^{X}(\omega;\{\sigma\}\times \ud x) :=
 U(\omega,\sigma(\omega),\Delta X_{\sigma(\omega)}(\omega))\mathds{1}_{\{\Delta 
X_\sigma\neq 0 |U(\omega,\sigma(\omega),\Delta 
X_{\sigma(\omega)}(\omega))|<\infty\}}.
\end{align*}
Since $X\in\mathcal{H}^2(\mathbb{G};\mathbb{R}^n)$, the $\mathbb{G}-$compensator of $\mu^X$ under $\mathbb{P}$ exists, see 
\cite[Theorem II.1.8]{jacod2013limit}. This is the unique, up to a $\mathbb{P}-$null set, $\mathbb{G}-$predictable random measure $\nu^{(\mathbb{G},X)}$ on $\mathbb{R}_+\times\mathbb{R}^{n}$, for which
\begin{align*}
\mathbb{E}\left[U * \mu^X_\infty \right] 
= \mathbb{E}\left[U * \nu^{(\mathbb{G},X)}_\infty\right]
 \end{align*}
holds for every non--negative function $U \in \widetilde{\mathcal{P}}^{\mathbb{G}}$, where we have defined 
\begin{align*}
U * \nu_\cdot(\omega) :=
			\begin{cases}
			\displaystyle \int_{(0,\cdot]\times\mathbb{R}^{n}} 
U\left(\omega,s,x\right) \nu(\omega; \ud s, \ud x),\textrm{ if } \int_{(0,\cdot]\times\mathbb{R}^{n}} 
|U\left(\omega,s,x\right)| \nu(\omega;\ud s, \ud x)<\infty,\\
		\displaystyle	\infty,\textrm{ otherwise}.
			\end{cases}    
\end{align*}

Let $U \in \widetilde{\mathcal{P}}^{\mathbb{G}}_+$ and consider a $\mathbb{G}-$predictable time $\sigma$, whose graph is denoted by $\llbracket\sigma\rrbracket$ (see \cite[Notation I.1.22]{jacod2013limit} and the comments afterwards); we define the random variable 
\[
\int_{\mathbb{R}^{n}}U(\omega,\sigma,x)\nu^{(\mathbb{G},X)}(\omega;\{\sigma\}\times \ud x) :=
	\int_{\mathbb{R}_+\times\mathbb{R}^{n}} 
U(\omega,\sigma(\omega),x)\mathds{1}_{\llbracket\sigma\rrbracket}\ \nu^{(\mathbb{G},X)}(\omega;\ud s,\ud x)
\]
if  $\int_{\mathbb{R}_+\times\mathbb{R}{n}} \lvert U(\omega,\sigma(\omega),x)\rvert \mathds{1}_{\llbracket\sigma\rrbracket}\ \nu^{(\mathbb{G},X)}(\omega;\ud s,\ud x) < \infty$, otherwise it equals $\infty$. 
Using \cite[Property II.1.11]{jacod2013limit}, we have
\begin{align}\label{PropertyII-1-11}
\int_{\mathbb{R}^{n}}U(\omega,\sigma,x)\nu^{(\mathbb{G},X)}(\omega;\{\sigma\}\times \ud x) 
= \mathbb{E}\left.\left[
\int_{\mathbb{R}^{n}}U(\omega,\sigma,x)\mu^{X}\!(\omega;\{\sigma \}\times \ud x)\right\vert
{\mathcal{G}_{\sigma-}}\right].
\end{align}
In order to simplify the notation, let us denote for any $\mathbb{G}-$predictable time $\sigma$
\begin{align}\label{hat_U}
\widehat{U}_{\sigma}^{(\mathbb{G},X)}(\omega) :=\int_{\mathbb{R}^{n}}U(\omega,\sigma,x)\nu^{(\mathbb{G},X)}(\omega;\{\sigma\}\times \ud x).
\end{align}
In particular, for $U = 1$ we define
\begin{align}\label{hat_zeta}
\zeta_{\sigma}^{(\mathbb{G},X)} (\omega)
:=\int_{\mathbb{R}^n}\nu^{(\mathbb{G},X)}(\omega;\{\sigma\}\times \ud x).
\end{align}

In order to define the stochastic integral of a function $U \in \widetilde{\mathcal{P}}^{\mathbb{G}}$ with respect to the \emph{$\mathbb{G}-$compensated integer--valued random measure 
${\widetilde{\mu}}^{(\mathbb{G},X)}\!:=\mu^X-\nu^{(\mathbb{G},X)}$}, we will consider the following class
\begin{align*}
G_2(\mathbb{G},\mu^X):=\bigg\{
U:\big(\widetilde{\Omega},\widetilde{\mathcal{P}}^{\mathbb{G}}\big){}\longrightarrow
\big(\mathbb{R}^{d},\mathcal{B}(\mathbb{R}^{d})\big),\, 
	\mathbb{E}\bigg[\sum_{t>0} \left|U(t,\Delta X_t)\mathds{1}_{\{\Delta 
X_t\neq0 \}}-\widehat{U}_t^{(\mathbb{G},X)}\right|^2\bigg]
	<\infty
\bigg\}.
\end{align*}
Any element of $G_2(\mathbb{G},\mu^X)$ can be associated to an element of $\mathcal{H}^{2,d}(\mathbb{G};\mathbb{R}^d)$, uniquely up to $\mathbb{P}-$indistinguishability, via 
\begin{align*}
G_2(\mathbb{G},\mu^X)\ni U{}\longmapsto U\star{\widetilde{\mu}}^{(\mathbb{G},X)}\in \mathcal{H}^{2,d}(\mathbb{G};\mathbb{R}^d),
\end{align*}
see \cite[Definition II.1.27, Proposition II.1.33.a]{jacod2013limit} and \citet[Theorem XI.11.21]{he2019semimartingale}.
We call $U\star{\widetilde{\mu}}^{(\mathbb{G},X)}$ the \emph{stochastic integral of $U$ with respect to ${\widetilde{\mu}}^{(\mathbb{G},X)}$}. 
Let us point out that for an arbitrary function of $G_2(\mathbb{G},\mu^X)$ the two processes $U*(\mu^X - \nu^{(\mathbb{G},X)})$ and $U\star \widetilde{\mu}^{(\mathbb{G},X)}$ are not equal.
We will make use of the following notation for the space of stochastic integrals with respect to ${\widetilde{\mu}}^{X}$ which are square integrable martingales
\begin{align*}
\mathcal{K}^2(\mathbb{G},\mu^X)
  := \left\{U\star{\widetilde{\mu}}^{(\mathbb{G},X)},\  U\in G_2(\mathbb{G},\mu^X)\right\}. 
\end{align*}
Moreover, by \cite[Theorem II.1.33]{jacod2013limit} or \cite[Theorem 11.21]{he2019semimartingale}, we have
\begin{align*}
\mathbb{E}\left[\langle U\star{\widetilde{\mu}}^{(\mathbb{G},X)}\rangle^{\mathbb{G}}_{\infty}\right]<\infty \text{ if and only if }
U\in G_2(\mathbb{G},\mu^X),
\end{align*}
which enables us to define the following more convenient space
\begin{align*}
\mathbb{H}^2(\mathbb{G},X)
 :=\left\{ U:\big(\widetilde{\Omega},\widetilde{\mathcal{P}}^{\mathbb{G}}\big) 
  {}\longrightarrow \big(\mathbb{R}^{d},\mathcal{B}(\mathbb{R}^{d})\big),\ 
  \mathbb{E}\left[
  \text{Tr} 
  \Big[\langle U\star{\widetilde{\mu}}^{(\mathbb{G},X)}\rangle^{\mathbb{G}}_t\Big]\right]<\infty \right\},
\end{align*}
and we emphasize that we have the direct identification
\begin{align*}
    \mathbb{H}^2(\mathbb{G},X) = G_2(\mathbb{G},\mu^X).
\end{align*}
Let us finish this subsection with the following useful formulas
\begin{align*}
\mathbb{E}\left[\text{Tr}\langle U\star{\widetilde{\mu}}^{(\mathbb{G},X)}\rangle^{\mathbb{G}}_{\infty}\right] 
    &= \mathbb{E}\left[\sum_{t>0} \left|U(t,\Delta X_t)\mathds{1}_{\{\Delta X_t\neq0 \}}-\widehat{U}_t^{(\mathbb{G},X)}\right|^2\right]\\
    &= \mathbb{E}\left[\sum_{t>0} \left|\int_{\mathbb{R}^n}U(t,x)\,\mu^{X}(t,\ud x) - \int_{\mathbb{R}^n}U(t,x)\,\nu^{(\mathbb{G},X)}(t,\ud x)\right|^2\right].
\end{align*}


\subsection{Dol\'{e}ans-Dade measure and disintegration} 

Assume that we are given a square integrable $\mathbb{G}$--martingale $X\in\mathcal{H}^{2}(\mathbb{G};\mathbb{R}^n)$ along with its associated random measures $\mu^{X}$ and $\nu^{(\mathbb{G},X)}$.
In $\big(\widetilde{\Omega},\mathcal{G}_{\infty} \otimes \mathcal{B}(\mathbb{R}_+) \otimes \mathcal{B}(\mathbb{R}^n)\big)$ we can define the Dol\'{e}ans-Dade measures of $\mu^{X}$, resp. of $\nu^{(\mathbb{G},X)}$, as follows
\begin{align*}
M_{\mu^{X}}(A) &:= \mathbb{E}\left[\mathds{1}_A * \mu^{X}_{\infty}\right],\\
\text{resp. }M_{\nu^{(\mathbb{G},X)}}(A) &:= \mathbb{E}\left[\mathds{1}_A * \nu^{(\mathbb{G},X)}_{\infty}\right],
\end{align*}
for every $A \in \mathcal{G}_{\infty} \otimes \mathcal{B}(\mathbb{R}_+) \otimes \mathcal{B}(\mathbb{R}^n)$.
Because $M_{\mu^{X}}$ is $\sigma$-integrable with respect to $\widetilde{\mathcal{P}}^{\mathbb{G}}$, we can define, for every non--negative $\mathcal{G}_{\infty} \otimes \mathcal{B}(\mathbb{R}_+) \otimes \mathcal{B}(\mathbb{R}^n)$--measurable function $W$, its conditional expectation with respect to $\widetilde{\mathcal{P}}^{\mathbb{G}}$ using $M_{\mu^{X}}$, which we denote by $M_{\mu^{X}}[W |\widetilde{\mathcal{P}}]$. 
Furthermore, since $\nu^{(\mathbb{G},X)}$ is the $\mathbb{G}-$compensator of $\mu^X$ under $\mathbb{P}$, by definition we have
\begin{align}\label{DD_eq}
   M_{\mu^{X}}(W)  = M_{\nu^{(\mathbb{G},X)}}(W),
\end{align}
for every non--negative, $\widetilde{\mathcal{P}}^{\mathbb{G}}-$measurable function $W$. 
Let us denote with $|I|$ the map in $\mathbb{R}^n$ where $|I|(x) := |x| + \mathds{1}_{\{0\}}(x)$. 
Then, using the facts that  $M_{\mu^{X}}(|I|^2) < \infty$, $M_{\mu^{X}}(\Omega \times \mathbb{R}_+ \times \{0\}) = 0 = M_{\nu^{X}}(\Omega \times \mathbb{R}_+ \times \{0\})$ and that $|I|^2$ is $\widetilde{\mathcal{P}}^{\mathbb{G}}-$measurable, we can define the new measures $|I|^2_{\cdot}\mu^{X}$, resp. $|I|^2_{\cdot}\nu^{(\mathbb{G},X)}$, as
\begin{align*}
M_{|I|^2_{\cdot}\mu^{X}}(A) &:= M_{\mu^{X}} \left(|I|^2\mathds{1}_A\right),\\
\text{ resp. } M_{|I|^2_{\cdot}\nu^{(\mathbb{G},X)}}(A) &:= M_{\nu^{(\mathbb{G},X)}}\left(|I|^2\mathds{1}_A\right),
\end{align*}
for every $A \in \mathcal{G}_{\infty} \otimes \mathcal{B}(\mathbb{R}_+) \otimes \mathcal{B}(\mathbb{R}^n)$ (see \cite[p. 294]{he2019semimartingale} for the notation).
Then, for every predictable, increasing process $C$ such that $|I|^2 * \nu^{(\mathbb{G},X)} \ll C$ $\mathbb{P}$--a.s., we get  from \cite[Theorem 5.14]{he2019semimartingale} that $\mathbb{P} \otimes (|I|^2 * \nu^{(\mathbb{G},X)}) \ll \mathbb{P} \otimes C$.

Consider a pair of martingales $\overline{X} := (X^\circ,X^{\natural}) \in \mathcal{H}^2(\mathbb{G};\mathbb{R}^p) \times \mathcal{H}^{2}(\mathbb{G};\mathbb{R}^n)$, and define 
\begin{align}\label{def_C}
    C^{(\mathbb{G},\overline{X})} 
    := \text{Tr}\left[\langle X^{\circ} \rangle^{\mathbb{G}}\right] + |I|^2 * \nu^{(\mathbb{G},X^{\natural})}.
\end{align}
Using the Kunita--Watanabe inequality, we can easily verify that $C^{(\mathbb{G},\overline{X})}$ possesses the property described in \eqref{comp_property}. 

At this point let us make an immediate, yet crucial for the current work, remark.
\begin{remark}\label{rem:immersion_no_change_in_comp}
Assuming that $\mathbb{G}$ is immersed in $\mathbb{H}$, \emph{i.e.}, $\mathbb{H}$ is a filtration such that $\mathcal{G}_t\subseteq \mathcal{H}_t$ for every $t\in\mathbb{R}_+$ and, additionally, it possesses the property that every $\mathbb{G}-$martingale is an $\mathbb{H}-$martingale, then $C^{(\mathbb{G},\overline{X})}$ and $C^{(\mathbb{H},\overline{X})}$ are indistinguishable.
Indeed, one immediately has that 
\begin{align*}
    \langle X^{\circ}\rangle^{\mathbb{G}} - 
    \langle X^{\circ}\rangle^{\mathbb{H}}
    = (\langle X^{\circ}\rangle^{\mathbb{G}} - X^{\circ}\cdot\left(X^{\circ}\right)^{\top})
    +(X^{\circ}\cdot\left(X^{\circ}\right)^{\top} - \langle X^{\circ}\rangle^{\mathbb{H}})
\end{align*}
is an $\mathbb{H}-$martingale, $\mathbb{H}-$predictable and of finite variation.
Hence, since its initial value is $0$, it is the zero martingale, which proves that $\langle X^{\circ}\rangle^{\mathbb{G}}$ and $\langle X^{\circ}\rangle^{\mathbb{H}}$ are indistinguishable.
One can argue analogously for the processes $|I|^2*\nu^{(\mathbb{G},X^{\natural})}$ and $|I|^2*\nu^{(\mathbb{H},X^{\natural})}$.
In other words, we are allowed to interchange the filtration symbol in the notation of \eqref{def_C}, or even to omit it.
\end{remark}

Returning to \eqref{def_C}, one notices that we can disintegrate  
$\nu^{(\mathbb{G},X^{\natural})}$, \emph{i.e.}, 
we can determine kernels 
$K^{(\mathbb{G},\overline{X})} :\left(\Omega \times \mathbb{R}_+, \mathcal{P}^{\mathbb{G}}\right) {}\longrightarrow \mathcal{R}\left(\mathbb{R}^n,\mathcal{B}(\mathbb{R}^n)\right)$, where $\mathcal{R}\left(\mathbb{R}^n,\mathcal{B}(\mathbb{R}^n)\right)$ are the Radon measures on $\mathbb{R}^n$, such that 
\begin{align}\label{def:Kernels}
  \nu^{(\mathbb{G},X^{\natural})}(\omega,\ud t,\ud x) = K^{(\mathbb{G},\overline{X})}(\omega,t,\ud x)\ud C^{(\mathbb{G},\overline{X})}_t(\omega).
\end{align}
The kernels $K^{(\mathbb{G},\overline{X})}$ are $\mathbb{P} \otimes C^{(\mathbb{G},\overline{X})}-$unique, as one can deduce by a straightforward Dynkin class argument.

Moreover, let us define 
\begin{equation}\label{def_c}
    c^{(\mathbb{G},\overline{X})} := \bigg(\frac{\ud \langle X^{\circ} \rangle^{\mathbb{G}}}{\ud C^{(\mathbb{G},\overline{X})}}\bigg)^{\frac{1}{2}}.
\end{equation}
The reader may observe that $\frac{\ud \langle X^{\circ}\rangle^{\mathbb{G}}}{\ud C^{(\mathbb{G},\overline{X})}}$ is a $\mathbb{G}-$predictable process with values in the set of  all symmetric, non--negative definite $p\times p$ matrices. 
Using the diagonalization property of these matrices and results from \citet{azoff1974borel}, one can easily show that $c^{(\mathbb{G},\overline{X})}$ will also be a $\mathbb{G}-$predictable process with values in the set of  all symmetric, non--negative definite $p\times p$ matrices.


\subsection{Orthogonal decompositions}

Let us now state the decomposition results that will be used to solve the BSDEs of interest; for more details, we refer to \citet[Section 2.2.1]{papapantoleon2018existence}.

Let $\overline{X}:=(X^\circ,X^\natural)\in\mathcal{H}^2(\mathbb{G};\mathbb{R}^p)\times\mathcal{H}^{2,d}(\mathbb{G};\mathbb{R}^n)$ with $M_{\mu^{X^\natural}}[\Delta X^\circ|\widetilde{\mathcal{P}}^{\mathbb{G}}]=0$. 
Using this assumption, we get that for $Y^1 \in \mathcal{L}^2(X^\circ, \mathbb{G})$ and $Y^2 \in \mathcal{K}^2(\mu^{X^\natural}, \mathbb{G})$ it holds that $\langle Y^1,Y^2 \rangle = 0$; see \emph{e.g.} \citet[Theorem 13.3.16]{cohen2015stochastic}.
Then we define
\begin{align*}
\mathcal{H}^2(\overline{X}^{\perp_{\mathbb{G}}})  
    := \Big(\mathcal{L}^2(X^\circ, \mathbb{G})\oplus\mathcal{K}^2(\mu^{X^\natural}, \mathbb{G})\Big)^{\perp_{\mathbb{G}}}.
\end{align*}  
Subsequently, we have the following description for $\mathcal{H}^2(\overline{X}^{\perp_{\mathbb{G}}})$, which is \cite[Proposition 2.6]{papapantoleon2018existence}.

\begin{proposition}\label{prop:CharacterOrthogSpace}
Let $\overline{X}:=(X^\circ,X^\natural)\in\mathcal{H}^2(\mathbb{G};\mathbb{R}^p)\times\mathcal{H}^{2,d}(\mathbb{G};\mathbb{R}^n)$ with $M_{\mu^{X^\natural}}[\Delta X^\circ|\widetilde{\mathcal{P}}^{\mathbb{G}}]=0$.
Then,
\begin{align*}
\mathcal{H}^2(\overline{X}^{\perp_{\mathbb{G}}}) = \big\{ L\in\mathcal{H}^2(\mathbb{G};\mathbb{R}^d),\ \langle X^{\circ}, L\rangle^{\mathbb{G}}=0 \text{ and } M_{\mu^{X^\natural}}[\Delta L|\widetilde{\mathcal{P}}^{\mathbb{G}}]=0\big\}.
\end{align*}
Moreover, the space $\big(\mathcal{H}^2(\overline{X}^{\perp_{\mathbb{G}}}), \Vert \cdot\Vert_{\mathcal{H}^2(\mathbb{R}^d)}\big)$ is closed.
\end{proposition}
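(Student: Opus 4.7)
The plan is to establish the set equality by double inclusion and then obtain closedness from the Hilbert space structure of $\mathcal{H}^2(\mathbb{G};\mathbb{R}^d)$. Throughout I will use the standard identifications: orthogonality of a martingale $L$ to all It\^{o} integrals against $X^{\circ}$ is governed by the predictable covariation $\langle X^{\circ},L\rangle^{\mathbb{G}}$, while orthogonality to stochastic integrals with respect to $\widetilde{\mu}^{(\mathbb{G},X^{\natural})}$ is governed by the conditional expectation of $\Delta L$ under the Dol\'eans measure $M_{\mu^{X^\natural}}$, as supplied by \cite[Theorem 11.21, Proposition 11.24]{he2019semimartingale}. The standing assumption $M_{\mu^{X^\natural}}[\Delta X^{\circ}|\widetilde{\mathcal{P}}^{\mathbb{G}}]=0$ is what guarantees that the direct sum $\mathcal{L}^2(X^\circ,\mathbb{G})\oplus \mathcal{K}^2(\mu^{X^\natural},\mathbb{G})$ is indeed orthogonal, so there is no cross term to worry about.

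For the inclusion $\mathcal{H}^2(\overline{X}^{\perp_\mathbb{G}})\subseteq\{\cdot\}$, fix $L\in\mathcal{H}^2(\overline{X}^{\perp_\mathbb{G}})$. Testing with the constant integrand $Z=(e_i)$ ($i$-th unit vector of $\mathbb{R}^p$) yields $\langle X^{\circ}_i,L_j\rangle^{\mathbb{G}}=0$ for every component $j$, hence $\langle X^{\circ},L\rangle^{\mathbb{G}}=0$. Next, for every bounded $\widetilde{\mathcal{P}}^{\mathbb{G}}$-measurable function $U$ with compact support away from $0$ in the mark space, the integral $U\star\widetilde{\mu}^{(\mathbb{G},X^\natural)}$ belongs to $\mathcal{K}^2(\mu^{X^\natural},\mathbb{G})$ and orthogonality together with the formula
\begin{equation*}
    \mathbb{E}\bigl[\langle U\star\widetilde{\mu}^{(\mathbb{G},X^\natural)},L\rangle^{\mathbb{G}}_\infty\bigr]=M_{\mu^{X^\natural}}\bigl(U\,M_{\mu^{X^\natural}}[\Delta L\mid\widetilde{\mathcal{P}}^{\mathbb{G}}]\bigr)
\end{equation*}
forces $M_{\mu^{X^\natural}}[\Delta L|\widetilde{\mathcal{P}}^{\mathbb{G}}]=0$ by the arbitrariness of $U$, since such $U$ form a dense family in $L^2(M_{\mu^{X^\natural}})$.

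For the reverse inclusion, let $L$ satisfy both conditions. Pick any $Z\in\mathbb{H}^2(\mathbb{G},X^\circ)$; by the Kunita--Watanabe-type identity for the predictable covariation of an It\^o stochastic integral, $\langle Z\cdot X^{\circ},L\rangle^{\mathbb{G}}=Z\cdot \langle X^{\circ},L\rangle^{\mathbb{G}}=0$, so $L\perp_{\mathbb{G}}\mathcal{L}^2(X^\circ,\mathbb{G})$. Now take $U\in G_2(\mathbb{G},\mu^{X^\natural})$. Using the jump representation $\Delta(U\star\widetilde{\mu}^{(\mathbb{G},X^\natural)})=U(\cdot,\Delta X^\natural)\mathds{1}_{\{\Delta X^{\natural}\neq 0\}}-\widehat{U}^{(\mathbb{G},X^{\natural})}$ and the fact that the purely discontinuous martingale $U\star\widetilde{\mu}^{(\mathbb{G},X^\natural)}$ has covariation with $L$ determined by its jumps, one computes
\begin{equation*}
    \mathbb{E}\bigl[\langle U\star\widetilde{\mu}^{(\mathbb{G},X^\natural)},L\rangle^{\mathbb{G}}_\infty\bigr]=M_{\mu^{X^\natural}}\bigl(U\cdot M_{\mu^{X^\natural}}[\Delta L\mid\widetilde{\mathcal{P}}^{\mathbb{G}}]\bigr)=0,
\end{equation*}
and a localization argument together with optional stopping upgrades this to the identically zero predictable covariation, yielding $L\perp_{\mathbb{G}}\mathcal{K}^2(\mu^{X^\natural},\mathbb{G})$.

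The closedness assertion is then immediate: $\mathcal{H}^2(\mathbb{G};\mathbb{R}^d)$ endowed with $\|\cdot\|_{\mathcal{H}^2(\mathbb{G};\mathbb{R}^d)}$ is a Hilbert space, and $\mathcal{H}^2(\overline{X}^{\perp_{\mathbb{G}}})$ is an orthogonal complement in this Hilbert space, so it is automatically closed. I expect the main technical obstacle to be the careful handling of the random-measure side: one must unambiguously identify the conditional expectation operator $M_{\mu^{X^\natural}}[\cdot|\widetilde{\mathcal{P}}^{\mathbb{G}}]$ as the right orthogonal-projection tool and verify the density of the admissible family of test integrands $U$ used in the two implications above.
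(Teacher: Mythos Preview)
The paper does not actually prove this proposition: it is quoted verbatim from \cite[Proposition 2.6]{papapantoleon2018existence} and no argument is given beyond the citation. So there is no ``paper's own proof'' to compare against; your sketch is effectively supplying the argument that the present paper omits.

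Your outline is correct in spirit and captures the two essential mechanisms: orthogonality to $\mathcal{L}^2(X^\circ,\mathbb{G})$ is governed by $\langle X^\circ,L\rangle^{\mathbb{G}}$ via the Kunita--Watanabe identity for predictable covariations of stochastic integrals, and orthogonality to $\mathcal{K}^2(\mu^{X^\natural},\mathbb{G})$ is equivalent to $M_{\mu^{X^\natural}}[\Delta L\mid\widetilde{\mathcal{P}}^{\mathbb{G}}]=0$, which is precisely the content of results such as \cite[Theorem 13.3.16]{cohen2015stochastic} that the paper itself invokes elsewhere. The closedness claim via the orthogonal-complement structure in the Hilbert space $\mathcal{H}^2$ is also the right argument.

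Two points deserve tightening. First, for the implication ``$M_{\mu^{X^\natural}}[\Delta L\mid\widetilde{\mathcal{P}}^{\mathbb{G}}]=0 \Rightarrow L\perp_{\mathbb{G}}\mathcal{K}^2(\mu^{X^\natural},\mathbb{G})$'', the displayed identity you wrote only gives $\mathbb{E}[\langle U\star\widetilde{\mu},L\rangle_\infty]=0$, and you then appeal to ``localization and optional stopping'' to upgrade to $\langle U\star\widetilde{\mu},L\rangle\equiv 0$. That step works, but you should make explicit that testing against $U\mathds{1}_{\llbracket 0,\tau\rrbracket}$ for arbitrary stopping times $\tau$ shows $\langle U\star\widetilde{\mu},L\rangle$ is a predictable finite-variation martingale started at $0$, hence identically zero; alternatively, invoke directly the characterization in \cite[Theorem 13.3.16]{cohen2015stochastic}, which gives the equivalence in one stroke and is what the cited reference \cite{papapantoleon2018existence} uses. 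Second, in the forward inclusion, when you handle the $\widehat U$ term in $\Delta(U\star\widetilde{\mu})$ you have implicitly used that $\widehat U$ is predictable and supported on the set of predictable jump times of $X^\natural$; this is fine but worth stating, since it is where the $\sigma$-finiteness of $M_{\mu^{X^\natural}}$ on $\widetilde{\mathcal{P}}^{\mathbb{G}}$ enters.
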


Summing up the previous results, we arrive at the decomposition result that is going to dictate the structure of the BSDEs in our setting
\begin{align*}
\mathcal{H}^2(\mathbb{G};\mathbb{R}^p) 
= \mathcal{L}^2(X^\circ,\mathbb{G}) \oplus \mathcal{K}^2(\mu^{X^\natural},\mathbb{G})\oplus \mathcal{H}^2(\overline{X}^{\perp_{\mathbb{G}}}),
\end{align*}
where each of the spaces appearing in the equality above is closed.


\subsection{Stochastic exponential}

Let $A$ be a finite variation process, and define the process 
\begin{align}\label{stocheq}
    \mathcal{E}(A)_\cdot := \ue^{A_\cdot} \prod_{s \leq \cdot} \frac{1 + \Delta A_s}{\ue^{\Delta A_s}},
\end{align}
which is called the stochastic exponential of $A$. 
Using the trivial inequalities $ 0\leq 1 + x \leq \ue^x$, for all $x \geq -1$, and the usual properties of the jumps of finite variation processes, we can easily see that the above process is well defined, adapted, c\`adl\`ag and locally bounded. 
The main functionality of the above process is that it satisfies the SDE 
\begin{align}\label{stochexp}
\mathcal{E}(A)_t = 1 + \int_{0}^{t} \mathcal{E}(A)_{s-} \ud A_s.
\end{align}
This fact is proved using It\=o's formula, and yields that $\mathcal{E}(A)$ also has finite variation; see \textit{e.g.} \cite[Section 15.1]{cohen2015stochastic}. 

In the sequel, we will need additional properties for a process expressed as a stochastic exponential, therefore we collect them in the next lemma.
In order to ease notation, we adopt the following convention: whenever we write $\Delta A \neq -1$, we mean that the set $\{\Delta A_t = -1 \text{ for some }t\in\mathbb{R}_+\}$ is evanescent. 
Analogous will be the understanding for $\Delta A \ge a$, for any $a\in\mathbb{R}_+$, as well as for $\mathcal{E}(A)\neq 0$, and so forth.

\begin{lemma}\label{lem:StochExp}
Let $A$ be a c\`adl\`ag process of finite variation.
\begin{enumerate}[label=(\roman*)]
\item\label{item:StochExp1} If $\Delta A \neq -1$, then $\mathcal{E}(A) \neq 0$.
\item\label{item:StochExp2} 
    If $\Delta A \neq -1$, then 
    $\mathcal{E}(A)^{-1} = \mathcal{E}\left(- \overline{A}\right)$, where $\overline{A}_\cdot := A_\cdot - \sum_{s \leq \cdot} \frac{(\Delta A_s)^2}{(1 + \Delta A_s)}$.
\item\label{item:StochExp3} 
    If $\Delta A \geq - 1$, 
    then $0 \leq \mathcal{E}(A)_\cdot \leq \ue^{A_\cdot}$.
\item\label{item:StochExp4} 
    If $A$ is non--decreasing, then $\mathcal{E}(A)$ is non--decreasing and if $A$ is non--increasing, then $\mathcal{E}(A)$ is non--increasing.
\item\label{item:StochExp5} 
    If $B$ is another finite variation process, then we have the identity  $\mathcal{E}(A) \mathcal{E}(B) = \mathcal{E}(A + B +[A,B])$, where $[A,B]_\cdot : = \sum _{s \leq \cdot} \Delta A_s \Delta B_s$.
\item\label{item:StochExp6} 
    Let $\overline{A}$ be as defined in \ref{item:StochExp2}, then we have the identity
    \begin{gather*}
    \Delta \overline A_\cdot
      = \frac{\Delta A_{\cdot}}{1 + \Delta A_{\cdot}}\\
        \shortintertext{and}
        \overline A_\cdot = A_0 + \int_0^\cdot \frac{1}{1+\Delta A_s} \ud A_s.
    \end{gather*}
\item\label{item:StochExp7} 
    Let $\gamma,\delta \geq 0$ and $\overline{A}$ as defined in \ref{item:StochExp2}.
    Define 
    \begin{align*}
        \widetilde{A}^{\delta,\gamma}_\cdot:= 
        \delta A_\cdot - \overline{\gamma A}_\cdot - [\delta A, \overline{\gamma A}]_\cdot;
    \end{align*}
    then, from \ref{item:StochExp2} and \ref{item:StochExp5} it trivially holds
    $\mathcal{E}(\delta A)\mathcal{E}(\gamma A)^{-1} 
    = \mathcal{E}(\widetilde{A}^{\delta,\gamma})$.
    Therefore,
    \begin{gather*}
    \Delta \widetilde{A}^{\delta,\gamma}_\cdot
     = \frac{\Delta((\delta - \gamma) A)_{\cdot}}{1 + \Delta (\gamma A)_{\cdot}}\\
     \shortintertext{and}
    \widetilde{A}^{\delta,\gamma}_\cdot 
        = A_0 + \int_0^\cdot \frac{1}{1+\Delta(\gamma A)_s} \ud ((\delta-\gamma)A)_s .
    \end{gather*}
    If $A$ is non--decreasing, then $\Delta (\widetilde{A}^{\delta,\gamma}) > -1$.
\end{enumerate}
\end{lemma}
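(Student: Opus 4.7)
The plan is to prove each part of \cref{lem:StochExp} by leaning on two workhorses: the explicit product formula \eqref{stocheq} and the SDE \eqref{stochexp}. Throughout, recall that a càdlàg finite variation $A$ has only countably many jumps and $\sum_{s\leq t}|\Delta A_s|^2<\infty$ on bounded intervals, which makes the relevant infinite products and sums absolutely convergent.

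For \ref{item:StochExp1} and \ref{item:StochExp3} I would work directly from \eqref{stocheq}. The exponential factor $\mathrm{e}^{A_\cdot}$ is strictly positive and each jump factor $\frac{1+\Delta A_s}{\mathrm{e}^{\Delta A_s}}$ lies in $(0,1]$ whenever $\Delta A_s>-1$, using the classical inequality $0\leq 1+x\leq \mathrm{e}^x$ for $x\geq-1$. This gives \ref{item:StochExp3} immediately, and under $\Delta A\neq -1$ the product is a convergent product of non-zero numbers (convergence follows from $\log(1+x)-x=O(x^2)$ and finite variation), yielding \ref{item:StochExp1}. For \ref{item:StochExp4}, I would instead use the SDE \eqref{stochexp}: when $A$ is non-decreasing, $\Delta A\geq 0$ so $\Delta A\neq-1$, whence by \ref{item:StochExp3} $\mathcal{E}(A)_{s-}\geq 0$; the Stieltjes integral $\int_0^t\mathcal{E}(A)_{s-}\,\mathrm{d}A_s$ is then non-decreasing. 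The non-increasing case is identical, with the convention that once $\mathcal{E}(A)$ hits zero it stays there.

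For \ref{item:StochExp5}, integration by parts gives
\begin{equation*}
\mathcal{E}(A)\mathcal{E}(B) = 1 + \int\mathcal{E}(A)_{-}\mathcal{E}(B)_{-}\,\mathrm{d}A + \int\mathcal{E}(A)_{-}\mathcal{E}(B)_{-}\,\mathrm{d}B + \sum\mathcal{E}(A)_{-}\mathcal{E}(B)_{-}\Delta A\,\Delta B,
\end{equation*}
which is the SDE solved by $\mathcal{E}(A+B+[A,B])$; uniqueness of the solution to \eqref{stochexp} (for finite variation drivers) yields the identity. Part \ref{item:StochExp2} then follows by applying \ref{item:StochExp5} with $B=-\overline{A}$: a direct jump computation shows $\Delta\overline{A}=\frac{\Delta A}{1+\Delta A}$, hence $[A,\overline{A}]=\sum\frac{(\Delta A_s)^2}{1+\Delta A_s}=A-\overline{A}$, so $A-\overline{A}-[A,\overline{A}]=0$ and $\mathcal{E}(A)\mathcal{E}(-\overline{A})=\mathcal{E}(0)=1$.

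For \ref{item:StochExp6}, the jump identity is an algebraic simplification of $\Delta\overline{A}=\Delta A-\frac{(\Delta A)^2}{1+\Delta A}$, and the integral representation is checked by splitting $A$ into its continuous and purely jump parts: on the continuous part $\Delta A=0$ so both sides reduce to $A^c$, while on the jump part both sides equal $\sum_{s\leq\cdot}\frac{\Delta A_s}{1+\Delta A_s}$. Finally, for \ref{item:StochExp7}, the identity $\mathcal{E}(\delta A)\mathcal{E}(\gamma A)^{-1}=\mathcal{E}(\widetilde{A}^{\delta,\gamma})$ is immediate from \ref{item:StochExp2} and \ref{item:StochExp5}. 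The jump formula follows by inserting $\Delta\overline{\gamma A}=\frac{\gamma\Delta A}{1+\gamma\Delta A}$ into
\begin{equation*}
\Delta\widetilde{A}^{\delta,\gamma}=\delta\Delta A-\Delta\overline{\gamma A}-\delta\Delta A\cdot\Delta\overline{\gamma A},
\end{equation*}
and the integral formula combines \ref{item:StochExp6} with the identity $[\delta A,\overline{\gamma A}]=\sum\delta\Delta A_s\cdot\frac{\gamma\Delta A_s}{1+\gamma\Delta A_s}$. For the last assertion, when $A$ is non-decreasing we have $\Delta A\geq 0$ and $1+\gamma\Delta A\geq 1$, so $\Delta\widetilde{A}^{\delta,\gamma}=\frac{(\delta-\gamma)\Delta A}{1+\gamma\Delta A}>-1$ is equivalent to $\delta\Delta A>-1$, which holds trivially. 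The main (mild) subtlety is the bookkeeping in \ref{item:StochExp2}: one must verify that $\overline{A}$ is itself of finite variation so that \ref{item:StochExp5} applies, which is guaranteed by $\sum\frac{(\Delta A_s)^2}{1+\Delta A_s}\leq(\sup_s|1+\Delta A_s|^{-1})\sum(\Delta A_s)^2<\infty$ on compacts once $\Delta A\neq -1$.
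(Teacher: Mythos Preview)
Your proof is correct and complete. It largely parallels the paper's, with a few differences worth noting. For \ref{item:StochExp1}, the paper explicitly bounds the infinite product away from zero by splitting jumps into $|\Delta A|\geq 1$ (finitely many) and $|\Delta A|<1$, whereas you compress this into the Taylor estimate $\log(1+x)-x=O(x^2)$; both are standard. The more substantive divergence is in \ref{item:StochExp2} and \ref{item:StochExp5}. The paper proves \ref{item:StochExp2} directly from the product representation \eqref{stocheq}: it observes that the continuous parts of $\mathcal{E}(A)^{-1}$ and $\mathcal{E}(-\overline{A})$ agree and then matches the jump factors via $1+\Delta(-\overline{A})=\frac{1}{1+\Delta A}$; \ref{item:StochExp5} is then dismissed as ``analogous'' (i.e.\ the same continuous-versus-jump comparison). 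You reverse the order: you establish \ref{item:StochExp5} first via integration by parts and uniqueness for the exponential SDE \eqref{stochexp}, and deduce \ref{item:StochExp2} as a corollary by taking $B=-\overline{A}$ and verifying $A-\overline{A}-[A,\overline{A}]=0$. Your route is more systematic---Yor's formula does all the algebraic work once---while the paper's direct manipulation of the product formula is more elementary and self-contained. For \ref{item:StochExp6} and \ref{item:StochExp7} the two approaches coincide.
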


\begin{proof}
The above properties are fairly standard and follow from relatively simple calculations; one may consult \cite[Section 15.1]{cohen2015stochastic} for \ref{item:StochExp1}--\ref{item:StochExp5}. 
Nevertheless, we also briefly argue about them for the convenience of the reader, since some of the arguments will be used for the proofs of \ref{item:StochExp6} and \ref{item:StochExp7}.

We present some preparatory computations, which will allow us to immediately conclude the required properties.
Let us fix an arbitrary $t\geq 0$.
The first step is to write \eqref{stocheq} in the form
\begin{equation}\label{stocheq2}
    \mathcal{E}(A)_t := \ue^{A_t} \prod_{s \leq t} \frac{1 + \Delta A_s}{\ue^{-\Delta A_s}} = \ue^{A^{c}_t} \prod_{s \leq t} (1 + \Delta A_s). 
\end{equation}
Consider a finite variation process, then the multitude of its jumps that have magnitude greater than a given $a \in \mathbb{R}_+ \setminus \{0\}$ is finite,  in any given interval $[0,t]$.
Hence, if we write 
\begin{equation}\label{stochprod}
    \prod_{s \leq t} (1 + \Delta A_s) = \prod_{\left\{s \leq t: |\Delta A_s| \geq 1\right\}} (1 + \Delta A_s) \prod_{\left\{s \leq t: |\Delta A_s| < 1\right\}} (1 + \Delta A_s),
\end{equation}
then the first term on the product on the right side of \eqref{stochprod} is what determines the sign, because this is a finite product, while the second term is always a non--negative number. 
As for the second term, we additionally have 
\begin{equation}\label{stochprod2}
    \prod_{\{s \leq t: |\Delta A_s| < 1\}} (1 + \Delta A_s) 
    = \prod_{\{s \leq t: \hspace{0.1cm}-1 < \Delta A_s < 0\}} (1 - |\Delta A_s|) \prod_{\{s \leq t:\hspace{0.1cm} 0 \leq \Delta A_s < 1\}} (1 + \Delta A_s)
\end{equation}
Now, the first term on the right hand side of \eqref{stochprod2} is the limit of a decreasing sequence of positive numbers and the second term is one of an increasing sequence of positive numbers.
Using the classical inequality $1+x\leq \ue^x$ for $x\in\mathbb{R}$, we can extract an upper bound for the latter term, as follows:
\begin{align*}
    \prod_{\{s \leq t:\hspace{0.1cm} 0 \leq \Delta A_s < 1\}} (1 + \Delta A_s)
    \leq \prod_{\{s \leq t:\hspace{0.1cm} 0 \leq \Delta A_s < 1\}} \ue^{\Delta A_s}
    =\exp\Big\{\sum_{\{s \leq t:\hspace{0.1cm} 0 \leq \Delta A_s < 1\}} \Delta A_s\Big\}
    \leq \exp\big\{{\rm{Var}(A)}_t\big\},
\end{align*}
where $\rm{Var}(A)$ denotes the total variation process associated to $A$. 
We also need to find a lower bound for the former term. 
We have identically $(1 - |\Delta A_s|)(1 + |\Delta A_s|) = 1 - |\Delta A_s|^2$, which implies that 
\begin{align*}
(1 - |\Delta A_s|)(1 + |\Delta A_s|) \geq \frac{3}{4},\ \text{ for }\ |\Delta A_s| < \frac{1}{2}.
\end{align*}
Therefore, one gets
\begin{align*}
\prod_{\{s \leq t: \hspace{0.1cm}-1 < \Delta A_s < 0\}} (1 - |\Delta A_s|)& = \prod_{\left\{s \leq t: \hspace{0.1cm}-1 < \Delta A_s < -\frac{1}{2}\right\}} (1 - |\Delta A_s|) \prod_{\left\{s \leq t: \hspace{0.1cm}-\frac{1}{2} < \Delta A_s < 0\right\}} (1 - |\Delta A_s|)\\
& \geq \frac{3}{4} \prod_{\left\{s \leq t: \hspace{0.1cm}-1 < \Delta A_s < -\frac{1}{2}\right\}} (1 - |\Delta A_s|) \prod_{\left\{s \leq t: \hspace{0.1cm}-1 < \Delta A_s < -\frac{1}{2}\right\}} \frac{1}{(1 + |\Delta A_s|)}\\
& \geq \frac{3}{4}\hspace{0.1cm} \ue^{- {\rm{Var}(A)}_t} \prod_{\left\{s \leq t: \hspace{0.1cm}-1 < \Delta A_s < -\frac{1}{2}\right\}} (1 - |\Delta A_s|)>0.
\end{align*}
Indeed, the term $\prod_{\left\{s \leq t: \hspace{0.1cm}-1 < \Delta A_s < -\frac{1}{2}\right\}} (1 - |\Delta A_s|)$ is a finite product.
In total, \ref{item:StochExp1} is proved. 
As for \ref{item:StochExp2}, we shall use the fact that the function $x\mapsto \frac{1}{x}$ is continuous on $\mathbb{R}\setminus \{0\}$ as well as that $\prod_{s \leq t} (1 + \Delta A_s)$ is a well defined non-zero limit. 
Hence, we have that
\begin{equation*}
    \left(\prod_{s \leq t} (1 + \Delta A_s)\right)^{-1} = \hspace{0.2cm}\prod_{s \leq t} \frac{1}{(1 + \Delta A_s)}.
\end{equation*}
The reader should observe that the continuous parts of $\mathcal{E}(A)^{-1}$ and $\mathcal{E}(-\overline{A})$ are identical. 
Therefore, we only need to compare the associated jump processes.
To this end, we immediately have 
\begin{align*}
    1 + \Delta(-\overline{A}) = 1 - \Delta A + \frac{(\Delta A)^2}{(1 + \Delta A)} 
    = \frac{(1 + \Delta A)}{(1 + \Delta A)} - \frac{\Delta A}{(1 + \Delta A)} = \frac{1}{(1 + \Delta A_s)},
\end{align*}
which is the desired identity. 
The claims in \ref{item:StochExp3} and \ref{item:StochExp4} are obvious in view of equation \eqref{stocheq2} and the classical inequality $0\leq 1+x \leq \ue^x$, for $x\geq -1$.
Analogously to \ref{item:StochExp2}, one immediately derives \ref{item:StochExp5}, \ref{item:StochExp6} and \ref{item:StochExp7} once the respective continuous and discontinuous parts are compared.
Indeed, regarding \ref{item:StochExp6} we have, on the one hand, that the continuous part of $\overline{A}$ is $A^c$, while $\Delta \overline{A} = \frac{(\Delta A)^2}{1 + \Delta A}$.
On the other hand, 
\begin{align*}
    \left( \int_0^t \frac{1}{1 + \Delta A_s}\ud A_s\right)^c =
     \int_0^t \frac{1}{1 + \Delta A_s}\ud A_s^c = A^c_{t},
\end{align*}
because the process $\Delta A$ is non-zero only countably many times.
Moreover, when one compares the respective jump processes for every $t\geq 0$
\begin{align*}
    \Delta \left( \int_0^\cdot \frac{1}{1 + \Delta A_s}\ud A_s\right)_t
    = \frac{\Delta A_t}{1+ \Delta A_t}
    =\frac{\Delta A_t + (\Delta A_t)^2 - (\Delta A_t)^2 }{1+ \Delta A_t}
    =\Delta A_t - \frac{(\Delta A_t)^2}{1 +\Delta A_t} = \Delta \overline{A}_t.
\end{align*}
Let us focus now on \ref{item:StochExp7}.
On the one hand, the continuous parts of $\widetilde{A}^{\delta, \gamma}$ and $(\delta-\gamma) A$ are identical.
On the other hand, we have for their discontinuous parts
\begin{align*}
    \Delta( \widetilde{A}^{\delta,\gamma} )
    &= \Delta( \delta A) - \Delta (\overline{\gamma A}) - \Delta( \delta A)\Delta (\overline{\gamma A})\\
    &= \Delta( \delta A) - \frac{\Delta (\gamma A)}{1+\Delta(\gamma A)} - \Delta( \delta A)\frac{\Delta (\gamma A)}{1+\Delta(\gamma A)}\\
    &=\frac{\Delta (\delta A)}{1+\Delta(\gamma A)} - \frac{\Delta (\gamma A)}{1+\Delta(\gamma A)}\\
    &=\frac{\Delta ((\delta - \gamma) A)}{1+\Delta(\gamma A)}.
    \numberthis\label{jump_tildeA}
\end{align*}
Using the fact that $\Delta A$ is non-zero only countably many times, we can conclude as before that
\begin{align*}
    \widetilde{A}^{\delta,\gamma}_\cdot 
    = A_0 + \int_0^\cdot \frac{1}{1+\Delta(\gamma A)_s} \ud \big((\delta- \gamma) A\big)_s.
\end{align*}
Finally, we only need to verify that $\Delta (\widetilde{A}^{\delta,\gamma}) > -1$, which is trivial in view of the following equivalences
\begin{equation*}
    -1 < 
    \Delta(\widetilde{A}^{\delta,\gamma})
    \overset{\eqref{jump_tildeA}}{=} 
    \frac{\Delta \big( (\delta - \gamma) A\big)}{1 + \Delta (\gamma A)}
    \Longleftrightarrow -1 - \Delta ( \gamma A) < \Delta \big((\delta-\gamma) A\big)
    \Longleftrightarrow -1 < \Delta( \delta A). \qedhere
\end{equation*}
\end{proof}


\subsection{Norms and spaces}\label{Norms and spaces}

We will largely follow the notation of \cite[Section 2.3]{papapantoleon2018existence} with regards to norms and spaces of stochastic processes.
However, we will need to additionally keep track of the filtration under which we are working, given that later many filtrations will be appearing in our framework.

Let $\overline{X}:=(X^\circ,X^\natural)\in \mathcal{H}^2(\mathbb{G};\mathbb{R}^p)\times\mathcal{H}^{2,d}(\mathbb{G};\mathbb{R}^n)$ with 
$M_{\mu^{X^\natural}}[\Delta X^\circ|\widetilde{\mathcal{P}}^{\mathbb{G}}]=0$,
 and $A,C:(\Omega \times \mathbb{R}_{+},\mathcal{P}^{\mathbb{G}}) {}\longrightarrow (\mathbb{R}_{+},\mathcal{B}(\mathbb{R}_+))$ c\`adl\`ag and increasing.
The following spaces will appear in the analysis throughout this article, for $\beta \geq 0$ and $T$ a $\mathbb{G}-$stopping time:

\begin{align*}
\mathbb{L}^{2}_{\beta}(\mathcal{G}_{T}, A;\mathbb{R}^d)
&:= \left\{\xi, \hspace{0.2cm} \mathbb{R}^{d} \text{--valued, } \mathcal{G}_{T}  \text{--measurable} , \|\xi\|^{2}_{\mathbb{L}^{2}_{\beta}(\mathcal{G}_{T}, A;\mathbb{R}^d)}: = \mathbb{E}\left[\mathcal{E}(\beta A)_{T-} |\xi|^2 \right] < \infty\right\},\\
\mathcal{H}^{2}_{\beta}(\mathbb{G},A;\mathbb{R}^d) 
&:= \left\{M \in \mathcal{H}^{2}(\mathbb{G},A;\mathbb{R}^d) , \|M\|^{2}_{\mathcal{H}^{2}_{\beta}(\mathbb{G},A;\mathbb{R}^d)}:= \mathbb{E} \left[\int_{0}^{T}\mathcal{E}(\beta A)_{t-} \, \ud\text{Tr}\big[\langle M \rangle^{\mathbb{G}}\big]_t \right] < \infty \right\},\\
\mathbb{H}^{2}_{\beta}(\mathbb{G},A,C;\mathbb{R}^d) 
&:= \begin{multlined}[t][0.75\textwidth]
 \bigg\{\phi, \hspace{0.2 cm} \mathbb{R}^{d} \text{--valued, } \mathbb{G}\text{--optional},\\  
 \|\phi\|^{2}_{\mathbb{H}^{2}_{\beta}(\mathbb{G},A,C;\mathbb{R}^d)}:= \mathbb{E} \left[\int_{0}^{T}\mathcal{E}(\beta A)_{t-}|\phi|_{t}^2 \, \ud C_t \right] < \infty  \bigg\},
\end{multlined}\\
\mathcal{S}^{2}_{\beta}(\mathbb{G},A;\mathbb{R}^d) 
&:=
\begin{multlined}[t][0.75\textwidth]
\bigg\{\phi, \hspace{0.2 cm} \mathbb{R}^{d} \text{--valued,} \hspace{0.1 cm} \mathbb{G}\text{--optional}, \\ \|\phi\|^{2}_{\mathcal{S}^{2}_{\beta}(\mathbb{G},A;\mathbb{R}^d)}:= 
\mathbb{E} \Big[\sup_{t \in [0,T]}\big\{\mathcal{E}(\beta A)_{t-}|\phi|_{t}^2\big\} \Big] < \infty  \bigg\},
\end{multlined}
\\
\mathbb{H}^{2}_{\beta}(\mathbb{G},A,X^\circ;\mathbb{R}^{d \times p}) 
&:=
\begin{multlined}[t][0.75\textwidth]
    \bigg\{Z \in \mathbb{H}^{2}(\mathbb{G},X^{\circ};\mathbb{R}^{d \times p}),  \\
    \|Z\|^{2}_{\mathbb{H}^{2}_{\beta}((\mathbb{G},A,X^\circ;\mathbb{R}^{d \times p}))}:= \mathbb{E} \left[\int_{0}^{T}\mathcal{E}(\beta A)_{t-} \, \ud\text{Tr}\big[\langle Z \cdot X^\circ \rangle^{\mathbb{G}}\big]_t \right] < \infty  \bigg\},
\end{multlined}
    \\
\mathbb{H}^{2}_{\beta}(\mathbb{G},A,X^\natural;\mathbb{R}^d)
&:= 
\begin{multlined}[t][0.75\textwidth]
    \bigg\{U \in \mathbb{H}^{2}(\mathbb{G},X^\natural;\mathbb{R}^d),  \\ \|U\|^{2}_{\mathbb{H}^{2}_{\beta}(\mathbb{G},A,X^\natural;\mathbb{R}^d)}:= \mathbb{E} \left[\int_{0}^{T}\mathcal{E}(\beta A)_{t-} \, \ud\text{Tr}\big[\langle U  \star \tilde{\mu}^{X^{\natural}} \rangle^{\mathbb{G}}\big]_t \right] < \infty  \bigg\},
\end{multlined}
\shortintertext{and}
\mathcal{H}^{2}_{\beta}(\mathbb{G},A,\overline{X}^{\perp_{\mathbb{G}}};\mathbb{R}^{d}) 
&:= \left\{M \in \mathcal{H}^{2}(\overline{X}^{\perp_{\mathbb{G}}}), \|M\|^{2}_{\mathcal{H}^{2}_{\beta}(\mathbb{G}, A,\overline{X}^{\perp_{\mathbb{G}}};\mathbb{R}^d)}:= \mathbb{E} \left[\int_{0}^{T}\mathcal{E}(\beta A)_{t-} \, \ud\text{Tr}\big[\langle M \rangle^{\mathbb{G}}\big]_t \right] < \infty \right\}.
\end{align*}
Moreover, for 
\begin{align*}
    \left(Y,Z,U,M\right) \in  
\mathcal{S}^{2}_{\beta}(\mathbb{G},A;\mathbb{R}^d) \times 
\mathbb{H}^{2}_{\beta}(\mathbb{G},A,X^\circ;\mathbb{R}^{d\times p}) \times \mathbb{H}^{2}_{\beta}(\mathbb{G},A,X^\natural;\mathbb{R}^d) \times  \mathcal{H}^{2}_{\beta}(\mathbb{G},A,\overline{X}^{\perp_{\mathbb{G}}};\mathbb{R}^d)
\end{align*}
we define
\begin{multline*}  
    \|\left(Y,Z,U,M\right) \|^{2}_{\star,\beta,\mathbb{G},A,\overline{X}} \\
    :=  \|Y\|^{2}_{\mathcal{S}^{2}_{\beta}(\mathbb{G},A;\mathbb{R}^d)} + \|Z\|^{2}_{\mathbb{H}^{2}_{\beta}(\mathbb{G},A,X^\circ;\mathbb{R}^{d\times p})} + 
    \|U\|^{2}_{\mathbb{H}^{2}_{\beta}(\mathbb{G},A,X^\natural;\mathbb{R}^d)} +  \|M\|^{2}_{\mathcal{H}^{2}_{\beta}(\mathbb{G},A,\overline{X}^{\perp_{\mathbb{G}}};\mathbb{R}^d)}.
\end{multline*}
Later on, we will need to rewrite the norms associated to the spaces $\mathbb{H}^{2}_{\beta}(\mathbb{G},A,X^\circ;\mathbb{R}^{d \times p})$ and $\mathbb{H}^{2}_{\beta}(\mathbb{G},A,X^\natural;\mathbb{R}^{d}) $ in terms of Lebesgue--Stieltjes integrals with respect to $C^{(\mathbb{G},\overline{X} )}$, for $C^{(\mathbb{G},\overline{X})}$ as defined in \eqref{def_C}; one may consult \cite[Lemma 2.13]{papapantoleon2018existence} for the details.
Hence, for $(Z,U) \in 
\mathbb{H}^2_{\beta}(\mathbb{G}, A, X^\circ;\mathbb{R}^d) \times \mathbb{H}^2_{\beta}(\mathbb{G}, A, X^\natural;\mathbb{R}^d)$ and $C^{(\mathbb{G},\overline{X})}$ as defined in \eqref{def_C}, we have
 \begin{align}
     \|Z\|^2_{\mathbb{H}^2_{\beta}(\mathbb{G}, A, X^\circ;\mathbb{R}^d)} &= \mathbb{E}\left[\int_{0}^{T}\mathcal{E}({\beta} A)_{s-}\|Z_s c^{(\mathbb{G},\overline{X})}_s\|^2\ud C^{(\mathbb{G},\overline{X})}_s\right] \label{comp_norm_altern_circ}
     \shortintertext{and}
     \|U\|^2_{\mathbb{H}^2_{\beta}(\mathbb{G}, A, X^\natural;\mathbb{R}^d)} &= \mathbb{E}\left[\int_{0}^{T}\mathcal{E}({\beta} A)_{s-}\left(\tnorm{U_s(\cdot)}^{(\mathbb{G},\overline{X})}_s\right)^2\ud C^{(\mathbb{G},\overline{X})}_s\right],\label{comp_norm_altern_natural}
 \end{align}
 where 
\begin{align}
\begin{multlined}[0.9\textwidth]
\Big(\tnorm{U_t(\omega;\cdot)}^{(\mathbb{G},\overline{X})}_t(\omega)\Big)^2 
 :=
 \int_{\mathbb{R}^n}\big|U(\omega,s,x) - \widehat{U}_s^{(\mathbb{G},\overline{X})}(\omega)\big|^2 K^{(\mathbb{G},\overline{X})}_s(\omega,\ud x) \label{def:tnorm}\\
 + \big(1 - \zeta^{(\mathbb{G},X^{\natural})}_s(\omega)\big) \Delta C^{(\mathbb{G},\overline{X})}_s(\omega) \left|\int_{\mathbb{R}^n}U(\omega,s,x)\,K^{(\mathbb{G},\overline{X})}_s(\omega,\ud x)\right|^2,
\end{multlined}
\end{align}
with $K^{(\mathbb{G},\overline{X})}$ satisfying \eqref{def:Kernels}, \emph{i.e.},
 \begin{align*}
     \nu^{(\mathbb{G}, X^{\natural})}(\omega,\ud t,\ud x) 
     = K^{(\mathbb{G}, \overline{X})}(\omega,t,\ud x) \ud C^{(\mathbb{G},\overline{X})}_t(\omega).
 \end{align*} 
 Finally, because of the assumption $M_{\mu^{X^\natural}}[\Delta X^\circ | \widetilde{\mathcal{P}}^{\mathbb{G}}]=0$, and in conjunction with \cite[Theorem 13.3.16]{cohen2015stochastic}, we have  
 \begin{align}\label{identity:norm_ZU}
     \|Z \cdot X^\circ + U \star \widetilde{\mu}^{X^\natural}\|^2_{\mathcal{H}^2_{\beta}(\mathbb{G},A;\mathbb{R}^d)} = \|Z\|^2_{\mathbb{H}^2_{\beta}(\mathbb{G},A,X^\circ;\mathbb{R}^{d \times p})} + \|U\|^2_{\mathbb{H}^2_{\beta}(\mathbb{G},A,X^\natural;\mathbb{R}^d)}.
 \end{align}

\begin{remark}\label{rem:notation_beta_zero}
In order to simplify the notation whenever possible, if we consider one of the aforementioned spaces for $\beta=0$, then we will omit $0$. 
As a result, the dependence on the process $A$ is redundant, hence we will also omit the process $A$ from the notation of the respective space.
As an example, $\mathbb{L}^2(\mathcal{G}_T;\mathbb{R}^d)$ denotes the space $\mathbb{L}^2_0(\mathcal{G}_T,A;\mathbb{R}^d)$, which is the classical Lebesgue space.
\end{remark}

\begin{remark}[\textbf{Norms for product spaces}]
\label{rem:norm-product-space}
In case we consider a system of $N\in\mathbb{N}$ couples of martingales, we will need to introduce norms associated to the respective product space. 
To this end, let
$ \{\overline{X}^i\}_{i \in \mathscr{N}}$ be a family of couples of martingales, \emph{i.e.}, $\overline{X}^i:= (X^{i,\circ},X^{i,\natural}) \in \mathcal{H}^2(\mathbb{G};\mathbb{R}^p) \times \mathcal{H}^{2,d}(\mathbb{G}; \mathbb{R}^n)$, such that $M_{\mu^{X^{i,\natural}}}[\Delta X^{i,\circ}|\widetilde{\mathcal{P}}^{\mathbb{G}}]=0,$ for all $i \in \mathscr{N}$. 
Moreover, let $A^i:(\Omega \times \mathbb{R}_{+},\mathcal{P}^{\mathbb{G}}) {}\longrightarrow (\mathbb{R}_{+},\mathcal{B}(\mathbb{R}_+))$ be c\`adl\`ag and increasing, for every $i \in \mathscr{N}$. 
Let us denote 
\[
    (\textbf{Y}^N,\textbf{Z}^N,\textbf{U}^N,\textbf{M}^N) := ({Y}^{i,N}, {Z}^{i,N}, {U}^{i,N}, {M}^{i,N})_{i \in \mathscr{N}}.
\]
Then, for 
\begin{multline*}
(\textbf{Y}^N,\textbf{Z}^N,\textbf{U}^N,\textbf{M}^N) \in \\ \Prod_{i = 1}^{N}
\mathcal{S}^{2}_{\hat{\beta}}(\mathbb{G},A^i;\mathbb{R}^d)
\times \mathbb{H}^{2}_{\hat{\beta}}(\mathbb{G},A^i,X^{i,\circ};\mathbb{R}^{d \times p}) 
\times \mathbb{H}^{2}_{\hat{\beta}}(\mathbb{G},A^i,X^{i,\natural};\mathbb{R}^d) 
\times \mathcal{H}^{2}_{\hat{\beta}}(\mathbb{G},A^i,{\overline{X}^i}^{\perp_{\mathbb{G}}};\mathbb{R}^{d})
\end{multline*}
we define
\begin{align*}
\big\|(\textbf{Y}^N,\textbf{Z}^N,\textbf{U}^N,\textbf{M}^N )\big\|^{2}_{\star,\beta,\mathbb{G},\{A^i\}_{i \in \mathscr{N}},\{\overline{X}^i\}_{i \in \mathscr{N}}}
    := \sum_{i = 1}^{N}\|(Y^i,Z^i,U^i,M^i)\|^{2}_{\star,\beta,\mathbb{G},A^i,\overline{X}^i}.
\end{align*}
\end{remark}


\subsection{The \texorpdfstring{\(\Gamma\)}{Gamma} function}

The integrand in the stochastic integral with respect to the purely-discontinuous martingale $\widetilde\mu^{X^\natural}$ is a suitable process $U$, and the driver of the BSDE $f$ will, obviously, depend on this process. 
However, in the present work, we cannot simply require that $f$ is Lipscitz in this argument with respect to the $\tnorm{\cdot}$-norm defined in \eqref{def:tnorm}, as done \textit{e.g.} in \citeauthor{papapantoleon2018existence} \cite{papapantoleon2018existence,Papapantoleon_Possamai_Saplaouras_2023}, because this norm depends on the filtration $\mathbb{G}$.
Hence, motivated by applications and connections to PDEs, see \cref{Remark 2.1.6} \ref{Remark 2.1.6. 3}, we define a composition of functions, called the $\Gamma$ function, with a free parameter $\Theta$.

\begin{definition}\label{def_Gamma_function}
Let $\overline{X}:=(X^\circ,X^\natural)\in \mathcal{H}^2(\mathbb{G};\mathbb{R}^p)\times\mathcal{H}^{2,d}(\mathbb{G};\mathbb{R}^n)$, 
let $C^{(\mathbb{G},\overline{X})}$ be as defined in \eqref{def_C} 
and $K^{(\mathbb{G},\overline{X})}$ that satisfies \eqref{def:Kernels}.
Additionally, let $\Theta $ be an $\mathbb{R}$--valued, $\widetilde{\mathcal{P}}^{\mathbb{G}}-$measurable function such that $|\Theta|\leq |I|$, for $|I|(x) := |x| + \mathds{1}_{\{0\}}(x)$.
Define the process
$\Gamma^{(\mathbb{G}, \overline{X},\Theta)} : \mathbb{H}^{2}(\mathbb{G},X^{\natural};\mathbb{R}^d) {}\longrightarrow \mathcal{P}^{\mathbb{G}}(\mathbb{R}^d)$ such that, for every $s\in \mathbb{R}_+$, holds
\begin{multline*}
\Gamma^{(\mathbb{G},\overline{X},\Theta)}(U)_s(\omega) :=
\int_{\mathbb{R}^n}\left(U(\omega,s,x) - \widehat{U}_s^{(\mathbb{G},X^{\natural})}(\omega)\right)\left(\Theta(\omega,s,x) - \widehat{\Theta}_s^{(\mathbb{G},X^{\natural})}(\omega)\right)\,K^{(\mathbb{G},\overline{X})}_s(\omega,\ud x)\\
+ (1 - \zeta^{(\mathbb{G},X^{\natural})}_s(\omega))\Delta C^{(\mathbb{G},\overline{X})}_s(\omega) \int_{\mathbb{R}^n}U(\omega,s,x)\,K^{(\mathbb{G},\overline{X})}_s(\omega,\ud x) \int_{\mathbb{R}^n}\Theta(\omega,s,x)\,K^{(\mathbb{G},\overline{X})}_s(\omega,\ud x).
\end{multline*}
\end{definition}

\begin{remark}\label{Remark 2.1.6}
\begin{enumerate}
    \item 
    Given the square--integrability of the martingale $X^\natural$, it is immediate that the process $\Theta$ as defined above lies in $\mathbb{H}^{2}(\mathbb{G},X^{\natural};\mathbb{R})$. 
    Therefore, for any $U\in\mathbb{H}^2(\mathbb{G},X^{\natural};\mathbb{R}^d)$, the process $\Gamma$ is well-defined $\mathbb{P}\otimes C^{(\mathbb{G},\overline{X})}-$a.e.
    \item 
    One would expect in $\Gamma$ a notational dependence on the kernel. 
    However, for $K_1^{(\mathbb{G},\overline{X})},K_2^{(\mathbb{G},\overline{X})}$ satisfying \eqref{def:Kernels} and $U \in \mathbb{H}^{2}(\mathbb{G},X^{\natural};\mathbb{R}^d)$ we have $\Gamma^{(\mathbb{G},\overline{X},\Theta)}(U)_s = \Gamma^{(\mathbb{G},\overline{X},\Theta)}(U)_s, \hspace{0.2cm} \mathbb{P} \otimes C^{(\mathbb{G},\overline{X})}-$a.e; here, implicitly, the left hand side is defined with respect to $K_1$ and the right hand side with respect to $K_2$.
    In that way, we have uniqueness of the kernels that satisfy \eqref{def:Kernels}.
    Since in the respective computations all the equalities appearing will be taken under $\mathbb{P} \otimes C^{(\mathbb{G},\overline{X})}$, we have suppressed the notational dependence on the kernels.
    \item \label{Remark 2.1.6. 3}
    The  choice of $\Gamma$ was based on, and inspired by, applications.
    The reader may recall, for example, the connection between BSDEs and partial integro-differential equations, and the special structure that is required for the generator, see \emph{e.g.}  \citet{barles1997backward} or \citet[Section 4.2]{delong2013backward}. 
    Moreover, one can easily verify that $\Gamma$ is equal to
    \begin{align*}
        \frac{\ud \langle U \star \widetilde{\mu}^{(\mathbb{G},X^\natural)},\Theta \star \widetilde{\mu}^{(\mathbb{G},X^\natural)} \rangle^{\mathbb{G}}}{\ud C^{(\mathbb{G},\overline{X})}}.
    \end{align*}
\end{enumerate}
\end{remark}

The next lemma can be viewed as a version of the Cauchy-Schwarz inequality and, essentially, justifies the definition of the process $\Gamma$.
In other words, the function $\Gamma$ is Lipschitz in the sense described below.
However, note that the inner characteristics of the function $\Gamma$ play no part in what follows in the other sections.
If one can prove that results similar to \cref{lem:Gamma_is_Lipschitz} and \cref{lem:equalities_integrals} hold, then the rest will remain valid, under the appropriate modifications.

\begin{lemma}\label{lem:Gamma_is_Lipschitz}
Let $\overline{X} \in \mathcal{H}^2(\mathbb{G};\mathbb{R}^p)\times\mathcal{H}^{2,d}(\mathbb{G};\mathbb{R}^n)$ and 
$\Theta \in \widetilde{\mathcal{P}}^{\mathbb{G}}$ be an $\mathbb{R}$--valued function such that $|\Theta|\leq |I|$. 
Then, for every $U^1,U^2 \in \mathbb{H}^{2}(\mathbb{G},X^{\natural};\mathbb{R}^d)$, we have 
\begin{align*}
\big| \Gamma^{(\mathbb{G},\overline{X},\Theta)}(U^1)_t(\omega) - \Gamma^{(\mathbb{G},\overline{X},\Theta)}(U^2)_t(\omega) \big|^2
\leq 2 \left(\tnorm{U^1_t(\omega ; \cdot) - U^2_t(\omega; \cdot)}^{(\mathbb{G},\overline{X})}_t(\omega)\right)^2,\hspace{0.2cm}\mathbb{P} \otimes C^{(\mathbb{G},\overline{X})}-\text{a.e.}
\end{align*}
\end{lemma}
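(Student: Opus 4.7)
\emph{Plan.} For fixed $\Theta$, $s$ and $\omega$, the map $U \mapsto \Gamma^{(\mathbb{G},\overline{X},\Theta)}(U)_s(\omega)$ is linear in $U$ by inspection of its defining formula. Setting $V := U^1 - U^2 \in \mathbb{H}^2(\mathbb{G},X^{\natural};\mathbb{R}^d)$, the claim reduces to
\[
    \bigl|\Gamma^{(\mathbb{G},\overline{X},\Theta)}(V)_s\bigr|^2
    \leq 2\bigl(\tnorm{V_s}^{(\mathbb{G},\overline{X})}_s\bigr)^2,\quad \mathbb{P}\otimes C^{(\mathbb{G},\overline{X})}\text{-a.e.}
\]

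The formula for $\Gamma$ splits as $\Gamma(V) = T_1(V) + T_2(V)$, with
\[
    T_1(V) := \int_{\mathbb{R}^n}(V-\widehat V)(\Theta-\widehat\Theta)\,K^{(\mathbb{G},\overline X)}_s(\ud x),\qquad T_2(V) := (1-\zeta^{(\mathbb{G},X^{\natural})}_s)\,\Delta C^{(\mathbb{G},\overline X)}_s\!\int V\,K_s(\ud x)\!\int\Theta\,K_s(\ud x),
\]
and $(\tnorm{V_s})^2$ splits analogously. Applying $|a+b|^2\leq 2(|a|^2+|b|^2)$, it suffices to show the two matched bounds $|T_1(V)|^2 \leq \int|V-\widehat V|^2 K_s(\ud x)$ and $|T_2(V)|^2 \leq (1-\zeta)\Delta C\,\bigl|\!\int V K_s(\ud x)\bigr|^2$.

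Both bounds follow from the Cauchy--Schwarz inequality in $L^2(K_s)$, which produces the desired $V$-factor multiplied by a scalar $\Theta$-factor. For $T_1$, a direct expansion yields
\[
    \int(\Theta-\widehat\Theta)^2 K_s(\ud x) = \int\Theta^2 K_s(\ud x) - (2-\zeta_s)\,\Delta C_s\Bigl(\int\Theta\,K_s(\ud x)\Bigr)^{\!2},
\]
which, since $\zeta_s\leq 1$, is dominated by $\int\Theta^2 K_s(\ud x)$. For $T_2$, another Cauchy--Schwarz gives $\bigl(\int\Theta K_s(\ud x)\bigr)^2 \leq \int K_s(\ud x)\cdot\int\Theta^2 K_s(\ud x)$, and combining with $(1-\zeta_s)\Delta C_s \int K_s(\ud x) = (1-\zeta_s)\zeta_s \leq 1$ reduces this case, too, to showing $\int\Theta^2 K_s(\ud x)\leq 1$.

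The only step with real content is the pointwise bound $\int \Theta^2(s,x)\,K^{(\mathbb{G},\overline X)}_s(\ud x) \leq 1$, $\mathbb{P}\otimes C^{(\mathbb{G},\overline X)}$-a.e. By the hypothesis $|\Theta|\leq |I|$ it suffices to prove $\int |I|^2(x)\,K_s(\ud x)\leq 1$, and this is exactly where the definition \eqref{def_C} of $C^{(\mathbb{G},\overline X)}$ is used: since $|I|^2\ast\nu^{(\mathbb{G},X^{\natural})}$ is a summand of the increasing process $C^{(\mathbb{G},\overline X)}$, its disintegration $\int|I|^2(x)K_s(\ud x)\,\ud C^{(\mathbb{G},\overline X)}_s$ is dominated by $\ud C^{(\mathbb{G},\overline X)}_s$, forcing the corresponding Radon--Nikodym density to lie in $[0,1]$. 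This is the main (and essentially only) obstacle, and it is an immediate consequence of the construction together with the $\mathbb{P}\otimes C$-uniqueness of the kernels $K$; the remainder of the proof is routine algebra.
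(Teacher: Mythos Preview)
Your proof is correct and follows essentially the same approach as the paper: linearity of $\Gamma$ in $U$, the split $\Gamma=T_1+T_2$, the elementary inequality $|a+b|^2\leq 2(|a|^2+|b|^2)$, and Cauchy--Schwarz in $L^2(K_s)$ on each piece. The only difference is in how the $\Theta$-factor is controlled. The paper packages both $\Theta$-terms into $\bigl(\tnorm{\Theta}^{(\mathbb{G},\overline X)}_t\bigr)^2$, identifies this as $\ud\langle\Theta\star\widetilde\mu^{(\mathbb{G},X^\natural)}\rangle^{\mathbb{G}}/\ud C^{(\mathbb{G},\overline X)}$, and bounds the latter by $\ud(|I|^2\ast\nu^{(\mathbb{G},X^\natural)})/\ud C^{(\mathbb{G},\overline X)}\leq 1$ via a measure-comparison argument on increments. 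You instead bound the two $\Theta$-factors separately and reduce both directly to $\int|I|^2(x)\,K^{(\mathbb{G},\overline X)}_s(\ud x)\leq 1$, which you read off from the very definition of $C^{(\mathbb{G},\overline X)}$ as a sum containing $|I|^2\ast\nu^{(\mathbb{G},X^\natural)}$. Your route is slightly more elementary and avoids invoking the quadratic-variation interpretation; the paper's route has the small advantage of exhibiting the clean intermediate inequality $|\Gamma(U^1)-\Gamma(U^2)|^2\leq 2\bigl(\tnorm{\delta U}\bigr)^2\bigl(\tnorm{\Theta}\bigr)^2$.
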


\begin{proof}
Let $\delta U := U^1- U^2$ then, by the Cauchy--Schwarz inequality, we get 
\begin{align*}
&\big| \Gamma^{(\mathbb{G},\overline{X},\Theta)}(U^1)_t - \Gamma^{(\mathbb{G},\overline{X},\Theta)}(U^2)_t \big|^2\\
&\hspace{1em}\leq 
\begin{multlined}[t][0.9\textwidth]
    \hspace{0.05cm} 2 \hspace{0.05cm} 
\left|\int_{\mathbb{R}^n}\Big[\delta U_t(x) 
    -  \widehat{\delta U}^{(\mathbb{G},\overline{X})}_t\Big]\hspace{0.1cm}
    \left[\Theta_t(x) - \widehat{\Theta}^{(\mathbb{G},\overline{X})}_t\right]\,K^{(\mathbb{G},\overline{X})}_t(\ud x)\right|^2\\
+ 2 \hspace{0.05cm} (1 - \zeta^{(\mathbb{G},X^{\natural})}_t)^2\hspace{0.1cm}
\big(\Delta C^{(\mathbb{G},\overline{X})}_t\big)^2 \left|\int_{\mathbb{R}^n}\delta U_t(x)\,K^{(\mathbb{G},\overline{X})}_t(\ud x)\right|^2 \left|\int_{\mathbb{R}^n}\Theta_t(x)\,K^{(\mathbb{G},\overline{X})}_t(\ud x)\right|^2 
\end{multlined}\\
&\hspace{1em}\leq 
\begin{multlined}[t][0.9\textwidth]
    2 \int_{\mathbb{R}^n}\left|\delta U_t(x) -  \widehat{\delta U}^{(\mathbb{G},\overline{X})}_t\right|^2\,K^{(\mathbb{G},\overline{X})}_t(\ud x) \hspace{0.1cm}\int_{\mathbb{R}^n}\left|\Theta_t(x) - \widehat{ \Theta}^{(\mathbb{G},\overline{X})}_t\right|^2\,K^{(\mathbb{G},\overline{X})}_t(\ud x)\\
+ 2 (1 - \zeta^{(\mathbb{G},X^{\natural})}_t)^2\hspace{0.1cm}
\big(\Delta C^{(\mathbb{G},\overline{X})}_t\big)^2 \left|\int_{\mathbb{R}^n}\delta U_t(x)\,K^{(\mathbb{G},\overline{X})}_t(\ud x)\right|^2
\left|\int_{\mathbb{R}^n}\Theta_t(x)\,K^{(\mathbb{G},\overline{X})}_t(\ud x)\right|^2
\end{multlined}\\
&\hspace{1em}\leq \hspace{0.05cm} 
2 \Big(\tnorm{U^2_t(\omega \hspace{0.1cm}; \cdot) - U^1_t(\omega \hspace{0.1cm}; \cdot)}^{(\mathbb{G},\overline{X})}_t(\omega)\Big)^2 \Big(\tnorm{\Theta_t(\cdot)}^{(\mathbb{G},\overline{X})}_t(\omega)\Big)^2,\hspace{0.2cm} \mathbb{P} \otimes C^{(\mathbb{G},\overline{X})}-\text{a.e}.
\end{align*}
Using \cite[11.21 Theorem, Part 3)]{he2019semimartingale}, because $|\Theta| \leq |I|$ and $X^{\natural} \in \mathcal{H}^{2,d}(\mathbb{G}; \mathbb{R}^n)$, we have
\begin{gather*}
    \langle {\Theta} \star \widetilde{\mu}^{(\mathbb{G},X^\natural)} \rangle^{\mathbb{G}}_{\cdot} 
    = |\Theta|^2 * \nu^{(\mathbb{G},X^\natural)}_{\cdot} - \sum_{s \leq \cdot} \big|\widehat\Theta^{(\mathbb{G},\overline{X})}_{s}\big|^2.
\end{gather*}
Hence, for every $s,u \in \mathbb{Q}_+$ with $s < u$, we get
\begin{align*}
    \langle \Theta \star \widetilde{\mu}^{(\mathbb{G},X^\natural)} \rangle^{\mathbb{G}}_{u} - \langle \Theta \star \widetilde{\mu}^{(\mathbb{G},X^\natural)} \rangle^{\mathbb{G}}_{s} 
    &= |\Theta|^2 * \nu^{(\mathbb{G},X^\natural)}_{u} - |\Theta|^2 * \nu^{(\mathbb{G},X^\natural)}_{s} - \sum_{s<r\leq u} \big| \widehat{\Theta}^{(\mathbb{G},\overline{X})}_r\big|^2
    \\
    &\leq |\Theta|^2 * \nu^{(\mathbb{G},X^\natural)}_{u} - |\Theta|^2 * \nu^{(\mathbb{G},X^\natural)}_{s} 
    = \left(|\Theta|^2 \mathds{1}_{\rrbracket s,u\rrbracket}\right) * \nu^{(\mathbb{G},X^\natural)}_{\infty}\\
    &\leq \left(|I|^2 
    \mathds{1}_{\rrbracket s,u\rrbracket}\right) * \nu^{(\mathbb{G},X^\natural)}_{\infty} 
    = |I|^2 * \nu^{(\mathbb{G},X^\natural)}_{u} - |I|^2 * \nu^{(\mathbb{G},X^\natural)}_{s}, \hspace{0.2cm} \mathbb{P}-\text{a.e}.
\end{align*}
By a straightforward monotone class argument, we have
\begin{align*}
    \ud \langle \Theta \star \widetilde{\mu}^{(\mathbb{G},X^\natural)} \rangle^{\mathbb{G}} \leq \ud \left(|I|^2 * \nu^{(\mathbb{G},X^\natural)}\right),\hspace{0.2cm} \mathbb{P}-\text{a.e}.
\end{align*}
Therefore, using the above, we get
\begin{align*}
\left(\tnorm{\Theta_t(\cdot)}^{(\mathbb{G},\overline{X})}_t(\omega)\right)^2  = \frac{\ud \langle \Theta \star \widetilde{\mu}^{(\mathbb{G},X^\natural)} \rangle^{\mathbb{G}}}{\ud C^{(\mathbb{G},\overline{X})}} \leq \frac{\ud\left(|I|^2 * \nu^{(\mathbb{G},X^{\natural})}\right)}{\ud C^{(\mathbb{G},\overline{X})}} \leq 1, \hspace{0.2cm} \mathbb{P} \otimes C^{(\mathbb{G},\overline{X})}-\text{a.e.},
\end{align*}
and the proof is thus complete.
\end{proof}


\subsection{Wasserstein distance}\label{sec:Wasserstein_dist}

Let $\mathbb{X}$ be a Polish space endowed with a metric $\rho$, then we denote by $\mathscr{P}(\mathbb{X})$ the space of probability measures on $(\mathbb{X},\rho)$. 
Moreover, for every real $q \in [1,\infty)$, we define the probability measures on $\mathbb{X}$ with finite $q$ moment to be 
\begin{align*}
    \mathscr{P}_q(\mathbb{X}) :=\left\{\mu : \int_{\mathbb{X}}\rho(x_0,x)^q \,\mu(\ud x) < \infty \right\},\text{ for some } x_0 \in \mathbb{X}.
\end{align*}
By the triangle inequality and the fact that we consider probability (\textit{i.e.} finite) measures, it is immediate that the space $\mathscr{P}_q(\mathbb{X})$ is independent of the choice of $x_0$.
The space $\mathscr{P}(\mathbb{X})$ is equipped with the usual weak topology,\footnote{We use the term as probabilists do, \emph{i.e.}, the topological dual space of the set of probability measures is the set of continuous and bounded functions defined on $\mathbb{X}$. In other words, from the point of view of functional analysis, this is the weak${}^*-$ topology.} which we denote by $\mathcal{T}$. 
Let us recall the form of the elements for the usual basis of $\mathcal{T}$.

\begin{definition}
Let $f : \mathbb{X} {}\longrightarrow \mathbb{R}$ be any continuous and bounded function (i.e. $f \in C_b(\mathbb{X})$), then we denote by $I^{f} :\mathscr{P}(\mathbb{X}) {}\rightarrow \mathbb{R}$ the function where
\begin{align*}
\mathscr{P}(\mathbb{X}) \ni \mu \longmapsto  \int_{\mathbb{X}}f(x)\mu(\ud x) \in \mathbb{R}.
\end{align*}
\end{definition}

\noindent We can now give a description of the usual basis of $\mathcal{T}$
\begin{align}\label{2.15}
    \textbf{B}(\mathcal{T}) := \left\{ \bigcap_{i = 1}^{m}\left(I^{f_i}\right)^{-1}(A_i): m \in \mathbb{N}, f_i \in C_b(X), A_i \text{ open sets of } \mathbb{R}
    \right\}.
\end{align} 
The weak topology is metrizable and, in fact, $(\mathscr{P}(\mathbb{X}),\mathcal{T})$ is a Polish space; see \citet[15.15 Theorem]{AliprantisBorder}. 
Moreover, it is well-known, see \emph{e.g.} \cite[15.3 Theorem]{AliprantisBorder}, that 
\begin{gather*}
\text{a sequence of probability measures} \hspace{0.2cm} \{\mu_m\}_{m \in \mathbb{N}} \text{ converges weakly to a probability measure }\mu\\
\text{if and only if for every bounded and continuous function } f \text{ we have }\\
\int_{\mathbb{X}}f(x)\,\mu_m(\ud x) \xrightarrow[m\to\infty]{} \int_{\mathbb{X}}f(x)\,\mu(\ud x).
\end{gather*}
In view of the above remarks, we have the following result using \eqref{2.15}.
A generalization of this result appears in \citet{varadarajan1958weak}.

\begin{lemma}\label{thm:weak_conv}
Let $\mu \in {\mathscr{P}}(\mathbb{X})$, then there exists a sequence $\{f^{\mu}_k\}_{k \in \mathbb{N}} \subseteq C_b(\mathbb{X})$ such that a sequence of probability measures $\{\mu_m\}_{m \in \mathbb{N}}$ converges weakly to $\mu$ if and only if 
\begin{align*}
    \int_{\mathbb{X}}f^{\mu}_k(x)\,\mu_m(\ud x) \xrightarrow[m\to\infty]{}  \int_{\mathbb{X}}f^{\mu}_k(x)\,\mu(\ud x), \text{ for all } k \in \mathbb{N}.
\end{align*}
\end{lemma}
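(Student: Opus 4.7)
The plan is to exploit the Polish (hence first--countable) structure of $(\mathscr{P}(\mathbb{X}),\mathcal{T})$ together with the explicit description of the sub--basis for $\mathcal{T}$ given in \eqref{2.15}, and then extract a countable family of test functions by reading off the continuous bounded functions that appear in a countable neighbourhood basis at $\mu$.

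First, I would invoke \cite[15.15 Theorem]{AliprantisBorder}, already cited in the text, which guarantees that $(\mathscr{P}(\mathbb{X}),\mathcal{T})$ is Polish. In particular it is first--countable, so there exists a countable neighbourhood basis $\{V_n\}_{n\in\mathbb{N}}$ of open sets at $\mu$. Using \eqref{2.15}, for each $n\in\mathbb{N}$ I can pick a basic neighbourhood $B_n\in\mathbf{B}(\mathcal{T})$ with $\mu\in B_n\subseteq V_n$, of the form
\begin{align*}
B_n=\bigcap_{i=1}^{m_n}\bigl(I^{f_i^n}\bigr)^{-1}(A_i^n),
\end{align*}
for some $m_n\in\mathbb{N}$, $f_i^n\in C_b(\mathbb{X})$, and open $A_i^n\subseteq\mathbb{R}$ containing $I^{f_i^n}(\mu)$. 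Then the countable collection $\{f_i^n : n\in\mathbb{N},\, 1\le i\le m_n\}$ can be re--enumerated as $\{f^\mu_k\}_{k\in\mathbb{N}}\subseteq C_b(\mathbb{X})$.

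For the "only if" direction there is nothing to prove, since weak convergence is exactly convergence of integrals against every $f\in C_b(\mathbb{X})$, hence in particular against each $f^\mu_k$. For the "if" direction, suppose that
\begin{align*}
\int_{\mathbb{X}} f^\mu_k(x)\,\mu_m(\ud x)\xrightarrow[m\to\infty]{} \int_{\mathbb{X}} f^\mu_k(x)\,\mu(\ud x)\qquad\text{for every }k\in\mathbb{N}.
\end{align*}
Fix an arbitrary open neighbourhood $V$ of $\mu$. Since $\{V_n\}_{n\in\mathbb{N}}$ is a neighbourhood basis at $\mu$, there exists $n_0$ with $V_{n_0}\subseteq V$, and it suffices to show $\mu_m\in V_{n_0}$ eventually. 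Because $B_{n_0}\subseteq V_{n_0}$, it is in turn enough to check that for every $i\in\{1,\dots,m_{n_0}\}$ one has $I^{f_i^{n_0}}(\mu_m)\in A_i^{n_0}$ eventually; this however is immediate from the openness of $A_i^{n_0}$, the fact that $I^{f_i^{n_0}}(\mu)\in A_i^{n_0}$, and the hypothesis applied to the particular $f^\mu_k$ corresponding to $f_i^{n_0}$. Hence $\mu_m\in V$ eventually, which, varying $V$, yields $\mu_m\to\mu$ weakly.

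I do not expect a substantial obstacle here: the argument is essentially a translation between the sub--basis description \eqref{2.15} of $\mathcal{T}$ and the first--countability of the Polish space $(\mathscr{P}(\mathbb{X}),\mathcal{T})$. The only point deserving mild care is making sure the $A_i^n$ can be chosen so that $I^{f_i^n}(\mu)\in A_i^n$ (which is automatic, since $\mu\in B_n$), so that openness of $A_i^n$ together with the assumed scalar convergence $I^{f_i^n}(\mu_m)\to I^{f_i^n}(\mu)$ yields $I^{f_i^n}(\mu_m)\in A_i^n$ eventually. Everything else is bookkeeping.
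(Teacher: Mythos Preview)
Your proposal is correct and follows essentially the same approach as the paper: both arguments exploit the first--countability of the Polish space $(\mathscr{P}(\mathbb{X}),\mathcal{T})$ to obtain a countable neighbourhood basis at $\mu$ (the paper uses metric balls $B_{\rho_{\mathcal{T}}}(\mu,1/m)$, you use an abstract basis $\{V_n\}$), then fit basic sets from \eqref{2.15} inside these and read off the resulting countable family of test functions. Your write--up is in fact more detailed than the paper's, which leaves the verification of the two implications to the reader.
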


\begin{proof}
Let $\rho_{{}_{\mathcal{T}}}$ be a metric that makes $(\mathscr{P}(\mathbb{X}),\mathcal{T})$ Polish. 
Then for every open ball with center $\mu$ and radius $\frac{1}{m}$ for $m \in \mathbb{N}$, denoted by $B_{\rho_{{}_{\mathcal{T}}}}(\mu,\frac{1}{m})$, there exists $D_m \in \textbf{B}(\mathcal{T})$ where $\mu \in D_m \subseteq B_{\rho_{{}_{\mathcal{T}}}}(\mu,\frac{1}{m})$. 
Using \eqref{2.15}, we can conclude the proof.
\end{proof}

On $\mathscr{P}_q(\mathbb{X})$ we can define an even stronger mode of convergence, that allows for more functions to be tested. 
We will simply call it weak convergence in $\mathscr{P}_q(\mathbb{X})$ and this mode says that
\begin{gather*}
    \text{a sequence of probability measures} \hspace{0.2cm} \{\mu_m\}_{m \in \mathbb{N}}\text{ converges weakly in }\mathscr{P}_q(\mathbb{X})\text{ to a probability measure } \mu\\
\text{if and only if for every continuous function } f \text{ such that } |f(x)| \leq C \hspace{0.1cm}(1 + \rho(x_0,x)^q), \text{ where }C := C(f) \in \mathbb{R}_+, \\
\numberthis\label{fW}
\text{we have } \int_{\mathbb{X}}f(x)\,\mu_m(\ud x) \xrightarrow[m\to\infty]{} \int_{\mathbb{X}}f(x)\,\mu(\ud x).
\end{gather*}
Of course, as before, it is immediate that in the above definition it does not matter which $x_0$ we choose. 
Now, the topology that is induced from this stronger mode of convergence is metrizable from a metric with nice properties; this metric is called the Wasserstein distance of order $q$.
More precisely, given two probability measures $\mu, \nu \in \mathscr{P}_q(\mathbb{X})$ we define the Wasserstein distance of order $q$ between them to be 
\begin{equation}\label{def:wasserstein}
    \mathcal{W}_{q,\rho}^q(\mu,\nu) := \inf_{\pi \in \Pi(\mu,\nu)}\left\{\int_{\mathbb{X} \times \mathbb{X}}\rho(x,y)^q\,\pi(\ud x,\ud y)\right\},
\end{equation}
where $\Pi(\mu,\nu)$ are the probability measures on $\mathbb{X} \times \mathbb{X}$ with marginals $\pi_1 = \mu$ and $\pi_2 = \nu$.
The interested reader may consult \citet[Theorem 6.9]{villani2009optimal} for the fact that $\mathcal{W}_{q,\rho}$ metrizes  $\mathscr{P}_q(\mathbb{X})$.
Moreover, $\mathscr{P}_q(\mathbb{X})$ with this mode of convergence is a Polish space, as one can see from \cite[Theorem 6.18]{villani2009optimal}.

\begin{remark}\label{rem:Wasserstein_measurable}
One can view the Wasserstein distance of order $q$ as a non--negative function on the Polish space $(\mathscr{P}(\mathbb{X}),\mathcal{T}) \times (\mathscr{P}(\mathbb{X}),\mathcal{T})$, with the possibility of taking infinite values,. 
Then, from \cite[Remark 6.12]{villani2009optimal} we have that $\mathcal{W}_q$ is lower semi-continuous, hence measurable.
\end{remark}

A useful inequality for the Wasserstein distance of order $q$, that is going to be used multiple times hereinafter, concerns the distance between two empirical measures on $\mathbb{X}$. 
Given an $N \in \mathbb{N}$ and $\textbf{x}^N:=(x_1,\dots,x_N),\textbf{y}^N:=(y_1,\dots,y_N) \in \mathbb{X}^{N}$, we have the empirical measures 
\begin{align*}
    L^N(\textbf{x}^N) := \frac{1}{N}\sum_{i = 1}^N \delta_{x_i} \quad\text{ and }\quad L^N(\textbf{y}^N) := \frac{1}{N}\sum_{i = 1}^N \delta_{y_i},
\end{align*}
where $\delta_{\cdot}$ is the Dirac measure on $\mathbb{X}$. 
Then, we have 
\begin{align}\label{empiricalineq}
    \mathcal{W}_{q,\rho}^q\big(L^N(\textbf{x}^N),L^N(\textbf{y}^N)\big) \leq \frac{1}{N} \sum_{i = 1}^{N}\rho(x_i,y_i)^q.
\end{align}
The above inequality is immediate if in the definition of the Wasserstein distance \eqref{def:wasserstein} we choose the probability measure $$\pi := \frac{1}{N}\sum_{i =1}^{N}\delta_{(x_i,y_i)},$$
where $\delta_{(\cdot,\cdot)}$ is the Dirac measure over $\mathbb{X} \times \mathbb{X}$. 
One can immediately see that $\pi_1 = L^N(\textbf{x}^N)$ and $\pi_2 = L^N(\textbf{y}^N)$.

\begin{remark}\label{Re_2.9}
Last but not least, note that if $\rho$ is a bounded metric, then for every $q \in [1,\infty)$ we have $\mathscr{P}(\mathbb{X}) = \mathscr{P}_q(\mathbb{X})$, and every function $f$ as in \eqref{fW} belongs to $C_b(\mathbb{X})$. 
Hence, we get that $\mathcal{W}_{q,\rho}$ metrizes the weak convergence on $\mathscr{P}(\mathbb{X})$, see \cite[Corollary 6.13]{villani2009optimal}.
\end{remark}


\subsection{Skorokhod space}

Let us denote by $\mathbb{D}^d := \{f:[0,\infty) {}\longrightarrow \mathbb{R}^d: f\text{ c\`adl\`ag}\}$ the space of c\`adl\`ag paths, for every $d \in \mathbb{N}$. 
We supply $\mathbb{D}^d$ with its usual $J_1$-metric, which we denote by $\rho_{J_1^d}$. 
Endowed with this metric, $\mathbb{D}^d$ becomes a Polish space. 
We are not going to get into the specifics of $\rho_{J_1^d}$, as we will only need a couple of its basic properties. 
Firstly, for every $x,y \in \mathbb{D}^d$, we have 
\begin{align}\label{skorokineq}
\rho_{J_1^d}(x,y) \leq \sup_{s \in [0,\infty)}\{|x_s - y_s|\} \wedge 1.
\end{align}
Secondly, the Borel $\sigma-$algebra that $\rho_{J_1^d}$ generates coincides with the usual product $\sigma-$algebra on ${(\mathbb{R}^d)}^{[0,\infty)}$ that the projections generate. 
To be more precise, we have 
\begin{align}\label{skoalgebra}
    \mathcal{B}_{\rho_{J_1^d}}(\mathbb{D}^d) = \sigma\Big(\textrm{Proj}_s^{-1}(A): A \in \mathcal{B}(\mathbb{R}^d), s \in [0,\infty)\Big) \bigcap \mathbb{D}^d,
\end{align}
where ${(\mathbb{R}^d)}^{[0,\infty)} \ni x \overset{\textrm{Proj}_s}{\longmapsto} x(s)\in \mathbb{R}^d$, for every $s \in [0,\infty)$. 
Additional results on the Skorokhod space are available from \cite[Chapter 15]{he2019semimartingale} or \cite[Chapter VI]{jacod2013limit}. 

Using that the $\sigma-$algebra on $\Omega$ is $\mathcal{G}$, it is obvious from \eqref{skoalgebra} that every $\mathcal{G} \otimes \mathcal{B}([0,\infty))-$jointly measurable c\`adl\`ag process $X$ can be seen as a function with domain $\Omega$ and taking values in $\mathbb{D}^d$ such that $X$ is $\big(\mathcal{G}/\mathcal{B}_{\rho_{J_1^d}}(\mathbb{D}^d)\big)-$measurable and \emph{vice versa}: every $\big(\mathcal{G}/\mathcal{B}_{\rho_{J_1^d}}(\mathbb{D}^d)\big)-$measurable random variable $X$ can be seen as a $\mathcal{G} \otimes \mathcal{B}([0,\infty))-$jointly measurable c\`adl\`ag process.

\begin{remark}\label{rem:Sko}
Later on, when we say that a collection of c\`adl\`ag processes is independent or is identically distributed or is exchangeable, they will be understood as $\left(\mathcal{G}/\mathcal{B}_{\rho_{J_1^d}}(\mathbb{D}^d)\right)-$measurable random variables.
\end{remark}

Finally, for every $a \in (0,\infty)$ and $x \in \mathbb{D}^d$, the initial segment $x|_{[0,a]}$, resp. $x|_{[0,a-]}$, will be understood as an element of $\mathbb{D}^d$, using the convention 
\begin{align}\label{inpath}
x|_{[0,a]}(s) := x(s)\mathds{1}_{[0,a)}(s) + x(a)\mathds{1}_{[a,\infty)}(s),
\text{ resp. }
x|_{[0,a-]}(s) := x(s)\mathds{1}_{[0,a)}(s) + x(a-)\mathds{1}_{[a,\infty)}(s).
\end{align}
For $a\in[0,\infty)$, we define $\mathbb{D}^d_{a} := \{x_{[0,a]}: x \in \mathbb{D}^d\}$, resp. $\mathbb{D}^d_{a-} := \{x_{[0,a-]}: x \in \mathbb{D}^d\}$, and we naturally have
\begin{equation*}
    \mathcal{B}_{\rho_{J_1^d}}(\mathbb{D}^d_{a}) = \mathcal{B}_{\rho_{J_1^d}}(\mathbb{D}^d) \bigcap \mathbb{D}^d_{a}, 
        \quad \text{ resp. } \quad 
    \mathcal{B}_{\rho_{J_1^d}}(\mathbb{D}^d_{a-}) = \mathcal{B}_{\rho_{J_1^d}}(\mathbb{D}^d) \bigcap \mathbb{D}^d_{a-}.
\end{equation*}

\section{A priori estimates}
\label{sec:apriori}

In this section, we provide \textit{a priori} estimates for BSDEs whose generator does not depend on the solution of the equation. 
These estimates will be our primary tool in order to prove the existence and uniqueness theorems for the mean-field as well as the McKean--Vlasov BSDEs in the subsequent sections, but also in order to prove the propagation of chaos statements. 
They expand the results of \citet[Section 3.4]{papapantoleon2018existence} by replacing the deterministic exponential with the stochastic one.
Let us mention that, independently, similar \textit{a priori} estimates for BSDEs have been obtained recently by \citet{possamai2023reflections}.

Intuitively speaking, the last term in \eqref{apriorieq} below is zero on average, as martingales are the stochastic analogs of deterministic constant functions. 
Because the norms are defined using an expectation, it should be possible to bound the norm of the solution $y$ using the norms of the generator $f$ and the terminal condition $\xi$. 
Then, it follows directly that one can bound the norm of $\eta$ using the norms of $f$ and $\xi$, by switching places in \eqref{apriorieq}.  

Let us recall that $(\Omega,\mathcal{G},\mathbb{G},\mathbb{P})$ denotes a complete stochastic basis.
Let us fix a predictable, c\`adl\`ag, non--decreasing process $C$, a predictable, real-valued process $\alpha$ and define $A_{\cdot}:= \int_{0}^{\cdot}\alpha_s\,\ud C_s$.
We would like to ease the introduced notation for the spaces throughout this section, by omitting the dependence on $\mathbb{G}$, $C$ and $A$.
More precisely, $\mathbb{L}^2_\beta(\mathcal{G}_T,A;\mathbb{R}^d)$ for a stopping time $T$, resp.
$\mathbb{H}^2_\beta(\mathbb{G},A,C;\mathbb{R}^d)$,
$\mathcal{S}^2_\beta(\mathbb{G},A;\mathbb{R}^d)$,
$\mathcal{H}^2_\beta(\mathbb{G},A;\mathbb{R}^d)$, 
will simply be denoted by 
$\mathbb{L}^2_\beta(\mathcal{G}_T;\mathbb{R}^d)$, resp. 
$\mathbb{H}^2_\beta(\mathbb{R}^d)$, 
$\mathcal{S}^2_\beta(\mathbb{R}^d)$,
$\mathcal{H}^2_\beta(\mathbb{R}^d)$.



\begin{lemma}
\label{lem:a_priori_estimates}
Assume we are given a $d$-dimensional semimartingale $y$ of the form
\begin{equation}\label{apriorieq}
 y_t = \xi + \int_{t}^{T}f_s \,\ud C_s - \int_{t}^{T}\,\ud\eta_s, 
\end{equation}
where $T$ is a stopping time, 
$\xi \in \mathbb{L}^2(\mathcal{G}_T; \mathbb{R}^d)$, 
$f$ is a $d$-dimensional optional process, 
and $\eta \in \mathcal{H}^2(\mathbb{R}^d)$. 
In addition, 
assume there exists some $\Phi \ge 0$ such that $\Delta A \leq \Phi, \mathbb{P} \otimes C-$almost everywhere. 
Finally, suppose there exists $\beta \in (0,\infty)$ such that
\begin{align}\label{ineq 3.2}
    \| \xi\|_{\mathbb{L}^2_\beta(\mathcal{G}_T;\mathbb{R}^d)} + 
    \left\| \frac{f}{\alpha}\right\|_{\mathbb{H}^2_\beta(\mathbb{R}^d)} < \infty.
\end{align}
Then, for any $(\gamma,\delta) \in (0,\beta]^2$ with $\gamma \neq \delta$, we have
\begin{align*}
\|\alpha y\|^{2}_{\mathbb{H}^2_{\delta}(\mathbb{R}^d)} 
    &\leq \frac{2(1 + \delta \Phi)}{\delta} \|\xi\|^{2}_{\mathbb{L}^{2}_{\delta}(\mathcal{G}_T;\mathbb{R}^d)} 
    + 2 \Lambda^{\gamma,\delta,\Phi} \left\|\frac{f}{\alpha}\right\|^{2}_{\mathbb{H}^{2}_{\gamma \vee \delta}(\mathbb{R}^d)}, \\ 
\|y\|^{2}_{\mathcal{S}^{2}_{\delta}(\mathbb{R}^d)} 
    &\leq 8 \|\xi\|^2_{\mathbb{L}^{2}_{\delta}(\mathcal{G}_T;\mathbb{R}^d)} 
    + 8 \frac{1 + \gamma \Phi}{\gamma} \left\|\frac{f}{\alpha}\right\|^{2}_{\mathbb{H}^{2}_{\gamma \vee \delta}(\mathbb{R}^d)}
\shortintertext{and}
\|\eta\|^{2}_{\mathcal{H}^{2}_{\delta}(\mathbb{R}^d)} 
    &\leq 9(2 + \delta\Phi) \|\xi\|^2_{\mathbb{L}^{2}_{\delta}(\mathcal{G}_T;\mathbb{R}^d)} 
    + 9\Big(\frac{1}{\gamma \vee \delta} + \delta \Lambda^{\gamma,\delta,\Phi}\Big) \left\|\frac{f}{\alpha}\right\|^{2}_{\mathbb{H}^{2}_{\gamma \vee \delta}(\mathbb{R}^d)},
\end{align*}
where
\begin{align*}
\Lambda^{\gamma,\delta,\Phi}
    : = \frac{(1 + \gamma \Phi)^2}{\gamma |\delta - \gamma|}.
\end{align*}
Therefore, putting the pieces together we have
\begin{gather*}
 \|\alpha y\|^{2}_{\mathbb{H}^2_{\delta}(\mathbb{R}^d)} + \|\eta\|^{2}_{\mathcal{H}^{2}_{\delta}(\mathbb{R}^d)} 
    \leq \Big(18 + \frac{2}{\delta}+ (9\delta + 2)\Phi \Big) \|\xi\|^2_{\mathbb{L}^{2}_{\delta}(\mathcal{G}_T;\mathbb{R}^d)} 
    + \Big( \frac{9}{\gamma \vee \delta} + (9\delta + 2) \Lambda^{\gamma,\delta,\Phi} \Big) \left\|\frac{f}{\alpha}\right\|^{2}_{\mathbb{H}^{2}_{\gamma \vee \delta}(\mathbb{R}^d)},\\
\|y\|^{2}_{\mathcal{S}^{2}_{\delta}(\mathbb{R}^d)} + \|\eta\|^{2}_{\mathcal{H}^{2}_{\delta}(\mathbb{R}^d)} 
    \leq (26 + 9 \delta \Phi)  \|\xi\|^2_{\mathbb{L}^{2}_{\delta}(\mathbb{R}^d)} 
    + \Big( \frac{8}{\gamma} + 8 \Phi + \frac{9}{\gamma \vee \delta} + 9\delta \Lambda^{\gamma,\delta,\Phi} \Big)\left\|\frac{f}{\alpha}\right\|^{2}_{\mathbb{H}^{2}_{\gamma \vee \delta}(\mathbb{R}^d)}
    \shortintertext{and}
    \begin{multlined}[t][\textwidth]
        \|\alpha y\|^{2}_{\mathbb{H}^2_{\delta}(\mathbb{R}^d)} 
        +  \|y\|^{2}_{\mathcal{S}^{2}_{\delta}(\mathbb{R}^d)} 
        + \|\eta\|^{2}_{\mathcal{H}^{2}_{\delta}(\mathbb{R}^d)}\\ 
    \leq  
    \Big(26 + \frac{2}{\delta}+ (9\delta + 2)\Phi \Big) \|\xi\|^2_{\mathbb{L}^{2}_{\delta}(\mathcal{G}_T;\mathbb{R}^d)} 
    + \Big( \frac{8}{\gamma} + 8 \Phi + \frac{9}{\gamma \vee \delta}
    + (9\delta + 2) \Lambda^{\gamma,\delta,\Phi} \Big)\left\|\frac{f}{\alpha}\right\|^{2}_{\mathbb{H}^{2}_{\gamma \vee \delta}(\mathbb{R}^d)}.
    \end{multlined}
\end{gather*}
\end{lemma}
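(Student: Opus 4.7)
The guiding idea is to apply integration by parts to the process $\mathcal{E}(\delta A)_{t-}|y_t|^2$ in order to exploit the SDE $\ud\mathcal{E}(\delta A) = \mathcal{E}(\delta A)_{-}\,\delta\alpha\,\ud C$ satisfied by the stochastic exponential. Writing $\ud y_s = \ud\eta_s - f_s\,\ud C_s$ from \eqref{apriorieq} and computing $\ud|y_s|^2 = 2y_{s-}^\top\,\ud y_s + \ud[y]_s$, the integration by parts formula, together with the fact that $\mathcal{E}(\delta A)$ is of finite variation (so $[\mathcal{E}(\delta A),|y|^2]$ reduces to a sum over jumps of the form $\mathcal{E}(\delta A)_{s-}\,\delta\Delta A_s\,\Delta|y_s|^2$), should yield an identity of the shape
\begin{align*}
\mathcal{E}(\delta A)_{T-}|\xi|^2
&= \mathcal{E}(\delta A)_{t-}|y_t|^2
  + \delta\int_t^T \mathcal{E}(\delta A)_{s-}\,\alpha_s|y_{s-}|^2\,\ud C_s \\
&\quad - 2\int_t^T \mathcal{E}(\delta A)_{s-}\,y_{s-}^\top f_s\,\ud C_s
  + \int_t^T \mathcal{E}(\delta A)_{s-}\,\ud[\eta]_s
  + (\text{martingale + jump correction}),
\end{align*}
where the jump correction terms are nonnegative up to lower order and absorb cleanly using $\Delta A \le \Phi$, which is exactly the place where the factor $(1+\delta\Phi)$ (and, with weight $\gamma$, the factor $(1+\gamma\Phi)$) enters via \cref{lem:StochExp}\ref{item:StochExp3}--\ref{item:StochExp7}.

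Next, I would handle the cross term $y_{s-}^\top f_s$ by a weighted Young's inequality,
\[
2|y_{s-}^\top f_s|\,\alpha_s
\le \gamma\,\alpha_s^2|y_{s-}|^2 + \frac{|f_s|^2}{\gamma},
\]
after pulling one factor of $\alpha$ into $y$ and the other into $1/f$; more precisely, $2|y_{s-}^\top f_s| \le \gamma\alpha_s|y_{s-}|^2 + |f_s|^2/(\gamma\alpha_s)$. Since the Young weight comes with $\gamma$ while the exponent multiplying the ``positive'' $|\alpha y|^2$ term is $\delta$, the bound must be written against $\mathcal{E}(\gamma\vee\delta A)_{s-}$, and switching between $\mathcal{E}(\delta A)_{s-}$ and $\mathcal{E}(\gamma A)_{s-}$ is controlled by $\mathcal{E}(\widetilde A^{\delta,\gamma})$ in \cref{lem:StochExp}\ref{item:StochExp7}, producing the factor $\frac{(1+\gamma\Phi)^2}{\gamma|\delta-\gamma|}=\Lambda^{\gamma,\delta,\Phi}$ upon integration. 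Taking $t=0$, using that the local martingale increments have mean zero after a standard localisation argument (justified by \eqref{ineq 3.2}), and rearranging gives the first bound on $\|\alpha y\|^2_{\mathbb{H}^2_\delta}$.

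For the supremum estimate $\|y\|^2_{\mathcal{S}^2_\delta}$, I would repeat the integration by parts from an arbitrary stopping time $t$ up to $T$, take conditional expectations given $\mathcal{G}_t$, and then apply Doob's $L^2$-maximal inequality to the resulting martingale term; the remaining deterministic-looking terms are controlled by the data via the same Young manipulation, yielding the factor $\frac{1+\gamma\Phi}{\gamma}$. For the bound on $\|\eta\|^2_{\mathcal{H}^2_\delta}$, I would isolate $\int_0^T\mathcal{E}(\delta A)_{s-}\,\ud[\eta]_s$ in the identity above, using that $[\eta]$ differs from $[y]$ only by jump cross terms with the finite-variation process $\int f\,\ud C$ (which can again be absorbed via Young and $\Delta A\le\Phi$), and then combining with the estimate already obtained on $\|\alpha y\|^2_{\mathbb{H}^2_\delta}$. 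The three ``combined'' displayed inequalities are then obtained by summing the three individual bounds.

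The main technical obstacle is the careful bookkeeping of the jump terms: the discontinuous part of $\mathcal{E}(\delta A)$ interacts with the jumps of $\eta$ and with $f\Delta C$ through the covariation $[\mathcal{E}(\delta A),|y|^2]$, and one must verify that every such jump contribution either has a definite sign or is dominated by $\Phi$ times a controllable quantity, so that the constants end up in the clean form $(1+\delta\Phi)$ and $\Lambda^{\gamma,\delta,\Phi}$. The restriction $\gamma\ne\delta$ is what allows the denominator $|\delta-\gamma|$ in $\Lambda$ to make sense, which is ultimately dictated by the need to reconcile the Young weight $\gamma$ with the exponential weight $\delta$ when the two do not coincide.
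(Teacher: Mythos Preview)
Your approach via It\^o's formula applied to $\mathcal{E}(\delta A)_{-}|y|^2$ together with Young's inequality is the classical one in the BSDE literature, but it is \emph{not} what the paper does. The paper avoids It\^o's formula entirely. It starts from the representation $y_t = \mathbb{E}[\xi + F(t)\mid\mathcal{G}_t]$ with $F(t):=\int_t^T f_s\,\ud C_s$, then applies Cauchy--Schwarz to $F(t)$ against $\alpha^2\,\ud C = \ud A$ to obtain $|F(t)|^2 \le \frac{1+\gamma\Phi}{\gamma}\,\mathcal{E}(\gamma A)^{-1}_{t-}\int_t^T \mathcal{E}(\gamma A)_{s-}|f_s/\alpha_s|^2\,\ud C_s$. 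The constant $\Lambda^{\gamma,\delta,\Phi}$ then arises by integrating this bound against $\mathcal{E}(\delta A)_{-}\,\ud A$, applying Tonelli, and doing a case split on $\delta\gtrless\gamma$ to control $\int_0^{s-}\mathcal{E}(\widetilde A^{\delta,\gamma})_{t-}\,\ud A_t$; this is precisely where the two weights $\gamma,\delta$ decouple and why $\|f/\alpha\|$ ends up measured in the $\gamma\vee\delta$ norm. The $\mathcal{S}^2$ bound comes from Doob applied to the martingale $\mathbb{E}[(\mathcal{E}(\delta A)_{t-}|\xi|^2 + \cdots)^{1/2}\mid\mathcal{G}_t]$, and the $\mathcal{H}^2$ bound on $\eta$ comes from the identity $\mathbb{E}[\int_t^T \ud\mathrm{Tr}\langle\eta\rangle\mid\mathcal{G}_t] = \mathbb{E}[|\xi - y_t + F(t)|^2\mid\mathcal{G}_t]$ plus integration by parts in $t$ against $\ud\mathcal{E}(\delta A)$.

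Your route could produce \emph{some} a priori estimate, but your sketch of how $\Lambda^{\gamma,\delta,\Phi}$ and the $\gamma\vee\delta$ weight arise does not hold up. In the It\^o/Young scheme, every term is integrated against $\mathcal{E}(\delta A)_{-}$; Young's inequality puts a coefficient $\gamma$ in front of $\|\alpha y\|^2_{\mathbb{H}^2_\delta}$ (forcing $\gamma<\delta$ for absorption) and $1/\gamma$ in front of $\|f/\alpha\|^2_{\mathbb{H}^2_\delta}$. The weight on $f/\alpha$ stays $\delta$, and there is no natural mechanism generating the $\gamma\vee\delta$ norm or the extra factor $(1+\gamma\Phi)^2$ --- your appeal to ``switching between $\mathcal{E}(\delta A)_{s-}$ and $\mathcal{E}(\gamma A)_{s-}$'' has no role in that computation. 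The paper's conditional-expectation route also sidesteps the jump bookkeeping you flag as the main obstacle, and no localisation is needed.
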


\begin{proof} 
By definition, we have that $\int_{t}^{T}\eta_s \,\ud s = \eta_T - \eta_{T\wedge t}$. 
Because $y$ is adapted and $\eta\in\mathcal{H}^2(\mathbb{G};\mathbb{R}^d)$, we have, for every $t\ge 0$, that
\begin{equation}\label{apriorieq2}
 y_t = \mathbb{E}\left[y_t|\mathcal{G}_t\right] = \mathbb{E}\left[\xi +  \int_{t}^{T}f_s \,\ud C_s  \bigg| \mathcal{G}_t \right]. 
\end{equation}
Hence, from the above identity it is evident that we need to study the following process
\begin{align}\label{def:F_Integral_process}
 F(t) :=  \int_{t}^{T}f_s \ud C_s. 
\end{align}
Let $\gamma \in \mathbb{R}_{+}$, then we have from the Cauchy--Schwarz inequality that 
\begin{align}\label{ineq:after_CS}
    |F(t)|^2 \leq  \int_{t}^{T}\mathcal{E}(\gamma A)^{-1}_{s-} \,\ud A_s  \int_{t}^{T} \mathcal{E}(\gamma A)_{s-} \frac{|f_s|^2}{\alpha^2_s} \,\ud C_s, 
\end{align}
which dictates further that we should focus on the first factor of the right-hand side of the inequality.
Using \cref{lem:StochExp}.\ref{item:StochExp2}, for $\overline{A}_\cdot := A_\cdot - \sum_{s \leq \cdot} \frac{(\Delta A_s)^2}{(1 + \Delta A_s)}$, we have 
\begin{align*}
    \int_{t}^{T}\mathcal{E}(\gamma A)^{-1}_{s-} \,\ud A_s  = \int_{t}^{T}\mathcal{E}( -\overline{\gamma A})_{s-} \,\ud A_s.
\end{align*}
Using \cref{lem:StochExp}.\ref{item:StochExp7}, the jumps of $\overline{\gamma A}$ satisfy 
    $-1<\Delta(- \overline{\gamma A})\leq 0,$
which implies that $\mathcal{E}( -\overline{\gamma A}) > 0$; see 
\ref{item:StochExp1} and \ref{item:StochExp3} of \cref{lem:StochExp}. 
Then, from \cref{lem:StochExp}.\ref{item:StochExp4},
\begin{align*}
    \int_{t}^{T}\mathcal{E}( -\overline{\gamma A})_{s-} \,\ud A_s &=  \frac{1}{\gamma} \int_{t}^{T}(1 + \Delta(\gamma A_s))\mathcal{E}( -\overline{\gamma A})_{s-} \,\ud(\overline{\gamma A})_s\\
    & \leq \frac{1 + \gamma\Phi}{\gamma}\int_{t}^{T}\mathcal{E}( -\overline{\gamma A})_{s-} \,\ud(\overline{\gamma A})_s 
    = - \frac{1 + \gamma\Phi}{\gamma} \int_{t}^{T}\mathcal{E}( -\overline{\gamma A})_{s-} \,\ud(-\overline{\gamma A})_s \\
    & = -\frac{1 + \gamma\Phi}{\gamma} \mathcal{E}(-\overline{\gamma A})\big|_t^T 
     = \frac{1 + \gamma\Phi}{\gamma} \mathcal{E}(\gamma A)^{-1}\big|_T^t 
    \leq \frac{1 + \gamma\Phi}{\gamma} \mathcal{E}(\gamma A)^{-1}_t\\
    &\leq \frac{1 + \gamma\Phi}{\gamma} \mathcal{E}(\gamma A)^{-1}_{t-},
\end{align*}
where the last inequality is validated by the fact that $\mathcal{E}(\gamma A)^{-1}_\cdot$ is non--increasing. 
Finally, combining the above results and returning to \eqref{ineq:after_CS}, we get 
\begin{align}\label{aprioriineq1}
 |F(t)|^2 \leq  \frac{1 + \gamma \Phi}{\gamma}\mathcal{E}(\gamma A)^{-1}_{t-} \int_{t}^{T} \mathcal{E}(\gamma A)_{s-} \frac{|f_s|^2}{\alpha^2_s} \,\ud C_s. 
\end{align}
Using the assumption we have made in \eqref{ineq 3.2}, for $\gamma \in (0,\beta]$ we have
\begin{align*}
    \mathbb{E}\left[|F(0)|^2\right] < \infty.
\end{align*}
Next, for $\delta \in (0,\beta]$, we will integrate $|F(t)|^2$ with respect to $\mathcal{E}(\delta A)_{-} \ud A$.
Before we proceed, let us underline that we are going to use the fact that 
$\mathcal{E}(\widetilde{A}^{\delta,\gamma})$ is (strictly) positive. 
Indeed, this is straightforward from \ref{item:StochExp1},\ref{item:StochExp3} and \ref{item:StochExp7} of  \cref{lem:StochExp}. 
Now, we return to our aim and, with the aid of \cref{lem:StochExp}.\ref{item:StochExp7}, inequality \eqref{aprioriineq1} and Tonelli's theorem, we get
\begin{align*}
\int_{0}^{T}\mathcal{E}(\delta A)_{t-} |F(t)|^2 \,\ud A_t 
&\leq  \frac{1 + \gamma \Phi}{\gamma} \int_{0}^{T}\mathcal{E}(\delta A)_{t-}\mathcal{E}(\gamma A)^{-1}_{t-} \int_{t}^{T} \mathcal{E}(\gamma A)_{s-} \frac{|f_s|^2}{\alpha^2_s} \,\ud C_s \,\ud A_t \\
&= \frac{1 + \gamma \Phi}{\gamma} 
    \int_{0}^{T}\mathcal{E}   (\widetilde{A}^{\delta,\gamma})_{t-} 
    \int_{0}^{T} \mathds{1}_{\rrbracket t,T\rrbracket}(s) \mathcal{E}(\gamma A)_{s-} \frac{|f_s|^2}{\alpha^2_s} \,\ud C_s \,\ud A_t \\
&=  \frac{1 + \gamma \Phi}{\gamma} 
    \int_{0}^{T}\int_{0}^{T}\mathcal{E}(\widetilde{A}^{\delta,\gamma})_{t-} \mathds{1}_{\rrbracket t,T\rrbracket}(s) 
    \mathcal{E}(\gamma A)_{s-} \frac{|f_s|^2}{\alpha^2_s} \,\ud C_s \,\ud A_t \\
&=  \frac{1 + \gamma \Phi}{\gamma} 
    \int_{0}^{T} \mathcal{E}(\gamma A)_{s-} \frac{|f_s|^2}{\alpha^2_s} \int_{0}^{T}\mathcal{E}(\widetilde{A}^{\delta,\gamma})_{t-}
    \mathds{1}_{\rrbracket t,T\rrbracket}(s) \,\ud A_t  \,\ud C_s\\
 &=  \frac{1 + \gamma \Phi}{\gamma} 
    \int_{0}^{T} \mathcal{E}(\gamma A)_{s-} \frac{|f_s|^2}{\alpha^2_s} \int_{0}^{s-}\mathcal{E}(\widetilde{A}^{\delta,\gamma})_{t-} \,\ud A_t  \,\ud C_s.
    \numberthis\label{ineq:after_integration}
\end{align*}
Let us concentrate for a moment on the term 
$\int_{0}^{s-}\mathcal{E}(\widetilde{A}^{\delta,\gamma})_{t-} \ud A_t $, by considering the two cases $\delta > \gamma$ and $\delta < \gamma$:

$\bullet\, \delta > \gamma$: 
Using \cref{lem:StochExp}.\ref{item:StochExp7} we derive the inequality
\begin{align*}
\int_{0}^{s-}\mathcal{E}(\widetilde{A}^{\delta,\gamma})_{t-} \,\ud A_t &= \frac{1}{\delta - \gamma} \int_{0}^{s-}(1 + \Delta(\gamma A)_t)\mathcal{E}(\widetilde{A}^{\delta,\gamma})_{t-} \,\ud\widetilde{A}^{\delta,\gamma}_t\\
&\leq \frac{(1 + \gamma \Phi)}{\delta - \gamma}\mathcal{E}(\widetilde{A}^{\delta,\gamma})_{s-}.
\numberthis\label{ineq:case_delta_greater_gamma_1}
\end{align*}
Thus, returning to \eqref{ineq:after_integration}, we have
\begin{align*}
\int_{0}^{T}\mathcal{E}(\delta A)_{t-} |F(t)|^2 \,\ud A_t 
&\overset{\eqref{ineq:after_integration}}{\leq}
    \frac{1 + \gamma \Phi}{\gamma} \int_{0}^{T} \mathcal{E}(\gamma A)_{s-} \frac{|f_s|^2}{\alpha^2_s} \int_{0}^{s-}\mathcal{E}(\widetilde{A}^{\delta,\gamma})_{t-} \,\ud A_t  \,\ud C_s\\
&\overset{\eqref{ineq:case_delta_greater_gamma_1}}{\leq} 
    \frac{(1 + \gamma \Phi)^2}{\gamma (\delta - \gamma)} \int_{0}^{T} \mathcal{E}(\gamma A)_{s-} \mathcal{E}(\widetilde{A}^{\delta,\gamma})_{s-} \frac{|f_s|^2}{\alpha^2_s} \,\ud C_s\\
&\overset{\phantom{\eqref{ineq:after_integration}}}{=} 
    \frac{(1 + \gamma \Phi)^2}{\gamma (\delta - \gamma)} \int_{0}^{T} \mathcal{E}(\gamma A)_{s-} \mathcal{E}(\gamma A)_{s-}^{-1} \mathcal{E}(\delta A)_{s-}  \frac{|f_s|^2}{\alpha^2_s} \,\ud C_s\\
&\overset{\phantom{\eqref{ineq:after_integration}}}{=}  
    \frac{(1 + \gamma \Phi)^2}{\gamma (\delta - \gamma)} \int_{0}^{T} \mathcal{E}(\delta A)_{s-}  \frac{|f_s|^2}{\alpha^2_s} \,\ud C_s,
\end{align*}
which is integrable for $\delta \leq \beta$.

$\bullet \hspace{0.1cm} \delta < \gamma$: 
Using \cref{lem:StochExp}.\ref{item:StochExp7} again, we deduce 
\begin{align*}
\int_{0}^{s-}\mathcal{E}(\widetilde{A}^{\delta,\gamma})_{t-} \,\ud A_t &= \frac{1}{\delta - \gamma} \int_{0}^{s-}(1 + \Delta(\gamma A)_t)\mathcal{E}(\widetilde{A}^{\delta,\gamma})_{t-} \,\ud\widetilde{A}^{\delta,\gamma}_t\\
&=\frac{1}{|\delta - \gamma|} \int_{0}^{s-}(1 + \Delta(\gamma A)_t)\,\ud\left(-\mathcal{E}(\widetilde{A}^{\delta,\gamma})\right)_{t} \\
&\leq \frac{1 + \gamma \Phi}{|\delta - \gamma|},
\numberthis
\label{ineq:case_gamma_greater_delta_1}
\end{align*}
where, for the inequality, we used that $\mathcal{E}(\widetilde{A}^{\delta,\gamma})$ is non--increasing; see   \ref{item:StochExp4} and \ref{item:StochExp7} of \cref{lem:StochExp}.
Thus, returning to \eqref{ineq:after_integration}, we have
\begin{align*}
\int_{0}^{T}\mathcal{E}(\delta A)_{t-} |F(t)|^2 \,\ud A_t 
&\overset{\eqref{ineq:after_integration}}{\leq}
    \frac{1 + \gamma \Phi}{\gamma} \int_{0}^{T} \mathcal{E}(\gamma A)_{s-} \frac{|f_s|^2}{\alpha^2_s} \int_{0}^{s-}\mathcal{E}(\widetilde{A}^{\delta,\gamma})_{t-} \,\ud A_t  \,\ud C_s\\
&\overset{\eqref{ineq:case_gamma_greater_delta_1}}{\leq} 
    \frac{(1 + \gamma \Phi)^2}{\gamma |\delta - \gamma|} \int_{0}^{T} \mathcal{E}(\gamma A)_{s-} 
    \frac{|f_s|^2}{\alpha^2_s} \,\ud C_s.
\end{align*}

In total, summing up the conclusions of the two cases, we have that for every $(\gamma,\delta) \in (0,\beta]^2$ with $\gamma \neq \delta$ we can rewrite \eqref{ineq:after_integration} as
\begin{align*}
    \int_{0}^{T}\mathcal{E}(\delta A)_{t-} |F(t)|^2 \,\ud A_t \leq \frac{(1 + \gamma \Phi)^2}{\gamma |\delta - \gamma|} \int_{0}^{T} \mathcal{E}((\gamma \vee \delta) A)_{s-} \frac{|f_s|^2}{\alpha^2_s} \,\ud C_s
\end{align*}
or, equivalently -- in terms of the introduced notation -- as
\begin{equation}\label{aprioriineq3}
\mathbb{E}\left[\int_{0}^{T}\mathcal{E}(\delta A)_{t-} |F(t)|^2 \,\ud A_t\right] \leq \Lambda^{\gamma,\delta,\Phi} \left\|\frac{f}{\alpha}\right\|^{2}_{\mathbb{H}^{2}_{\gamma \vee \delta}(\mathbb{R}^d)}. 
\end{equation}

We are now ready to estimate $\|\alpha y\|_{\mathbb{H}^{2}_{\delta}(\mathbb{R}^d)}$. 
Using \eqref{apriorieq2}, \eqref{def:F_Integral_process}, Jensen's inequality and the inequality $(a + b)^2 \leq 2(a^2 + b^2)$, in conjunction with the fact that $A$ is predictable, we have
\begin{align*}
\|\alpha y\|^{2}_{\mathbb{H}^{2}_{\delta}(\mathbb{R}^d)} &= \mathbb{E}\left[\int_{0}^{T}\mathcal{E}(\delta A)_{t-} |y_t|^2 \,\ud A_t\right] 
    \leq \mathbb{E}\left[\int_{0}^{T}\mathcal{E}(\delta A)_{t-} \mathbb{E}\left[\left|\xi +  F(t) \right|^2 \Big| \mathcal{G}_t \right] \,\ud A_t\right]\\
&\leq 2 \mathbb{E}\left[\int_{0}^{T} \mathbb{E}\left[\mathcal{E}(\delta A)_{t-} |\xi|^2 + \mathcal{E}(\delta A)_{t-} \big|F(t) \big|^2 \Big| \mathcal{G}_t \right] \,\ud A_t\right]\\
&= 2 \mathbb{E}\left[\int_{\mathbb{R}_+} 
\mathbb{E}\left[\mathcal{E}(\delta A)_{t-} |\xi|^2 + \mathcal{E}(\delta A)_{t-} \big|F(t)\big|^2 \Big| \mathcal{G}_t \right] \,\ud A_{T\wedge t}\right]\\
&= 2 \mathbb{E}\left[\int_{0}^{T} \mathcal{E}(\delta A)_{t-} |\xi|^2 + \mathcal{E}(\delta A)_{t-} |F(t)|^2 \,\ud A_t\right]\\
&\leq 2  \mathbb{E}\left[|\xi|^2 \int_{0}^{T} \mathcal{E}(\delta A)_{t-} \,\ud A_t\right] + 2 \Lambda^{\gamma,\delta,\Phi} \left\|\frac{f}{\alpha}\right\|^{2}_{\mathbb{H}^{2}_{\gamma \vee \delta}}\\
&\leq \frac{2(1 + \delta \Phi)}{\delta} \hspace{0.1cm} \|\xi\|^{2}_{\mathbb{L}^{2}_{\delta}(\mathcal{G}_T;\mathbb{R}^d)} 
+ 2 \hspace{0.1cm} \Lambda^{\gamma,\delta,\Phi} \left\|\frac{f}{\alpha}\right\|^{2}_{\mathbb{H}^{2}_{\gamma \vee \delta}(\mathbb{R}^d)}. 
\end{align*}

We move on to the estimate of $\|y\|_{\mathcal{S}^{2}_{\delta}(\mathbb{R}^d)}$.
Once again, we will use \eqref{apriorieq2}, \eqref{def:F_Integral_process}, \eqref{aprioriineq1}, Jensen's inequality and $(a + b)^2 \leq 2(a^2 + b^2)$. 
Furthermore, we will need Doob's inequality and the vector analogue of the triangle inequality for conditional expectations.  By definition
\begin{align*}
\|y\|^{2}_{\mathcal{S}^{2}_{\delta}(\mathbb{R}^d)} 
&= \mathbb{E}\Big[ \sup_{0 \leq t \leq T}\left(\mathcal{E}(\delta A)_{t-} |y_t|^2\right)\Big] 
= \mathbb{E}\Big[ \sup_{0 \leq t \leq T}\Big(\mathcal{E}(\delta A)^{\frac{1}{2}}_{t-} |y_t|\Big)^2\Big]\\
&= \mathbb{E}\Big[ \sup_{0 \leq t \leq T}
    \Big(\mathcal{E}(\delta A)^{\frac{1}{2}}_{t-} \Big|\mathbb{E}\big[\xi +  F(t)  \big| \mathcal{G}_t \big] \Big|\Big)^2\Big] \\
&\leq  \mathbb{E}\Big[ \sup_{0 \leq t \leq T}
    \Big(\mathcal{E}(\delta A)^{\frac{1}{2}}_{t-} \mathbb{E}\Big[\big|\xi +  F(t) \,\ud C_s \big|  \Big| \mathcal{G}_t \Big] \Big)^2\Big] \\
&\leq 2 \mathbb{E}\Big[ \sup_{0 \leq t \leq T}
    \Big( \mathbb{E}\Big[\sqrt{ \mathcal{E}(\delta A)_{t-} |\xi|^2 + \mathcal{E}(\delta A)_{t-}  |F(t)|^2 } \Big| \mathcal{G}_t \Big] \Big)^2\Big] \\
&\leq 2 \mathbb{E}\bigg[ 
    \sup_{0 \leq t \leq T} \bigg(
    \mathbb{E}\bigg[\Big(\mathcal{E}(\delta A)_{t-} |\xi |^2 + \frac{1 + \gamma \Phi}{\gamma} \mathcal{E}(\delta A)_{t-} \mathcal{E}(\gamma A)^{-1}_{t-} \int_{t}^{T} \mathcal{E}(\gamma A)_{s-} \frac{|f_s|^2}{\alpha^2_s} \Big)^{\frac{1}{2}} \bigg| \mathcal{G}_t \bigg] \bigg)^2\bigg].
\end{align*}
At this point, we will split again our analysis in two cases:

$\bullet \hspace{0.1cm} \delta < \gamma$: By definition of the stochastic exponential, see \eqref{stocheq2}, for $A$ increasing we have that $0 < \mathcal{E}(\delta A)_{t-} \leq \mathcal{E}(\gamma A)_{t-}$ or equivalently $0 < \mathcal{E}(\delta A)_{t-} \mathcal{E}(\gamma A)^{-1}_{t-} \leq 1$.
Hence, we get 
\begin{align}
&2 \mathbb{E}\left[ \sup_{0 \leq t \leq T}\left(\mathbb{E}\left[\sqrt{\mathcal{E}(\delta A)_{t-} |\xi |^2 + \frac{1 + \gamma \Phi}{\gamma} \mathcal{E}(\delta A)_{t-} \mathcal{E}(\gamma A)^{-1}_{t-} \int_{t}^{T} \mathcal{E}(\gamma A)_{s-} \frac{|f_s|^2}{\alpha^2_s} \,\ud C_s } \bigg| \mathcal{G}_t \right] \right)^2\right] \label{ineq:S2_inside_substitution}\\
&\hspace{1em}\leq 2 \mathbb{E}\left[ \sup_{0 \leq t }\left(\mathbb{E}\left[\sqrt{\mathcal{E}(\delta A)_{T-} |\xi |^2 + \frac{1 + \gamma \Phi}{\gamma} \int_{0}^{T} \mathcal{E}(\gamma A)_{s-} \frac{|f_s|^2}{\alpha^2_s} \,\ud C_s } \bigg| \mathcal{G}_t \right] \right)^2\right]\nonumber \\
&\hspace{1em}\leq 8 \mathbb{E}\left[ \mathcal{E}(\delta A)_{T-} |\xi |^2 + \frac{1 + \gamma \Phi}{\gamma} \int_{0}^{T} \mathcal{E}(\gamma A)_{s-} \frac{|f_s|^2}{\alpha^2_s} \,\ud C_s  \right] \nonumber\\
&\hspace{1em}\leq  8 \hspace{0.1cm} \|\xi\|^2_{\mathbb{L}^{2}_{\delta}(\mathcal{G}_T;\mathbb{R}^d)} + 8 \hspace{0.1cm} \frac{1 + \gamma \Phi}{\gamma} \hspace{0.1cm} \left\|\frac{f}{\alpha}\right\|^{2}_{\mathbb{H}^{2}_{\gamma}(\mathbb{R}^d)}.\nonumber
\end{align}

$ \bullet \, \delta > \gamma$: 
We will use the fact that $ \mathcal{E}(\delta A)_{\cdot} \mathcal{E}(\gamma A)^{-1}_{\cdot} = \mathcal{E}(\widetilde{A}^{\delta,\gamma})_{\cdot}$ is non--decreasing; see \ref{item:StochExp4} and \ref{item:StochExp7} in \cref{lem:StochExp}. 
Starting at the left-hand side of \eqref{ineq:S2_inside_substitution}, we can proceed exactly like in the previous case except that now we transfer 
$\mathcal{E}(\delta A)_{t-} \mathcal{E}(\gamma A)^{-1}_{t-}$ inside the integral and we bound it from above by $\mathcal{E}(\delta A)_{s-} \mathcal{E}(\gamma A)^{-1}_{s-}$, for $s\ge t$. 
After this simplification, we get the same formulas with the difference that we have $\mathcal{E}(\delta A)_{s-}$ in the place of $\mathcal{E}(\gamma A)_{s-}$ inside the Lebesgue--Stieltjes integral. 

Combining the two cases we get 
\begin{align*}
 \|y\|^{2}_{\mathcal{S}^{2}_{\delta}(\mathbb{R}^d)} \leq 8 \hspace{0.1cm} \|\xi\|^2_{\mathbb{L}^{2}_{\delta}(\mathcal{G}_T;\mathbb{R}^d)} 
 + 8 \hspace{0.1cm} \frac{1 + \gamma \Phi}{\gamma} \hspace{0.1cm} \left\|\frac{f}{\alpha}\right\|^{2}_{\mathbb{H}^{2}_{\gamma \vee \delta}(\mathbb{R}^d)}. 
\end{align*}

What remains to prove is a bound for $\|\eta\|_{\mathcal{H}^{2}_{\delta}(\mathbb{R}^d)}$. 
We are going to use the identity $\int_{t}^{T} \, \ud\eta_s = \xi - y_t + F(t).$ 
Let $\eta = (\eta^1,\dots,\eta^d)$. We have per coordinate $i \in \{1,\dots,d\}$ that
\begin{align*}
(\eta^i_T - \eta^i_{T\wedge t})^2 &= (\eta^i_{T})^2 -  (\eta^i_{T\wedge t})^2 - 2\eta^i_{T\wedge t} (\eta^i_T - \eta^i_{T\wedge t})\\
&=  ((\eta^i_{T})^2 - \langle\eta^i\rangle_T) - ((\eta^i_{T\wedge t})^2 - \langle\eta^i\rangle_{T\wedge t}) - 2\eta^i_{T\wedge t} (\eta^i_T - \eta^i_{T\wedge t}) + \langle\eta^i\rangle_T - \langle\eta^i\rangle_{T\wedge t}.
\end{align*}
Hence, because for every $\mathbb{G}-$martingale $M$ and $t \in \mathbb{R}_+$ holds that $\mathbb{E}\left[M_T\big|\mathcal{G}_t\right] = M_{T\wedge t}$ and $\eta^i, (\eta^i)^2 - \langle \eta^i \rangle$ are $\mathbb{G}-$martingales, we have from the linearity of the conditional expectation that
\begin{equation}\label{3.10}
 \mathbb{E}[|\xi - y_t + F(t)|^2 | \mathcal{G}_t] 
 = \mathbb{E}[|\eta_T - \eta_{T\wedge t}|^2   | \mathcal{G}_t] = \mathbb{E}\left[\int_{t}^{T}\,\ud\text{Tr}[\langle\eta\rangle]_s \Bigg|\mathcal{G}_t\right].
\end{equation}
\[\]
Direct computations using \eqref{stochexp}, yield that
\begin{align*}
\int_{0}^{T}\mathcal{E}(\delta A)_{s-}\ud\text{Tr}[\langle\eta\rangle_s] &= \delta \int_{0}^{T}\int_{0}^{s-}\mathcal{E}(\delta A)_{t-}\,\ud A_t\,\ud\text{Tr}[\langle\eta\rangle]_s + \text{Tr}[\langle\eta\rangle]_T - \text{Tr}[\langle\eta\rangle]_{0} \\
&= \delta \int_{0}^{T}\int_{0}^{T} \mathds{1}(s)_{\rrbracket t,T\rrbracket} \mathcal{E}(\delta A)_{t-}\,\ud A_t\,\ud\text{Tr}[\langle\eta\rangle]_s + \text{Tr}[\langle\eta\rangle]_T - \text{Tr}[\langle\eta\rangle]_{0} \\
&= \delta \int_{0}^{T}\mathcal{E}(\delta A)_{t-}\int_{t}^{T}\,\ud\text{Tr}[\langle\eta\rangle]_s\,\ud A_t + \text{Tr}[\langle\eta\rangle]_T - \text{Tr}[\langle\eta\rangle]_{0} \\
&\leq \delta \int_{0}^{T}\mathcal{E}(\delta A)_{t-}\int_{t}^{T}\,\ud\text{Tr}[\langle\eta\rangle]_s\,\ud A_t + \text{Tr}[\langle\eta\rangle]_T.
\end{align*}
Hence, we have
\begin{align*}
\|\eta\|^{2}_{\mathcal{H}^{2}_{\delta}(\mathbb{R}^d)} = \mathbb{E}\left[\int_{0}^{T}\mathcal{E}(\delta A)_{s-}\ud\text{Tr}[\langle\eta\rangle_s] \right] \leq \delta \hspace{0.1cm}\mathbb{E}\left[\int_{0}^{T}\mathcal{E}(\delta A)_{t-}\int_{t}^{T}\,\ud\text{Tr}[\langle\eta\rangle]_s\,\ud A_t\right] + \mathbb{E}\left[\text{Tr}[\langle\eta\rangle]_T\right].
\end{align*}
Regarding the first term on the right side of the above inequality, using the fact that $A$ is predictable and \eqref{stochexp}, \eqref{apriorieq2}$,\eqref{3.10}$, we get
\begin{align*}
\mathbb{E}\left[\int_{0}^{T}\mathcal{E}(\delta A)_{t-}\int_{t}^{T}\,\ud\text{Tr}[\langle\eta\rangle]_s\,\ud A_t\right] &= \mathbb{E}\left[\int_{0}^{T}\mathcal{E}(\delta A)_{t-} \mathbb{E}\left[\int_{t}^{T}\,\ud\text{Tr}[\langle\eta\rangle]_s \Bigg|\mathcal{G}_t\right]\,\ud A_t\right] \\
&= \mathbb{E}\left[\int_{0}^{T}\mathcal{E}(\delta A)_{t-}  \mathbb{E}[|\xi - y_t + F(t)|^2 | \mathcal{G}_t]\,\ud A_t\right] \\
&\leq 3 \hspace{0.1cm}\mathbb{E}\left[\int_{0}^{T}\mathcal{E}(\delta A)_{t-}  \mathbb{E}[|\xi|^2 + |y_t|^2 + |F(t)|^2 | \mathcal{G}_t]\,\ud A_t\right] \\
&\leq 3 \hspace{0.1cm}\mathbb{E}\left[\int_{0}^{T}\mathcal{E}(\delta A)_{t-}  |\xi|^2\,\ud A_t\right] + 3 \hspace{0.1cm}\mathbb{E}\left[\int_{0}^{T}\mathcal{E}(\delta A)_{t-}  \mathbb{E}[|F(t)|^2 | \mathcal{G}_t]\,\ud A_t\right] \\
&\hspace{0.4cm}+ 6 \hspace{0.1cm}\mathbb{E}\left[\int_{0}^{T}\mathcal{E}(\delta A)_{t-}  \mathbb{E}[|\xi|^2 + |F(t)|^2 | \mathcal{G}_t]\,\ud A_t\right] \\
&= 9  \hspace{0.1cm}\mathbb{E}\left[\int_{0}^{T}\mathcal{E}(\delta A)_{t-}  |\xi|^2\,\ud A_t\right] + 9 \hspace{0.1cm}\mathbb{E}\left[\int_{0}^{T}\mathcal{E}(\delta A)_{t-}  \mathbb{E}[|F(t)|^2 | \mathcal{G}_t]\,\ud A_t\right]\\
&\leq \frac{9(1 + \delta \Phi)}{\delta}  \hspace{0.1cm} \|\xi\|^2_{\mathbb{L}^{2}_{\delta}(\mathcal{G}_T;\mathbb{R}^d)} 
+ 9\Lambda^{\gamma,\delta,\Phi}  \hspace{0.1cm} \left\|\frac{f}{\alpha}\right\|^{2}_{\mathbb{H}^{2}_{\gamma \vee \delta}(\mathbb{R}^d)}.
\end{align*}
As for the second term, from \eqref{apriorieq2} and \eqref{aprioriineq1}, we get
\begin{align*}
\mathbb{E}\left[\text{Tr}[\langle\eta\rangle]_T\right] &= \mathbb{E}\left[ |\xi - y_0 + F(0)|^2\right] \leq 3\hspace{0.1cm} \mathbb{E}\left[ |\xi|^2\right] + 3\hspace{0.1cm} \mathbb{E}\left[|y_0|^2\right] + 3\hspace{0.1cm} \mathbb{E}\left[F(0)|^2\right] \\
&\leq 9\hspace{0.1cm} \mathbb{E}\left[ |\xi|^2\right] + 9\hspace{0.1cm} \mathbb{E}\left[F(0)|^2\right] \\
&\leq 9\hspace{0.1cm}\|\xi\|^2_{\mathbb{L}^{2}_{\delta}(\mathcal{G}_T;\mathbb{R}^d)} 
+ \frac{9}{\gamma \vee \delta}\hspace{0.1cm} \left\|\frac{f}{\alpha}\right\|^{2}_{\mathbb{H}^{2}_{\gamma \vee \delta}(\mathbb{R}^d)}.
\end{align*}
Combining all the above, we finally arrive at
\begin{equation*}
\|\eta\|^{2}_{\mathcal{H}^{2}_{\delta}(\mathbb{R}^d)} 
\leq 9(2 + \delta\Phi) \hspace{0.1cm} \|\xi\|^2_{\mathbb{L}^{2}_{\delta}(\mathcal{G}_T;\mathbb{R}^d)} 
+ 9\left(\frac{1}{\gamma \vee \delta} + \delta \Lambda^{\gamma,\delta,\Phi}\right)\hspace{0.1cm} \left\|\frac{f}{\alpha}\right\|^{2}_{\mathbb{H}^{2}_{\gamma \vee \delta}(\mathbb{R}^d)}. \qedhere
\end{equation*}
\end{proof}

Let $\mathcal{C}_\beta := \{(\gamma,\delta) \in (0,\beta]^2 : \gamma < \delta\}$. 
We define 
\begin{align*}
M_\star^\Phi(\beta)&: = \inf_{(\gamma,\delta) \in \mathcal{C}_\beta} \left\{\frac{9}{\delta} + 8 \frac{(1 + \gamma\Phi)}{ \gamma} + 9 \hspace{0.1cm} \frac{\delta}{\delta - \gamma}\frac{(1 + \gamma \Phi)^2}{\gamma} \right\}\\
\shortintertext{and}
\widetilde{M}^\Phi(\beta)&: = \inf_{(\gamma,\delta) \in \mathcal{C}_\beta} \left\{ \frac{9}{\delta} +  8 \frac{(1 + \gamma\Phi)}{ \gamma} +\frac{2 + 9\delta}{\delta - \gamma}\hspace{0.1cm} \frac{(1 + \gamma \Phi)^2}{\gamma}\right\}.
\end{align*}
In order to complete our analysis, we provide asymptotic bounds for $M_\star^\Phi(\beta)$ and $\widetilde{M}^\Phi(\beta)$ with respect to $\Phi$.

\begin{proposition}\label{prop:infima_for_M}
Let $\Phi \geq 0$ and $\beta \in (0,\infty)$, then we have
\begin{align*}
    M_{\star}^{\Phi}(\beta) 
    &= \min_{\gamma \in (0,\beta)} \left\{\frac{9}{\beta} + 8 \frac{(1 + \gamma\Phi)}{ \gamma} + 9 \hspace{0.1cm} \frac{\beta}{\beta - \gamma}\frac{(1 + \gamma \Phi)^2}{\gamma} \right\} \numberthis
    \label{eq_3.12}\\
    &= \frac{6\sqrt{17} + 35}{\beta} + \left(6\sqrt{17} + 26\right)\Phi
    \shortintertext{and}
    \widetilde{M}^\Phi(\beta)
    &= \min_{\gamma \in (0,\beta)} \left\{ \frac{9}{\beta} +  8 \frac{(1 + \gamma\Phi)}{ \gamma} + \frac{2 + 9\beta}{\beta - \gamma}\hspace{0.1cm} \frac{(1 + \gamma \Phi)^2}{\gamma}\right\} \numberthis
    \label{eq_3.14}\\
    &= \frac{ 2 \sqrt{\frac{2}{\beta} + 9}\sqrt{\frac{2}{\beta} + 17}  + \frac{4}{\beta} + 35}{\beta} + \left( 2 \sqrt{\frac{2}{\beta} + 9}\sqrt{\frac{2}{\beta} + 17}  + \frac{4}{\beta} + 26\right) \Phi.
\end{align*}
Hence, we get that 
\begin{align}\label{M_ineq}
\lim_{\beta \rightarrow \infty} M_\star^\Phi(\beta) = \lim_{\beta \rightarrow \infty} \widetilde{M}^\Phi(\beta) = 
 \left(6\sqrt{17} + 26\right)\Phi.
\end{align}
\end{proposition}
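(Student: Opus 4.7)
The strategy is to first show that the $\delta$-minimization is attained at the boundary $\delta = \beta$, which will yield identities \eqref{eq_3.12} and \eqref{eq_3.14}, and then to reduce the remaining one-variable $\gamma$-minimization to a standard form of the type $\inf_\gamma [a/\gamma + b/(\beta-\gamma)]$ via an algebraic rewriting.

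\textbf{Step 1: reduction to $\delta=\beta$.} For any fixed $\gamma \in (0,\beta)$, inspect the $\delta$-dependence of each objective. In the first one, $\tfrac{9}{\delta}$ is decreasing in $\delta$, and $\tfrac{\delta}{\delta-\gamma} = 1 + \tfrac{\gamma}{\delta-\gamma}$ is also decreasing in $\delta$ when $\gamma>0$; hence the whole expression is strictly decreasing in $\delta$ on $(\gamma,\beta]$. For the second one, write $\tfrac{2+9\delta}{\delta-\gamma} = 9 + \tfrac{9\gamma+2}{\delta-\gamma}$, which is again decreasing in $\delta$. Therefore, in both infima the optimum is attained at $\delta=\beta$, giving \eqref{eq_3.12} and \eqref{eq_3.14}.

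\textbf{Step 2: a key algebraic identity.} I would prove the identity
\begin{equation*}
\frac{(1+\gamma\Phi)^2}{\gamma(\beta-\gamma)} = \frac{1}{\gamma\beta} + \frac{(1+\beta\Phi)^2}{\beta(\beta-\gamma)} - \Phi^2,
\end{equation*}
which is verified by clearing denominators and expanding: both sides multiplied by $\beta\gamma(\beta-\gamma)$ equal $\beta(1+\gamma\Phi)^2$. The whole point of this identity is that it separates the $\gamma$-dependence into a sum of an $a/\gamma$ term and a $b/(\beta-\gamma)$ term modulo constants, which is the perfect format for the final optimization. Applying it to the two objectives at $\delta=\beta$, the first becomes
\begin{equation*}
g(\gamma) = \frac{9}{\beta} + 8\Phi - 9\beta\Phi^2 + \frac{17}{\gamma} + \frac{9(1+\beta\Phi)^2}{\beta-\gamma},
\end{equation*}
while the second, using $k := 2+9\beta$ and the identity, reduces analogously to
\begin{equation*}
\widetilde{g}(\gamma) = \frac{9}{\beta} + 8\Phi - (2+9\beta)\Phi^2 + \frac{17+2/\beta}{\gamma} + \frac{(9+2/\beta)(1+\beta\Phi)^2}{\beta-\gamma}.
\end{equation*}

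\textbf{Step 3: one-variable optimization.} I would next record the elementary fact that for $a,b>0$,
\begin{equation*}
\min_{\gamma\in(0,\beta)}\Big[\frac{a}{\gamma}+\frac{b}{\beta-\gamma}\Big] = \frac{(\sqrt{a}+\sqrt{b})^2}{\beta},
\end{equation*}
with minimizer $\gamma^\star = \sqrt{a}\,\beta/(\sqrt{a}+\sqrt{b})$, which follows from setting the derivative to zero. Applied to $g$ with $(a,b)=(17,\,9(1+\beta\Phi)^2)$, this gives $(\sqrt{a}+\sqrt{b})^2/\beta = (\sqrt{17}+3(1+\beta\Phi))^2/\beta$; expanding and collecting terms by order of $\Phi$, the resulting $9\beta\Phi^2$ cancels against the $-9\beta\Phi^2$ already present, leaving exactly the claimed $\tfrac{35+6\sqrt{17}}{\beta}+(26+6\sqrt{17})\Phi$. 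For $\widetilde{g}$, with $(a,b)=(17+2/\beta,\,(9+2/\beta)(1+\beta\Phi)^2)$, the same computation produces $\widetilde{g}_{\min}$; the $(2+9\beta)\Phi^2$ terms cancel and the remaining pieces recombine into the stated closed form.

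\textbf{Step 4: asymptotics.} The limit \eqref{M_ineq} is immediate from the closed-form expressions: as $\beta\to\infty$, the $\tfrac{1}{\beta}$- and $\tfrac{1}{\beta^2}$-terms vanish and $\sqrt{9+2/\beta}\sqrt{17+2/\beta}\to 3\sqrt{17}$, so both infima tend to $(26+6\sqrt{17})\Phi$. The only place requiring genuine insight is Step 2—the algebraic identity—after which everything else is bookkeeping; I expect the main obstacle to be the careful cancellation in Step 3 to verify that no stray $\Phi^2$ or $\beta\Phi^2$ terms survive.
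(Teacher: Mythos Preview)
Your proof is correct. Step~1 matches the paper's argument verbatim; the divergence is in how the resulting one-variable minimum over $\gamma\in(0,\beta)$ is computed. The paper performs the change of variable $\tfrac{1}{\gamma}+\Phi=\lambda\bigl(\tfrac{1}{\beta}+\Phi\bigr)$ with $\lambda\in(1,\infty)$, reducing the first problem to minimizing $h(\lambda)=8\lambda+9\lambda^{2}/(\lambda-1)$ by direct calculus (critical point $\lambda=1+3/\sqrt{17}$), and handles the second problem by the same substitution. Your route instead uses the partial-fraction-type identity $\tfrac{(1+\gamma\Phi)^{2}}{\gamma(\beta-\gamma)}=\tfrac{1}{\gamma\beta}+\tfrac{(1+\beta\Phi)^{2}}{\beta(\beta-\gamma)}-\Phi^{2}$ to force the objective into the separable form $\tfrac{a}{\gamma}+\tfrac{b}{\beta-\gamma}$, for which the minimum $(\sqrt{a}+\sqrt{b})^{2}/\beta$ is a one-line Cauchy--Schwarz fact. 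The paper's substitution is arguably more systematic (it makes the $\Phi$-dependence linear from the outset, so no $\Phi^{2}$ cancellation is ever needed), whereas your decomposition is more elementary and exposes the exact minimizer without any calculus; the trade-off is the bookkeeping in Step~3 to check that the $9\beta\Phi^{2}$ and $(2+9\beta)\Phi^{2}$ terms cancel, which they do.
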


\begin{remark}
Let us point out that the discrepancy of the coefficients between \cref{lem:a_priori_estimates} and \cref{prop:infima_for_M} here, and \citet[Proposition 5.4.]{possamai2023reflections} is due to the difference in the definition of the norms, where we use the left limit of the stochastic exponential instead of the right one as in \cite{possamai2023reflections}, while they also focus on the special case where $\xi = 0$ (when adapted to our notation).
\end{remark}

\begin{proof}
We will present here only the part of the results that is needed in the main text, \textit{i.e.} for \eqref{eq_3.12} and \eqref{eq_3.14}. 
The remainder of this proof is deferred to \cref{sec_app:rest_of_comp}. 

Using the definition of $M_\star^\Phi(\beta),\widetilde{M}^\Phi(\beta)$ we can make a couple of observations. 
The first one is that we should only examine the case $\delta = \beta$, because for every pair $(\gamma,\delta) \in \mathcal{C}_{\beta}$ we have
    \begin{align*}
        \frac{9}{\delta} + 8 \frac{(1 + \gamma\Phi)}{ \gamma} + 9 \hspace{0.1cm} \frac{\delta}{\delta - \gamma}\frac{(1 + \gamma \Phi)^2}{\gamma} &\geq \frac{9}{\beta} + 8 \frac{(1 + \gamma\Phi)}{ \gamma} + 9 \hspace{0.1cm} \frac{\beta}{\beta - \gamma}\frac{(1 + \gamma \Phi)^2}{\gamma},\\
        \frac{9}{\delta} +  8 \frac{(1 + \gamma\Phi)}{ \gamma} + \frac{2 + 9\delta}{\delta - \gamma}\hspace{0.1cm} \frac{(1 + \gamma \Phi)^2}{\gamma} &\geq \frac{9}{\beta} +  8 \frac{(1 + \gamma\Phi)}{ \gamma} + \frac{2 + 9\beta}{\beta - \gamma}\hspace{0.1cm} \frac{(1 + \gamma \Phi)^2}{\gamma}.
    \end{align*}
    The second one is that
    \begin{align*}
        \lim_{\gamma \rightarrow 0^+} \frac{9}{\beta} + 8 \frac{(1 + \gamma\Phi)}{ \gamma} + 9 \hspace{0.1cm} \frac{\beta}{\beta - \gamma}\frac{(1 + \gamma \Phi)^2}{\gamma} 
        &=  \lim_{\gamma \rightarrow \beta^-} \frac{9}{\beta} + 8 \frac{(1 + \gamma\Phi)}{ \gamma} + 9 \hspace{0.1cm} \frac{\beta}{\beta - \gamma}\frac{(1 + \gamma \Phi)^2}{\gamma} 
        = + \infty
        \shortintertext{and}
         \lim_{\gamma \rightarrow 0^+} \frac{9}{\beta} +  8 \frac{(1 + \gamma\Phi)}{ \gamma} + \frac{2 + 9\beta}{\beta - \gamma}\hspace{0.1cm} \frac{(1 + \gamma \Phi)^2}{\gamma} 
         &=  \lim_{\gamma \rightarrow \beta^-} \frac{9}{\beta} +  8 \frac{(1 + \gamma\Phi)}{ \gamma} +\frac{2 + 9\beta}{\beta - \gamma}\hspace{0.1cm} \frac{(1 + \gamma \Phi)^2}{\gamma} 
         = + \infty.
    \end{align*}
    Therefore, we have that
\begin{align*}
    M_{\star}^{\Phi}(\beta) &= \min_{\gamma \in (0,\beta)} \left\{\frac{9}{\beta} + 8 \frac{(1 + \gamma\Phi)}{ \gamma} + 9 \hspace{0.1cm} \frac{\beta}{\beta - \gamma}\frac{(1 + \gamma \Phi)^2}{\gamma} \right\} \\
    \shortintertext{and}
    \widetilde{M}^\Phi(\beta) &= \min_{\gamma \in (0,\beta)} \left\{ \frac{9}{\beta} +  8 \frac{(1 + \gamma\Phi)}{ \gamma} + \frac{2 + 9\beta}{\beta - \gamma}\hspace{0.1cm} \frac{(1 + \gamma \Phi)^2}{\gamma}\right\}. \qedhere
\end{align*}

\end{proof}


\section{Existence and uniqueness results}
\label{4}

In this section, we provide general existence and uniqueness results for McKean--Vlasov BSDEs and mean-field systems of BSDEs, in a setting where the filtrations are stochastically discontinuous and the stochastic integrals are defined with respect to general $\mathbb L^2$--martingales.
In other words, we consider discrete-time and continuous-time McKean--Vlasov BSDEs and mean-field systems of BSDEs in a unified setting, while the driving processes are general and include both diffusions and jumps.
We consider first a ``path-dependent'' version of McKean--Vlasov BSDEs and mean-field systems of BSDEs, where the generator depends on the initial segment of the solution $Y$, see \eqref{MVBSDE_with_initial_path} and \eqref{mfBSDE_with_initial_path}.
Then, we also provide existence and uniqueness results for ``classical'' McKean--Vlasov BSDEs and mean-field systems of BSDEs that depend only on the instantaneous value of $Y$, see \eqref{MVBSDE_instantaneous} and \eqref{mfBSDE_instantaneous}, under weaker assumptions.


\subsection{McKean--Vlasov BSDEs}
\label{MV-BSDE}

Let us now introduce the setting for the first existence and uniqueness theorem, which concerns McKean--Vlasov BSDEs, \textit{i.e.} BSDEs where the law of the process affects the generator of the equation. 
Recall that $(\Omega,\mathcal{G},\mathbb{G},\mathbb{P})$ denotes a complete stochastic basis, and assume it supports the following:
\begin{enumerate} [label=\textbf{(MV\arabic*)}]
    \item\label{F1} A pair of martingales $\overline{X} := (X^\circ,X^{\natural}) \in \mathcal{H}^2(\mathbb{G};\mathbb{R}^p) \times \mathcal{H}^{2,d}(\mathbb{G};\mathbb{R}^n)$ that satisfy $M_{\mu^{X^{\natural}}}[\Delta X^\circ|\widetilde{\mathcal{P}}^{\mathbb{G}}]=0$, where $\mu^{X^{\natural}}$ is the random measure generated by the jumps of $X^{\natural}$.\footnote{Since the filtration $\mathbb{G}$ is given as well as the pair $\overline{X}$, we will make use of     $C^{(\mathbb{G},\overline{X})}$, resp. 
    $c^{(\mathbb{G},\overline{X})}$, as defined in \eqref{def_C}, resp. \eqref{def_c}. 
    Moreover, we will use the kernels $K^{(\mathbb{G},\overline{X})}$ as determined by \eqref{def:Kernels}.}
    
    \item\label{F2} A $\mathbb{G}-$stopping time $T$ and a terminal condition $\xi \in \mathbb{L}^2_{\hat{\beta}}(\mathcal{G}_T,A^{(\mathbb{G},\overline{X}, f)};\mathbb{R}^d)$, for a $\hat{\beta} > 0$ and $A^{(\mathbb{G},\overline{X}, f)}$ as defined in \ref{F5} below.

    \item\label{F3} Functions $\Theta, \Gamma$ as in \cref{def_Gamma_function}, where the data for the definition are the pair $(\mathbb{G}, \overline{X})$, the process $C^{(\mathbb{G},\overline{X})}$ and the kernels $K^{(\mathbb{G},\overline{X})}$.
    \item\label{F4} A generator $f: \Omega \times \mathbb{R}_+ \times \mathbb{D}^d \times \mathbb{R}^{d \times p} \times \mathbb{R}^d \times \mathscr{P}(\mathbb{D}^d) {}\longrightarrow \mathbb{R}^d$ such that for any $(y,z,u,\mu) \in \mathbb{D}^d \times \mathbb{R}^{d \times p} \times \mathbb{R}^d \times \mathscr{P}(\mathbb{D}^d)$, the map 
    \begin{align*}
        (\omega,t) \longmapsto f(\omega,t,y,z,u,\mu) \hspace{0.2cm}\text{is}\hspace{0.2cm}\mathbb{G}\text{--progressively measurable}
    \end{align*}
    and satisfies the following (stochastic) Lipschitz condition 
    \begin{align*}
    \begin{multlined}[0.9\textwidth]
    |f(\omega,t,y,z,u,\mu) - f(\omega,t,y',z',u',\mu')|^2\\
    \leq\hspace{0.1cm} r(\omega,t) \hspace{0.1cm} \rho_{J_1^d}^2(y,y')+ \hspace{0.1cm} \vartheta^o(\omega,t) \hspace{0.1cm} |z - z'|^2 
     + \hspace{0.1cm} \vartheta^{\natural}(\omega,t) \hspace{0.1cm} |u - u'|^2 + \vartheta^*(\omega,t) \hspace{0.1cm} W^2_{2,\rho_{J_1^{d}}}\left(\mu,\mu'\right),
    \end{multlined}
    \end{align*}
        where  $         (r,\vartheta^o,\vartheta^{\natural},\vartheta^*): \left(\Omega \times \mathbb{R}_+, \mathcal{P}^{\mathbb{G}}\right) \hspace{0.2cm} {}\longrightarrow \hspace{0.2cm}\left(\mathbb{R}^4_+,\mathcal{B}\left(\mathbb{R}^4_+\right)\right).    $
     
     \item \label{F5} Define $\alpha^2 := \max\{\sqrt{r},\vartheta^o, \vartheta^{\natural}, \sqrt{\vartheta^*}\}$. 
     Consider the $\mathbb{G}$--predictable and c\`adl\`ag process
      \begin{align*}
         A^{(\mathbb{G},\overline{X}, f)}_\cdot := \int_{0}^{\cdot}\alpha^2_s \,\ud C^{(\mathbb{G},\overline{X})}_s,
      \end{align*} 
     then there exists $\Phi > 0$ such that
          $\Delta A^{(\mathbb{G},\overline{X}, f)} \leq \Phi,$ $\mathbb{P} \otimes C^{(\mathbb{G},\overline{X})}-\text{a.e.}$
    
    \item \label{F6} For the same $\hat{\beta}$ as in \ref{F2}, there exists $\Lambda_{\hat{\beta}} > 0$ such that $\mathcal{E}\left(\hat{\beta} A^{(\mathbb{G},\overline{X}, f)}\right)_T \leq \Lambda_{\hat{\beta}} $ $\mathbb{P}-$a.s.
      
    \item\label{F7} For the same $\hat{\beta}$ as in \ref{F2}, we have 
    \begin{align*}
    \mathbb{E}\left[    \int_{0}^{T}\mathcal{E}\left(\hat{\beta} A^{(\mathbb{G},\overline{X}, f)}\right)_{s-} \frac{|f(s,0,0,0,\delta_0)|^2}{\alpha^2_s}\ud C^{(\mathbb{G},\overline{X})}_s\right] < \infty, 
    \end{align*}
   where $\delta_0$ is the Dirac measure on the domain of the last variable concentrated at $0$, the neutral element of the addition.%
   %
\end{enumerate}

\begin{remark}
    Let us provide a few of remarks regarding the notation and description we have used in the assumptions we have considered.
    \begin{enumerate}
    \item In \textup{\ref{F5}}, we have suppressed the dependence on $(\mathbb{G},f)$ in the notation of $\alpha$, but we have carried it in the notation of $A^{(\mathbb{G},\overline{X},f)}$.

    \item In \textup{\ref{F7}}, and in view of \ref{F6}, the integrability condition in \textup{\ref{F7}} could be equivalently described by  $\|\frac{f}{\alpha}\|_{\mathbb{H}^2(\mathbb{G},C^{(\mathbb{G},\overline{X})};\mathbb{R}^d)}<\infty$.
        Indeed, under \textup{\ref{F6}} every $\hat{\beta}-$norm is equivalent to its $0-$counterpart.
        However, later we will weaken \textup{\ref{F6}}, hence we prefer to write the integrability condition by means of the stochastic exponential.
    
    \item In \textup{\ref{F7}}, and in view of \textup{\ref{F4}} where the probability measures are defined on the Skorokhod space, the neutral element of the addition is the constant function which equals 0. 
    We will consider later generators whose last variable will be probability measures defined on the Euclidean space $\mathbb{R}^d$, see \ref{F4_prime} defined below.
        Hence, in this case $0$ will denote the origin of $\mathbb{R}^d$.
    \end{enumerate}
\end{remark}

Let us now consider the path-dependent McKean--Vlasov BSDE, which has the form
\begin{align}
\begin{multlined}[0.9\textwidth]
Y_t = \xi + \int^{T}_{t}f\left(s,Y|_{[0,s]},Z_s  c^{(\mathbb{G},\overline{X})}_s,\Gamma^{(\mathbb{G},\overline{X},\Theta)}(U)_s,\mathcal{L}(Y|_{[0,s]})\right) \, \ud C^{(\mathbb{G},\overline{X})}_s\\
- \int^{T}_{t}Z_s \,  \ud X^\circ_s - \int^{T}_{t}\int_{\mathbb{R}^n}U_s \, \widetilde{\mu}^{(\mathbb{G},X^{\natural})}(\ud s,\ud x) - \int^{T}_{t} \,\ud M_s.
\label{MVBSDE_with_initial_path}
\end{multlined}
\end{align}
In this first setting, the generator depends on the initial segment of the solution, \textit{i.e.} on $Y|_{[0,\cdot]}$, and not just on $Y_\cdot$ as is customary in the BSDE literature.

\begin{definition}\label{def:standard_data_MVBSDE_initial_path}
    A set of data $\left(\mathbb{G},\overline{X},T,\xi,\Theta,\Gamma,f\right)$ that satisfies Assumptions \emph{\ref{F1}--\ref{F7}}  will be called \emph{standard data under $\hat{\beta}$} for the path-dependent McKean--Vlasov BSDE \eqref{MVBSDE_with_initial_path}.    
\end{definition}

Let us now present the existence and uniqueness result for the solution of the McKean--Vlasov BSDEs \eqref{MVBSDE_with_initial_path} in this general setting.
 
\begin{theorem}\label{thm:MVBSDE_initial_path}
Let $\left(\mathbb{G},\overline{X},T,\xi,\Theta,\Gamma,f\right)$ be standard data under $\hat{\beta}$ for the path-dependent McKean--Vlasov BSDE \eqref{MVBSDE_with_initial_path}. 
Assuming that
\begin{align*}
    \max\left\{2,\frac{2\Lambda_{\hat{\beta}}}{\hat{\beta}}\right\} M^{\Phi}_{\star}(\hat{\beta}) < 1,
\end{align*} 
then the McKean--Vlasov BSDE
\begin{align}
\begin{multlined}[0.9\textwidth]
Y_t = \xi + \int^{T}_{t}f\left(s,Y|_{[0,s]},Z_s   c^{(\mathbb{G},\overline{X})}_s,\Gamma^{(\mathbb{G},\overline{X},\Theta)}(U)_s,\mathcal{L}(Y|_{[0,s]})\right) \, \ud C^{(\mathbb{G},\overline{X})}_s\\
- \int^{T}_{t}Z_s \,  \ud X^\circ_s - \int^{T}_{t}\int_{\mathbb{R}^n}U_s \, \widetilde{\mu}^{(\mathbb{G},X^{\natural})}(\ud s,\ud x) - \int^{T}_{t} \,\ud M_s
\tag{\ref{MVBSDE_with_initial_path}}
\end{multlined}
\end{align}
admits a unique solution 
 \begin{align*}
     (Y,Z,U,M) \in
\mathcal{S}^{2}(\mathbb{G};\mathbb{R}^d) 
\times \mathbb{H}^{2}(\mathbb{G},X^\circ;\mathbb{R}^{d \times p})  
\times \mathbb{H}^{2}(\mathbb{G},X^\natural;\mathbb{R}^d) 
\times \mathcal{H}^{2}(\mathbb{G},\overline{X}^{\perp_{\mathbb{G}}};\mathbb{R}^{d}).\footnotemark
 \end{align*}%
\footnotetext{The reader may recall \cref{rem:notation_beta_zero} and the fact that under \ref{F5} the $\hat{\beta}-$norms are equivalent to their $0$--counterparts.}
\end{theorem}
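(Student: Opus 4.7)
The plan is to establish the result via a Banach fixed point argument on the product space $\mathcal{E}_{\hat\beta}:=\mathcal{S}^{2}_{\hat\beta}\times\mathbb{H}^{2}_{\hat\beta}(X^\circ)\times\mathbb{H}^{2}_{\hat\beta}(X^\natural)\times\mathcal{H}^{2}_{\hat\beta}(\overline X^{\perp_\mathbb{G}})$ endowed with $\|\cdot\|_{\star,\hat\beta,\mathbb{G},A^{(\mathbb{G},\overline X,f)},\overline X}$. By Assumption \ref{F6}, these $\hat\beta$-weighted norms are equivalent to their unweighted counterparts, so any fixed point automatically belongs to the space stated in the theorem. Given an input $(y,z,u,m)\in\mathcal{E}_{\hat\beta}$, I freeze both the law and the initial path in the generator by setting $g_s:=f(s,y|_{[0,s]},z_s c^{(\mathbb{G},\overline X)}_s,\Gamma^{(\mathbb{G},\overline X,\Theta)}(u)_s,\mathcal{L}(y|_{[0,s]}))$. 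The Lipschitz bound of \ref{F4}, together with \cref{lem:Gamma_is_Lipschitz}, \ref{F5} and \ref{F7}, guarantees $g/\alpha\in\mathbb{H}^{2}_{\hat\beta}$. Hence $N_t:=\mathbb{E}[\xi+\int_0^T g_s\,\ud C^{(\mathbb{G},\overline X)}_s\mid\mathcal{G}_t]$ is a well-defined square-integrable $\mathbb{G}$-martingale and the orthogonal decomposition $\mathcal{H}^2=\mathcal{L}^2(X^\circ)\oplus\mathcal{K}^2(\mu^{X^\natural})\oplus\mathcal{H}^2(\overline X^{\perp_\mathbb{G}})$ (see \cref{prop:CharacterOrthogSpace} and the discussion around it) uniquely identifies a triple $(Z,U,M)$ with $N-N_0=Z\cdot X^\circ+U\star\widetilde\mu^{(\mathbb{G},X^\natural)}+M$. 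Setting $Y_t:=N_t-\int_0^t g_s\,\ud C^{(\mathbb{G},\overline X)}_s$ then defines $\Psi(y,z,u,m):=(Y,Z,U,M)$, whose fixed points are precisely the solutions of \eqref{MVBSDE_with_initial_path}.

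To verify the contraction, I fix two inputs indexed by $i=1,2$ and denote the corresponding differences by $\delta y,\delta z,\delta u,\delta m,\delta g,\delta Y,\delta Z,\delta U,\delta M$. Then $(\delta Y,\delta Z,\delta U,\delta M)$ satisfies an equation of the form \eqref{apriorieq} with zero terminal condition and driver $\delta g$; applying \cref{lem:a_priori_estimates} with $\delta=\hat\beta$ and optimising over $\gamma\in(0,\hat\beta)$ as in \cref{prop:infima_for_M} yields $\|\Psi(y^1,z^1,u^1,m^1)-\Psi(y^2,z^2,u^2,m^2)\|^2_{\star,\hat\beta}\leq M^\Phi_\star(\hat\beta)\,\|\delta g/\alpha\|^2_{\mathbb{H}^{2}_{\hat\beta}}$. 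Dividing the Lipschitz estimate of \ref{F4} by $\alpha^2$ and using $r,\vartheta^*\leq\alpha^4$ together with $\vartheta^o,\vartheta^\natural\leq\alpha^2$, the bound in \cref{lem:Gamma_is_Lipschitz} for the $\Gamma$-term, the identities \eqref{comp_norm_altern_circ}--\eqref{comp_norm_altern_natural}, and the coupling bound $\mathcal{W}^{2}_{2,\rho_{J_1^d}}(\mathcal{L}(y^1|_{[0,s]}),\mathcal{L}(y^2|_{[0,s]}))\leq\mathbb{E}[\sup_{r\leq s}|\delta y_r|^2]$ obtained from \eqref{skorokineq} via the diagonal coupling induced by $(y^1,y^2)$, I aim to reach $\|\delta g/\alpha\|^2_{\mathbb{H}^{2}_{\hat\beta}}\leq (2\Lambda_{\hat\beta}/\hat\beta)\|\delta y\|^2_{\mathcal{S}^{2}_{\hat\beta}}+\|\delta z\|^2_{\mathbb{H}^{2}_{\hat\beta}(X^\circ)}+2\|\delta u\|^2_{\mathbb{H}^{2}_{\hat\beta}(X^\natural)}$. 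The factor $\Lambda_{\hat\beta}/\hat\beta$ appears twice, once for the path-dependent $r$-term and once for the $\mathcal{W}_2$-term, through the elementary identity $\int_0^T\mathcal{E}(\hat\beta A^{(\mathbb{G},\overline X,f)})_{s-}\ud A^{(\mathbb{G},\overline X,f)}_s=(\mathcal{E}(\hat\beta A^{(\mathbb{G},\overline X,f)})_T-1)/\hat\beta$, Assumption \ref{F6}, and the pointwise bound $\mathcal{E}(\hat\beta A^{(\mathbb{G},\overline X,f)})_{s-}\geq 1$, which together give $\mathbb{E}[\sup_{r\leq T}|\delta y_r|^2]\leq\|\delta y\|^2_{\mathcal{S}^{2}_{\hat\beta}}$. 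Chaining these two inequalities produces the contraction estimate $\|\Psi(y^1,z^1,u^1,m^1)-\Psi(y^2,z^2,u^2,m^2)\|^2_{\star,\hat\beta}\leq \max\{2,2\Lambda_{\hat\beta}/\hat\beta\}\,M^\Phi_\star(\hat\beta)\,\|(\delta y,\delta z,\delta u,\delta m)\|^2_{\star,\hat\beta}$, whose constant is strictly less than $1$ by hypothesis. Banach's fixed point theorem then delivers a unique fixed point, namely the unique solution of \eqref{MVBSDE_with_initial_path}.

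The principal technical obstacle is the handling of the $\mathcal{W}_2$-term: once the coupling bound replaces it by $\mathbb{E}[\sup_{r\leq s}|\delta y_r|^2]$, this quantity is deterministic in $s$, yet it still has to be controlled against the stochastic weighting $\mathcal{E}(\hat\beta A^{(\mathbb{G},\overline X,f)})_{s-}\,\ud A^{(\mathbb{G},\overline X,f)}_s$. Assumption \ref{F6} is precisely what permits the crucial step of extracting the deterministic quantity from the integral and bounding the remainder by $\Lambda_{\hat\beta}/\hat\beta$; the same mechanism simultaneously absorbs the path-dependence through $y|_{[0,s]}$. This is the reason the ratio $\Lambda_{\hat\beta}/\hat\beta$ enters the contraction constant and why the stochastic-exponential version of the a priori estimates in \cref{lem:a_priori_estimates}, rather than a deterministic-exponential version, is indispensable in this degree of generality.
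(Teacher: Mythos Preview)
Your proposal is correct and follows essentially the same route as the paper's proof: a Banach fixed-point argument on the $\hat\beta$-weighted product space, construction of the map via martingale representation and the orthogonal decomposition, application of the a~priori estimates of \cref{lem:a_priori_estimates} (optimised through \cref{prop:infima_for_M}) to the difference equation with zero terminal value, and the Lipschitz/coupling bounds leading to the constant $\max\{2,2\Lambda_{\hat\beta}/\hat\beta\}\,M^\Phi_\star(\hat\beta)$. Your justification of the factor $\Lambda_{\hat\beta}/\hat\beta$ via $\int_0^T\mathcal{E}(\hat\beta A)_{s-}\,\ud A_s=(\mathcal{E}(\hat\beta A)_T-1)/\hat\beta$ together with $\mathcal{E}(\hat\beta A)_{\cdot-}\geq 1$ is exactly the mechanism the paper uses (implicitly) when invoking \ref{F6}.
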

\begin{proof}
Let us initially repeat that assumption \ref{F6} leads to equivalence between the $\hat{\beta}$--norms and their $0$--counterparts.
However, we will need the $\hat{\beta}-$norms in order to construct the contraction, which will ultimately provide the fixed-point we are seeking.

Regarding the notation we will use in the remainder of this proof, since we have fixed the set of standard data under $\hat{\beta}$, for the convenience of the reader we will ease the notation by dropping the dependence on $\mathbb{G},\overline{X}$ and $f$.
More precisely, the objects $C^{(\mathbb{G},\overline{X})}$,  
$c^{(\mathbb{G},\overline{X})}$,  
$A^{(\mathbb{G},\overline{X}, f)}$, 
$K^{(\mathbb{G},\overline{X})}$, 
$\Gamma^{(\mathbb{G},\overline{X},\Theta)}$, 
and $\tnorm{\cdot}^{(\mathbb{G},\overline{X})}$
will be simply denoted by, respectively, 
$C$, $c$, $A$, $K$, $\Gamma^{\Theta}$, and $\tnorm{\cdot}$.
Additionally, we introduce the symbol $\mathscr{H}_{\hat{\beta}}^2$
for the product space 
\begin{align*}
\mathbb{H}^{2}_{\hat{\beta}}(\mathbb{G},A,X^\circ;\mathbb{R}^{d \times p})  
\times \mathbb{H}^{2}_{\hat{\beta}}(\mathbb{G},A,X^\natural;\mathbb{R}^d) 
\times \mathcal{H}^{2}_{\hat{\beta}}(\mathbb{G},A,\overline{X}^{\perp_{\mathbb{G}}};\mathbb{R}^{d}),
\end{align*}
whose norm corresponds to the sum of the respective norms.

Let us begin with a quadruple 
$(y,z,u,m) \in
\mathcal{S}^{2}_{\hat{\beta}}(\mathbb{G},A;\mathbb{R}^d) 
\times
\mathscr{H}_{\hat{\beta}}^2$.
Following the classical approach for BSDEs defined at that generality, see \emph{e.g.} \citet{elkaroui1997general}, \citet[Theorem 3.5]{papapantoleon2018existence} and \citet{possamai2023reflections}, for the given $(y,z,u,m)$ we get 
from the representation of the martingale
\begin{align*}
\mathbb{E}\left[ \xi + \int^{T}_{0}f\left(s,y|_{[0,s]},z_s  c_s,\Gamma^{\Theta}(u)_s,\mathcal{L}(y|_{[0,s]})\right) \, \ud C_s \bigg| \mathcal{G}_{\cdot} \right ]
\end{align*}
a unique\footnote{Of course, we use the convention that a class is represented by its elements.} triple of processes $(Z,U,M) \in \mathscr{H}^2_{\hat{\beta}}$,
as long as 
\begin{align}
    \Big\|\frac{g}{\alpha} \Big\|_{\mathbb{H}^2_{\hat{\beta}}(\mathbb{G},A,C;\mathbb{R}^d)}<\infty
    \qquad 
    \text{ for } g_{\cdot}:=f\big( \cdot,y|_{[0,\cdot]},z_{\cdot}  c_{\cdot},\Gamma^{\Theta}(u)_{\cdot},\mathcal{L}(y|_{[0,\cdot]}) \big).
    \label{norm_g_finite}
\end{align}
Then, we define the $\mathbb{G}-$semimartingale
\begin{align*}
Y_{\cdot} := 
\mathbb{E}\left[ \xi + \int^{T}_{\cdot}f\left(s,y|_{[0,s]},z_s  c_s,\Gamma^{\Theta}(u)_s,\mathcal{L}(y|_{[0,s]})\right) \, \ud C_s \bigg| \mathcal{G}_{\cdot} \right ],
\end{align*}
where we use its c\`adl\`ag version,
and we obtain the identity
\begin{multline*}
Y_t = \xi + \int^{T}_{t}f\left(s,y|_{[0,s]},z_s  c_s,\Gamma^{\Theta}(u)_s,\mathcal{L}(y|_{[0,s]})\right) \, \ud C_s\\
- \int^{T}_{t}Z_s \,  \ud X^\circ_s - \int^{T}_{t}\int_{\mathbb{R}^n}U_s \, \widetilde{\mu}^{(\mathbb{G},X^{\natural})}(\ud s,\ud x) - \int^{T}_{t} \,\ud M_s.
\end{multline*}
We have postponed the verification of \eqref{norm_g_finite}, whose validity we present now:
by using the trivial inequality $(a + b)^2 \leq 2 a^2 + 2 b^2$ one derives
    \begin{multline*}
    \int^{T}_{0}
    \mathcal{E}(\hat{\beta} A)_{s-} \frac{\left|f\left(s,y|_{[0,s]},z_s  c_s,\Gamma^{\Theta}(u)_s,\mathcal{L}(y|_{[0,s]})\right)\right|^2}{\alpha^2_s} \, \ud C_s\\
    \leq 2
    \int^{T}_{0}
    \mathcal{E}(\hat{\beta} A)_{s-} \frac{\left|f(s,0,0,0,\delta_0) - f\left(s,y|_{[0,s]},z_s  c_s,\Gamma^{\Theta}(u)_s,\mathcal{L}(y|_{[0,s]})\right)\right|^2}{\alpha^2_s} \, \ud C_s
    \\+2
    \int_{0}^{T}
    \mathcal{E}(\hat{\beta} A)_{s-} \frac{|f(s,0,0,0,\delta_0)|^2}{\alpha^2_s}\ud C_s    
\end{multline*}
and then, from \ref{F7} and an application of the Lipschitz property as described in \ref{F4}, one gets\footnote{We provide similar computations below in \eqref{ineq:norm_psi_initial_path}. Hence, for the sake of compactness, we omit at this point the detailed computations.}
\begin{align*}
     \left\| \frac{f\left(\cdot,y|_{[0,\cdot]},z_\cdot  c_\cdot,\Gamma^{\Theta}(u)_\cdot,\mathcal{L}(y|_{[0,\cdot]})\right)}{\alpha_\cdot}\right\|_{\mathbb{H}^2_{\hat{\beta}}(\mathbb{G},A,C;\mathbb{R}^d)} < \infty.
\end{align*}
Thus, the above computations, \cref{lem:a_priori_estimates} and \cref{prop:infima_for_M} yield that
\begin{align*}
    \|Y\|^{2}_{\mathcal{S}^{2}_{\hat{\beta}}(\mathbb{G},A;\mathbb{R}^d)} \leq 8 \|\xi\|^2_{\mathbb{L}^{2}_{\hat{\beta}}(\mathcal{G}_T,A;\mathbb{R}^d)}  + M^{\Phi}_{\star}(\hat{\beta})\left\| \frac{f\left(\cdot,y|_{[0,\cdot]},z_\cdot  c_\cdot,\Gamma^{\Theta}(u)_\cdot,\mathcal{L}(y|_{[0,\cdot]})\right)}{\alpha_\cdot}\right\|^{2}_{\mathbb{H}^2_{\hat{\beta}}(\mathbb{G},A,C;\mathbb{R}^d)}.
\end{align*}

Summing up the above arguments, to each quadruple
$(y,z,u,m) \in
\mathcal{S}^{2}_{\hat{\beta}}(\mathbb{G},A;\mathbb{R}^d) 
\times
\mathscr{H}_{\hat{\beta}}^2$
we have uniquely associated a new one $(Y,Z,U,M)$ lying in the same space.
Hence, we can define the function
\begin{gather*}
    S: \mathcal{S}^{2}_{\hat{\beta}}(\mathbb{G},A;\mathbb{R}^d) 
    \times \mathscr{H}_{\hat{\beta}}^2 
    \longrightarrow
    \mathcal{S}^{2}_{\hat{\beta}}(\mathbb{G},A;\mathbb{R}^d) 
    \times \mathscr{H}_{\hat{\beta}}^2
\shortintertext{ with }
    S(y,z,u,m) := (Y,Z,U,M).
\end{gather*}

We proceed to prove that under the assumption 
$\max\big\{2,\frac{2\Lambda_{\hat{\beta}}}{\hat{\beta}}\big\}M_\star^\Phi(\hat{\beta})<1$ the function $S$ is a contraction, so that by Banach's fixed point theorem we get the unique solution that we want.
Let $(y^j,z^j,u^j,m^j) \in
\mathcal{S}^{2}_{\hat{\beta}}(\mathbb{G},A;\mathbb{R}^d) 
    \times \mathscr{H}_{\hat{\beta}}^2$ for $ j = 1,2$.
For $t\in [0,T]$ we define
\begin{align*}
\psi_t : = f\left(t,y^2|_{[0,t]},z^2_t  c_t,\Gamma^{\Theta}(u^2)_t,\mathcal{L}\left(y^2|_{[0,t]}\right)\right)
- f\left(t,y^1|_{[0,t]},z^1_t c_t,\Gamma^{\Theta}(u^1)_t,\mathcal{L}\left(y^1|_{[0,t]}\right)\right).
\end{align*}
Using \ref{F4} and \cref{lem:Gamma_is_Lipschitz}, we get\footnote{We use the fact that $\frac{r}{\alpha^2}\leq \alpha^2$, $\vartheta^\circ \leq \alpha^2$, $\vartheta^\natural \leq \alpha^2$ and $\frac{\vartheta^*}{\alpha^2} \leq \alpha^2$.}
\begin{align*}
 \left|\frac{\psi_t}{\alpha_t}\right|^{2} 
 &\leq \alpha^2_t \hspace{0.1cm} \rho_{J_1^d}^2(y^2|_{[0,t]},y^1|_{[0,t]}) 
 + \| (z_t^2-z_t^1)  c_t  \|^2 
 + 2\hspace{0.05cm}
 \tnorm{u^2_t - u^1_t}_t^2 
 + \alpha^2_t \hspace{0.1cm} W^2_{2,\rho_{J_1^d}}(\mathcal{L}(y^2|_{[0,t]}),\mathcal{L}(y^1|_{[0,t]}))\\
&\leq  \alpha^2_t\hspace{0.1cm} \sup_{s \in [0,t]}\{|y^2_s - y^1_s|^2\} 
+ \|(z_t^2-z_t^1) c_t \|^2 
+ 2\hspace{0.05cm} \tnorm{u^2_t - u^1_t}_t^2 
+ \alpha^2_t \hspace{0.1cm}
\mathbb{E}\Big[\sup_{s \in [0,t]}\{|y^2_s - y^1_s|^2\}\Big].
\end{align*}
In the last inequality we used the fact that
\begin{align*} 
W^2_{2,\rho_{J_1^d}}\left(\mathcal{L}(y^2|_{[0,t]}),\mathcal{L}(y^1|_{[0,t]})\right) 
&\overset{\phantom{\eqref{skorokineq}}}{\leq}  \int_{\mathbb{D}^d \times \mathbb{D}^d}\rho_{J_1^d}(x,z)^2 \pi(\ud x,\ud z) 
= \mathbb{E}\left[\rho_{J_1^d}\left(y^2|_{[0,t]},y^1|_{[0,t]}\right)^2\right] \\
&\overset{\eqref{skorokineq}}{\leq} \mathbb{E}\Big[\sup_{s \in [0,t]}\{|y^2_s - y^1_s|^2\}\Big],
\end{align*}
where we chose $\pi$ to be the image measure on $\mathbb{D}^d \times \mathbb{D}^d$ produced by the measurable function $(y^1|_{[0,t]},y^2|_{[0,t]}): \Omega {}\longrightarrow \mathbb{D}^d \times \mathbb{D}^d$.
Hence,
\begin{align}
\begin{multlined}[0.9\textwidth]
\label{ineq:psi_over_alpha}
 \mathcal{E}(\hat{\beta} A)_{t-} \left|\frac{\psi_t}{\alpha_t}\right|^{2} 
 \leq   \alpha^2_t\hspace{0.1cm}\mathcal{E}(\hat{\beta} A)_{t-} 
 \sup_{s \in [0,t]}\{|y^2_s - y^1_s|^2\} 
 + \mathcal{E}(\hat{\beta} A)_{t-} \|(z_t^2-z_t^1) c_t \|^2 \\
 \hspace{1em}+ 2\hspace{0.05cm} \mathcal{E}(\hat{\beta} A)_{t-} 
 \tnorm{u^2_t - u^1_t}_t^2 
 + \alpha^2_t \hspace{0.1cm}\mathcal{E}(\hat{\beta} A)_{t-} 
 \mathbb{E}\Big[\sup_{s \in [0,t]}\{|y^2_s - y^1_s|^2\}\Big].
 \end{multlined}    
\end{align}
Then, we integrate with respect to the measure $\mathbb{P} \otimes C$ in order to get from \ref{F6}, \eqref{comp_norm_altern_circ} and \eqref{comp_norm_altern_natural} that
\begin{align*}
\left\|\frac{\psi}{\alpha}\right\|^{2}_{\mathbb{H}^{2}_{\hat{\beta}}(\mathbb{G},A,C;\mathbb{R}^d)}
&\leq \frac{\Lambda_{\hat{\beta}}}{\hat{\beta}} \hspace{0.1cm} \|y^2 - y^1\|^{2}_{\mathcal{S}^{2}_{\hat{\beta}}(\mathbb{G},A;\mathbb{R}^d)} 
+ \|z^2-z^1\|^{2}_{\mathbb{H}^{2}_{\hat{\beta}}(\mathbb{G},A,X^\circ;\mathbb{R}^{d \times p})}\\
&\hspace{4em}+ 2\hspace{0.05cm}\|u^2 - u^1\|^{2}_{\mathbb{H}^{2}_{\hat{\beta}}(\mathbb{G},A,X^\natural;\mathbb{R}^d)} 
+  \frac{1}{\hat{\beta}} \hspace{0.1cm} \mathbb{E}\big[\mathcal{E}(\hat{\beta} A)_T\big] \|y^2 - y^1\|^{2}_{\mathcal{S}^{2}_{\hat{\beta}}(\mathbb{G},A;\mathbb{R}^d)}\\
&\leq \frac{2\Lambda_{\hat{\beta}}}{\hat{\beta}} 
\|y^2 - y^1\|^{2}_{\mathcal{S}^{2}_{\hat{\beta}}(\mathbb{G},A;\mathbb{R}^d)}
+ \|z^2-z^1\|^{2}_{\mathbb{H}^{2}_{\hat{\beta}}(\mathbb{G},A,X^\circ;\mathbb{R}^{d \times p})}
+2\hspace{0.05cm}\|u^2 - u^1\|^{2}_{\mathbb{H}^{2}_{\hat{\beta}}(\mathbb{G},A,X^\natural;\mathbb{R}^d)}\\
&\leq
\max\Big\{2,\frac{2\Lambda_{\hat{\beta}}}{\hat{\beta}}\Big\}
\| (y^2 - y^1, z^2-z^1, u^2 - u^1,m^2-m^1)\|_{\star,\hat{\beta},\mathbb{G},A,\overline{X}}^2.
\numberthis\label{ineq:norm_psi_initial_path}
\end{align*}
Consider now $S(y^i,z^i,u^i,m^i)=(Y^i,Z^i,U^i,M^i)$, for $i=1,2$.
We will apply \cref{lem:a_priori_estimates} to $Y^2 - Y^1$; the reader should observe that $Y^2_T - Y^1_T=0$.
Consequently, 
\begin{align*}
&\|S(y^2,z^2,u^2,m^2) - S(y^1,z^1,u^1,m^1) \|^{2}_{\star,\hat{\beta},\mathbb{G},A,\overline{X}}\\
&\hspace{3em}\overset{\phantom{\eqref{ineq:norm_psi_initial_path}}}{=}  
\|Y^2 - Y^1\|^{2}_{\mathcal{S}^{2}_{\hat{\beta}}(\mathbb{G},A;\mathbb{R}^d)} 
+\| (Z^2 -Z^1, U^2 - U^1, M^2 - M^1)\|_{\mathscr{H}^2_{\hat{\beta}}}
 \\
&\hspace{3em}\overset{\phantom{\eqref{ineq:norm_psi_initial_path}}}{\leq}  
M^{\Phi}_{\star}(\hat{\beta}) \left\|\frac{\psi}{\alpha}\right\|^{2}_{\mathbb{H}^{2}_{\hat{\beta}}(\mathbb{G},A,C;\mathbb{R}^d)}\\
&\hspace{3em}\overset{\eqref{ineq:norm_psi_initial_path}}{\leq} \max\left\{2,\frac{2\Lambda_{\hat{\beta}}}{\hat{\beta}}\right\} M^{\Phi}_{\star}(\hat{\beta}) 
\|(y^2 - y^1,z^2 - z^1, u^2 - u^1, m^2-m^1 \|^{2}_{\star,\hat{\beta},\mathbb{G},A,\overline{X}}.
\end{align*}
Hence, we obtain the desired contraction if $\max\Big\{2,\frac{2\Lambda_{\hat{\beta}}}{\hat{\beta}}\Big\} M^{\Phi}_{\star}(\hat{\beta})<1$.
\end{proof}

\begin{remark}\label{rem:about_proof_thm_MVBSDE_initial_path}
Let us now provide some remarks related to specific points of the proof of \cref{thm:MVBSDE_initial_path}:
\begin{enumerate}
    \item
    Let $y \in \mathcal{S}^{2}_{\hat{\beta}}(\mathbb{G},A^{(\mathbb{G},\overline{X}, f)};\mathbb{R}^d)$, then it is easy to show from the inequality 
    \begin{align*}
    W^2_{2,\rho_{J_1^d}}\left(\mathcal{L}(y|_{[0,t_2]}),\mathcal{L}(y|_{[0,t_1]})\right) \leq \mathbb{E}\Big[\sup_{s \in (t_1,t_2]}\{|y_{s} - y_{t_1}|^2\}\Big],
    \end{align*} 
    for (real) numbers $0 \leq t_1 < t_2 $, that $\mathcal{L}(y|_{[0,\cdot]})$ is a c\`adl\`ag (deterministic) process. 
    \item 
    The imposition of the bound $\Lambda_{\hat{\beta}}$ seems inevitable if we want to consider path-dependent BSDEs, \textit{i.e.} those whose generator $f$ depends on the initial segment of the paths of the solution $Y$. 
    Indeed, in \eqref{ineq:psi_over_alpha} we need to multiply the stochastic exponential with the (square of the) running maximum of $y^2-y^1$.
    One could, possibly, be able to proceed without an assumption on boundedness of the stochastic exponential, if it was possible to extract \emph{a priori} estimates for the running maximum analogous to \cref{lem:a_priori_estimates}.
    In this case, one expects in \cref{lem:a_priori_estimates} an integrability condition of the form
    $\mathbb{E}[\mathcal{E}(\hat{\beta}A)_{T-} \sup_{s\in [ 0,T]}|y_s|^2]<\infty$ to appear, which is clearly stronger than the $\mathcal{S}^2-$norm we are using.
    Unfortunately, we were not able to extract such an \emph{a priori} estimate.
    \item\label{rem:about_proof_thm_MVBSDE_initial_path_3}
    In case we consider a BSDE whose generator $f$ depends at time $s$ on the instantaneous value of the solution $Y$, \emph{e.g.}, on $Y_s$ or $Y_{s-}$, and not on the initial -- up to time $s$ -- segment of its paths, \emph{i.e.}, on $Y|_{[0,s]}$ or $Y|_{[0,s-]}$, then we can proceed under the assumption that $\mathcal{E}(\hat{\beta}A^{(\mathbb{G},\overline{X},f)})_T$ is integrable, for the $\hat{\beta}$ determining the standard data.
    However, in this case we will need to seek a solution such that 
    \begin{align*}
    Y \in \mathcal{S}^2_{\hat{\beta}}(\mathbb{G},A^{(\mathbb{G},\overline{X},f)};\mathbb{R}^d)
    \quad \text{ and } \quad 
    \alpha Y\in \mathbb{H}^2_{\hat{\beta}}(\mathbb{G},A^{(\mathbb{G},\overline{X},f)},C^{(\mathbb{G},\overline{X})};\mathbb{R}^d),
    \end{align*}
    instead of simply $Y \in \mathcal{S}^2_{\hat{\beta}}(\mathbb{G},A^{(\mathbb{G},\overline{X},f)};\mathbb{R}^d)$.
\end{enumerate}
\end{remark}

In view of the previous remarks, we will close this subsection by considering ``classical'' McKean--Vlasov BSDEs, whose generator depends at each time $s \in [0,T]$ only on the instantaneous value of $Y$, \emph{i.e.}, on $Y_s$ or $Y_{s-}$.
More precisely, we will consider McKean--Vlasov BSDEs of the form
\begin{align}
\begin{multlined}[0.9\textwidth]
Y_t = \xi + \int^{T}_{t}f\left(s,Y_s,Z_s  c^{(\mathbb{G},\overline{X})}_s,\Gamma^{(\mathbb{G},\overline{X},\Theta)}(U)_s,\mathcal{L}(Y_s)\right) \, \ud C^{(\mathbb{G},\overline{X})}_s\\
- \int^{T}_{t}Z_s \,  \ud X^\circ - \int^{T}_{t}\int_{\mathbb{R}^n}U_s \, \widetilde{\mu}^{(\mathbb{G},X^{\natural})}(\ud s,\ud x) - \int^{T}_{t} \,\ud M_s.
\tag{\ref{MVBSDE_instantaneous}}
\end{multlined}
\end{align}
To this end, we need to reformulate assumptions \ref{F4} and \ref{F6} as follows:

\begin{enumerate}[label=\textbf{(MV\arabic*${}^{\prime}$)}]
\setcounter{enumi}{3}
\item\label{F4_prime} 
A generator $f: \Omega \times \mathbb{R}_+ \times \mathbb{R}^d \times \mathbb{R}^{d \times p} \times \mathbb{R}^d \times \mathscr{P}_2(\mathbb{R}^d) {}\longrightarrow \mathbb{R}^d$ such that for any $(y,z,u,\mu) \in \mathbb{R}^d \times \mathbb{R}^{d \times p} \times \mathbb{R}^d \times \mathscr{P}_2(\mathbb{R}^d)$, the map 
    \begin{align*}
        (\omega,t) \longmapsto f(\omega,t,y,z,u,\mu) \hspace{0.2cm}\text{is}\hspace{0.2cm}\mathbb{G}\text{--progressively measurable}
    \end{align*}
    and satisfies the following (stochastic) Lipschitz condition 
    \begin{align*}
    \begin{multlined}[t][0.9\textwidth]
    |f(\omega,t,y,z,u,\mu) - f(\omega,t,y',z',u',\mu')|^2\\
    \leq\hspace{0.1cm} r(\omega,t) \hspace{0.1cm} |y-y'|^2+ \hspace{0.1cm} \vartheta^o(\omega,t) \hspace{0.1cm} |z - z'|^2 
     + \hspace{0.1cm} \vartheta^{\natural}(\omega,t) \hspace{0.1cm} |u - u'|^2 + \vartheta^*(\omega,t) \hspace{0.1cm} W^2_{2,|\cdot|}\left(\mu,\mu'\right),
    \end{multlined}
    \end{align*}
        where  $         (r,\vartheta^o,\vartheta^{\natural},\vartheta^*): \left(\Omega \times \mathbb{R}_+, \mathcal{P}^{\mathbb{G}}\right) \hspace{0.2cm} {}\longrightarrow \hspace{0.2cm}\left(\mathbb{R}^4_+,\mathcal{B}\left(\mathbb{R}^4_+\right)\right).$
      \setcounter{enumi}{5}
    \item\label{F6_prime} For the same $\hat{\beta}$ as in \ref{F2}, the process $\mathcal{E}\left(\hat{\beta} A^{(\mathbb{G},\overline{X},f)}\right) $ is integrable.
\end{enumerate}
Additionally, we will introduce the following convenient notation, where $\alpha$ is the process determined in \ref{F5}:
\begin{align*}
    \mathscr{S}^{2}_{\hat{\beta}}(\mathbb{G},\alpha,C^{(\mathbb{G},\overline{X})};\mathbb{R}^d) :=
    \big\{y\in \mathcal{S}^2_{\hat{\beta}}(\mathbb{G},A^{(\mathbb{G},\overline{X},f)};\mathbb{R}^d) : \| \alpha y \|_{\mathbb{H}_{\hat{\beta}}(\mathbb{G},A^{(\mathbb{G},\overline{X},f)},C^{(\mathbb{G},\overline{X})};\mathbb{R}^d)}<\infty \big\}
\end{align*}
with the associated norm defined by
\begin{align}\label{eq 4.5}
    \| \cdot \|^2_{\mathscr{S}^{2}_{\hat{\beta}}(\mathbb{G},\alpha,C^{(\mathbb{G},\overline{X})};\mathbb{R}^d)}
    :=
    \| \cdot\|^2_{\mathcal{S}^2_{\hat{\beta}}(\mathbb{G},A^{(\mathbb{G},\overline{X},f)};\mathbb{R}^d)}
    +
    \|\alpha \cdot\|^2_{\mathbb{H}_{\hat{\beta}}(\mathbb{G},A^{(\mathbb{G},\overline{X},f)},C^{(\mathbb{G},\overline{X})};\mathbb{R}^d)}.
\end{align}

\begin{definition}\label{def:standard_data_MVBSDE_instant_value}
A set of data $\left(\mathbb{G},\overline{X},T,\xi,\Theta,\Gamma,f\right)$ that satisfies Assumptions \emph{\ref{F1}--\ref{F3}, \ref{F4_prime}, \ref{F5}, \ref{F6_prime}} and $\emph{\ref{F7}}$  will be called \emph{standard data under $\hat{\beta}$} for the McKean--Vlasov BSDE \eqref{MVBSDE_instantaneous}.    
\end{definition}

\begin{theorem}\label{thm:MVBSDE_instantaneous_first}
Let $\left(\mathbb{G},\overline{X},T,\xi,\Theta,\Gamma,f\right)$ 
be standard data under $\hat{\beta}$ for the McKean--Vlasov BSDE \eqref{MVBSDE_instantaneous}. 
Assuming that
\begin{align}
\label{eq:condition-EU-theorm}
    \max\Big\{2,\frac{\mathbb{E}[\mathcal{E}(\hat{\beta} A_T)]}{\hat{\beta}}\Big\} \widetilde{M}^{\Phi}(\hat{\beta})  < 1,
\end{align}
then the McKean--Vlasov BSDE 
\begin{align}
\begin{multlined}[0.9\textwidth]
Y_t = \xi + \int^{T}_{t}f\left(s,Y_{s},Z_s   c^{(\mathbb{G},\overline{X})}_s,\Gamma^{(\mathbb{G},\overline{X},\Theta)}(U)_s,\mathcal{L}(Y_{s})\right) \, \ud C^{(\mathbb{G},\overline{X})}_s\\
- \int^{T}_{t}Z_s \,  \ud X^\circ_s - \int^{T}_{t}\int_{\mathbb{R}^n}U_s \, \widetilde{\mu}^{(\mathbb{G},X^{\natural})}(\ud s,\ud x) - \int^{T}_{t} \,\ud M_s
\tag{\ref{MVBSDE_instantaneous}}
\end{multlined}
\end{align}
admits a unique solution
\begin{align*}
    (Y,Z,U,M) \in 
\mathscr{S}^{2}_{\hat{\beta}}(\mathbb{G},\alpha,C^{(\mathbb{G},\overline{X})};\mathbb{R}^d) 
\times \mathbb{H}^{2}_{\hat{\beta}}(\mathbb{G},X^\circ;\mathbb{R}^{d \times p})  
\times \mathbb{H}^{2}_{\hat{\beta}}(\mathbb{G},X^\natural;\mathbb{R}^d) 
\times \mathcal{H}^{2}_{\hat{\beta}}(\mathbb{G},\overline{X}^{\perp_{\mathbb{G}}};\mathbb{R}^{d}).
\end{align*}
\end{theorem}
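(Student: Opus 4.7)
The plan is to adapt the Picard--Banach fixed-point argument of \cref{thm:MVBSDE_initial_path} along the lines sketched in \cref{rem:about_proof_thm_MVBSDE_initial_path}\ref{rem:about_proof_thm_MVBSDE_initial_path_3}. Concretely, I would set up a map $S$ on the Banach space
\[
\mathscr{E}_{\hat\beta}:=\mathscr{S}^{2}_{\hat{\beta}}(\mathbb{G},\alpha,C^{(\mathbb{G},\overline{X})};\mathbb{R}^d)\times \mathbb{H}^{2}_{\hat{\beta}}(\mathbb{G},A^{(\mathbb{G},\overline{X},f)},X^\circ;\mathbb{R}^{d\times p})\times \mathbb{H}^{2}_{\hat{\beta}}(\mathbb{G},A^{(\mathbb{G},\overline{X},f)},X^\natural;\mathbb{R}^d)\times \mathcal{H}^{2}_{\hat{\beta}}(\mathbb{G},A^{(\mathbb{G},\overline{X},f)},\overline{X}^{\perp_{\mathbb{G}}};\mathbb{R}^{d}),
\]
equipped with the natural sum-of-squares norm. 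Given $(y,z,u,m)\in\mathscr{E}_{\hat\beta}$, the frozen driver
\[
g_s:=f\big(s,y_s,z_s c^{(\mathbb{G},\overline{X})}_s,\Gamma^{(\mathbb{G},\overline{X},\Theta)}(u)_s,\mathcal{L}(y_s)\big)
\]
has to be shown to satisfy $\|g/\alpha\|_{\mathbb{H}^2_{\hat\beta}(\mathbb{G},A^{(\mathbb{G},\overline{X},f)},C^{(\mathbb{G},\overline{X})};\mathbb{R}^d)}<\infty$; this follows from the stochastic Lipschitz property \ref{F4_prime}, \ref{F7}, \cref{lem:Gamma_is_Lipschitz}, the membership $y\in\mathscr{S}^2_{\hat\beta}$, and the elementary estimate
\[
\mathbb{E}\bigg[\int_0^T\mathcal{E}(\hat\beta A^{(\mathbb{G},\overline{X},f)})_{s-}\alpha_s^2\,\ud C^{(\mathbb{G},\overline{X})}_s\bigg]=\frac{\mathbb{E}[\mathcal{E}(\hat\beta A^{(\mathbb{G},\overline{X},f)})_T]-1}{\hat\beta},
\]
which is finite by \ref{F6_prime}. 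Then $S(y,z,u,m):=(Y,Z,U,M)$ is defined via the c\`adl\`ag martingale representation of $\mathbb{E}[\xi+\int_0^T g_s\,\ud C^{(\mathbb{G},\overline{X})}_s\mid\mathcal{G}_\cdot]$ using the orthogonal decomposition of \cref{prop:CharacterOrthogSpace}, and the combined bound of \cref{lem:a_priori_estimates} together with \cref{prop:infima_for_M} (applied with the constant $\widetilde{M}^\Phi(\hat\beta)$) yields $S:\mathscr{E}_{\hat\beta}\to\mathscr{E}_{\hat\beta}$.

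For the contraction step, take two inputs $(y^i,z^i,u^i,m^i)$, $i=1,2$, and set $\psi_s$ to be the difference of the frozen drivers. The stochastic Lipschitz assumption \ref{F4_prime}, \cref{lem:Gamma_is_Lipschitz}, and the coupling bound $\mathcal{W}^2_{2,|\cdot|}(\mathcal{L}(y^2_s),\mathcal{L}(y^1_s))\le \mathbb{E}[|y^2_s-y^1_s|^2]$ give
\[
\bigg|\frac{\psi_s}{\alpha_s}\bigg|^2\le \alpha_s^2|y^2_s-y^1_s|^2+\|(z^2_s-z^1_s)c^{(\mathbb{G},\overline{X})}_s\|^2+2\,\big(\tnorm{u^2_s-u^1_s}^{(\mathbb{G},\overline{X})}_s\big)^2+\alpha_s^2\,\mathbb{E}[|y^2_s-y^1_s|^2].
\]
Integrating against $\mathcal{E}(\hat\beta A^{(\mathbb{G},\overline{X},f)})_{s-}\ud C^{(\mathbb{G},\overline{X})}_s$ and taking expectations, the first three terms are controlled by $\|\alpha(y^2-y^1)\|^2_{\mathbb{H}^2_{\hat\beta}}$, $\|z^2-z^1\|^2_{\mathbb{H}^2_{\hat\beta}(X^\circ)}$ and $2\|u^2-u^1\|^2_{\mathbb{H}^2_{\hat\beta}(X^\natural)}$ respectively, via \eqref{comp_norm_altern_circ} and \eqref{comp_norm_altern_natural}. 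The contraction coefficient $\max\{2,\mathbb{E}[\mathcal{E}(\hat\beta A_T)]/\hat\beta\}\,\widetilde{M}^\Phi(\hat\beta)<1$ of \eqref{eq:condition-EU-theorm} will then emerge after invoking the combined a priori estimate of \cref{lem:a_priori_estimates} on $S(y^2)-S(y^1)$ (whose terminal value is zero, so only the $f/\alpha$-term survives), and Banach's fixed-point theorem will conclude.

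The main difficulty, and the reason the hypothesis now involves $\mathbb{E}[\mathcal{E}(\hat\beta A_T)]/\hat\beta$ rather than $\Lambda_{\hat\beta}/\hat\beta$ as in \cref{thm:MVBSDE_initial_path}, is the law-dependent term: because \ref{F6_prime} only provides integrability (not boundedness) of the stochastic exponential, one cannot pull $\mathcal{E}(\hat\beta A)_{s-}$ out of the outer expectation directly. The resolution is to exploit the fact that $\mathbb{E}[|y^2_s-y^1_s|^2]$ is deterministic in $s$ and bounded above by the constant $\mathbb{E}[\sup_{s'\le T}|y^2_{s'}-y^1_{s'}|^2]\le\|y^2-y^1\|^2_{\mathcal{S}^2_{\hat\beta}}$ (the last inequality using $\mathcal{E}(\hat\beta A)_{0-}=1$), which decouples the time dependence and then integrates against $\mathcal{E}(\hat\beta A)_{s-}\ud A$ to give exactly the factor $\mathbb{E}[\mathcal{E}(\hat\beta A_T)]/\hat\beta$. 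A second minor subtlety is that the path-dependent $\mathcal{S}^2$-estimate of \cref{thm:MVBSDE_initial_path} must be replaced by the stronger combined $\mathbb{H}^2+\mathcal{S}^2+\mathcal{H}^2$ bound of \cref{lem:a_priori_estimates}, which is precisely why the space $\mathscr{S}^2_{\hat\beta}$ and the constant $\widetilde{M}^\Phi$ appear in the statement.
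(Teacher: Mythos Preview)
Your proposal is correct and follows essentially the same approach as the paper's proof: the paper also sets up the Picard map on $\mathscr{S}^{2}_{\hat{\beta}}\times\mathscr{H}^2_{\hat\beta}$, derives the pointwise bound analogous to \eqref{ineq:psi_over_alpha} with instantaneous values, and handles the law term exactly as you describe---bounding the deterministic quantity $\mathbb{E}[|y^2_s-y^1_s|^2]$ by $\|y^2-y^1\|^{2}_{\mathcal{S}^{2}_{\hat{\beta}}}$ and then integrating $\mathcal{E}(\hat\beta A)_{s-}\alpha_s^2\,\ud C_s=\hat\beta^{-1}\ud\mathcal{E}(\hat\beta A)_s$ to produce the factor $\hat\beta^{-1}\mathbb{E}[\mathcal{E}(\hat\beta A)_T]$. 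The combined a priori estimate with constant $\widetilde{M}^\Phi(\hat\beta)$ and Banach's theorem then conclude, just as you outline.
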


\begin{proof}
Adopting the notation of the proof of \cref{thm:MVBSDE_initial_path}, we closely follow the arguments presented there, and we omit the steps that are identical or can be immediately adapted to the present case.
We endow the product space $\mathscr{S}^{2}_{\hat{\beta}}
(\mathbb{G},\alpha, C;\mathbb{R}^d) 
\times
\mathscr{H}_{\hat{\beta}}^2$ with the norm which comes from the sum of the respective norms.

Let us start again from a quadruple 
$(y,z,u,m) \in
\mathscr{S}^{2}_{\hat{\beta}}
(\mathbb{G},\alpha, C;\mathbb{R}^d) 
\times
\mathscr{H}_{\hat{\beta}}^2$, 
for which we can prove that 
\begin{align*}
     \left\| \frac{f\left(\cdot,y_{\cdot},z_\cdot  c_\cdot,\Gamma^{\Theta}(u)_\cdot,\mathcal{L}(y_{\cdot})\right)}{\alpha_\cdot}\right\|_{\mathbb{H}^2_{\hat{\beta}}(\mathbb{G},A,C;\mathbb{R}^d)} < \infty.
\end{align*}

Hence, for the triple $(Z,U,M)$ obtained from the martingale representation and the associated c\`adl\`ag version of the $\mathbb{G}-$semimartingale $Y$, we have from    \cref{lem:a_priori_estimates} and \cref{prop:infima_for_M} that
\begin{multline*}
    \|Y\|^{2}_{\mathcal{S}^{2}_{\hat{\beta}}(\mathbb{G},A;\mathbb{R}^d)} 
    +
    \|\alpha Y\|^{2}_{\mathbb{H}^{2}_{\hat{\beta}}(\mathbb{G},A,C;\mathbb{R}^d)} 
    +
    \|(Z,U,M)\|_{\mathscr{H}^2_{\hat{\beta}}} \\
    \leq     
    \Big(26 + \frac{2}{\hat{\beta}}+ (9\hat{\beta} + 2)\Phi \Big) 
    \|\xi\|^2_{\mathbb{L}^{2}_{\hat{\beta}}(\mathcal{G}_T,A;\mathbb{R}^d)}  
    + \widetilde{M}^{\Phi}(\hat{\beta})\left\| \frac{f\left(\cdot,y_{\cdot},z_\cdot  c_\cdot,\Gamma^{\Theta}(u)_\cdot,\mathcal{L}(y_{\cdot})\right)}{\alpha_\cdot}\right\|^{2}_{\mathbb{H}^2_{\hat{\beta}}(\mathbb{G},A,C;\mathbb{R}^d)},
\end{multline*}
where we used \eqref{identity:norm_ZU} to equivalently write the left-hand side of the last inequality. 
In other words, the function 
\begin{gather*}
    S: \mathscr{S}^{2}_{\hat{\beta}}(\mathbb{G},\alpha,C;\mathbb{R}^d) 
    \times \mathscr{H}_{\hat{\beta}}^2 
    \longrightarrow
    \mathscr{S}^{2}_{\hat{\beta}}(\mathbb{G},\alpha,C;\mathbb{R}^d) 
    \times \mathscr{H}_{\hat{\beta}}^2
\shortintertext{ with }
    S(y,z,u,m) := (Y,Z,U,M)
\end{gather*}
is well-defined.

We proceed to prove that 
the function $S$ is a contraction.
Let $(y^j,z^j,u^j,m^j) \in
\mathscr{S}^{2}_{\hat{\beta}}(\mathbb{G},\alpha,C;\mathbb{R}^d) 
    \times \mathscr{H}_{\hat{\beta}}^2$ for $ j = 1,2$.
For $t\in\llbracket 0,T\rrbracket$ we define
\begin{align*}
\psi_t : = f\left(t,y^2_t,z^2_t  c_t,\Gamma^{\Theta}(u^2)_t,\mathcal{L}\left(y^2_t\right)\right)
- f\left(t,y^1_t,z^1_t c_t,\Gamma^{\Theta}(u^1)_t,\mathcal{L}\left(y^1_t\right)\right).
\end{align*}
Using \ref{F4_prime} and \cref{lem:Gamma_is_Lipschitz}, we have 
\begin{align}
\begin{multlined}[0.9\textwidth]
 \mathcal{E}(\hat{\beta} A)_{t-} \left|\frac{\psi_t}{\alpha_t}\right|^{2} 
 \leq   \alpha^2_t\hspace{0.1cm}\mathcal{E}(\hat{\beta} A)_{t-} 
 |y^2_t - y^1_t|^2 
 + \mathcal{E}(\hat{\beta} A)_{t-} \|(z_t^2-z_t^1) c_t \|^2 \\
 \hspace{1em}+ 2\hspace{0.05cm} \mathcal{E}(\hat{\beta} A)_{t-} 
 \tnorm{u^2_t - u^1_t}_t^2 
 + \alpha^2_t \hspace{0.1cm}\mathcal{E}(\hat{\beta} A)_{t-} 
 \mathbb{E}[|y^2_t - y^1_t|^2],
 \label{ineq:norm_psi_instant_value}
\end{multlined}
\end{align}
which is the analogous to \eqref{ineq:psi_over_alpha}.

Then, we integrate with respect to the measure $\mathbb{P} \otimes C$ in order to get from \ref{F6_prime}, \eqref{comp_norm_altern_circ} and \eqref{comp_norm_altern_natural} that
\begin{align*}
\left\|\frac{\psi}{\alpha}\right\|^{2}_{\mathbb{H}^{2}_{\hat{\beta}}(\mathbb{G},A,C;\mathbb{R}^d)}
&\leq 
\|\alpha(y^2 - y^1)\|^{2}_{\mathbb{H}^{2}_{\hat{\beta}}(\mathbb{G},A,C;\mathbb{R}^d)} 
+ \|z^2-z^1\|^{2}_{\mathbb{H}^{2}_{\hat{\beta}}(\mathbb{G},A,X^\circ;\mathbb{R}^{d \times p})}\\
&\hspace{4em}+ 2\hspace{0.05cm}\|u^2 - u^1\|^{2}_{\mathbb{H}^{2}_{\hat{\beta}}(\mathbb{G},A,X^\natural;\mathbb{R}^d)} 
+  \frac{1}{\hat{\beta}} \hspace{0.1cm} \mathbb{E}\big[\mathcal{E}(\hat{\beta} A)_T\big] \|y^2 - y^1\|^{2}_{\mathcal{S}^{2}_{\hat{\beta}}(\mathbb{G},A;\mathbb{R}^d)}\\
&\leq
\max\Big\{2,\frac{\mathbb{E}[\mathcal{E}(\hat{\beta}A)_T]}{\hat{\beta}}\Big\}
\big(\| y^2-y^1\|_{\mathscr{S}^2_{\hat{\beta}}(\mathbb{G},A;\mathbb{R}^d)}
 + \| (z^2-z^1, u^2 - u^1,m^2-m^1)\|^2_{\mathscr{H}^2_{\hat{\beta}}}\big).
\end{align*}
Consider now $S(y^i,z^i,u^i,m^i)=(Y^i,Z^i,U^i,M^i)$, for $i=1,2$.
We will apply \cref{lem:a_priori_estimates} for $Y^2 - Y^1$; the reader should observe that $Y^2_T - Y^1_T=0$.
Consequently, 
\begin{align*}
&\|Y^2 - Y^1\|^{2}_{\mathscr{S}^{2}_{\hat{\beta}}(\mathbb{G},\alpha,C;\mathbb{R}^d)} 
+\| (Z^2 -Z^1, U^2 - U^1, M^2 - M^1)\|_{\mathscr{H}^2_{\hat{\beta}}}\\
&\hspace{3em}\leq  
\widetilde{M}^{\Phi}(\hat{\beta}) \left\|\frac{\psi}{\alpha}\right\|^{2}_{\mathbb{H}^{2}_{\hat{\beta}}(\mathbb{G},A,C;\mathbb{R}^d)}\\
&\hspace{3em}\leq 
\max\Big\{2,\frac{\mathbb{E}[\mathcal{E}(\hat{\beta} A_T)]}{\hat{\beta}}\Big\} \widetilde{M}^{\Phi}(\hat{\beta}) 
\|(y^2 - y^1,z^2 - z^1, u^2 - u^1, m^2-m^1) \|^{2}_{\mathscr{S}^2_{\hat{\beta}}(\mathbb{G},\alpha,C;\mathbb{R}^d) \times \mathscr{H}^2_{\hat{\beta}}}.
\end{align*}
Hence, we obtain the desired contraction if $\max\big\{2,\frac{\mathbb{E}[\mathcal{E}(\hat{\beta} A_T)]}{\hat{\beta}}\big\} \widetilde{M}^{\Phi}(\hat{\beta})<1$.
\end{proof}
We may improve the condition \eqref{eq:condition-EU-theorm} under which one can prove the existence and uniqueness of a solution for the McKean--Vlasov BSDE \eqref{MVBSDE_instantaneous} by imposing stronger assumptions on $A^{(\mathbb{G},\overline{X},f)}$ and on the time horizon $T$.
Regarding the former, we introduce the following condition.
\begin{enumerate}[label=\textbf{(MV\arabic*${}^{\prime\prime}$)}]
    \setcounter{enumi}{5}
    \item\label{F6_doubleprime} The process $A^{(\mathbb{G},\overline{X},f)}$ is deterministic.
\end{enumerate}

\begin{remark}
If $T$ in \ref{F2} is deterministic and finite, then Condition \ref{F6} is automatically satisfied.
However, when $T = \infty$ this does not necessarily hold. 
Either way, \ref{F6} is not required in the method we are going to use.
\end{remark}

\begin{theorem}\label{thm:MVBSDE_instantaneous_second}
Let $\left(\mathbb{G},\overline{X},T,\xi,\Theta,\Gamma,f\right)$ satisfy \emph{\ref{F1}--\ref{F3},\ref{F4_prime},\ref{F5},\ref{F6_doubleprime}} and \emph{\ref{F7}} under $\hat{\beta}$. 
Additionally, let $T$ in \emph{\ref{F2}} be deterministic.
Assuming that 
\begin{align*}
    2 \widetilde{M}^{\Phi}(\hat{\beta}) < 1, 
\end{align*}
then the McKean--Vlasov BSDE 
\begin{align}
\begin{multlined}[0.9\textwidth]
Y_t = \xi + \int^{T}_{t}f\left(s,Y_{s},Z_s   c^{(\mathbb{G},\overline{X})}_s,\Gamma^{(\mathbb{G},\overline{X},\Theta)}(U)_s,\mathcal{L}(Y_{s})\right) \, \ud C^{(\mathbb{G},\overline{X})}_s\\
- \int^{T}_{t}Z_s \,  \ud X^\circ_s - \int^{T}_{t}\int_{\mathbb{R}^n}U_s \, \widetilde{\mu}^{(\mathbb{G},X^{\natural})}(\ud s,\ud x) - \int^{T}_{t} \,\ud M_s
\tag{\ref{MVBSDE_instantaneous}}
\end{multlined}
\end{align}
admits a unique solution 
\begin{align*}
(Y,Z,U,M) \in 
\mathscr{S}^{2}_{\hat{\beta}}(\mathbb{G},\alpha,C^{(\mathbb{G},\overline{X})};\mathbb{R}^d) 
\times \mathbb{H}^{2}_{\hat{\beta}}(\mathbb{G},X^\circ;\mathbb{R}^{d \times p})
\times \mathbb{H}^{2}_{\hat{\beta}}(\mathbb{G},X^\natural;\mathbb{R}^d) 
\times \mathcal{H}^{2}_{\hat{\beta}}(\mathbb{G},\overline{X}^{\perp_{\mathbb{G}}};\mathbb{R}^{d}).
\end{align*}
\end{theorem}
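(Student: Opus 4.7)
The plan is to retrace the Banach fixed-point argument of \cref{thm:MVBSDE_instantaneous_first} and exploit the fact that under \ref{F6_doubleprime} (together with the deterministic and finite horizon $T$) the process $\mathcal{E}(\hat{\beta} A^{(\mathbb{G},\overline{X},f)})_{-}$ is itself deterministic. This allows us to interchange expectation and time-integration freely when estimating the mean-field contribution to the generator, which is exactly what degrades \eqref{eq:condition-EU-theorm} in the previous theorem. As before, I would set up the map
\[
S: \mathscr{S}^{2}_{\hat{\beta}}(\mathbb{G},\alpha,C^{(\mathbb{G},\overline{X})};\mathbb{R}^d) \times \mathscr{H}_{\hat{\beta}}^2 \longrightarrow \mathscr{S}^{2}_{\hat{\beta}}(\mathbb{G},\alpha,C^{(\mathbb{G},\overline{X})};\mathbb{R}^d) \times \mathscr{H}_{\hat{\beta}}^2,
\]
by freezing $(y,z,u,m)$ inside the generator, invoking the martingale representation to obtain $(Y,Z,U,M)$, and then controlling $\|S(y,z,u,m)\|$ via \cref{lem:a_priori_estimates} and \cref{prop:infima_for_M}; the well-posedness of this step is literally identical to the previous theorem since the only role of \ref{F6_prime} in that step was to ensure $\mathscr{S}^2_{\hat{\beta}}$ is nontrivial, and here $A$ deterministic and $T$ deterministic give $\mathcal{E}(\hat{\beta} A)_T < \infty$ automatically.

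The crucial step is the contraction estimate. Taking $(y^j, z^j, u^j, m^j)$ for $j=1,2$ and defining $\psi$ exactly as in the proof of \cref{thm:MVBSDE_instantaneous_first}, the pointwise bound \eqref{ineq:norm_psi_instant_value} is unchanged. When integrating with respect to $\mathbb{P}\otimes C^{(\mathbb{G},\overline{X})}$, the three non-mean-field terms give, as before, $\|\alpha(y^2-y^1)\|^2_{\mathbb{H}^2_{\hat{\beta}}}$, $\|z^2-z^1\|^2_{\mathbb{H}^2_{\hat{\beta}}}$ and $2\|u^2-u^1\|^2_{\mathbb{H}^2_{\hat{\beta}}}$. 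For the mean-field term, however, I would now write
\[
\mathbb{E}\!\left[\int_0^T \alpha^2_t\, \mathcal{E}(\hat{\beta} A)_{t-}\, \mathbb{E}\!\left[|y^2_t - y^1_t|^2\right]\ud C^{(\mathbb{G},\overline{X})}_t\right] = \int_0^T \alpha^2_t\, \mathcal{E}(\hat{\beta} A)_{t-}\, \mathbb{E}\!\left[|y^2_t - y^1_t|^2\right]\ud C^{(\mathbb{G},\overline{X})}_t,
\]
because $\alpha$, $C^{(\mathbb{G},\overline{X})}$ and $\mathcal{E}(\hat{\beta} A)_{t-}$ are all deterministic. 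Pulling the expectation back inside and applying Fubini gives exactly $\|\alpha(y^2-y^1)\|^2_{\mathbb{H}^2_{\hat{\beta}}}$, rather than the $\mathcal{S}^2_{\hat{\beta}}$ norm that was unavoidable in the general case. Consequently
\[
\Big\|\tfrac{\psi}{\alpha}\Big\|^{2}_{\mathbb{H}^{2}_{\hat{\beta}}(\mathbb{G},A,C;\mathbb{R}^d)} \leq 2\|\alpha(y^2-y^1)\|^2_{\mathbb{H}^2_{\hat{\beta}}} + \|z^2-z^1\|^2_{\mathbb{H}^2_{\hat{\beta}}} + 2\|u^2-u^1\|^2_{\mathbb{H}^2_{\hat{\beta}}},
\]
and the prefactor reduces simply to $2$, without the spurious $\mathbb{E}[\mathcal{E}(\hat{\beta} A_T)]/\hat{\beta}$ term.

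Combining this with the a priori estimate of \cref{lem:a_priori_estimates} (applied to $Y^2-Y^1$, which has vanishing terminal condition) and \cref{prop:infima_for_M} yields
\[
\|Y^2 - Y^1\|^{2}_{\mathscr{S}^{2}_{\hat{\beta}}} + \|(Z^2-Z^1, U^2-U^1, M^2-M^1)\|^2_{\mathscr{H}^2_{\hat{\beta}}} \leq 2\,\widetilde{M}^{\Phi}(\hat{\beta})\, \|(y^2-y^1,\ldots,m^2-m^1)\|^2_{\mathscr{S}^2_{\hat{\beta}}\times\mathscr{H}^2_{\hat{\beta}}},
\]
so the assumption $2\,\widetilde{M}^{\Phi}(\hat{\beta}) < 1$ makes $S$ a contraction, and Banach's fixed-point theorem delivers the unique solution. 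The only nontrivial issue is making sure the deterministic-integral manipulation in the mean-field step is justified; once that observation is in hand the rest is a near-verbatim adaptation of \cref{thm:MVBSDE_instantaneous_first}, with the weaker contraction condition being the quantitative payoff of the determinism hypothesis.
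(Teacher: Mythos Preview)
Your proposal is correct and follows essentially the same route as the paper: freeze the input, use the pointwise Lipschitz bound \eqref{ineq:norm_psi_instant_value}, and exploit the determinism of $A$ (hence of $\mathcal{E}(\hat\beta A)_{-}$ and of the measure $\alpha^2\,\ud C = \ud A$) together with the deterministic $T$ to apply Tonelli to the mean-field term, collapsing it to $\|\alpha(y^2-y^1)\|^2_{\mathbb{H}^2_{\hat\beta}}$ and yielding the contraction constant $2\,\widetilde{M}^\Phi(\hat\beta)$. One small imprecision: \ref{F6_doubleprime} only guarantees that $A$ is deterministic, not $\alpha$ and $C^{(\mathbb{G},\overline{X})}$ separately, but since what you actually need is that $\alpha^2\,\ud C^{(\mathbb{G},\overline{X})} = \ud A$ is a deterministic measure, the argument goes through unchanged.
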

\begin{proof}
 Adopting the notation and the arguments used in the proof of \cref{thm:MVBSDE_instantaneous_first}, we proceed from \eqref{ineq:norm_psi_instant_value}, \emph{i.e.},
 \begin{multline*}
 \mathcal{E}(\hat{\beta} A)_{t-} \left|\frac{\psi_t}{\alpha_t}\right|^{2} 
 \leq   \alpha^2_t\hspace{0.1cm}\mathcal{E}(\hat{\beta} A)_{t-} 
 |y^2_t - y^1_t|^2 
 + \mathcal{E}(\hat{\beta} A)_{t-} \|(z_t^2-z_t^1) c_t \|^2 \\
 \hspace{1em}+ 2\hspace{0.05cm} \mathcal{E}(\hat{\beta} A)_{t-} 
 \tnorm{u^2_t - u^1_t}_t^2 
 + \alpha^2_t \hspace{0.1cm}\mathcal{E}(\hat{\beta} A)_{t-} 
 \mathbb{E}[|y^2_t - y^1_t|^2].
 \end{multline*}
From \ref{F6_doubleprime}, by integrating with respect to $\mathbb{P}\otimes C$ and by applying Tonelli's theorem\footnote{Here we apply the fact that $T$ is deterministic.} in the last summand of the right-hand side of the inequality, we obtain
\begin{align*}
    \left\|\frac{\psi}{\alpha}\right\|^{2}_{\mathbb{H}^{2}_{\hat{\beta}}(\mathbb{G},A,C;\mathbb{R}^d)}
    &\leq 2\hspace{0.1cm} \|\alpha (y^2 - y^1)\|^{2}_{\mathbb{H}^{2}_{\beta}(\mathbb{G},A,C;\mathbb{R}^d)} 
    + \|z^2-z^1\|^{2}_{\mathbb{H}^{2}_{\hat{\beta}}(\mathbb{G},A,X^\circ;\mathbb{R}^{d \times p})}
    + 2\hspace{0.05cm}\|u^2-u^1\|^{2}_{\mathbb{H}^{2}_{\hat{\beta}}(\mathbb{G},A,X^\natural;\mathbb{R}^d)} 
    \\
    &\leq
    2 \|y^2 - y^1\|_{\mathscr{S}^2_{\hat{\beta}}(\mathbb{G},\alpha,C;\mathbb{R}^d)}
    +2
    \| (z^2 - z^1, u^2 - u^1, m^2-m^1)\|^2_{\mathscr{H}^2_{\hat{\beta}}}.
\end{align*}
Hence, by applying \cref{lem:a_priori_estimates}  the contraction is obtained for $2 \widetilde{M}^{\Phi}(\hat{\beta}) < 1$.
\end{proof}
\begin{remark}
    In the statements of \cref{thm:MVBSDE_initial_path} and \cref{thm:MVBSDE_instantaneous_first} the maximum is taken over the number $2$ and another quantity. 
    The number $2$ essentially appears because of the Lipschitz constant in \cref{lem:Gamma_is_Lipschitz}.
    Hence, the conditions for these theorems can be improved, if one uses a different form for the $\Gamma$ function, whose Lipschitz constant is smaller.
    However, this is not the case for \cref{thm:MVBSDE_instantaneous_second}, where we have twice the same term after the application of Tonelli's theorem.
    Thus, the number 2 in the condition of \cref{thm:MVBSDE_instantaneous_second} can only worsen for a different form of the $\Gamma$ function. 
\end{remark}


\subsection{Mean-field system of BSDEs}

Let us now introduce the setting for the first existence and uniqueness theorem, which concerns mean-field systems of BSDEs, \textit{i.e.} BSDEs where the empirical measure of the processes affects the generator of the equation.
Recall that $(\Omega,\mathcal{G},\mathbb{G},\mathbb{P})$ denotes a stochastic basis which satisfies the usual conditions, and assume it supports the following:

\begin{enumerate} [label=\textbf{(MF\arabic*)}]
    \item\label{G1} $N$ couples of martingales $ \{\overline{X}^i := (X^{i,\circ},X^{i,\natural})\}_{i \in \mathscr{N}} \in \left(\mathcal{H}^2(\mathbb{G};\mathbb{R}^p) \times \mathcal{H}^{2,d}(\mathbb{G};\mathbb{R}^n)\right)^N$ that satisfy $M_{\mu^{X^{i,\natural}}}[\Delta X^{i,\circ}|\widetilde{\mathcal{P}}^\mathbb{G}]=0$, where $\mu^{X^{i,\natural}}$ is the random measure generated by the jumps of $X^{i,\natural}$, for $i\in\mathscr{N}$.\footnote{Since the filtration $\mathbb{G}$ is given as well as the pairs $\overline{X}^i$, for $i\in\mathscr{N}$, we will make use of $C^{(\mathbb{G},\overline{X}^i)}$, resp. 
    $c^{(\mathbb{G},\overline{X}^i)}$, as defined in \eqref{def_C}, resp. \eqref{def_c}. 
    Moreover, we will use the kernels $K^{(\mathbb{G},\overline{X}^i)}$ as determined by \eqref{def:Kernels}.}
    
    \item \label{G2} A $\mathbb{G}$--stopping time $T$ and terminal conditions $\{\xi^{i,N}\}_{i \in \mathscr{N}} \in \prod_{i =1}^{N}\mathbb{L}^2_{\hat{\beta}}(\mathcal{G}_T,A^{(\mathbb{G},\overline{X}^i,f)};\mathbb{R}^d)$, for a $\hat{\beta} > 0$ and $\{A^{(\mathbb{G},\overline{X}^i,f)}\}_{i \in \mathscr{N}}$ the ones defined in \ref{G5}.
    
    \item\label{G3} Functions $\{\Theta^i\}_{i \in \mathscr{N}},\Gamma$ as in \cref{def_Gamma_function}, where for each $i\in\mathscr{N}$ the data for the definition are the pair $(\mathbb{G}, \overline{X}^i)$, the process $C^{(\mathbb{G},\overline{X}^i)}$ and the kernels $K^{(\mathbb{G},\overline{X}^i)}$.
    
    \item\label{G4} A generator $f: \Omega \times \mathbb{R}_+ \times \mathbb{D}^d \times \mathbb{R}^{d \times p} \times \mathbb{R}^d \times \mathscr{P}(\mathbb{D}^d) {}\longrightarrow \mathbb{R}^d$ such that for any $(y,z,u,\mu) \in \mathbb{D}^d \times \mathbb{R}^{d \times p} \times \mathbb{R}^d \times \mathscr{P}(\mathbb{D}^d)$, the map 
    \begin{align*}
        (\omega,t) \longmapsto f(\omega,t,y,z,u,\mu) \hspace{0.2cm}\text{is}\hspace{0.2cm}\mathbb{G}\text{--progressively measurable}
    \end{align*}
    and satisfies the following (stochastic) Lipschitz condition 
    \begin{align*}
    \begin{multlined}[0.9\textwidth]
    |f(\omega,t,y,z,u,\mu) - f(\omega,t,y',z',u',\mu')|^2\\
    \leq \hspace{0.1cm} r(\omega,t) \hspace{0.1cm} \rho_{J_1^d}(y,y')^2+ \hspace{0.1cm} \vartheta^o(\omega,t) \hspace{0.1cm} |z - z'|^2 
     + \hspace{0.1cm} \vartheta^{\natural}(\omega,t) \hspace{0.1cm} |u - u'|^2 + \vartheta^*(\omega,t) \hspace{0.1cm} W^2_{2,\rho_{J_1^{d}}}\left(\mu,\mu'\right),
    \end{multlined}
    \end{align*}
     where  $(r,\vartheta^o,\vartheta^{\natural},\vartheta^*): \left(\Omega \times \mathbb{R}_+, \mathcal{P}^\mathbb{G}\right)
         \longrightarrow \left(\mathbb{R}^4_+,\mathcal{B}\left(\mathbb{R}^4_+\right)\right).$
     \item \label{G5}  Define $\alpha^2 := \max\{\sqrt{r},\vartheta^o, \vartheta^{\natural}, \sqrt{\vartheta^*}\}$. 
     For the $\mathbb{G}$--predictable and c\`adl\`ag processes
      \begin{align*}
          A^{(\mathbb{G},\overline{X}^i,f)}_t := \int_{0}^{t}\alpha^2_s\ud C^{(\mathbb{G},\overline{X}^i)}_s
      \end{align*} 
     there exists $\Phi > 0$ such that
      $
          \Delta A^{(\mathbb{G},\overline{X}^i,f)}_t(\omega) \leq \Phi,$ $ \mathbb{P} \otimes C^{(\mathbb{G},\overline{X}^i)}-\text{a.e., for all } i \in \mathscr{N}.
      $

      \item\label{G6} For the same $\hat{\beta}$ as in \ref{G2} there exists $ \Lambda_{\hat{\beta}} > 0$ such that $\max_{i \in \mathscr{N}}\Big\{\mathcal{E}\Big(\hat{\beta} A^{(\mathbb{G},\overline{X}^i,f)} \Big)_{T}\Big\} \leq \Lambda_{\hat{\beta}} $.
      
    \item\label{G7} For the same $\hat{\beta}$ as in \ref{G2} we have 
$$\mathbb{E}\left[\int_{0}^{T}\mathcal{E}\Big(\hat{\beta} A^{(\mathbb{G},\overline{X}^i,f)}\Big)_{s-} \frac{|f(s,0,0,0,\delta_0)|^2}{\alpha^2_s}\ud C^{(\mathbb{G},\overline{X}^i)}_s\right] < \infty,\hspace{0.5cm} i \in \mathscr{N},  $$
   where $\delta_0$ is the Dirac measure on the domain of the last variable concentrated at $0$, the neutral element of the addition.
\end{enumerate}

Now, we consider a mean-field system of path-dependent BSDEs of the form
\begin{align}
\begin{multlined}[0.9\textwidth]
\label{mfBSDE_with_initial_path}
Y^{i,N}_t = \xi^{i,N} + \int^{T}_{t}f \left(s,Y^{i,N}|_{[0,s]},Z^{i,N}_s  c^{(\mathbb{G},\overline{X}^i)}_s,\Gamma^{(\mathbb{G},\overline{X}^i,\Theta^i)}(U^{i,N})_s,L^N(\textbf{Y}^N|_{[0,s]}) \right) \,\ud C^{(\mathbb{G},\overline{X}^i)}_s\\ 
- \int^{T}_{t}Z^{i,N}_s \,  \ud X^{i,\circ}_s - \int^{T}_{t}\int_{\mathbb{R}^n}U^{i,N}_s(x) \, \widetilde{\mu}^{(\mathbb{G},X^{i,\natural})}(\ud s,\ud x) - \int^{T}_{t} \,\ud M^{i,N}_s,  
\quad    i\in\mathscr{N},
\end{multlined}
\end{align}
where the BSDEs depend again on the initial segment of the path of the solution process $Y^{i,N}$, \textit{i.e.} on $Y^{i,N}|_{[0,\cdot]}$, for all $i\in\mathscr{N}$

\begin{definition}\label{def:standard_data_mfBSDE_initial_path}
    A set of data $\big(\mathbb{G},\{\overline{X}^i\}_{i \in \mathscr{N}},T,\left\{\xi^{i,N}\right\}_{i \in \mathscr{N}},\{\Theta^i\}_{i \in \mathscr{N}},\Gamma,f\big)$ that satisfies Assumptions \emph{\ref{G1}--\ref{G7}}  will be called \emph{standard data under $\hat{\beta}$} for the mean-field path-dependent BSDEs \eqref{mfBSDE_with_initial_path}.  
\end{definition}


Next, we provide the existence and uniqueness result for the solution of the mean-field system of path-dependent BSDEs \eqref{mfBSDE_with_initial_path}.
Before we proceed to the statement of the announced theorem, let us mention that we adopt the following convention hereinafter: whenever we consider a (finite) Cartesian product of normed spaces, the norm on this product will be the sum of the norms of the normed spaces used to construct the Cartesian product and it will be simply denoted by $\|\cdot\|$; see also \cref{rem:norm-product-space}.
 
\begin{theorem}\label{thm:mfBSDE_initial_path}
Let $\big(\mathbb{G},\{\overline{X}^i\}_{i \in \mathscr{N}},T,\left\{\xi^{i,N}\right\}_{i \in \mathscr{N}},\{\Theta^i\}_{i \in \mathscr{N}},\Gamma,f\big)$ be standard data under $\hat{\beta}$ for the mean-field path-dependent BSDE \eqref{mfBSDE_with_initial_path}. 
If 
\begin{align*}
    \max\left\{2,\frac{2\Lambda_{\hat{\beta}}}{\hat{\beta}}\right\} M^{\Phi}_{\star}(\hat{\beta}) < 1,
\end{align*} 
then the system of $N-$BSDEs
\begin{align}
\begin{multlined}[0.9\textwidth]
Y^{i,N}_t = \xi^{i,N} + \int^{T}_{t}f \left(s,Y^{i,N}|_{[0,s]},Z^{i,N}_s  c^{(\mathbb{G},\overline{X}^i)}_s,\Gamma^{(\mathbb{G},\overline{X}^i,\Theta^i)}(U^{i,N})_s,L^N(\textbf{Y}^N|_{[0,s]}) \right) \,\ud C^{(\mathbb{G},\overline{X}^i)}_s\\ 
- \int^{T}_{t}Z^{i,N}_s \,  \ud X^{i,\circ}_s - \int^{T}_{t}\int_{\mathbb{R}^n}U^{i,N}_s(x) \, \widetilde{\mu}^{(\mathbb{G},X^{i,\natural})}(\ud s,\ud x) - \int^{T}_{t} \,\ud M^{i,N}_s,  
\quad    i\in\mathscr{N},
\tag{\ref{mfBSDE_with_initial_path}}
\end{multlined}
\end{align}
admits a unique $N-$quadruple  $(\textbf{Y}^N,\textbf{Z}^N,\textbf{U}^N,\textbf{M}^N)$ as solution, such that 
\begin{gather*}
\textbf{Y}^N:=(Y^{1,N},\ldots,Y^{N,N}) 
\in \Prod_{i = 1}^{N}
\mathcal{S}^{2}_{\hat{\beta}}(\mathbb{G},A^{(\mathbb{G},\overline{X}^i,f)};\mathbb{R}^d),\\
    \textbf{Z}^N:=(Z^{1,N},\ldots,Z^{N,N})
    \in \Prod_{i=1}^N \mathbb{H}^{2}_{\hat{\beta}}(\mathbb{G},A^{(\mathbb{G},\overline{X}^i,f)},X^{i,\circ};\mathbb{R}^{d \times p}),\\
\textbf{U}^N:=(U^{1,N},\ldots,U^{N,N})
\in \Prod_{i=1}^N \mathbb{H}^{2}_{\hat{\beta}}(\mathbb{G},A^{(\mathbb{G},\overline{X}^i,f)},X^{i,\natural};\mathbb{R}^d)
\shortintertext{and}
    \textbf{M}^N:=(M^{1,N},\ldots,M^{N,N})
    \in\Prod_{i=1}^N \mathcal{H}^{2}_{\hat{\beta}}(\mathbb{G},A^{(\mathbb{G},\overline{X}^i,f)},{\overline{X}^i}^{\perp_{\mathbb{G}}};\mathbb{R}^{d}).
\end{gather*}
\end{theorem}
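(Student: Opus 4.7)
The plan is to adapt the fixed-point argument of \cref{thm:MVBSDE_initial_path} to the $N$--coordinate system, the key new ingredient being the inequality \eqref{empiricalineq} for the Wasserstein distance between empirical measures. For notational ease, let us write
$C^i := C^{(\mathbb{G},\overline{X}^i)}$,
$c^i := c^{(\mathbb{G},\overline{X}^i)}$,
$A^i := A^{(\mathbb{G},\overline{X}^i,f)}$,
$\Gamma^{i}(\cdot) := \Gamma^{(\mathbb{G},\overline{X}^i,\Theta^i)}(\cdot)$
and $\tnorm{\cdot}^i := \tnorm{\cdot}^{(\mathbb{G},\overline{X}^i)}$, for $i\in\mathscr{N}$. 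Consider the product Banach space
\begin{align*}
\mathcal{B}^N_{\hat{\beta}}
 := \Prod_{i=1}^N \bigl(\mathcal{S}^2_{\hat{\beta}}(\mathbb{G},A^i;\mathbb{R}^d)\times \mathbb{H}^2_{\hat{\beta}}(\mathbb{G},A^i,X^{i,\circ};\mathbb{R}^{d\times p})\times \mathbb{H}^2_{\hat{\beta}}(\mathbb{G},A^i,X^{i,\natural};\mathbb{R}^d)\times \mathcal{H}^2_{\hat{\beta}}(\mathbb{G},A^i,{\overline{X}^i}^{\perp_{\mathbb{G}}};\mathbb{R}^d)\bigr),
\end{align*}
equipped with the norm $\|\cdot\|_{\star,\hat{\beta},\mathbb{G},\{A^i\},\{\overline{X}^i\}}$ of \cref{rem:norm-product-space}.

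Given $(\mathbf{y}^N,\mathbf{z}^N,\mathbf{u}^N,\mathbf{m}^N)\in \mathcal{B}^N_{\hat{\beta}}$, define coordinatewise
\[
g^i_s := f\bigl(s,y^{i,N}|_{[0,s]},z^{i,N}_s c^i_s,\Gamma^i(u^{i,N})_s,L^N(\mathbf{y}^N|_{[0,s]})\bigr),\qquad i\in\mathscr{N}.
\]
Arguing as in \cref{thm:MVBSDE_initial_path} via \ref{G4}, \ref{G7} and \cref{lem:Gamma_is_Lipschitz}, together with the empirical inequality \eqref{empiricalineq}, one checks that $\|g^i/\alpha\|_{\mathbb{H}^2_{\hat{\beta}}(\mathbb{G},A^i,C^i;\mathbb{R}^d)}<\infty$ for every $i$. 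Martingale representation of $\mathbb{E}[\xi^{i,N}+\int_0^T g^i_s\,\ud C^i_s\mid \mathcal{G}_\cdot]$ relative to $X^{i,\circ}$, $\widetilde{\mu}^{(\mathbb{G},X^{i,\natural})}$ and the orthogonal complement yields a unique quadruple $(Y^{i,N},Z^{i,N},U^{i,N},M^{i,N})$ in the $i$-th factor of $\mathcal{B}^N_{\hat{\beta}}$. This defines the map
$
S : \mathcal{B}^N_{\hat{\beta}} \longrightarrow \mathcal{B}^N_{\hat{\beta}}, \; (\mathbf{y}^N,\mathbf{z}^N,\mathbf{u}^N,\mathbf{m}^N)\longmapsto (\mathbf{Y}^N,\mathbf{Z}^N,\mathbf{U}^N,\mathbf{M}^N).
$

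For the contraction estimate, take two inputs $(\mathbf{y}^{j,N},\mathbf{z}^{j,N},\mathbf{u}^{j,N},\mathbf{m}^{j,N})$, $j=1,2$, with images under $S$ denoted similarly with capital letters, and set $\psi^i := g^{i,2}-g^{i,1}$. The Lipschitz bound \ref{G4}, \cref{lem:Gamma_is_Lipschitz} and \eqref{empiricalineq} give, in close analogy with \eqref{ineq:psi_over_alpha},
\begin{align*}
\mathcal{E}(\hat{\beta} A^i)_{t-}\Big|\frac{\psi^i_t}{\alpha_t}\Big|^2
& \leq \alpha_t^2 \mathcal{E}(\hat{\beta} A^i)_{t-}\sup_{s\le t}|y^{i,2,N}_s-y^{i,1,N}_s|^2 + \mathcal{E}(\hat{\beta} A^i)_{t-}\|(z^{i,2,N}_t-z^{i,1,N}_t)c^i_t\|^2\\
&\quad + 2\,\mathcal{E}(\hat{\beta} A^i)_{t-}\bigl(\tnorm{u^{i,2,N}_t-u^{i,1,N}_t}^i_t\bigr)^2
+ \frac{\alpha_t^2}{N}\mathcal{E}(\hat{\beta} A^i)_{t-} \sum_{j=1}^N\mathbb{E}\!\Big[\sup_{s\le t}|y^{j,2,N}_s-y^{j,1,N}_s|^2\Big].
\end{align*}
Integrating against $\mathbb{P}\otimes C^i$, using \ref{G6} to bound $\mathcal{E}(\hat{\beta} A^i)_{t-}$ by $\Lambda_{\hat{\beta}}$ in the first and last terms, together with the identities \eqref{comp_norm_altern_circ}--\eqref{comp_norm_altern_natural}, and then summing over $i\in\mathscr{N}$, one obtains
\[
\sum_{i=1}^N \Big\|\frac{\psi^i}{\alpha}\Big\|^2_{\mathbb{H}^2_{\hat{\beta}}(\mathbb{G},A^i,C^i;\mathbb{R}^d)}
\le \max\Big\{2,\frac{2\Lambda_{\hat{\beta}}}{\hat{\beta}}\Big\}\,\bigl\|(\mathbf{y}^{2,N}-\mathbf{y}^{1,N},\mathbf{z}^{2,N}-\mathbf{z}^{1,N},\mathbf{u}^{2,N}-\mathbf{u}^{1,N},\mathbf{m}^{2,N}-\mathbf{m}^{1,N})\bigr\|_{\star,\hat{\beta},\mathbb{G},\{A^i\},\{\overline{X}^i\}}^2 ,
\]
where the crucial observation is that the double sum $\sum_{i=1}^N \frac{1}{N}\sum_{j=1}^N\mathbb{E}[\sup_{s\le t}|y^{j,2,N}_s-y^{j,1,N}_s|^2]$ collapses to $\sum_{j=1}^N\mathbb{E}[\sup_{s\le t}|y^{j,2,N}_s-y^{j,1,N}_s|^2]$, so the mean-field coupling does not spoil the constant. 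Applying \cref{lem:a_priori_estimates} and \cref{prop:infima_for_M} to each coordinate difference $Y^{i,2,N}-Y^{i,1,N}$ (with vanishing terminal value) and summing over $i$ yields
\[
\|S(\mathbf{y}^{2,N},\mathbf{z}^{2,N},\mathbf{u}^{2,N},\mathbf{m}^{2,N})-S(\mathbf{y}^{1,N},\mathbf{z}^{1,N},\mathbf{u}^{1,N},\mathbf{m}^{1,N})\|^2_{\star,\hat{\beta},\mathbb{G},\{A^i\},\{\overline{X}^i\}}
\le \max\Big\{2,\frac{2\Lambda_{\hat{\beta}}}{\hat{\beta}}\Big\} M^{\Phi}_\star(\hat{\beta})\,\|\cdot\|_{\star,\hat{\beta},\mathbb{G},\{A^i\},\{\overline{X}^i\}}^2 .
\]
By hypothesis this factor is strictly less than one, so $S$ is a contraction on $\mathcal{B}^N_{\hat{\beta}}$ and Banach's fixed point theorem furnishes the unique solution. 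The main conceptual point, and the only essentially new step compared with \cref{thm:MVBSDE_initial_path}, is the bookkeeping that ensures the mean-field coupling through $L^N$ collapses cleanly after summation; once this is observed, all other estimates reduce to the coordinatewise arguments already carried out.
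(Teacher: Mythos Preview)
Your proof follows the same fixed–point strategy as the paper's and arrives at the correct contraction constant, but the displayed pointwise inequality contains a genuine slip that is precisely the place where the mean-field argument departs from the McKean--Vlasov one. In the last term you write
\[
\frac{\alpha_t^2}{N}\,\mathcal{E}(\hat{\beta} A^i)_{t-}\sum_{j=1}^N \mathbb{E}\!\Big[\sup_{s\le t}|y^{j,2,N}_s-y^{j,1,N}_s|^2\Big],
\]
but inequality \eqref{empiricalineq} is a \emph{pathwise} bound between empirical measures, so it yields the random quantity
\(
\frac{1}{N}\sum_{j=1}^N \sup_{s\le t}|y^{j,2,N}_s-y^{j,1,N}_s|^2
\)
with no expectation; the expectation in \eqref{ineq:psi_over_alpha} arose only because there the Wasserstein term compares \emph{laws}, not empirical measures. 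Your stated inequality is therefore false as written. This is exactly the point emphasised in the paper (cf.\ \cref{rem:about_proof_thm_mfBSDE_initial_path}): after integration one must control $\mathcal{E}(\hat\beta A^i)_{t-}$ multiplied by the running maximum of $y^{j,\cdot,N}$ for $j\neq i$, and this cross-coordinate coupling is handled via the uniform bound \ref{G6}, yielding
\[
\mathbb{E}\!\Big[\int_0^T \frac{\alpha_t^2}{N}\mathcal{E}(\hat\beta A^i)_{t-}\sum_{j=1}^N \sup_{s\le t}|y^{j,2,N}_s-y^{j,1,N}_s|^2\,\ud C^i_t\Big]
\le \frac{\Lambda_{\hat\beta}}{\hat\beta}\,\frac{1}{N}\sum_{j=1}^N \|y^{j,2,N}-y^{j,1,N}\|^2_{\mathcal{S}^2_{\hat\beta}(\mathbb{G},A^j;\mathbb{R}^d)}.
\]
Once the spurious expectation is removed and this step is made explicit, your summation-over-$i$ observation and the remaining estimates are correct and coincide with the paper's proof.
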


\begin{proof}
Not surprisingly, we will follow analogous arguments as in the proof of \cref{thm:MVBSDE_initial_path} which dealt with path-dependent McKean--Vlasov BSDEs \eqref{MVBSDE_with_initial_path}.
However, there are points that differentiate the current proof from the previous ones.
Hence, for the convenience of the reader, we will present here the proof in a compact, yet sufficiently clear, way.
To this end, let us introduce the following more compact notation:
\begin{gather*}
    \mathcal{S}^2_{\hat{\beta},N}:=\Prod_{i = 1}^{N}
\mathcal{S}^{2}_{\hat{\beta}}(\mathbb{G},A^{(\mathbb{G},\overline{X}^i,f)};\mathbb{R}^d),
    \quad 
    \mathbb{H}^{2,\circ}_{\hat{\beta},N} 
    :=\Prod_{i=1}^N \mathbb{H}^{2}_{\hat{\beta}}(\mathbb{G},A^{(\mathbb{G},\overline{X}^i,f)},X^{i,\circ};\mathbb{R}^{d \times p}),\\
\mathbb{H}^{2,\natural}_{\hat{\beta},N}:= 
\Prod_{i=1}^N \mathbb{H}^{2}_{\hat{\beta}}(\mathbb{G},A^{(\mathbb{G},\overline{X}^i,f)},X^{i,\natural};\mathbb{R}^d)
    \quad\text{and}\quad
    \mathcal{H}^{2,\perp_{\mathbb{G}}}_{\hat{\beta},N}:=
    \Prod_{i=1}^N \mathcal{H}^{2}_{\hat{\beta}}(\mathbb{G},A^{(\mathbb{G},\overline{X}^i,f)},{\overline{X}^i}^{\perp_{\mathbb{G}}};\mathbb{R}^{d}).
\end{gather*}

Let $(\textbf{y}^N,\textbf{z}^N,\textbf{u}^N,\textbf{m}^N) \in 
\mathcal{S}^2_{\hat{\beta},N} \times 
\mathbb{H}^{2,\circ}_{\hat{\beta},N} \times
\mathbb{H}^{2,\natural}_{\hat{\beta},N} \times 
\mathcal{H}^{2,\perp_{\mathbb{G}}}_{\hat{\beta},N}$
with 
\begin{align*}
\textbf{y}^N:=(y^{1,N},\ldots,y^{N,N}),
\textbf{z}^N:=(z^{1,N},\ldots,z^{N,N}),
\textbf{u}^N:=(u^{1,N},\ldots,u^{N,N})\text{ and }
\textbf{m}^N:=(m^{1,N},\ldots,m^{N,N}).
\end{align*}
Working per coordinate, following the proof of \cref{thm:MVBSDE_initial_path}, and using analogous arguments to those provided in the extraction of bound \eqref{ineq:norm_psi_ininital_path_mf} below, it permits us to conclude that we get unique processes 
\[
(\textbf{Y}^N,\textbf{Z}^N,\textbf{U}^N,\textbf{M}^N) \in \mathcal{S}^2_{\hat{\beta},N} \times 
\mathbb{H}^{2,\circ}_{\hat{\beta},N} \times
\mathbb{H}^{2,\natural}_{\hat{\beta},N} \times 
\mathcal{H}^{2,\perp_{\mathbb{G}}}_{\hat{\beta},N},
\]
such that for $ i\in\mathscr{N}$
\begin{multline*}
Y^{i,N}_t = \xi^{i,N} + \int^{T}_{t}f \left(s,y^{i,N}|_{[0,s]},z^{i,N}_s  c^{(\mathbb{G},\overline{X}^i)}_s,\Gamma^{(\mathbb{G},\overline{X}^i,\Theta^i)}(u^{i,N})_s,L^N(\textbf{y}^N|_{[0,s]}) \right) \,\ud C^{(\mathbb{G},\overline{X}^i)}_s\\ 
- \int^{T}_{t}Z^{i,N}_s \,  \ud X^{i,\circ}_s - \int^{T}_{t}\int_{\mathbb{R}^n}U^{i,N}_s(x) \, \widetilde{\mu}^{(\mathbb{G},X^{i,\natural})}(\ud s,\ud x) - \int^{T}_{t} \,\ud M^{i,N}_s,  
\end{multline*}
with
\begin{align*}
Y^{i,N}_{t} := \mathbb{E}\left[ \xi^{i,N} + \int^{T}_{t}f\left(s,y^{i,N}|_{[0,s]},z^{i,N}_s c^{(\mathbb{G},\overline{X}^i)}_s,\Gamma^{(\mathbb{G},\overline{X}^i,\Theta^i)}(u^{i,N})_s,L^N(\textbf{y}^N|_{[0,s]})\right) \, \ud C^{(\mathbb{G},\overline{X}^i)}_s \bigg| \mathcal{G}_{t} \right ].
\end{align*}
The reader may observe that for each $i\in\mathscr{N}$ we have represented the martingale part of the semimartingale $Y^{i, N}$ as a sum of stochastic integrals with respect to the elements of the pair $\overline{X}^i$ and an element of the orthogonal space $\mathcal{H}^{2}_{\hat{\beta}}(\mathbb{G},A^{(\mathbb{G},\overline{X}^i,f)},{\overline{X}^i}^{\perp_{\mathbb{G}}};\mathbb{R}^{d})$.

Hence, we define the function 
\begin{gather*}
\textbf{S}^N:
\mathcal{S}^2_{\hat{\beta},N} \times 
\mathbb{H}^{2,\circ}_{\hat{\beta},N} \times
\mathbb{H}^{2,\natural}_{\hat{\beta},N} \times 
\mathbb{H}^{2,\perp_{\mathbb{G}}}_{\hat{\beta},N}
\longrightarrow 
\mathcal{S}^2_{\hat{\beta},N} \times 
\mathbb{H}^{2,\circ}_{\hat{\beta},N} \times
\mathbb{H}^{2,\natural}_{\hat{\beta},N} \times 
\mathbb{H}^{2,\perp_{\mathbb{G}}}_{\hat{\beta},N}
\shortintertext{with}
\textbf{S}^N(\textbf{y}^N,\textbf{z}^N,\textbf{u}^N,\textbf{m}^N) := 
(\textbf{Y}^N,\textbf{Z}^N,\textbf{U}^N,\textbf{M}^N).
\end{gather*}
As before, we want to prove that $\textbf{S}^N$ is a contraction, so that by Banach's fixed point theorem we get the unique solution that we want.

For $j=1,2$, let 
$(\textbf{y}^{N,j},\textbf{z}^{N,j},\textbf{u}^{N,j},\textbf{m}^{N,j}) \in
\mathcal{S}^2_{\hat{\beta},N} \times 
\mathbb{H}^{2,\circ}_{\hat{\beta},N} \times
\mathbb{H}^{2,\natural}_{\hat{\beta},N} \times 
\mathcal{H}^{2,\perp_{\mathbb{G}}}_{\hat{\beta},N}$
with 
\begin{gather*}
\textbf{y}^{N,j}:=(y^{1,N,j},\ldots,y^{N,N,j}),\quad
\textbf{z}^{N,j}:=(z^{1,N,j},\ldots,z^{N,N,j}),\\
\textbf{u}^{N,j}:=(u^{1,N,j},\ldots,u^{N,N,j})
\quad\text{and}\quad
\textbf{m}^{N,j}:=(m^{1,N,j},\ldots,m^{N,N,j})
\end{gather*}
and let us, also, denote $\textbf{S}^N(\textbf{y}^{N,j},\textbf{z}^{N,j},\textbf{u}^{N,j},\textbf{m}^{N,j}) $ by $\textbf{Y}^{N,j},\textbf{Z}^{N,j},\textbf{U}^{N,j},\textbf{M}^{N,j}$.
We proceed to define for every $i \in \mathscr{N}$
\begin{multline*}
    \psi^i_t : = f\left(t,y^{i,N,2}|_{[0,t]},z^{i,N,2}_t c^{(\mathbb{G},\overline{X}^i)}_t,\Gamma^{(\mathbb{G},\overline{X}^i,\Theta^i)}(u^{i,N,2})_t,L^N\left(\textbf{y}^{N,2}|_{[0,t]}\right)\right)\\
    - f\left(t,y^{i,N,1}|_{[0,t]},z^{i,N,1}_t  c^{(\mathbb{G},\overline{X}^i)}_t,\Gamma^{(\mathbb{G},\overline{X}^i,\Theta^i)}(u^{i,N,1})_t,L^N\left(\textbf{y}^{N,1}|_{[0,t]}\right)\right).
\end{multline*}
In order to control the Wasserstein distance between the empirical measures 
we have from \eqref{empiricalineq} and \eqref{skorokineq}
\begin{align*}
W_{2,\rho_{J_1^{d}}}^2
\big(L^N\left(\textbf{y}^{N,2}|_{[0,t]}\right),L^N\left(\textbf{y}^{N,1}|_{[0,t]}\right)\big)
\leq \frac{1}{N}\sum_{m = 1}^{N}\sup_{s \in [0,t]}\{|y^{m,N,2}_s - y^{m,N,1}_s|^2\}. 
\end{align*}
The reader may observe that no expectation appears on the right-hand side of the inequality, which is something to be expected because of the nature of the left-hand side.
Indeed, there we have a Wasserstein distance of empirical measures, which depend on $\omega$.
Hence, in general, there can be no deterministic upper bound for this random variable. 

By an application of \eqref{comp_norm_altern_circ}, \eqref{comp_norm_altern_natural} and \cref{lem:Gamma_is_Lipschitz} per coordinate we get the following estimation\footnote{This is the analogous to \eqref{ineq:norm_psi_initial_path} in \cref{thm:MVBSDE_initial_path}.} for $i\in\mathscr{N}$
\begin{align*}  
&\left\|\frac{\psi^i}{\alpha}\right\|^{2}_{\mathbb{H}^{2}_{\hat{\beta}}(\mathbb{G},A^{(\mathbb{G},\overline{X}^i,f)},C^{(\mathbb{G},\overline{X}^i)};\mathbb{R}^d)}\\
&\hspace{2em}
\begin{multlined}[0.85\textwidth]
\leq \frac{\Lambda_{\hat{\beta}}}{\hat{\beta}} \hspace{0.1cm} \|y^{i,N,2} - y^{i,N,1}\|^{2}_{\mathcal{S}^{2}_{\hat{\beta}}(\mathbb{G},A^{(\mathbb{G},\overline{X}^i,f)};\mathbb{R}^d)} 
+ \|z^{i,N,2} - z^{i,N,1}\|^{2}_{ \mathbb{H}^{2}_{\hat{\beta}}(\mathbb{G},A^{(\mathbb{G},\overline{X}^i,f)},X^{i,\circ};\mathbb{R}^{d \times p})} \\
+ 2 \hspace{0.1cm}
\|u^{i,N,2} - u^{i,N,1}\|^{2}_{\mathbb{H}^{2}_{\hat{\beta}}(\mathbb{G},A^{(\mathbb{G},\overline{X}^i,f)},X^{i,\natural};\mathbb{R}^d) } \\
+  \frac{1}{N} \sum_{m = 1}^N \mathbb{E}\left[\frac{1}{\hat{\beta}} \hspace{0.1cm}\mathcal{E}\left(\hat{\beta} A^{(\mathbb{G},\overline{X}^i,f)} \right)_{T-} 
\sup_{s \in [0,T]}\{|y^{m,N,2}_s - y^{m,N,1}_s|^2\}  \right]    
\numberthis\label{ineq:norm_psi_ininital_path_mf_pre}
\end{multlined}
\\
&\hspace{2em}
\begin{multlined}[0.85\textwidth]
    \leq \frac{\Lambda_{\hat{\beta}}}{\hat{\beta}} \hspace{0.1cm} 
    \|y^{i,N,2} - y^{i,N,1}\|^{2}_{\mathcal{S}^{2}_{\hat{\beta}}(\mathbb{G},A^{(\mathbb{G},\overline{X}^i,f)};\mathbb{R}^d)} + \|z^{i,N,2} - z^{i,N,1}\|^{2}_{ \mathbb{H}^{2}_{\hat{\beta}}(\mathbb{G},A^{(\mathbb{G},\overline{X}^i,f)},X^{i,\circ};\mathbb{R}^{d \times p})} \\
+ 2 \hspace{0.1cm}
\|u^{i,N,2} - u^{i,N,1}\|^{2}_{\mathbb{H}^{2}_{\hat{\beta}}(\mathbb{G},A^{(\mathbb{G},\overline{X}^i,f)},X^{i,\natural};\mathbb{R}^d) } 
+  \frac{\Lambda_{\hat{\beta}}}{\hat{\beta}} \hspace{0.1cm}\frac{1}{N}\sum_{m = 1}^{N} 
\|y^{m,N,2} - y^{m,N,1}\|^2_{\mathcal{S}^{2}_{\hat{\beta}}(\mathbb{G},A^{(\mathbb{G},\overline{X}^m,f)};\mathbb{R}^d)}.
\end{multlined}
\numberthis\label{ineq:norm_psi_ininital_path_mf}
\end{align*}
Then, by employing \cref{lem:a_priori_estimates} per coordinate, in conjunction with the fact that $\textbf{Y}^{N,1}_T=\textbf{Y}^{N,2}_T$, and by summing over $i\in\mathscr{N}$, we have (recall the notation for the norm on the Cartesian product)
\begin{align*}
&\big\|\textbf{S}^N(\textbf{y}^{N,2},\textbf{z}^{N,2},\textbf{u}^{N,2},\textbf{m}^{N,2}) - \textbf{S}^N(\textbf{y}^{N,1},\textbf{z}^{N,1},\textbf{u}^{N,1},\textbf{m}^{N,1}) \big\|^{2}\\
&\begin{multlined}[0.9\textwidth]
\overset{\phantom{(\textbf{Y}^{N,1}_T=\textbf{Y}^{N,2}_T)}}{=}
\sum_{i = 1}^{N}\| Y^{i,N,2} - Y^{i,N,1}\|^{2}_{\mathcal{S}^{2}_{\hat{\beta}}(\mathbb{G},A^{(\mathbb{G},\overline{X}^i,f)};\mathbb{R}^d)} + \sum_{i = 1}^N\|Z^{i,N,2} - Z^{i,N,1}\|^{2}_{\mathbb{H}^{2}_{\hat{\beta}}(\mathbb{G},A^{(\mathbb{G},\overline{X}^i,f)},X^{i,\circ};\mathbb{R}^{d \times p})}\\
+ \sum_{i = 1}^N\| U^{i,N,2} - U^{i,N,1}\|^{2}_{\mathbb{H}^{2}_{\hat{\beta}}(\mathbb{G},A^{(\mathbb{G},\overline{X}^i,f)},X^{i,\natural};\mathbb{R}^d)} +  \sum_{i = 1}^N\| M^{i,N,2} - M^{i,N,1}\|^{2}_{\mathcal{H}^{2}_{\hat{\beta}}(\mathbb{G},A^{(\mathbb{G},\overline{X}^i,f)},{\overline{X}^i}^{\perp_{\mathbb{G}}};\mathbb{R}^{d})}
\end{multlined}\\
&\overset{(\textbf{Y}^{N,1}_T=\textbf{Y}^{N,2}_T)}{\leq}
M^{\Phi}_{\star}(\hat{\beta}) \sum_{i = 1}^{N}\left\|\frac{\psi^i}{\alpha}\right\|^{2}_{\mathbb{H}^{2}_{\hat{\beta}}(\mathbb{G},A^{(\mathbb{G},\overline{X}^i,f)},C^{(\mathbb{G},\overline{X}^i)};\mathbb{R}^d)}\\
&\overset{\eqref{ineq:norm_psi_ininital_path_mf}}{\underset{\phantom{(\textbf{Y}^{N,1}_T=\textbf{Y}^{N,2}_T)}}{\leq}}
\max\left\{2,\frac{2\Lambda_{\hat{\beta}}}{\hat{\beta}}\right\}\hspace{0.05cm} 
M^{\Phi}_{\star}(\hat{\beta}) \hspace{0.05cm}
\big\| \textbf{y}^{N,2} - \textbf{y}^{N,1}, \textbf{z}^{N,2} - \textbf{z}^{N,1}, \textbf{u}^{N,2} - \textbf{u}^{N,1}, \textbf{m}^{N,2} - \textbf{m}^{N,1} \big\|^{2},
\end{align*}
which provides the desired contraction.\footnote{The reader may observe that in \eqref{ineq:norm_psi_ininital_path_mf} for each fixed $i\in\{1,\ldots,N \}$ there are terms which correspond to $m\neq i$, whose coefficient is $1/N$. 
These terms sum up their coefficients up to $1$ when we sum over $i\in\{1,\ldots,N \}$. }
\end{proof}

\begin{remark}\label{rem:about_proof_thm_mfBSDE_initial_path}
Let us provide at this point some comments related to the proof of \cref{thm:mfBSDE_initial_path}.
\begin{enumerate}
    \item 
    For $\textup{\textbf{y}}^N \in \mathcal{S}^2_{\hat{\beta},N}$
    it is easy to show from \eqref{skorokineq} that $L^N(\textup{\textbf{y}}^N|_{[0,\cdot]})$ is an adapted, c\`adl\`ag process; see also the proof of \cref{Lemma_5.4} for similar arguments. 
    \item\label{rem:about_proof_thm_mfBSDE_initial_path_2}
    In the derivation of \eqref{ineq:norm_psi_ininital_path_mf} one faces the problem of multiplying the running maximum of processes lying within $\mathcal{S}^2_{\hat{\beta}}(\mathbb{G},A^{(\mathbb{G},\overline{X}^i,f)};\mathbb{R}^d)$ with the stochastic exponential associated to $A^{(\mathbb{G},\overline{X}^m,f)}$, for $m\neq i$. 
    In general, one cannot derive such estimates, except for special cases, \emph{e.g.}, like those described by  Condition \textup{\ref{G6}} or \textup{\ref{G6_prime}} provided below.
    \item 
    In view of \cref{rem:about_proof_thm_MVBSDE_initial_path}.\ref{rem:about_proof_thm_MVBSDE_initial_path_3}, we may also consider mean-field systems of BSDEs whose generator depends on the instantaneous value of the $\textbf{Y}^N-$part of the solution.
    However, one is not able to prove the existence and the uniqueness of the solution under the analogous framework of \cref{thm:MVBSDE_instantaneous_first}, \emph{i.e.}, under a condition that involves the mean of the stochastic exponentials.
    This can be easily explained.
    To this end, we have to recall and combine two facts. 
    The first one is the previous remark \ref{rem:about_proof_thm_mfBSDE_initial_path_2}.
    The second one is that in \eqref{ineq:psi_over_alpha} the Wasserstein distance provides an expectation, which allows to integrate it with respect to a stochastic exponential, thus factorizing the respective mean values.
    However, this is not the case in the inequality before \eqref{ineq:norm_psi_ininital_path_mf}.
\end{enumerate}
\end{remark}
In view of the previous remark, we will consider mean-field systems of BSDEs whose generator depends on the instantaneous value of the $\textbf{Y}^N-$part of the solution.
To this end, again, we need to reformulate assumptions \ref{G4} and \ref{G6} as follows:
\begin{enumerate}[label=\textbf{(MF\arabic*${}^{\prime}$)}]
\setcounter{enumi}{3}
\item\label{G4_prime} 
A generator $f: \Omega \times \mathbb{R}_+ \times \mathbb{R}^d \times \mathbb{R}^{d \times p} \times \mathbb{R}^d \times \mathscr{P}_2(\mathbb{R}^d) {}\longrightarrow \mathbb{R}^d$ such that for any $(y,z,u,\mu) \in \mathbb{R}^d \times \mathbb{R}^{d \times p} \times \mathbb{R}^d \times \mathscr{P}_2(\mathbb{R}^d)$, the map 
    \begin{align*}
    \begin{multlined}[t][0.75\textwidth]
    |f(\omega,t,y,z,u,\mu) - f(\omega,t,y',z',u',\mu')|^2\\
    \leq\hspace{0.1cm} r(\omega,t) \hspace{0.1cm} |y-y'|^2+ \hspace{0.1cm} \vartheta^o(\omega,t) \hspace{0.1cm} |z - z'|^2 
     + \hspace{0.1cm} \vartheta^{\natural}(\omega,t) \hspace{0.1cm} |u - u'|^2 + \vartheta^*(\omega,t) \hspace{0.1cm} W^2_{2,|\cdot|}\left(\mu,\mu'\right),
    \end{multlined}
    \end{align*}
        where  $         (r,\vartheta^o,\vartheta^{\natural},\vartheta^*): \left(\Omega \times \mathbb{R}_+, \mathcal{P}^{\mathbb{G}}\right) \hspace{0.2cm} {}\longrightarrow \hspace{0.2cm}\left(\mathbb{R}^4_+,\mathcal{B}\left(\mathbb{R}^4_+\right)\right).$
    \setcounter{enumi}{5}
    \item \label{G6_prime} For $i,j \in \mathscr{N}$ we have $ A^{(\mathbb{G},\overline{X}^i,f)}= A^{(\mathbb{G},\overline{X}^j,f)}$.\footnote{The equality is understood up to evanescence. Moreover, in view of the definition of $A^{(\mathbb{G},\overline{X}^i,f)}$, for $i\in\mathscr{N}$, this condition is equivalent to $C^{(\mathbb{G},\overline{X}^i)}= C^{(\mathbb{G},\overline{X}^j)}$ for $i,j\in\{1,\ldots,N\}$. We prefer to present it in the way we did, because it will be more convenient for the justification of the computations.}
\end{enumerate}

\begin{theorem}\label{thm:mfBSDE_instantaneous}
Let $\left(\mathbb{G},\{\overline{X}^i\}_{i \in \mathscr{N}},T,\left\{\xi^i\right\}_{i \in \mathscr{N}},\{\Theta^i\}_{i \in \mathscr{N}},\Gamma,f\right)$ satisfy \emph{\ref{G1}-\ref{G3}}, \emph{\ref{G4_prime}, \ref{G5}, \ref{G6_prime}} and \emph{\ref{G7}}. 
If 
\begin{align*}
    2 \widetilde{M}^{\Phi}(\hat{\beta}) < 1,
\end{align*} 
then the system of $N-$BSDEs 
\begin{align}
\begin{multlined}[0.9\textwidth]
Y^{i,N}_t = \xi^{i,N} + \int^{T}_{t}f \left(s,Y^{i,N}_s,Z^{i,N}_s  c^{(\mathbb{G},\overline{X}^i)}_s,\Gamma^{(\mathbb{G},\overline{X}^i,\Theta^i)}(U^{i,N})_s,L^N(\textbf{Y}^N_s) \right) \,\ud C^{(\mathbb{G},\overline{X}^i)}_s\\ 
- \int^{T}_{t}Z^{i,N}_s \,  \ud X^{i,\circ}_s - \int^{T}_{t}\int_{\mathbb{R}^n}U^{i,N}_s(x) \, \widetilde{\mu}^{(\mathbb{G},X^{i,\natural})}(\ud s,\ud x) - \int^{T}_{t} \,\ud M^{i,N}_s,  
\quad    i\in\mathscr{N},
\end{multlined}
\tag{\ref{mfBSDE_instantaneous}}
\end{align}
admits a unique solution $(\textbf{Y}^N,\textbf{Z}^N,\textbf{U}^N,\textbf{M}^N)$ such that
\begin{gather*}
\textbf{Y}^N:=(Y^{1,N},\ldots,Y^{N,N}) 
\in \Prod_{i = 1}^{N}
\mathscr{S}^{2}_{\hat{\beta}}(\mathbb{G},\alpha,C^{(\mathbb{G},\overline{X}^i)};\mathbb{R}^d),\footnotemark\\
    \textbf{Z}^N:=(Z^{1,N}\ldots,Z^{N,N})
    \in \Prod_{i=1}^N \mathbb{H}^{2}_{\hat{\beta}}(\mathbb{G},A^{(\mathbb{G},\overline{X}^i,f)},X^{i,\circ};\mathbb{R}^{d \times p}),\\
\textbf{U}^N:=(U^{1,N},\ldots,U^{N,N})
\in \Prod_{i=1}^N \mathbb{H}^{2}_{\hat{\beta}}(\mathbb{G},A^{(\mathbb{G},\overline{X}^i,f)},X^{i,\natural};\mathbb{R}^d)
\shortintertext{and}
    \textbf{M}^N:=(M^{1,N},\ldots,M^{N,N})
    \in\Prod_{i=1}^N \mathcal{H}^{2}_{\hat{\beta}}(\mathbb{G},A^{(\mathbb{G},\overline{X}^i,f)},{\overline{X}^i}^{\perp_{\mathbb{G}}};\mathbb{R}^{d}).
\end{gather*}%
\footnotetext{The $\mathscr{S}^2-$spaces have been introduced before \cref{def:standard_data_MVBSDE_instant_value}.}
\end{theorem}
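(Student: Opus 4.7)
My plan is to adapt the Picard-iteration/contraction argument used in the proof of \cref{thm:MVBSDE_instantaneous_second} to the mean-field system, exactly as the proof of \cref{thm:mfBSDE_initial_path} adapts \cref{thm:MVBSDE_initial_path} to the system setting. The new ingredient here is that Assumption \ref{G6_prime} forces $A^{(\mathbb{G},\overline{X}^i,f)}$ to be a common process $A$ for all $i\in\mathscr{N}$, which is precisely what allows to exploit Tonelli (as in the instantaneous McKean--Vlasov case) instead of having to deterministically bound the stochastic exponential. I would introduce the same compact product-space notation as in \cref{thm:mfBSDE_initial_path}, working this time with $\mathscr{S}^{2}_{\hat{\beta}}$ in the $Y$--coordinates.

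Given input $(\textbf{y}^N,\textbf{z}^N,\textbf{u}^N,\textbf{m}^N)$ in the product space, I would, coordinate-wise, first verify the integrability
\begin{align*}
\Big\|\tfrac{f(\cdot,y^{i,N}_\cdot,z^{i,N}_\cdot c^{(\mathbb{G},\overline{X}^i)}_\cdot,\Gamma(u^{i,N})_\cdot,L^N(\textbf{y}^N_\cdot))}{\alpha_\cdot}\Big\|_{\mathbb{H}^2_{\hat{\beta}}(\mathbb{G},A,C^{(\mathbb{G},\overline{X}^i)};\mathbb{R}^d)}<\infty
\end{align*}
using \ref{G4_prime}, \ref{G7} and the inequality $W^2_{2,|\cdot|}(L^N(\textbf{y}^N_s),\delta_0)\leq \frac{1}{N}\sum_m |y^{m,N}_s|^2$. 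Then, by the standard martingale representation against $(X^{i,\circ}, \widetilde{\mu}^{(\mathbb{G},X^{i,\natural})})$ together with the orthogonal decomposition obtained from \cref{prop:CharacterOrthogSpace}, I define the semimartingale
\begin{align*}
Y^{i,N}_t := \mathbb{E}\Big[\xi^{i,N} + \int_t^T f(s,y^{i,N}_s,z^{i,N}_s c^{(\mathbb{G},\overline{X}^i)}_s,\Gamma(u^{i,N})_s, L^N(\textbf{y}^N_s))\,\ud C^{(\mathbb{G},\overline{X}^i)}_s \,\Big|\,\mathcal{G}_t\Big]
\end{align*}
and extract $(Z^{i,N},U^{i,N},M^{i,N})$; the \emph{a priori} estimates of \cref{lem:a_priori_estimates} combined with \cref{prop:infima_for_M} ensure that the resulting quadruple lies in the appropriate space, defining the Picard map $\textbf{S}^N$.

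For the contraction step, given two inputs indexed by $j=1,2$, I set
\begin{align*}
\psi^i_t := f(t,y^{i,N,2}_t,z^{i,N,2}_t c^{(\mathbb{G},\overline{X}^i)}_t,\Gamma(u^{i,N,2})_t,L^N(\textbf{y}^{N,2}_t)) - f(t,y^{i,N,1}_t,z^{i,N,1}_t c^{(\mathbb{G},\overline{X}^i)}_t,\Gamma(u^{i,N,1})_t,L^N(\textbf{y}^{N,1}_t)).
\end{align*}
Using \ref{G4_prime}, \cref{lem:Gamma_is_Lipschitz} and the empirical-measure inequality \eqref{empiricalineq} in the form $W^2_{2,|\cdot|}(L^N(\textbf{y}^{N,2}_t),L^N(\textbf{y}^{N,1}_t))\leq \frac{1}{N}\sum_{m=1}^N |y^{m,N,2}_t - y^{m,N,1}_t|^2$, I bound $\mathcal{E}(\hat{\beta}A)_{t-}|\psi^i_t/\alpha_t|^2$ pointwise. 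Here is the decisive point where \ref{G6_prime} enters: because $A^{(\mathbb{G},\overline{X}^i,f)}=A$ and hence $C^{(\mathbb{G},\overline{X}^i)}$ are a common pair for all $i$, after integrating with respect to $\mathbb{P}\otimes C$ and applying Tonelli to the Wasserstein term I obtain
\begin{align*}
\Big\|\tfrac{\psi^i}{\alpha}\Big\|^2_{\mathbb{H}^2_{\hat{\beta}}(\mathbb{G},A,C;\mathbb{R}^d)} &\leq \|\alpha(y^{i,N,2}-y^{i,N,1})\|^2_{\mathbb{H}^2_{\hat{\beta}}} + \|z^{i,N,2}-z^{i,N,1}\|^2_{\mathbb{H}^2_{\hat{\beta}}(X^{i,\circ})} \\
&\quad + 2\|u^{i,N,2}-u^{i,N,1}\|^2_{\mathbb{H}^2_{\hat{\beta}}(X^{i,\natural})} + \tfrac{1}{N}\sum_{m=1}^N \|\alpha(y^{m,N,2}-y^{m,N,1})\|^2_{\mathbb{H}^2_{\hat{\beta}}},
\end{align*}
exactly the system analogue of the estimate obtained in the proof of \cref{thm:MVBSDE_instantaneous_second}. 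Summing over $i\in\mathscr{N}$ collapses the last term into a second copy of $\sum_i \|\alpha(y^{i,N,2}-y^{i,N,1})\|^2$, producing the factor $2$.

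The final step is an application of \cref{lem:a_priori_estimates} to each $Y^{i,N,2}-Y^{i,N,1}$ (with zero terminal condition), combined with \cref{prop:infima_for_M}, summed over $i$, which yields
\begin{align*}
\|\textbf{S}^N(\textbf{y}^{N,2},\ldots)-\textbf{S}^N(\textbf{y}^{N,1},\ldots)\|^2 \leq 2\widetilde{M}^{\Phi}(\hat{\beta})\, \|(\textbf{y}^{N,2}-\textbf{y}^{N,1},\textbf{z}^{N,2}-\textbf{z}^{N,1},\textbf{u}^{N,2}-\textbf{u}^{N,1},\textbf{m}^{N,2}-\textbf{m}^{N,1})\|^2,
\end{align*}
and Banach's fixed point theorem then concludes. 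The main subtlety I expect is bookkeeping around the product-space norms and verifying that all the integrability/measurability issues (in particular that $L^N(\textbf{y}^N_\cdot)$ is a genuine $\mathscr{P}_2(\mathbb{R}^d)$-valued predictable process whenever the inputs are $\mathbb{G}$--optional with the right integrability) carry over cleanly; once \ref{G6_prime} is in force, the Tonelli step that was blocked in \cref{rem:about_proof_thm_mfBSDE_initial_path}\,(iii) becomes available and the rest is routine.
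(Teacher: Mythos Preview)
Your proposal is correct and follows essentially the same route as the paper's proof: both adapt the contraction argument of \cref{thm:mfBSDE_initial_path}, using \ref{G6_prime} to identify all $A^{(\mathbb{G},\overline{X}^i,f)}$ with a common process $A$ so that the cross-terms $\frac{1}{N}\sum_m \alpha^2_s\,\mathcal{E}(\hat{\beta}A)_{s-}|y^{m,N,2}_s-y^{m,N,1}_s|^2$ integrate to the correct $\mathbb{H}^2_{\hat{\beta}}$-norms, after which summation over $i$ yields the factor $2$ and \cref{lem:a_priori_estimates}/\cref{prop:infima_for_M} give the contraction constant $2\widetilde{M}^{\Phi}(\hat{\beta})$. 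One small clarification: the decisive step is not Tonelli but simply the equality of integrators---since $A^i=A^m$ (and hence $C^i=C^m$), the integral $\int_0^T \alpha^2_s\,\mathcal{E}(\hat{\beta}A^i)_{s-}|y^{m,N,2}_s-y^{m,N,1}_s|^2\,\ud C^i_s$ is literally $\|\alpha(y^{m,N,2}-y^{m,N,1})\|^2_{\mathbb{H}^2_{\hat{\beta}}(\mathbb{G},A^m,C^m;\mathbb{R}^d)}$, which is the computation the paper isolates.
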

\begin{proof}
In view of \ref{G6_prime}, we will denote every $A^{(\mathbb{G},\overline{X}^i,f)}$, resp. $C^{(\mathbb{G},\overline{X}^i)}$, for $i\in\mathscr{N}$, simply by $A$, resp. $C$.
Adopting the notation of the proof of \cref{thm:mfBSDE_initial_path} and following exactly the same arguments as in the aforementioned proof, we arrive at the following inequality (which is the analogous to \eqref{ineq:norm_psi_ininital_path_mf_pre}) for $i\in\mathscr{N}$
\begin{align*}  
&\left\|\frac{\psi^i}{\alpha}\right\|^{2}_{\mathbb{H}^{2}_{\hat{\beta}}(\mathbb{G},A,C;\mathbb{R}^d)}\\
&\hspace{2em}
\begin{multlined}[0.85\textwidth]
\leq 
\|\alpha(y^{i,N,2} - y^{i,N,1})\|^{2}_{\mathbb{H}^{2}_{\hat{\beta}}(\mathbb{G},A,C;\mathbb{R}^d)} 
+ \|z^{i,N,2} - z^{i,N,1}\|^{2}_{ \mathbb{H}^{2}_{\hat{\beta}}(\mathbb{G},A,X^{i,\circ};\mathbb{R}^{d \times p})} \\
+ 2 \hspace{0.1cm}
\|u^{i,N,2} - u^{i,N,1}\|^{2}_{\mathbb{H}^{2}_{\hat{\beta}}(\mathbb{G},A,X^{i,\natural};\mathbb{R}^d) }
+  \frac{1}{N} \sum_{m = 1}^N 
 \|\alpha \delta (y^{m,N,2}-y^{m,N,1})\|^2_{\mathbb{H}^{2}_{\hat{\beta}}(\mathbb{G},A,C;\mathbb{R}^d)}.
 \numberthis\label{ineq:norm_psi_instant_value_mf_pre}
\end{multlined}
\end{align*}
We underline that from \ref{G6_prime} for $i,m \in \mathscr{N}$ we have $\mathbb{P}-$a.s. (we return to the initial notation to demonstrate the property we used)
\begin{align*}
    &\int_{0}^{T}\alpha^2_s\mathcal{E}\Big(\hat{\beta} A^{(\mathbb{G},\overline{X}^i,f)}\Big)_{s-}| y^{m,N,2}_s - y^{m,N,1}_s|^2 \ud C_s^{(\mathbb{G},\overline{X}^i)}\\
    &\hspace{2em}= \frac{1}{\hat{\beta}} \int_{0}^{T}| y^{m,N,2}_s - y^{m,N,1}_s|^2 \ud \mathcal{E}\Big(\hat{\beta} A^{(\mathbb{G},\overline{X}^i,f)}\Big)_{s}\\
    &\hspace{2em}= \frac{1}{\hat{\beta}} \int_{0}^{T}| y^{m,N,2}_s - y^{m,N,1}_s|^2 \ud \mathcal{E}\Big(\hat{\beta} A^{(\mathbb{G},\overline{X}^m,f)}\Big)_{s}\\
    &\hspace{2em}= \int_{0}^{T}\alpha^2_s\mathcal{E}\Big(\hat{\beta} A^{(\mathbb{G},\overline{X}^m,f)}\Big)_{s-}| y^{m,N,2}_s - y^{m,N,1}_s|^2 \ud C_s^{(\mathbb{G},\overline{X}^m)}.
\end{align*}
The rest are straightforward.
\end{proof}

\begin{remark} \label{rem:about_G6}
    Assumption \textup{\ref{G6_prime}} can be considered natural. 
    Indeed, this is the case when 
    $\{\overline{X}^i\}_{i \in \mathscr{N}}$ is a family whose elements are identically distributed and define deterministic $C^{(\mathbb{G},\overline{X}^i)}$ processes, \textit{e.g.} from \cref{independent increments} have independent increments. 
    Then, 
    from \eqref{def_C} and \citet[6.23 Theorem]{he2019semimartingale} we would have for $i,j \in \mathscr{N}$ and $t \in \mathbb{R}_+$ that
\begin{align*}
C^{(\mathbb{G},\overline{X}^i)}_t 
&=\mathbb{E}\Big[C^{(\mathbb{G},\overline{X}^i)}_t\Big] 
= \mathbb{E}\Big[\textup{Tr}\big[\langle X^{i,\circ} \rangle^{\mathbb{G}}_t\big] \Big] + \mathbb{E}\left[|I|^2 * \nu^{(\mathbb{G},X^{i,\natural})}_t\right]\\
&= \mathbb{E}\Big[\big| X^{i,\circ}_t\big|^2\Big] 
+ \mathbb{E}\left[|I|^2 * \mu^{X^{i,\natural}}_t\right] 
= \mathbb{E}\Big[\big| X^{i,\circ}_t\big|^2\Big] 
+\mathbb{E}\Big[\big|X^{i,\natural}_t\big|^2\Big]\\
&= \mathbb{E}\Big[\big| X^{j,\circ}_t\big|^2\Big] 
+ \mathbb{E}\Big[\big|X^{j,\natural}_t\big|^2\Big] = \mathbb{E}\Big[\big| X^{j,\circ}_t\big|^2\Big] + \mathbb{E}\left[|I|^2 * \mu^{X^{j,\natural}}_t\right]\\
&= \mathbb{E}\Big[\textup{Tr}\big[\langle X^{j,\circ} \rangle^{\mathbb{G}}_t\big] \Big] + \mathbb{E}\left[|I|^2 * \nu^{(\mathbb{G},X^{j,\natural})}_t\right] = \mathbb{E}\Big[C^{(\mathbb{G},\overline{X}^j)}_t\Big] = C^{(\mathbb{G},\overline{X}^j)}_t.
\end{align*}
\end{remark}


\section{Backward propagation of chaos in a general setting}
\label{sec:propagation}

In this section, we will present the general results on the propagation of chaos for particles that satisfy a backward SDE, that have been mentioned in the introduction of this work.
The technique for our proof is novel and differs from the classical results in the literature, in that we make repeated used of the \textit{a priori} estimates and the Strong Law of Large Numbers, instead of using a delicate change of measure and an application of Girsanov's theorem or some form of Gronwall's inequality or It\=o's lemma, as in \citet{lauriere2022backward} and the literature on the propagation of chaos for forward SDEs.
This method allows us to consider a more general setting, and we work with asymmetric mean-field systems and general square integrable drivers with jumps.

Despite the fact that we have already provided existence and uniqueness results for the respective mean-field BSDE systems \eqref{mfBSDE_with_initial_path} and McKean--Vlasov BSDEs \eqref{MVBSDE_with_initial_path}, there are still quite a few preparatory and auxiliary results that will be required before we present the proof of the respective propagation of chaos statements. 
These auxiliary results are presented here, while their proofs are deferred to \cref{subsec_app:tehnical_lemmata,subsec:Conservation_of_solutions}. 

Let us recall that stochastic process are typically defined with respect to a reference filtration. 
More specifically, `compensator-type' processes are defined with respect to a predictable $\sigma-$algebra, which depends on the reference filtration. 
In the propagation of chaos statements, we want the solutions of the McKean--Vlasov BSDEs to be independent, hence we solve each one in its own filtration. 
On the other hand, when we solve the mean-field systems of BSDEs we work in filtrations larger than those of the corresponding McKean--Vlasov BSDEs. 
Therefore, the question naturally arises, whether the solutions of the McKean--Vlasov BSDEs with respect to the smaller filtrations, remain solutions when we work in the larger filtrations. 
Appendices \ref{subsec_app:tehnical_lemmata} and \ref{subsec:Conservation_of_solutions} provide the necessary tools to prove such conservation-type results under the condition of immersion of filtrations.

Naturally, the framework we are going to use for the propagation of chaos will be based on the common ground of the frameworks we used in the previous sections, suitably enriched and reinforced wherever required. 


\subsection{Setting} 

Let $(\Omega,\mathcal{G},\mathbb{P})$ denote a probability space 
which supports the following:
\begin{enumerate} [label=\textup{\textbf{(PC\arabic*)}}]
    \item\label{H1} A sequence of independent and identically distributed processes 
    $\{\overline{X}^i\}_{i \in \mathbb{N}}$ such that, for every $i\in \mathbb{N}$,  $\overline{X}^i=(X^{i,\circ},X^{i,\natural})\in \mathcal{H}^2(\mathbb{F}^i;\mathbb{R}^p) \times \mathcal{H}^{2,d}(\mathbb{F}^i;\mathbb{R}^n)$ with 
    $M_{\mu^{X^{i,\natural}}}[\Delta X^{i,\circ}|\widetilde{\mathcal{P}}^{\mathbb{F}^i}]
    = 0$, where $\mathbb{F}^i:=(\mathcal{F}^i_t)_{t\geq 0}$ is the usual augmentation of the natural filtration of $\overline{X}^i$ and $\mu^{X^{i,\natural}}$ is the random measure generated by the jumps of $X^{i,\natural}$.\footnote{Since for every $i\in\mathbb{N}$ the filtration $\mathbb{F}^i$ is associated to $\overline{X}^i$,  we will make use of $C^{(\mathbb{F}^i,\overline{X}^i)}$, resp. 
    $c^{(\mathbb{F}^i,\overline{X}^i)}$, as defined in \eqref{def_C}, resp. \eqref{def_c}. 
    Moreover, we will use the kernels $K^{(\mathbb{F}^i,\overline{X}^i)}$ as determined by \eqref{def:Kernels}.} 

    \item\label{H2} A deterministic time $T$, a sequence of identically distributed terminal conditions $\{\xi^i\}_{i \in \mathbb{N}}$ and a sequence of sets of terminal conditions $\left\{\{\xi^{i,N}\}_{i \in \mathscr{N}}\right\}_{N\in \mathbb{N}}$ such that, under a $\hat{\beta} > 0$, it holds that $\xi^i, \xi^{i,N}\in \mathbb{L}^2_{\hat{\beta}}(\mathcal{F}^i_T, A^{(\mathbb{F}^i,\overline{X}^i,f)};\mathbb{R}^d),\mathbb{L}^2_{\hat{\beta}}(\mathcal{F}^{1,\dots,N}_T, A^{(\mathbb{F}^i,\overline{X}^i,f)};\mathbb{R}^d)$\footnote{see \cref{rem:comments_H_Cond} \textit{(i)}.} respectively for every $i\in\mathbb{N}$, where 
    $\{A^{(\mathbb{F}^i,\overline{X}^i,f)}\}_{i \in \mathbb{N}}$ are the ones defined in \ref{H5}. Moreover, we assume that  
    \[
        \|\xi^{i,N} - \xi^i\|^2_{\mathbb{L}^2_{\hat{\beta}}(\mathcal{F}^{1,\dots,N}_T, A^{(\mathbb{F}^i,\overline{X}^i,f)};\mathbb{R}^d)} \xrightarrow[N \rightarrow \infty]{|\cdot|} 0,
    \] 
    for every $i \in \mathbb{N}$, and 
    \[
        \frac{1}{N} \sum_{i = 1}^N \|\xi^{i,N} - \xi^i\|^2_{\mathbb{L}^2_{\hat{\beta}}(\mathcal{F}^{1,\dots,N}_T, A^{(\mathbb{F}^i,\overline{X}^i,f)};\mathbb{R}^d)} \xrightarrow[N \rightarrow \infty]{|\cdot|}0.
    \]
    
    \item\label{H3} Functions 
    $\Theta,\Gamma$ as in \cref{def_Gamma_function}, where $\Theta$ is deterministic and for each $i\in\mathbb{N}$ the data for the definition are the pair $(\mathbb{F}^i, \overline{X}^i)$, the process $C^{(\mathbb{F}^i,\overline{X}^i)}$ and the kernels $K^{(\mathbb{F}^i,\overline{X}^i)}$. 
    Note that $\Theta \in \widetilde{\mathcal{P}}^{\mathbb{F}^i}$, for each $i \in \mathbb{N}$. 
    
    
    \item \label{H4}A generator $f: \mathbb{R}_+ \times \mathbb{D}^d \times \mathbb{R}^{d \times p} \times \mathbb{R}^d \times \mathscr{P}(\mathbb{D}^d) {}\longrightarrow \mathbb{R}^d$ such that for any $(y,z,u,\mu) \in \mathbb{D}^d \times \mathbb{R}^{d \times p} \times \mathbb{R}^d \times \mathscr{P}(\mathbb{D}^d)$, the map 
    \begin{align*}
        t \longmapsto f(t,y,z,u,\mu) \hspace{0.2cm}\text{is}\hspace{0.2cm}\mathcal{B}(\mathbb{R}_+)\text{--measurable}
    \end{align*}
    and satisfies the following Lipschitz condition 
    \begin{align*}
    \begin{multlined}[0.9\textwidth]
    |f(t,y,z,u,\mu) - f(t,y',z',u',\mu')|^2\\
    \leq \hspace{0.1cm} r(t) \hspace{0.1cm} \rho_{J_1^d}(y,y')^2+ \hspace{0.1cm} \vartheta^o(t) \hspace{0.1cm} |z - z'|^2 
     + \hspace{0.1cm} \vartheta^{\natural}(t) \hspace{0.1cm} |u - u'|^2 + \vartheta^*(t) \hspace{0.1cm} W^2_{2,\rho_{J_1^{d}}}\left(\mu,\mu'\right),
    \end{multlined}
    \end{align*}
     where  
     $(r,\vartheta^o,\vartheta^{\natural},\vartheta^*): \left(\mathbb{R}_+, \mathcal{B}(\mathbb{R}_+)\right) \longrightarrow \left(\mathbb{R}^4_+,\mathcal{B}\left(\mathbb{R}^4_+\right)\right).
     $
    
    \item\label{H5}  Define $\alpha^2 := \max\{\sqrt{r},\vartheta^o, \vartheta^{\natural}, \sqrt{\vartheta^*}\}$. 
        For the $\mathbb{F}^i$-predictable and c\`adl\`ag processes
    \begin{align}\label{def:A_propag}
      A^{(\mathbb{F}^i,\overline{X}^i,f)}_\cdot := \int_{0}^{\cdot}\alpha^2_s\ud C^{(\mathbb{F}^i,\overline{X}^i)}_s
    \end{align} 
    there exists $\Phi > 0$ such that
    $
    \Delta A^{(\mathbb{F}^i,\overline{X}^i,f)}(\omega) \leq \Phi,$ $ \mathbb{P} \otimes C^{(\mathbb{F}^i,\overline{X}^i)}-\text{a.e,} 
    $
    for every $i \in \mathbb{N}$.

    \item \label{H6} For the same $\hat{\beta}$ as in \ref{H2} we have 
       \begin{align}\label{equation 5.2}
       \mathbb{E}\left[\int_{0}^{T}\mathcal{E}\left(\hat{\beta} A^{(\mathbb{F}^i,\overline{X}^i,f)}\right)_{s-} \frac{|f(s,0,0,0,\delta_0)|^2}{\alpha^2_s}\ud C^{(\mathbb{F}^i,\overline{X}^i)}_s\right] < \infty,\hspace{0.5cm} i \in \mathbb{N},
       \end{align}
       where $\delta_0$ is the Dirac measure on the domain of the last argument concentrated at $0$, the neutral element of the addition. 

       \item \label{H_determQ} There exist a non--decreasing, right continuous function $Q$, 
       a Borel--measurable function $\gamma$ and 
       a family $\{b^i\}_{i \in \mathbb{N}}$, with $b^i\in\mathcal{P}^{\mathbb{F}^i}_+$ for every $i\in\mathbb{N}$, such that 
      \begin{align*} \mathcal{E}\left(\hat{\beta}A^{(\mathbb{F}^i,\overline{X}^i,f)}\right)_{\cdot} &= 1 + \int_{0}^{\cdot}b^i_s\,\ud Q_s, \hspace{0.5cm} i \in \mathbb{N}
      \shortintertext{and}
      \sup_{i \in \mathbb{N}}\{b^i\} &\leq \gamma, \hspace{0.5cm} Q-a.e.
      \end{align*}
    \item\label{H:Lambda_bound} For the same $\hat{\beta}$ as in \ref{H2} and $\gamma$ as in \ref{H_determQ} there exists a $\Lambda_{\hat{\beta}} > 0$ such that 
    $
    1 + \int_{0}^{T}\gamma_s\,\ud Q_s = \Lambda_{\hat{\beta}}$.

   \item \label{H:prop_contraction} For the same $\hat{\beta}$ as in \ref{H2} we have $ \max\left\{2,\frac{3\Lambda_{\hat{\beta}}}{\hat{\beta}}\right\} M^{\Phi}_{\star}(\hat{\beta}) < 1$.

\end{enumerate}


Let us now collect a few remarks and observations regarding the assumptions in the framework \ref{H1}--\ref{H:prop_contraction}.
Some of these remarks and observations provide immediate properties derived from the conditions, while others justify the imposed conditions.
Additionally, let us introduce the required notation used hereinafter.
To this end, let us fix $N \in \mathbb{N}$ and assume \ref{H1}--\ref{H:prop_contraction} are in force.
The McKean--Vlasov BSDE \eqref{MVBSDE_with_initial_path} associated to the standard data
$\big(\overline{X}^i,\mathbb{F}^{i},\Theta,\Gamma,T,\xi^i,f\big)$ under $\hat{\beta}$ admits, by \cref{thm:MVBSDE_initial_path}, a unique solution, which will be denoted by $(Y^i,Z^i,U^i,M^i)$, for each $i\in\mathscr{N}$.
In the sequel, we will say that 
$(\widetilde{\textbf{Y}}^N,
\widetilde{\textbf{Z}}^N,
\widetilde{\textbf{U}}^N,
\widetilde{\textbf{M}}^N)$ is the solution of the first $N$ McKean--Vlasov BSDEs, where we define 
\begin{align*}
  \widetilde{\textbf{Y}}^{N}:=(Y^1,\ldots,Y^n),\,  
  \widetilde{\textbf{Z}}^{N}:=(Z^1,\ldots,Z^n),\,  
  \widetilde{\textbf{U}}^{N}:=(U^1,\ldots,U^n)\text{\hspace{0.3em}and\hspace{0.3em}}
  \widetilde{\textbf{M}}^{N}:=(M^1,\ldots,M^n).  
\end{align*}
We underline that the symbol $(\textbf{Y}^N,\textbf{Z}^N,\textbf{U}^N,\textbf{U}^N)$ is reserved for the solution of the mean-field BSDEs.

\begin{remark}\label{rem:comments_H_Cond}
\begin{enumerate}
    \item\label{rem:comments_H_Cond1}
    By construction, $\{\mathbb{F}^i\}_{i \in \mathbb{N}}$ is a sequence of independent filtrations on $(\Omega,\mathcal{G},\mathbb{P})$.
    Moreover, for every $N \in \mathbb{N}$, we define the filtration $\mathbb{F}^{1,..,N} := \bigvee_{m = 1}^N\mathbb{F}^m$.
    Using \citet[Theorem 1]{Wu1982} we have that $\mathbb{F}^{1,..,N}$ satisfies the usual conditions.
    Let $i \in \mathscr{N}$ and $N\in\mathbb{N}$ then, 
    a direct consequence of the independence of filtrations is that every $\mathbb{F}^i-$martingale, remains martingale under $\mathbb{F}^{1,\dots,N}$, \emph{i.e.}, the filtration $\mathbb{F}^i$ is immersed in the filtration $\mathbb{F}^{1,\dots,N}$.
    In particular, $X^{i,\natural} \in \mathcal{H}^{2,d}(\mathbb{F}^{1,\dots,N}; \mathbb{R}^n)$; see \cref{cor:same_stoch_integ_wrtIVRM}.
     Additionally, from the assumption $M_{\mu^{X^{i,\natural}}}[\Delta X^{i,\circ}|\widetilde{\mathcal{P}}^{\mathbb{F}^i}]
    = 0$ one can deduce that $M_{\mu^{X^{i,\natural}}}[\Delta X^{i,\circ}|\widetilde{\mathcal{P}}^{\mathbb{F}^{1,\dots,N}}]= 0$ also holds; one can follow the exact same arguments as in \cref{lem:conserv_laws}. 
    \item\label{rem:comments_H_Cond2} 
    Let $i\in\mathbb{N}$ and $N\in\mathbb{N}$.
    Under $\mathbb{F}^i$ we have defined via \eqref{def_C} and \eqref{def_c} the c\`adl\`ag, $\mathbb{F}^i-$predictable and increasing processes $C^{(\mathbb{F}^i,\overline{X}^i)}$ and $c^{(\mathbb{F}^i,\overline{X}^i)}$. 
    Naturally, one can consider the respective processes under the filtration $\mathbb{F}^{1,..,N}$,
    \emph{i.e.},
    $C^{(\mathbb{F}^{1,..,N},\overline{X}^i)}$ and $c^{(\mathbb{F}^{1,..,N},\overline{X}^i)}$.
    However, in view of the immersion of the filtrations and of  \cref{rem:immersion_no_change_in_comp}, we have 
    \begin{align}\label{equation 5.3}
    C^{(\mathbb{F}^i,\overline{X}^i)} 
    = C^{(\mathbb{F}^{1,..,N},\overline{X}^i)}\hspace{0.2cm}
    \text{and} \hspace{0.2cm}
    c^{(\mathbb{F}^i,\overline{X}^i)} = c^{(\mathbb{F}^{1,..,N},\overline{X}^i)}.
    \end{align}
    This property allows us to drop the notational dependence on the filtration.
    Hence, under \textup{\ref{H1}} where we fix the sequence $\{\overline{X}^i\}_{n\in\mathbb{N}}$, we will simply denote these objects as $C^i$, $c^i$, for $i\in\mathbb{N}$.
    Additionally, recalling the definition of $A^{(\mathbb{F}^i,\overline{X}^i,f)}$ in  \eqref{def:A_propag}, we can also simplify the respective notation.
    Hence, under \textup{\ref{H1}} and \textup{\ref{H5}}, we denote by $A^i$ the process
    $A^{(\mathbb{F}^i,\overline{X}^i,f)}\equiv A^{(\mathbb{F}^{1,\dots,N},\overline{X}^i,f)}$.
    \item\label{rem:comments_H_Cond3}
    Under \textup{\ref{H1}} and \textup{\ref{H5}}, let us assume that there exist a non--decreasing, right continuous function $Q$, 
    a Borel--measurable function $\gamma$ and 
    a family $\{b^i\}_{i \in \mathbb{N}}$, with $b^i\in\mathcal{P}^{\mathbb{F}^i}_+$ for every $i\in\mathbb{N}$, such that 
    \begin{align*}
        \sup_{i\in\mathbb{N}} \{\alpha^2 b^i\} \leq \gamma
    \text{\hspace{0.2em} and \hspace{0.2em}}    
            C^i_\cdot = \int_{0}^{\cdot}b^i_s\,\ud Q_s
    \text{,\hspace{0.2em} for every $i\in\mathbb{N}$}.
        \end{align*}
        Then, this property obviously transfers through \eqref{def:A_propag}, \eqref{stocheq} and \eqref{stochexp} to the sequence $\big\{\mathcal{E}\big(\hat{\beta}A^i\big)\big\}_{i \in \mathbb{N}}$, \emph{i.e.},
        \begin{align*}
        &\mathcal{E}\big(\hat{\beta}A^i\big)_{\cdot} = 1 + \frac{1}{\hat{\beta}}\int_{0}^{\cdot}\mathcal{E}\big(\hat{\beta}A^i\big)_{s-}\alpha^2_s\hspace{0.1cm} b^i_s\,\ud Q_s,
    \text{\hspace{0.2em}  for every $i\in\mathbb{N}$} 
    \shortintertext{and from \ref{H:Lambda_bound}}
    & \mathcal{E}\big(\hat{\beta}A^i\big)_{T} \leq \ue^{\hat{\beta}\Lambda_{\hat{\beta}}}.    
        \end{align*}    
        In other words, \textup{\ref{H_determQ}} is fulfilled in the setting described here.
    \item\label{rem:comments_H_Cond4} Let $N\in\mathbb{N}$ and $i\in\mathscr{N}$.
    Conditions \ref{H1}--\ref{H:prop_contraction} guarantee that the septuple $(\mathbb{F}^i,\overline{X}^i,T,\xi^i,\Theta,\Gamma,f)$ consists of standard data under $\hat{\beta}$ for the McKean--Vlasov BSDE \eqref{MVBSDE_with_initial_path}; see \cref{thm:MVBSDE_initial_path}.
    Hereinafter, we will denote its solution by $(Y^{i},Z^{i},U^{i},M^{i})$.
    Completely analogously, under the same framework, the septuple
    $(\mathbb{F}^{1,..,N}, \{\overline{X}^i\}_{i \in \mathscr{N}}, T, \left\{\xi^{i}\right\}_{i \in \mathscr{N}}, \Theta, \Gamma, f)$ consists of standard data under $\hat{\beta}$ for the mean-field BSDE system \eqref{mfBSDE_with_initial_path}; see \cref{thm:mfBSDE_initial_path}.
    Hereinafter, we will denote its solution by 
    $\{(Y^{i,N},Z^{i,N},U^{i,N},M^{i,N})\}_{i \in \mathscr{N}}$.
    In particular, $\textbf{Y}^N:=(Y^{1,N},\dots,Y^{N,N})$.
%
%
    \item\label{rem:comments_Y_ident_distr}
     The sequence of driving martingales associated to the McKean--Vlasov equations are independent and identically distributed, as well as the terminal random variables. 
     Hence, from \cref{lem:conserv_laws}, we can view the solutions as strong solutions under the larger filtration and conclude their uniqueness in law. 
     In other words, the solutions of the McKean--Vlasov BSDEs are identically distributed.
\end{enumerate}
\end{remark}
    

%
%
The following proposition in conjunction with \ref{H1} as well as \cref{rem:Ass-PC7}, justify the setting described in \cref{rem:comments_H_Cond}.\ref{rem:comments_H_Cond3}.
These indicate that \ref{H_determQ} is by no means restrictive for applications.

\begin{proposition}\label{independent increments}
Let $\mathbb{G}$ be a filtration on $(\Omega,\mathcal{G},\mathbb{P})$ that satisfies the usual conditions. 
Consider a pair $\overline{X} := (X^{\circ},X^\natural) \in \mathcal{H}^2(\mathbb{G};\mathbb{R}^p) \times \mathcal{H}^2(\mathbb{G};\mathbb{R}^n)$ with independent increments.
Then, the processes $C^{(\mathbb{G},\overline{X})}$ and $ c^{(\mathbb{G},\overline{X})}$, as defined via \eqref{def_C} and \eqref{def_c}, are deterministic.
\end{proposition}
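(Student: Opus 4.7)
The plan is to treat the two summands of $C^{(\mathbb{G},\overline{X})}$ separately and then to deduce the result for $c^{(\mathbb{G},\overline{X})}$ by a Radon--Nikodym argument. The underlying engine will be the classical fact that any $\mathbb{G}$-predictable, c\`adl\`ag process $V$ of finite variation, starting at zero, such that $V_t-V_s$ has deterministic conditional expectation given $\mathcal{G}_s$ for all $s<t$, must itself be deterministic: with $f(t):=\mathbb{E}[V_t]$, the process $V-f$ is at once a $\mathbb{G}$-martingale (by assumption), $\mathbb{G}$-predictable, and of finite variation, hence vanishes.

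First I would apply this principle to $\langle X^\circ\rangle^{\mathbb{G}}$, working entry-wise via polarization (each $\langle X^{\circ,i},X^{\circ,j}\rangle^{\mathbb{G}}$ is half the difference of $\langle X^{\circ,i}+X^{\circ,j}\rangle^{\mathbb{G}}$ and $\langle X^{\circ,i}-X^{\circ,j}\rangle^{\mathbb{G}}$, and linear combinations of components of $X^\circ$ still have independent increments with respect to $\mathbb{G}$). For $s<t$, the martingale property of $X^\circ(X^\circ)^\top-\langle X^\circ\rangle^{\mathbb{G}}$ combined with the $\mathbb{G}$-martingale property of $X^\circ$ (which kills the cross terms) gives
\begin{align*}
\mathbb{E}\bigl[\langle X^\circ\rangle^{\mathbb{G}}_t - \langle X^\circ\rangle^{\mathbb{G}}_s \,\big|\, \mathcal{G}_s\bigr] = \mathbb{E}\bigl[(X^\circ_t-X^\circ_s)(X^\circ_t-X^\circ_s)^\top \,\big|\, \mathcal{G}_s\bigr],
\end{align*}
and the right-hand side is deterministic by the independence of increments assumption. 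The preliminary fact then forces $\langle X^\circ\rangle^{\mathbb{G}}$ to be deterministic.

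Next I would repeat the argument for $V_\cdot:=|I|^2*\nu^{(\mathbb{G},X^\natural)}_\cdot$. By construction $V$ is the $\mathbb{G}$-compensator of the finite-variation process $|I|^2*\mu^{X^\natural}_\cdot=\sum_{u\leq \cdot}|\Delta X^\natural_u|^2$, so
\begin{align*}
\mathbb{E}[V_t-V_s \,|\, \mathcal{G}_s] = \mathbb{E}\Bigl[\,\sum_{s<u\leq t}|\Delta X^\natural_u|^2 \,\Big|\, \mathcal{G}_s\Bigr].
\end{align*}
The sum on the right is measurable with respect to $\sigma(X^\natural_v - X^\natural_u : s\leq u\leq v\leq t)$ (being a pointwise limit of Riemann-type functionals of the increments of $X^\natural$ on $(s,t]$), hence independent of $\mathcal{G}_s$ by the independent increments property; the conditional expectation therefore equals the unconditional one, which is deterministic, and the preliminary fact concludes. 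Summing the two summands shows that $C^{(\mathbb{G},\overline{X})}$ is deterministic.

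Finally, for $c^{(\mathbb{G},\overline{X})}$ I would argue that every entry of the symmetric, nonnegative definite, matrix-valued, $\mathbb{G}$-predictable process $d\langle X^\circ\rangle^{\mathbb{G}}/dC^{(\mathbb{G},\overline{X})}$ is the Radon--Nikodym derivative of one deterministic $\sigma$-finite measure on $(\mathbb{R}_+,\mathcal{B}(\mathbb{R}_+))$ with respect to another, so a deterministic version can be chosen; the Borel measurability of the symmetric, nonnegative square root (the very result of Azoff already invoked in the definition of $c^{(\mathbb{G},\overline{X})}$) then forces $c^{(\mathbb{G},\overline{X})}$ to be deterministic as well. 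The only mildly delicate point will be tidying up the matrix-valued step: I would handle it through polarization as indicated above, so that the one-dimensional ``predictable finite-variation martingale $=0$'' argument is all that is really needed.
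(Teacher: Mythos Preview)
Your argument is correct. The paper's proof takes a much shorter route: it simply invokes \cite[Corollary 7.87]{medvegyev2007stochastic} (see also \cite[Theorem II.4.15]{jacod2013limit}), which states outright that a process with independent increments has deterministic characteristics, whence $\langle X^\circ\rangle^{\mathbb{G}}$ and $\nu^{(\mathbb{G},X^\natural)}$ are deterministic; the conclusion for $C^{(\mathbb{G},\overline{X})}$ and $c^{(\mathbb{G},\overline{X})}$ is then left implicit. What you have written is essentially a self-contained reproof of the piece of that textbook result that is actually needed here: the ``predictable finite-variation martingale vanishes'' trick applied to $V-\mathbb{E}[V_\cdot]$ is precisely the mechanism behind the cited corollary. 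Your version buys independence from the external reference and makes the passage to $c^{(\mathbb{G},\overline{X})}$ explicit (the paper does not spell this out); the paper's version buys brevity. The polarization step and the Radon--Nikodym/square-root step for $c^{(\mathbb{G},\overline{X})}$ are fine, with the usual understanding that ``deterministic'' means a deterministic version can be chosen, which is all that is used downstream.
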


\begin{proof}
Let $j\in\{1,\dots,p\}$, then $X^{\circ,j}$ denotes the $j-$element of the $p-$dimensional process $X^{\circ}$.
Analogously, $x^j$ denotes the $j-$element of the $p-$dimensional vector $x$.
Using \citet[Definition 6.27]{he2019semimartingale}, we have that the dual predictable projection of the process $\sum_{s \leq t}|\Delta X^{\circ,j}_s|^2$ is equal to $(x^j)^2 * \nu^{(\mathbb{G},X^{\circ})}$, for every $j \in \{1,\dots,p\}$.
Then, using \citet[Corollary 7.87]{medvegyev2007stochastic}, we get that $\langle X^{\circ} \rangle^{\mathbb{G}}$ and $\nu^{(\mathbb{G},X^{\natural})}$ are deterministic.
\end{proof}
 
\begin{remark}[On Assumption \ref{H_determQ}]
\label{rem:Ass-PC7}
A couple of examples where Assumption \ref{H_determQ} is satisfied, are the extended Grigelionis martingales (which, obviously, include L\'evy martingales), see \citet[Definition 2.15]{kallsen1998semimartingale}, and affine martingales, see \citet{kallsen2006didactic} and \citet[Sections 2,3]{kallsen2011pricing}.
In these processes, we respectively have  that
\begin{align*}
C^{(\mathbb{G},\overline{X})}_{t} = \lambda^2 \bigg(t + \sum_{s\leq t}\mathds{1}_{B}(s)\bigg) \hspace{0.5cm} \text{and} \hspace{0.5cm}
C^{(\mathbb{G},\overline{X})}_{t} = \int_{0}^{t}b_s\, \ud s,
\end{align*}
for some $\lambda \in \mathbb{R},B \subseteq (0,\infty)$ and $b \in \mathcal{P}^{\mathbb{G}}_+$, with $B$ at most countable and $b$ appropriately bounded.    
\end{remark}
 

\subsection{Main results}

We are ready to prove the propagation of chaos between the system of mean-field BSDEs \eqref{mfBSDE_with_initial_path} and the McKean--Vlasov BSDEs \eqref{MVBSDE_with_initial_path}.
The setting consists of the conditions \ref{H1}--\ref{H:prop_contraction}.
For fixed $N\in\mathbb{N}$, we introduce the following notation for the solution $(\textbf{Y}^N,\textbf{Z}^N,\textbf{U}^N,\textbf{M}^N)$ associated to the mean-field BSDE \eqref{mfBSDE_with_initial_path}:\footnote{See also \cref{rem:comments_H_Cond}.\ref{rem:comments_H_Cond4}.}
\begin{gather*}
    \textbf{Y}^N=(Y^{1,N},\dots, Y^{N,N}),\,
    \textbf{Z}^N=(Z^{1,N},\dots, Z^{N,N}),\\
    \textbf{U}^N=(U^{1,N},\dots, U^{N,N})\text{ \hspace{0.5em}and\hspace{0.5em}}
    \textbf{M}^N=(M^{1,N},\dots, M^{N,N}).
\end{gather*}
Also, for $i\in\mathbb{N}$, we will call the $i-$th McKean--Vlasov BSDE \eqref{MVBSDE_with_initial_path} the one that corresponds to the standard data $\big(\overline{X}^i,\mathbb{F}^i,\Theta,\Gamma,T,\xi^i,f \big)$ under $\hat{\beta}$.
Additionally, we will call the first $N$ McKean--Vlasov BSDEs \eqref{MVBSDE_with_initial_path} those that correspond to the set of standard data 
$\big\{\big(\overline{X}^i,\mathbb{F}^i,\Theta,\Gamma,T,\xi^i,f \big)\big\}_{i\in\mathscr{N}}$ under $\hat{\beta}$ with associated solution 
$(\widetilde{\textbf{Y}}^N,
\widetilde{\textbf{Z}}^N,
\widetilde{\textbf{U}}^N,
\widetilde{\textbf{M}}^N)$; see also the comments at the beginning of \Cref{subsec:Conservation_of_solutions}.  

\begin{theorem}[\textbf{Propagation of chaos for the system}]\label{thm:prop_chaos_avrg}
Assume that \ref{H1}--\ref{H:prop_contraction} are in force.
The solution of the mean-field BSDE \eqref{mfBSDE_with_initial_path}, denoted by $(\textbf{Y}^N,\textbf{Z}^N,\textbf{U}^N,\textbf{M}^N)$, and the solutions of the first $N$ McKean--Vlasov BSDEs \eqref{MVBSDE_with_initial_path}, denoted by
$(\widetilde{\textbf{Y}}^N,
\widetilde{\textbf{Z}}^N,
\widetilde{\textbf{U}}^N,
\widetilde{\textbf{M}}^N)$, 
satisfy
\begin{align}
   \lim_{N \rightarrow \infty} \frac{1}{N} \sum_{i = 1}^{N} \big\|\left(Y^{i,N} - Y^{i},Z^{i,N} - Z^{i},U^{i,N} - U^{i},M^{i,N} - M^i\right) \big\|^{2}_{\star,\hat{\beta},\mathbb{F}^{1,\dots,N},A^i,\overline{X}^i} = 0.\footnotemark
\end{align}%
\footnotetext{Recall \cref{rem:comments_H_Cond}.\ref{rem:comments_H_Cond2} for the notation 
$A^i$, for every $i\in\mathbb{N}$.}
\end{theorem}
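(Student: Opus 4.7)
The strategy is to apply the \textit{a priori} estimates of \cref{lem:a_priori_estimates} to the difference $(Y^{i,N}-Y^{i},Z^{i,N}-Z^{i},U^{i,N}-U^{i},M^{i,N}-M^{i})$, which satisfies a BSDE under $\mathbb{F}^{1,\dots,N}$ with zero terminal condition (modulo $\xi^{i,N}-\xi^{i}$) and driver
\[
\psi^{i,N}_s := f\bigl(s,Y^{i,N}|_{[0,s]},Z^{i,N}_s c^{i}_s,\Gamma^{(\mathbb{F}^{1,\dots,N},\overline{X}^{i},\Theta)}(U^{i,N})_s,L^{N}(\textbf{Y}^{N}|_{[0,s]})\bigr)- f\bigl(s,Y^{i}|_{[0,s]},Z^{i}_s c^{i}_s,\Gamma^{(\mathbb{F}^{i},\overline{X}^{i},\Theta)}(U^{i})_s,\mathcal{L}(Y^{i}|_{[0,s]})\bigr).
\]
Here I would first invoke the conservation of solutions (Lemma \ref{lem:conserv_laws}) to view $(Y^{i},Z^{i},U^{i},M^{i})$ as a solution under the enlarged filtration $\mathbb{F}^{1,\dots,N}$, so that both equations live on a common stochastic basis, and \cref{rem:immersion_no_change_in_comp} ensures $C^{i}$, $c^{i}$ and the relevant compensators are unchanged. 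Then \cref{lem:a_priori_estimates} combined with \cref{prop:infima_for_M} gives, for each $i\in\mathscr{N}$,
\[
\|(Y^{i,N}-Y^{i},Z^{i,N}-Z^{i},U^{i,N}-U^{i},M^{i,N}-M^{i})\|^{2}_{\star,\hat{\beta},\mathbb{F}^{1,\dots,N},A^{i},\overline{X}^{i}} \leq C_{1}\|\xi^{i,N}-\xi^{i}\|^{2}_{\mathbb{L}^{2}_{\hat{\beta}}} + M^{\Phi}_{\star}(\hat{\beta})\Bigl\|\tfrac{\psi^{i,N}}{\alpha}\Bigr\|^{2}_{\mathbb{H}^{2}_{\hat{\beta}}(\mathbb{F}^{1,\dots,N},A^{i},C^{i};\mathbb{R}^{d})}.
\]

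\textbf{Key decomposition of the Wasserstein term.} Applying \ref{H4} together with \cref{lem:Gamma_is_Lipschitz} to bound $|\psi^{i,N}/\alpha|^{2}$ produces a Wasserstein summand of the form $\alpha^{2}_{s}\,W^{2}_{2,\rho_{J_{1}^{d}}}\bigl(L^{N}(\textbf{Y}^{N}|_{[0,s]}),\mathcal{L}(Y^{i}|_{[0,s]})\bigr)$. The crucial step is to insert the empirical measure of the McKean--Vlasov solutions $\widetilde{\textbf{Y}}^{N}$ via the triangle inequality and the elementary inequality $(a+b)^{2}\leq 2a^{2}+2b^{2}$:
\[
W^{2}_{2,\rho_{J_{1}^{d}}}\bigl(L^{N}(\textbf{Y}^{N}|_{[0,s]}),\mathcal{L}(Y^{i}|_{[0,s]})\bigr) \leq 2\,W^{2}_{2,\rho_{J_{1}^{d}}}\bigl(L^{N}(\textbf{Y}^{N}|_{[0,s]}),L^{N}(\widetilde{\textbf{Y}}^{N}|_{[0,s]})\bigr) + 2\,W^{2}_{2,\rho_{J_{1}^{d}}}\bigl(L^{N}(\widetilde{\textbf{Y}}^{N}|_{[0,s]}),\mathcal{L}(Y^{i}|_{[0,s]})\bigr).
\]
For the first term, \eqref{empiricalineq} and \eqref{skorokineq} give an almost sure bound by $\frac{1}{N}\sum_{j=1}^{N}\sup_{r\in[0,s]}|Y^{j,N}_{r}-Y^{j}_{r}|^{2}$. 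For the second term, since by \cref{rem:comments_H_Cond}(\ref{rem:comments_Y_ident_distr}) the $Y^{j}$ are i.i.d. with common law $\mathcal{L}(Y^{1})$ on $\mathbb{D}^{d}$, Varadarajan's theorem (\cref{thm:weak_conv}) together with uniform integrability provided by \ref{H2} and \ref{H6} yields $\mathbb{P}$-a.s. convergence to $0$, uniformly in $i$, of $\varepsilon^{N}_{s}:=W^{2}_{2,\rho_{J_{1}^{d}}}\bigl(L^{N}(\widetilde{\textbf{Y}}^{N}|_{[0,s]}),\mathcal{L}(Y^{1}|_{[0,s]})\bigr)$.

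\textbf{Averaging and closing the loop.} Multiplying the per-coordinate estimate by $\frac{1}{N}$, summing over $i\in\mathscr{N}$, and using \ref{H:Lambda_bound} to absorb the $\mathcal{E}(\hat{\beta}A^{i})$ factors via a computation analogous to \eqref{ineq:norm_psi_ininital_path_mf}, one arrives at an inequality of the shape
\[
\Delta_{N} \leq C_{1}\cdot\tfrac{1}{N}\sum_{i=1}^{N}\|\xi^{i,N}-\xi^{i}\|^{2}_{\mathbb{L}^{2}_{\hat{\beta}}} + \max\Bigl\{2,\tfrac{3\Lambda_{\hat{\beta}}}{\hat{\beta}}\Bigr\}M^{\Phi}_{\star}(\hat{\beta})\,\Delta_{N} + R_{N},
\]
where $\Delta_{N}$ denotes the quantity in the statement and $R_{N}\to 0$ is the contribution from the empirical-to-law Wasserstein term. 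The coefficient $3=1+2$ arises because, after the triangle inequality, the ``state'' term $\alpha^{2}\rho_{J_{1}^{d}}^{2}(Y^{i,N}|_{[0,s]},Y^{i}|_{[0,s]})$ appears once directly and once through the averaging of the empirical-to-empirical Wasserstein bound (which, upon summation over $i$, redistributes into the same quantity with coefficient $2$). Invoking \ref{H:prop_contraction}, the self-referential $\Delta_{N}$ can be absorbed on the left-hand side, leaving $\Delta_{N}\lesssim \frac{1}{N}\sum_{i=1}^{N}\|\xi^{i,N}-\xi^{i}\|^{2}+R_{N}$, both of which tend to $0$ by \ref{H2} and the dominated-convergence argument for $R_{N}$.

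\textbf{Main obstacle.} The delicate point is the justification of $R_{N}\to 0$: one needs almost sure convergence of $L^{N}(\widetilde{\textbf{Y}}^{N}|_{[0,s]})$ to $\mathcal{L}(Y^{1}|_{[0,s]})$ in the $2$-Wasserstein topology on $(\mathscr{P}_{2}(\mathbb{D}^{d}),\mathcal{W}_{2,\rho_{J_{1}^{d}}})$ \emph{together with} a domination by an integrable process uniformly in $N$ and $s\in[0,T]$, so that one can pass the limit inside the time-integral against $\alpha^{2}_{s}\mathcal{E}(\hat{\beta}A^{i})_{s-}\,\ud C^{i}_{s}$. The $\mathcal{S}^{2}_{\hat{\beta}}$-integrability of the $Y^{i}$ coming from \cref{thm:MVBSDE_initial_path}, combined with \ref{H_determQ} and \ref{H:Lambda_bound}, supplies the required domination; the pathwise Glivenko--Cantelli in the Polish space $\mathbb{D}^{d}_{T}$ (cf. \cite{varadarajan1958weak}) supplies the a.s. convergence in the weak topology, and the uniform $\mathcal{S}^{2}$-bound upgrades it to $\mathcal{W}_{2}$. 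A secondary technical subtlety is ensuring measurability of the Wasserstein functionals in $(\omega,s)$, handled by \cref{rem:Wasserstein_measurable}.
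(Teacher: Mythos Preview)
Your proposal is correct and follows essentially the same route as the paper: conservation of solutions to work under $\mathbb{F}^{1,\dots,N}$, per-coordinate \textit{a priori} estimates, the Wasserstein triangle inequality with the empirical measure $L^{N}(\widetilde{\textbf{Y}}^{N}|_{[0,s]})$ inserted, the bound \eqref{empiricalineq}--\eqref{skorokineq} for the empirical-to-empirical term, summation over $i$ to obtain the factor $\max\{2,3\Lambda_{\hat\beta}/\hat\beta\}M^{\Phi}_{\star}(\hat\beta)$, and absorption via \ref{H:prop_contraction}.

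One simplification you are missing in the treatment of $R_{N}$: because $\rho_{J_{1}^{d}}\leq 1$ (see \eqref{skorokineq}), one has $W^{2}_{2,\rho_{J_{1}^{d}}}\leq 1$ identically, so (i) the domination needed to pass the limit inside the time integral is simply by the $Q$-integrable function $\gamma$ from \ref{H_determQ}--\ref{H:Lambda_bound}, with no appeal to $\mathcal{S}^{2}_{\hat\beta}$-bounds on the $Y^{i}$, and (ii) by \cref{Re_2.9} the Wasserstein distance already metrizes weak convergence, so no ``upgrade'' from weak to $\mathcal{W}_{2}$ via moment control is required. The paper then concludes by the Strong Law of Large Numbers applied through \cref{thm:weak_conv}, exactly as you outline. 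Also note that $\mathcal{L}(Y^{i}|_{[0,s]})=\mathcal{L}(Y^{1}|_{[0,s]})$ for all $i$ by \cref{rem:comments_H_Cond}(\ref{rem:comments_Y_ident_distr}), so your ``uniformly in $i$'' is automatic.
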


\begin{proof}
For fixed $N\in\mathbb{N}$, we will work under the filtration $\mathbb{F}^{1,\dots,N}$.
For $i\in\mathscr{N}$, \cref{lem:conserv_laws} permits us to consider the $i-$th McKean--Vlasov BSDE under the filtration $\mathbb{F}^{1,\dots,N}$, instead of $\mathbb{F}^i$, without affecting the solution $(Y^i,Z^i,U^i,M^i)$.
This property allows us to finally consider the first $N$ McKean--Vlasov equations under the filtration $\mathbb{F}^{1,\dots,N}$, which is also the filtration considered for the mean-field BSDE $N-$system. 

For every $i \in \mathscr{N}$ we subtract the solution of the $i-$th McKean--Vlasov BSDE from the $i-$th element of the solution of the mean-field BSDE in order to derive 
\begin{align*}
Y^{i,N}_t - Y^{i}_t 
&= \xi^{i,N} -\xi^i + \int^{T}_{t} \Big\{ f\big(s,Y^{i,N}|_{[0,s]},Z^{i,N}_s  c^i,\Gamma^{(\mathbb{F}^{1,\dots,N},\overline{X}^i,\Theta)}(U^{i,N})_s,L^N( \textbf{Y}^{N}|_{[0,s]})\big)\\
&\hspace{8em}- f\big(s,Y^{i}|_{[0,s]},Z^i_s c^i,\Gamma^{(\mathbb{F}^{1,\dots,N},\overline{X}^i,\Theta)}(U^i)_s,\mathcal{L}( Y^{i}|_{[0,s]})\big) \Big\} \, \ud C^i_s
\numberthis\label{system:prop_syst}
\\
&\hspace{2em}- \int_{t}^{T}\,\ud\left[\left(Z^{i,N} - Z^{i} \right) \cdot X^{i,\circ} + \left(U^{i,N} - U^{i}\right) \star {\widetilde{\mu}}^{(\mathbb{F}^{1,\dots,N},X^{i,\natural})} + M^{i,N} - M^i\right]_{s}, 
 \end{align*}
which finally provides another BSDE system.
Hence, we can utilize the \textit{a priori} estimates in \cref{lem:a_priori_estimates}.

Let us define $\psi:=(\psi^1,\dots,\psi^N)$, where for every $i\in\mathscr{N}$ we have defined  
\begin{align*}
    \begin{multlined}[0.85\textwidth]
    \psi^i_\cdot : = f\big(\cdot,Y^{i,N}|_{[0,\cdot]},
    Z^{i,N}_{\cdot}  c^i_{\cdot},
    \Gamma^{(\mathbb{F}^{1,\dots,N},\overline{X}^i,\Theta)}(U^{i,N})_\cdot,
    L^N( \textbf{Y}^{N}|_{[0,\cdot]})\big)\\
    - f\big(\cdot,Y^{i}|_{[0,\cdot]},
    Z^i_{\cdot} c^i_{\cdot},
    \Gamma^{(\mathbb{F}^{1,\dots,N},\overline{X}^i,\Theta)}(U^{i})_\cdot,
    \mathcal{L}( Y^{i}|_{[0,\cdot]})\big).
\end{multlined}
\end{align*}
Now, from the Lipschitz condition \ref{H4}, the identities \eqref{comp_norm_altern_circ} and \eqref{comp_norm_altern_natural},
\cref{lem:Gamma_is_Lipschitz} which provides the Lipschitz property of 
$\Gamma$ with respect to $\tnorm{\cdot}$, and a combination of \ref{H_determQ} and \ref{H:Lambda_bound}, which in particular provide the bound of the respective stochastic exponential,
we get for every $i \in \mathscr{N}$ that
\begin{align*}
\left\|\frac{\psi^i}{\alpha}\right\|^{2}_{\mathbb{H}^{2}_{\hat{\beta}}(\mathbb{F}^{1,\dots,N},A^i,C^i;\mathbb{R}^d)} 
&    \leq  \frac{\Lambda_{\hat{\beta}}}{\hat{\beta}} 
    \|Y^{i,N} - Y^{i}\|^{2}_{\mathcal{S}^{2}_{\hat{\beta}}(\mathbb{F}^{1,\dots,N},A^i;\mathbb{R}^d)}\\      
&\hspace{1em}    +  \|Z^{i,N} - Z^{i}\|^{2}_{\mathbb{H}^{2}_{\hat{\beta}}(\mathbb{F}^{1,\dots,N},A^i,X^{i,\circ};\mathbb{R}^{d \times p})}
+  2 \hspace{0.1cm} \|U^{i,N} - U^{i}\|^{2}_{\mathbb{H}^{2}_{\beta}(\mathbb{F}^{1,\dots,N},A^i,X^{i,\natural};\mathbb{R}^d)}\\
&\hspace{1em}+  
\frac{1}{\hat{\beta}}
\mathbb{E}\left[\int_{0}^{T}
W_{2,\rho_{J_1^{d}}}^2\left(L^N\left(\textbf{Y}^{N}|_{[0,s]}\right),\mathcal{L}\left( Y^{i}|_{[0,s]}\right)\right)\, \ud \mathcal{E}(\hat{\beta}A^i)_s \right].
\numberthis
\label{ineq:psi_i_prop_syst}
\end{align*}
At this point let us observe that, in order to proceed, we need to derive the convergence (in the sense dictated by the last term on the right-hand side of \eqref{ineq:psi_i_prop_syst}) of the empirical mean of the mean-field solution to the common (in view of 
\cref{rem:comments_H_Cond}.\ref{rem:comments_Y_ident_distr}) law of the solution of the McKean--Vlasov BSDEs.
To this end, we will use the triangular inequality for the Wasserstein distance as follows
\begin{align}
\label{ineq:Wass_triang_prop_system}
\begin{multlined}[0.9\textwidth]
    W_{2,\rho_{J_1^{d}}}^2
    \big(L^N(\textbf{Y}^{N}|_{[0,s]}),\mathcal{L}(Y^i|_{[0,s]})\big)\\
    \leq 
    2 \hspace{0.1cm} 
    W_{2,\rho_{J_1^{d}}}^2
    \big(L^N(\textbf{Y}^{N}|_{[0,s]}),
    L^N({\widetilde{\textbf{Y}}}^{N}|_{[0,s]}) \big) 
    + 2 \hspace{0.1cm}
    W_{2,\rho_{J_1^{d}}}^2
    \big(L^N({\widetilde{\textbf{Y}}}^{N}|_{[0,s]}),
    \mathcal{L}\left( Y^{i}|_{[0,s]}\right)\big),
\end{multlined}
\end{align}
in order to reduce our initial problem into two easier ones: the convergence (in the sense dictated in \eqref{ineq:psi_i_prop_syst}) to 0 of the summands on the right-hand side of \eqref{ineq:Wass_triang_prop_system}.
The following computations serve this purpose.

Let us, for the time being, deal with the first summand on the right-hand side of \eqref{ineq:Wass_triang_prop_system}.
Then, for $i\in\mathscr{N}$ and by integrating with respect to $\mathbb{P}\otimes \mathcal{E}(\hat{\beta}A^i)$ we have
by means of \eqref{empiricalineq} and \eqref{skorokineq}
\begin{align*}
&\mathbb{E}\left[\int_{0}^{T}
W_{2,\rho_{J_1^{d}}}^2
    \big(L^N(\textbf{Y}^{N}|_{[0,s]}),
    L^N({\widetilde{\textbf{Y}}}^{N}|_{[0,s]}) \big)
\ud \mathcal{E}(\hat{\beta}A^i)_s\right]\\
&\hspace{2em}\leq 
\mathbb{E}\left[\int_{0}^{T}
\frac{1}{N}\sum_{m = 1}^{N}\sup_{z \in [0,s]}\{|Y^{m,N}_z - Y^{m}_z|\}^2\ud \mathcal{E}(\hat{\beta}A^i)_s\right]\\
&\hspace{2em}\leq \Lambda_{\hat{\beta}} \hspace{0.1cm} \frac{1}{N} \sum_{m = 1}^{N}\|Y^{m,N} - Y^{m}\|^{2}_{\mathcal{S}^{2}_{\hat{\beta}}(\mathbb{F}^{1,\dots,N},A^{\overline{X}^m};\mathbb{R}^d)}. 
\numberthis
\label{ineq:first_summand_prop_syst}
\end{align*}    

Returning to the system \eqref{system:prop_syst}, we utilize \cref{lem:a_priori_estimates} (in conjunction with \cref{prop:infima_for_M}), which essentially amounts to adding \eqref{ineq:psi_i_prop_syst} over $i\in\mathscr{N}$, and in conjunction with \eqref{ineq:Wass_triang_prop_system} and \eqref{ineq:first_summand_prop_syst} we have
\begin{align*}
     &\sum_{i = 1}^{N} \|\left(Y^{i,N} - Y^{i},Z^{i,N} - Z^{i},U^{i,N} - U^{i},M^{i,N} - M^i\right) \|^{2}_{\star,\hat{\beta},\mathbb{F}^{1,\dots,N},A^i,\overline{X}^i} \\
     &\leq (26 + 9 \hat{\beta} \Phi) \sum_{i = 1}^N \|\xi^{i,N} - \xi^i\|^2_{\mathbb{L}^2_{\hat{\beta}}(\mathcal{F}^{1,\dots,N}_T, A^{i};\mathbb{R}^d)} \\
     &\hspace{0.2cm}+ \max\left\{2,\frac{3\Lambda_{\hat{\beta}}}{\hat{\beta}}\right\} M^{\Phi}_{\star}(\hat{\beta}) \hspace{0.2cm}   \sum_{i = 1}^{N} \|\left(Y^{i,N} - Y^{i},Z^{i,N} - Z^{i},U^{i,N} - U^{i},M^{i,N} - M^i\right) \|^{2}_{\star,\hat{\beta},\mathbb{F}^{1,\dots,N},\alpha,A^i,\overline{X}^i}\\
&\hspace{0.2cm}+ \hspace{0.1cm} \frac{2M^{\Phi}_{\star}(\hat{\beta})}{\hat{\beta}} \hspace{0.2cm} \sum_{i = 1}^{N}
\mathbb{E}\Big[\int_{0}^{T}
W_{2,\rho_{J_1^{d}}}^2
\big(L^N({\widetilde{\textbf{Y}}}^{N}|_{[0,s]}),
\mathcal{L}( Y^{i}|_{[0,s]})\big)\, \ud \mathcal{E}(\hat{\beta} A^i)_s \Big].
\end{align*}
Hence, using \ref{H:prop_contraction} we get
\begin{align*}
   &\frac{1}{N} \sum_{i = 1}^{N} \|\left(Y^{i,N} - Y^{i},Z^{i,N} - Z^{i},U^{i,N} - U^{i},M^{i,N} - M^i\right) \|^{2}_{\star,\hat{\beta},\mathbb{F}^{1,\dots,N},A^i,\overline{X}^i}\\
   &\leq \frac{(26 + 9 \hat{\beta} \Phi)}{1 - \max\left\{2,\frac{3\Lambda_{\hat{\beta}}}{\hat{\beta}}\right\}M^{\Phi}_{\star}(\hat{\beta})}\hspace{0.1cm}\frac{1}{N} \sum_{i = 1}^N \|\xi^{i,N} - \xi^i\|^2_{\mathbb{L}^2_{\hat{\beta}}(\mathcal{F}^{1,\dots,N}_T, A^{i};\mathbb{R}^d)}\\
   &\hspace{0.2cm}+ \frac{2M^{\Phi}_{\star}(\hat{\beta})}{1 - \max\left\{2,\frac{3\Lambda_{\hat{\beta}}}{\hat{\beta}}\right\}M^{\Phi}_{\star}(\hat{\beta})}\hspace{0.1cm}\frac{1}{\hat{\beta}}\hspace{0.1cm}\mathbb{E}\left[\frac{1}{N} \sum_{i = 1}^{N}\int_{0}^{T} W_{2,\rho_{J_1^{d}}}^2\left(L^N\left({\widetilde{\textbf{Y}}}^{N}|_{[0,s]}\right),\mathcal{L}\left( Y^{i}|_{[0,s]}\right)\right)\, \ud \mathcal{E}\left(\hat{\beta} A^i\right)_{s} \right].
\end{align*}
In other words, using \ref{H2}, we have reduced our initial problem to the one which consists of proving that 
\begin{align}
\lim_{N \rightarrow \infty} \mathbb{E}\Big[
\frac{1}{N} \sum_{i = 1}^{N}
\int_{0}^{T} 
W_{2,\rho_{J_1^{d}}}^2
\big(L^N({\widetilde{\textbf{Y}}}^{N}|_{[0,s]}),\mathcal{L}( Y^{i}|_{[0,s]})\big)\, \ud \mathcal{E}(\hat{\beta} A^i)_{s} \Big] = 0.
\label{new_cond_prop_syst}
\end{align}
Let us point out that this expression contains also the (expectation of the sum of the) second summand of \eqref{ineq:Wass_triang_prop_system}.

In view of the above, we focus hereinafter on proving that \eqref{new_cond_prop_syst} is indeed true.
From \cref{rem:comments_H_Cond}.\ref{rem:comments_Y_ident_distr} we have for every $i,j \in \mathbb{N}$ that 
$\mathcal{L}\left( Y^{i}|_{[0,s]}\right) = \mathcal{L}\left( Y^{j}|_{[0,s]}\right)$, for every $s\in\mathbb{R}_+$.
Now, from \ref{H_determQ} and Tonelli's theorem we have 
\begin{align*}
&\mathbb{E}\Big[\frac{1}{N} 
\sum_{i = 1}^{N}\int_{0}^{T} W_{2,\rho_{J_1^{d}}}^2\big(L^N({\widetilde{\textbf{Y}}}^{N}|_{[0,s]}),
\mathcal{L}( Y^{i}|_{[0,s]})\big)\, \ud \mathcal{E}(\hat{\beta} A^i)_{s} \Big]\\
&\hspace{1em}= 
\mathbb{E}\Big[\frac{1}{N} 
\sum_{i = 1}^{N}\int_{0}^{T} W_{2,\rho_{J_1^{d}}}^2
\big(L^N({\widetilde{\textbf{Y}}}^{N}|_{[0,s]}),
\mathcal{L}( Y^{i}|_{[0,s]})\big)
b^i_s\, \ud Q_{s} \Big]\\
&\hspace{1em}= 
\int_{0}^{T} \mathbb{E}
\Big[W_{2,\rho_{J_1^{d}}}^2
\big(L^N({\widetilde{\textbf{Y}}}^{N}|_{[0,s]}),
\mathcal{L}( Y^{1}|_{[0,s]})\big) 
\frac{1}{N} \sum_{i = 1}^{N}b^i_s\Big]\, \ud Q_{s},
\end{align*}
where in the last equality we used 
\cref{rem:comments_H_Cond}.\ref{rem:comments_Y_ident_distr}.
Then, because $W_{2,\rho_{J_1^{d}}}(\cdot,\cdot) \leq 1$, from \ref{H:Lambda_bound} we have that the Borel--measurable function $\gamma$
is $Q-$integrable and dominates the sequence 
\begin{align*}
    \bigg\{
\mathbb{E}\Big[
W_{2,\rho_{J_1^{d}}}^2\big(
L^N({\widetilde{\textbf{Y}}}^{N}|_{[0,\cdot]}),
\mathcal{L}( Y^{1}|_{[0,\cdot]})\big) 
\frac{1}{N}\sum_{i = 1}^{N} b^i_s\Big]\bigg\}_{N \in \mathbb{N}}.
\end{align*}
Hence, it suffices to show that for every $s \in (0,T]$ we have
\begin{align*}
    \lim_{N \rightarrow \infty} \mathbb{E}\left[W_{2,\rho_{J_1^{d}}}^2\left(L^N\left({\widetilde{\textbf{Y}}}^{N}|_{[0,s]}\right),\mathcal{L}\left( Y^{1}|_{[0,s]}\right)\right) \frac{1}{N}\sum_{i = 1}^{N} b^i_s\right] = 0.
\end{align*}
Obviously we have 
\begin{align*}
W_{2,\rho_{J_1^{d}}}^2\left(L^N\left({\widetilde{\textbf{Y}}}^{N}|_{[0,s]}\right),\mathcal{L}\left( Y^{1}|_{[0,s]}\right)\right) \frac{1}{N}\sum_{i = 1}^{N} b^i_s 
\leq \gamma_s.
\end{align*}
Using the dominated convergence theorem, our goal is reduced to showing that, for every $s\in[0,T]$,
\begin{align*}
    \lim_{N \rightarrow \infty} W_{2,\rho_{J_1^{d}}}^2\left(L^N\left({\widetilde{\textbf{Y}}}^{N}|_{[0,s]}\right),\mathcal{L}\left( Y^{1}|_{[0,s]}\right)\right) = 0, \hspace{0.2cm}\mathbb{P} -  \text{a.e.}
\end{align*}
Recalling now the comments provided in \cref{sec:Wasserstein_dist},  more precisely the fact that the Wasserstein distance metrizes the weak convergence of measures on $\mathbb{D}^d$, we are allowed to translate the desired convergence into weak convergence of the respective measures.
Fix $s \in (0,T]$, from \cref{thm:weak_conv} -- we follow the notation of the aforementioned lemma -- we only need to show that
\begin{align*}
    \lim_{N \rightarrow \infty} \int_{\mathbb{D}^d}f^{\mathcal{L}\left( Y^{1}|_{[0,s]}\right)}_k(x)\,L^N\left({\widetilde{\textbf{Y}}}^{N}|_{[0,s]}\right)(\ud x) = \int_{\mathbb{D}^d}f^{\mathcal{L}\left( Y^{1}|_{[0,s]}\right)}_k(x)\,\mathcal{L}\left( Y^{1}|_{[0,s]}\right)(\ud x), \hspace{0.2cm}\mathbb{P} -  \text{a.e.},
\end{align*}
for all $k \in \mathbb{N}$, and for a suitable sequence $\{f^{\mathcal{L}(Y^1|_{[0,s]})}\}_{k\in\mathbb{N}}\subseteq C_b(\mathbb{D}^d)$. 
Fix $k \in \mathbb{N}$, the above equality is equivalently written as
\begin{align*}\label{SLLN}
   \lim_{N \rightarrow \infty} \frac{1}{N} \sum_{m = 1}^{N} f^{\mathcal{L}\left( Y^{1}|_{[0,s]}\right)}_k\left(Y^m|_{[0,s]}\right) = \mathbb{E}\left[f^{\mathcal{L}\left( Y^{1}|_{[0,s]}\right)}_k\left(Y^1|_{[0,s]}\right)\right]
 \hspace{0.2cm}\mathbb{P} -  \text{a.e.},
\end{align*}
which, in fact, is the Strong Law of Large Numbers for the independent and identically distributed random variables 
$\Big\{f^{\mathcal{L}\left( Y^{1}|_{[0,s]}\right)}_k\left(Y^m|_{[0,s]}\right)\Big\}_{m \in \mathbb{N}}$; 
recall 
\cref{rem:comments_H_Cond}.\ref{rem:comments_Y_ident_distr}. 
\end{proof}

\begin{theorem}[\textbf{Propagation of chaos}]
\label{thm:prop_chaos}
Assume \ref{H1}--\ref{H:prop_contraction} are in force, and let $i\leq N\in\mathbb{N}$.
The solution of the mean-field BSDE \eqref{mfBSDE_with_initial_path}, denoted by $(\textbf{Y}^N,\textbf{Z}^N,\textbf{U}^N,\textbf{M}^N)$, and the solution of the $i-$th McKean--Vlasov BSDE \eqref{MVBSDE_with_initial_path}, denoted by
$(Y^i,Z^i,U^i,M^i)$, 
satisfy
\begin{equation}\label{eq5.1}
\lim_{N \rightarrow \infty} \hspace{0.2cm} \|\left(Y^{i,N} - Y^{i},Z^{i,N} - Z^{i},U^{i,N} - U^{i},M^{i,N} - M^i\right) \|^{2}_{\star,\hat{\beta},\mathbb{F}^{1,\dots,N},A^{i},\overline{X}^i} = 0. 
\end{equation}

\end{theorem}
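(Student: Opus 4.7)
The plan is to mimic the proof of \cref{thm:prop_chaos_avrg}, but now apply the \textit{a priori} estimates from \cref{lem:a_priori_estimates} to a single coordinate instead of summing over $i\in\mathscr{N}$. As before, by \cref{lem:conserv_laws} we may consider the $i$-th McKean--Vlasov BSDE under the larger filtration $\mathbb{F}^{1,\dots,N}$ without altering its solution. Subtracting the $i$-th McKean--Vlasov BSDE from the $i$-th equation of the mean-field system yields a BSDE with zero terminal datum up to $\xi^{i,N}-\xi^i$ and driver $\psi^i$ defined exactly as in the proof of \cref{thm:prop_chaos_avrg}.

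Applying \cref{lem:a_priori_estimates} (together with \cref{prop:infima_for_M}) to this single BSDE, and using the Lipschitz property \ref{H4} of $f$, \cref{lem:Gamma_is_Lipschitz} for $\Gamma$, and the bound $\mathcal{E}(\hat\beta A^i)_T\leq \Lambda_{\hat\beta}$ guaranteed by \ref{H_determQ}--\ref{H:Lambda_bound}, I obtain
\begin{align*}
\|(Y^{i,N}-Y^i,Z^{i,N}-Z^i,U^{i,N}-U^i,M^{i,N}-M^i)\|^2_{\star,\hat\beta,\mathbb{F}^{1,\dots,N},A^i,\overline{X}^i}
&\leq (26+9\hat\beta\Phi)\|\xi^{i,N}-\xi^i\|^2_{\mathbb{L}^2_{\hat\beta}(\mathcal{F}^{1,\dots,N}_T,A^i;\mathbb{R}^d)}\\
&\hspace{-3em}+\max\Big\{2,\tfrac{\Lambda_{\hat\beta}}{\hat\beta}\Big\}M^{\Phi}_\star(\hat\beta)\,\|(Y^{i,N}-Y^i,\dots)\|^2_{\star,\hat\beta,\mathbb{F}^{1,\dots,N},A^i,\overline{X}^i}\\
&\hspace{-3em}+\tfrac{M^\Phi_\star(\hat\beta)}{\hat\beta}\,\mathbb{E}\Big[\int_0^T W^2_{2,\rho_{J_1^d}}\big(L^N(\mathbf{Y}^N|_{[0,s]}),\mathcal{L}(Y^i|_{[0,s]})\big)\,\ud\mathcal{E}(\hat\beta A^i)_s\Big].
\end{align*}
The absorption of the $(Y^{i,N}-Y^i,\dots)$ term on the right-hand side into the left-hand side is justified by \ref{H:prop_contraction}, which leaves a strictly positive coefficient.

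It therefore remains to show that the Wasserstein expectation above vanishes as $N\to\infty$. Applying the triangle inequality exactly as in \eqref{ineq:Wass_triang_prop_system},
\[
W^2_{2,\rho_{J_1^d}}\big(L^N(\mathbf{Y}^N|_{[0,s]}),\mathcal{L}(Y^i|_{[0,s]})\big)\leq 2W^2_{2,\rho_{J_1^d}}\big(L^N(\mathbf{Y}^N|_{[0,s]}),L^N(\widetilde{\mathbf{Y}}^N|_{[0,s]})\big)+2W^2_{2,\rho_{J_1^d}}\big(L^N(\widetilde{\mathbf{Y}}^N|_{[0,s]}),\mathcal{L}(Y^i|_{[0,s]})\big),
\]
and bounding the first summand by \eqref{empiricalineq} and \eqref{skorokineq} produces the average
$\tfrac{1}{N}\sum_{m=1}^N\|Y^{m,N}-Y^m\|^2_{\mathcal{S}^2_{\hat\beta}(\mathbb{F}^{1,\dots,N},A^m;\mathbb{R}^d)}$ (after integrating with respect to $\mathbb{P}\otimes\mathcal{E}(\hat\beta A^i)$ and using $\mathcal{E}(\hat\beta A^i)_T\leq\Lambda_{\hat\beta}$), which tends to $0$ by \cref{thm:prop_chaos_avrg}. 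For the second summand, we repeat verbatim the argument from the proof of \cref{thm:prop_chaos_avrg}: using \ref{H_determQ}--\ref{H:Lambda_bound} we apply the dominated convergence theorem with dominating function $\gamma$, and the pointwise convergence
\[
W^2_{2,\rho_{J_1^d}}\big(L^N(\widetilde{\mathbf{Y}}^N|_{[0,s]}),\mathcal{L}(Y^i|_{[0,s]})\big)\xrightarrow[N\to\infty]{}0,\quad \mathbb{P}\text{--a.e.},
\]
follows from \cref{thm:weak_conv} together with the Strong Law of Large Numbers applied to the i.i.d.\ sequence $\{f^{\mathcal{L}(Y^i|_{[0,s]})}_k(Y^m|_{[0,s]})\}_{m\in\mathbb{N}}$ (recall \cref{rem:comments_H_Cond}.\ref{rem:comments_Y_ident_distr}). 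Combined with $\|\xi^{i,N}-\xi^i\|\to 0$ from \ref{H2}, this yields \eqref{eq5.1}.

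The one delicate point is the handling of the first Wasserstein summand: a naive bound in terms of a single coordinate $\|Y^{i,N}-Y^i\|^2$ is not available, and one really needs the average estimate. This is precisely why \cref{thm:prop_chaos_avrg} must be proved first and is invoked here as the main input; without it, the argument would be circular.
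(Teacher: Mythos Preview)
Your proof is correct and follows essentially the same approach as the paper: apply the \textit{a priori} estimates to the single coordinate, absorb the self-referential term via \ref{H:prop_contraction}, split the Wasserstein term by the triangle inequality, invoke \cref{thm:prop_chaos_avrg} for the first summand, and repeat the SLLN/dominated-convergence argument for the second. The only cosmetic difference is that the paper carries out the triangle-inequality split on the Wasserstein term before displaying the final a priori bound, whereas you display the bound with $L^N(\mathbf{Y}^N)$ first and split afterwards; the resulting estimates and coefficients coincide.
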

\begin{proof}
We work for $N$ large enough such that $i \leq N$. 
The arguments presented in \cref{thm:prop_chaos_avrg} can be followed almost verbatim in order to conclude. 
We provide the sketch of the proof for the convenience of the reader.

We derive \eqref{system:prop_syst} and define $\psi^i$ as in \cref{thm:prop_chaos_avrg}.
The upper bound of \eqref{ineq:first_summand_prop_syst} worsens as follows
\begin{align*}
&\mathbb{E}\left[\int_{0}^{T}
W_{2,\rho_{J_1^{d}}}^2
    \big(L^N(\textbf{Y}^{N}|_{[0,s]}),
    L^N({\widetilde{\textbf{Y}}}^{N}|_{[0,s]}) \big)
\ud \mathcal{E}(\hat{\beta}A^i)_s\right]\\
&\hspace{2em}\leq \Lambda_{\hat{\beta}} \hspace{0.1cm} \frac{1}{N} \sum_{m = 1}^{N}\|Y^{m,N} - Y^{m}\|^{2}_{\mathcal{S}^{2}_{\hat{\beta}}(\mathbb{F}^{1,\dots,N},A^{\overline{X}^m};\mathbb{R}^d)}\\
&\hspace{2em}\leq \Lambda_{\hat{\beta}} \hspace{0.1cm} \frac{1}{N} \sum_{m = 1}^{N}\|\left(Y^{i,N} - Y^{i},Z^{i,N} - Z^{i},U^{i,N} - U^{i},M^{i,N} - M^i\right) \|^{2}_{\star,\hat{\beta},\mathbb{F}^{1,\dots,N},A^{i},\overline{X}^i}. 
\end{align*}    
Then, using the \textit{a priori} estimates from \cref{lem:a_priori_estimates} for the system \eqref{system:prop_syst}
 in conjunction with the comments above, one gets for the $i-$element of the solution of the system
\begin{align*}
&\|\left(Y^{i,N} - Y^{i},Z^{i,N} - Z^{i},U^{i,N} - U^{i},M^{i,N} - M^i\right) \|^{2}_{\star,\hat{\beta},\mathbb{F}^{1,\dots,N},A^{i},\overline{X}^i}\\
&\leq (26 + 9 \hat{\beta} \Phi) \|\xi^{i,N} - \xi^i\|^2_{\mathbb{L}^2_{\hat{\beta}}(\mathcal{F}^{1,\dots,N}_T, A^{i};\mathbb{R}^d)}\\
&\hspace{0.5cm} + \hspace{0.1cm }\max\left\{2,\frac{\Lambda_{\hat{\beta}}}{\hat{\beta}}\right\} M^{\Phi}_{\star}(\hat{\beta}) \hspace{0.2cm}  \|\left(Y^{i,N} - Y^{i},Z^{i,N} - Z^{i},U^{i,N} - U^{i},M^{i,N} - M^i\right) \|^{2}_{\star,\hat{\beta},\mathbb{F}^{1,\dots,N},A^{i},\overline{X}^i} \hspace{0.5cm}\\
&\hspace{0.5cm}+  \frac{2 \Lambda_{\hat{\beta}}}{\hat{\beta}}M^{\Phi}_{\star}(\hat{\beta}) \hspace{0.1cm}\frac{1}{N} \sum_{i = 1}^{N} \|\left(Y^{i,N} - Y^{i},Z^{i,N} - Z^{i},U^{i,N} - U^{i},M^{i,N} - M^i\right) \|^{2}_{\star,\hat{\beta},\mathbb{F}^{1,\dots,N},A^{i},\overline{X}^i}\\
&\hspace{0.5cm}+ \hspace{0.1cm} 2 M^{\Phi}_{\star}(\hat{\beta}) \frac{1}{\hat{\beta}} \hspace{0.2cm} \mathbb{E}\left[\int_{0}^{T} W_{2,\rho_{J_1^{d}}}^2\left(L^N\left({\widetilde{\textbf{Y}}}^{N}|_{[0,s]}\right),\mathcal{L}\left( Y^{i}|_{[0,s]}\right)\right)\, \ud \mathcal{E}\left(\hat{\beta} A^{i}\right)_{s} \right].\end{align*}

In view of \ref{H:prop_contraction} we get
\begin{align*}
&\|\left(Y^{i,N} - Y^{i},Z^{i,N} - Z^{i},U^{i,N} - U^{i},M^{i,N} - M^i\right) \|^{2}_{\star,\hat{\beta},\mathbb{F}^{1,\dots,N},A^{i},\overline{X}^i}\\
&
\begin{multlined}
\leq \frac{(26 + 9 \hat{\beta} \Phi)\hat{\beta}}{2M^{\Phi}_{\star}(\hat{\beta})} C_{\star,\Phi,\hat{\beta}} \hspace{0.2cm} \|\xi^{i,N} - \xi^i\|^2_{\mathbb{L}^2_{\hat{\beta}}(\mathcal{F}^{1,\dots,N}_T, A^{i};\mathbb{R}^d)} \\
+\Lambda_{\hat{\beta}} C_{\star,\Phi,\hat{\beta}} \hspace{0.1cm}\frac{1}{N} \sum_{i = 1}^{N} \|\left(Y^{i,N} - Y^{i},Z^{i,N} - Z^{i},U^{i,N} - U^{i},M^{i,N} - M^i\right) \|^{2}_{\star,\hat{\beta},\mathbb{F}^{1,\dots,N},A^{i},\overline{X}^i}\\
+C_{\star,\Phi,\hat{\beta}}
\hspace{0.2cm}\mathbb{E}\left[\int_{0}^{T} W_{2,\rho_{J_1^{d}}}^2\left(L^N\left({\widetilde{\textbf{Y}}}^{N}|_{[0,s]}\right),\mathcal{L}\left( Y^{i}|_{[0,s]}\right)\right)\, \ud \mathcal{E}\left(\hat{\beta} A^{i}\right)_{s} \right],
\end{multlined}
\end{align*}
for 
\begin{align*}
C_{\star,\Phi,\hat{\beta}}:=
\frac{2 M^{\Phi}_{\star}(\hat{\beta})}{1 - \max\left\{2,\frac{\Lambda_{\hat{\beta}}}{\hat{\beta}}\right\} M^{\Phi}_{\star}(\hat{\beta})}\frac{1}{\hat{\beta}}.
\end{align*}

However, the right-hand side of the last inequality vanishes as $N$ increases to $\infty$.
Indeed, the first term goes to zero from \ref{H2}, the second term is the conclusion of \cref{thm:prop_chaos_avrg}, and for the third term we can follow exactly the same arguments as in the proof of \cref{thm:prop_chaos_avrg}.
\end{proof}



The following are the usual convergence in law statements for the solutions of the mean-field systems to the solutions of the McKean--Vlasov BSDEs in the path-dependent case.

\begin{corollary}\label{5.2}
Assume \ref{H1}--\ref{H:prop_contraction} are in force and let $i\leq N\in\mathbb{N}$.
The solution of the mean-field BSDE \eqref{mfBSDE_with_initial_path}, denoted by $(\textbf{Y}^N,\textbf{Z}^N,\textbf{U}^N,\textbf{M}^N)$, and the solution of the $i-$th McKean--Vlasov BSDE \eqref{MVBSDE_with_initial_path}, denoted by
$(Y^i,Z^i,U^i,M^i)$, 
satisfy
\begin{enumerate}
    \item\label{i''} $\lim_{N \rightarrow \infty} \sup_{s \in [0,T]} \big\{ W_{2,|\cdot|}^2(\mathcal{L}(Y^{i,N}_s),\mathcal{L}(Y^i_s)) \big\} = 0,$
    \vspace{0.4cm}
    \item\label{ii''}  $\lim_{N \rightarrow \infty} \sup_{s \in [0,T]} \big\{ W_{2,\rho_{J_1^{d}}}^{2}(\mathcal{L}(Y^{i,N}|_{[0,s]}),\mathcal{L}(Y^i|_{[0,s]})) \big\} = 0,$
    \vspace{0.4cm}
    \item\label{iii''} $\lim_{N \rightarrow \infty} \sup_{s \in [0,T]} \big\{ W^2_{2,\rho_{J_1^{id}}}\left(\mathcal{L}(Y^{1,N}|_{[0,s]},\dots , Y^{i,N}|_{[0,s]}),\mathcal{L}(Y^1|_{[0,s]}, \dots , Y^i|_{[0,s]})\right) \big\} = 0.$
\end{enumerate}
\end{corollary}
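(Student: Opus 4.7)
The plan is to deduce all three statements from the convergence
\begin{equation*}
    \|Y^{j,N}-Y^{j}\|^{2}_{\mathcal{S}^{2}_{\hat{\beta}}(\mathbb{F}^{1,\dots,N},A^{j};\mathbb{R}^{d})} \xrightarrow[N\to\infty]{} 0,
\end{equation*}
which is contained in \cref{thm:prop_chaos} and holds for every fixed $j\in\mathbb{N}$; since $i$ is fixed, any finite sum over $j\in\{1,\dots,i\}$ also vanishes as $N\to\infty$.

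Two elementary ingredients drive the argument. First, for every pair $(R_{1},R_{2})$ of random variables valued in a Polish metric space $(\mathbb{X},\rho)$, the image of $\mathbb{P}$ under $(R_{1},R_{2})$ is a coupling of $\mathcal{L}(R_{1})$ with $\mathcal{L}(R_{2})$; hence, for every $q\geq 1$,
\begin{equation*}
    \mathcal{W}_{q,\rho}^{q}\bigl(\mathcal{L}(R_{1}),\mathcal{L}(R_{2})\bigr)
    \leq \mathbb{E}\bigl[\rho(R_{1},R_{2})^{q}\bigr].
\end{equation*}
Second, since each $A^{j}$ is non-decreasing with $A^{j}_{0}=0$, \cref{lem:StochExp}\ref{item:StochExp4} yields $\mathcal{E}(\hat{\beta}A^{j})_{t-}\geq\mathcal{E}(\hat{\beta}A^{j})_{0}=1$ for every $t\in[0,T]$, whence the unweighted sup-norm is controlled by the weighted one:
\begin{equation*}
    \mathbb{E}\Bigl[\sup_{t\in[0,T]}|Y^{j,N}_{t}-Y^{j}_{t}|^{2}\Bigr] \leq \|Y^{j,N}-Y^{j}\|^{2}_{\mathcal{S}^{2}_{\hat{\beta}}(\mathbb{F}^{1,\dots,N},A^{j};\mathbb{R}^{d})}.
\end{equation*}

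To prove \ref{i''} I would couple via $(Y^{i,N}_{s},Y^{i}_{s})$, bound the Wasserstein distance at time $s$ by $\mathbb{E}[|Y^{i,N}_{s}-Y^{i}_{s}|^{2}]$, interchange $\sup_{s\in[0,T]}$ with $\mathbb{E}$, and invoke the second display above. For \ref{ii''}, the same coupling in $\mathbb{D}^{d}$ combined with \eqref{skorokineq} gives
\begin{equation*}
    \mathcal{W}^{2}_{2,\rho_{J_{1}^{d}}}\bigl(\mathcal{L}(Y^{i,N}|_{[0,s]}),\mathcal{L}(Y^{i}|_{[0,s]})\bigr)
    \leq \mathbb{E}\Bigl[\sup_{r\in[0,s]}|Y^{i,N}_{r}-Y^{i}_{r}|^{2}\Bigr],
\end{equation*}
and after taking $\sup_{s\in[0,T]}$ the right-hand side is once again dominated by $\|Y^{i,N}-Y^{i}\|^{2}_{\mathcal{S}^{2}_{\hat{\beta}}}$, which tends to zero by \cref{thm:prop_chaos}.

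For \ref{iii''}, I would treat $(Y^{1,N}|_{[0,s]},\dots,Y^{i,N}|_{[0,s]})$ and $(Y^{1}|_{[0,s]},\dots,Y^{i}|_{[0,s]})$ as $\mathbb{D}^{id}$-valued random variables and couple them through their joint law. The analogue of \eqref{skorokineq} on $\mathbb{D}^{id}$, combined with the elementary bound $|v|^{2}\leq\sum_{j=1}^{i}|v_{j}|^{2}$ for $v=(v_{1},\dots,v_{i})\in(\mathbb{R}^{d})^{i}$, yields
\begin{equation*}
    \mathcal{W}^{2}_{2,\rho_{J_{1}^{id}}}\!\bigl(\mathcal{L}(Y^{1,N}|_{[0,s]},\dots,Y^{i,N}|_{[0,s]}),\mathcal{L}(Y^{1}|_{[0,s]},\dots,Y^{i}|_{[0,s]})\bigr)
    \leq \sum_{j=1}^{i}\mathbb{E}\Bigl[\sup_{r\in[0,T]}|Y^{j,N}_{r}-Y^{j}_{r}|^{2}\Bigr].
\end{equation*}
Taking $\sup_{s\in[0,T]}$ on the left, applying the $\mathcal{S}^{2}_{\hat{\beta}}$-bound to each summand on the right, and letting $N\to\infty$, each of the finitely many terms vanishes by \cref{thm:prop_chaos}, concluding the argument. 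No genuine obstacle is foreseen; the one point that deserves explicit verification is the removal of the stochastic-exponential weight, which rests on $\mathcal{E}(\hat{\beta}A^{j})_{t-}\geq 1$ for each $A^{j}$ non-decreasing starting at zero.
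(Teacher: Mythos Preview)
Your proposal is correct and follows essentially the same route as the paper: couple through the joint law to bound each Wasserstein distance by the corresponding $\mathbb{E}[\sup_{r}|Y^{j,N}_r-Y^{j}_r|^{2}]$, then invoke \cref{thm:prop_chaos}. Your explicit verification that $\mathcal{E}(\hat{\beta}A^{j})_{t-}\geq 1$ is even a small improvement in clarity---the paper writes an equality between the unweighted sup-norm and the $\mathcal{S}^{2}_{\hat{\beta}}$-norm where an inequality (in the direction you need) is what actually holds.
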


\begin{proof}
Let $s \in [0,T]$ then, by the definition of the Wasserstein distance of order two we have that 
\begin{align*}
W_{2,|\cdot|}^2(\mathcal{L}(Y^{i,N}_s),\mathcal{L}(Y^i_s)) &\leq \int_{\mathbb{R}^d \times \mathbb{R}^d}|x - z|^2 \pi(\ud x,\ud z) = \mathbb{E}\left[|Y^{i,N}_s - Y^i_s|^2\right]\\
&\leq \mathbb{E}\left[\sup_{0 \leq s \leq T}\left\{|Y^{i,N}_s - Y^{i}_s|^2\right\}\right] = \|Y^{i,N} - Y^{i}\|^2_{\mathcal{S}^{2}_{\hat{\beta}}(\mathbb{F}^{1,\dots,N},A^{\overline{X}^i};\mathbb{R}^d)},
\end{align*}
where we chose $\pi$ to be the image measure on $\mathbb{R}^{2d}$ produced by the measurable function $(Y^{i,N}_t,Y^i_t): \Omega \rightarrow \mathbb{R}^{2d}$.
The right hand side of the above inequality is independent of $s$, hence from \cref{thm:prop_chaos} we finished the proof of the first statement.

As for the second statement, using \eqref{skorokineq}, we have similarly that 
\begin{align*}
W_{2,\rho_{J_1^d}}^2(\mathcal{L}(Y^{i,N}|_{[0,s]}),\mathcal{L}(Y^i|_{[0,s]})) &\leq \int_{\mathbb{D}^d \times \mathbb{D}^d}\rho_{J_1^d}(x,z)^2 \pi(\ud x,\ud z) \leq \mathbb{E}\left[\sup_{0 \leq s \leq T}\left\{|Y^{i,N}_s - Y^{i}_s|^2\right\}\right]\\
&= \|Y^{i,N} - Y^{i}\|^2_{\mathcal{S}^{2}_{\hat{\beta}}(\mathbb{F}^{1,\dots,N},A^{\overline{X}^i};\mathbb{R}^d)}.
\end{align*}

Finally, for the last statement, we have
\begin{multline*}
W^2_{2,\rho_{J_1^{id}}}\left(\mathcal{L}(Y^{1,N}|_{[0,s]},\dots , Y^{i,N}|_{[0,s]}),\mathcal{L}(Y^1|_{[0,s]}, \dots , Y^i|_{[0,s]})\right) \leq \int_{\mathbb{D}^{id} \times \mathbb{D}^{id}}\rho_{J_1^{id}}(x,z)^2\pi(\ud x,\ud z)\\
\leq \int_{\mathbb{D}^{id} \times \mathbb{D}^{id}}\sum_{m = 1}^{i}\sup_{s \in [0,T]}|x_m - z_m|^2 \pi(\ud x,\ud z),
\end{multline*}
where we chose $\pi$ to be the image measure produced by the measurable function $h: \Omega {}\rightarrow \mathbb{D}^{2id}$ with $$h := (Y^{1,N}|_{[0,s]}, \dots , Y^{i,N}|_{[0,s]},Y^1|_{[0,s]}, \dots , Y^i|_{[0,s]}).$$ 
Hence, we have
\[ W^2_{2,\rho_{J_1^{id}}}\left(\mathcal{L}(Y^{1,N}|_{[0,s]},\dots , Y^{i,N}|_{[0,s]}),\mathcal{L}(Y^1|_{[0,s]}, \dots , Y^i|_{[0,s]})\right) \leq \sum_{m = 1}^{i}\|Y^{m,N} - Y^m\|^{2}_{\mathcal{S}^{2}_{\hat{\beta}}(\mathbb{F}^{1,\dots,N},A^{\overline{X}^m};\mathbb{R}^d)},\] 
and using \cref{thm:prop_chaos} again, we get the uniform convergence.
\end{proof}


\section{Backward propagation of chaos and convergence rates in the classical setting}
\label{6}

In this section, we revisit the classical setting for BSDEs, where the solution depends only on the instantaneous value of the process $Y$, and discuss the propagation of chaos as well as the corresponding rates of convergence in this setting.
More specifically, we can prove propagation of chaos results analogous to Theorems \ref{thm:prop_chaos_avrg} and \ref{thm:prop_chaos} under conditions simpler than those considered in the previous section.
Then, we derive convergence rates for the propagation of chaos in our setting, utilizing the celebrated work of \citet{fournier2015rate} on the convergence rates of the empirical measure with respect to the Wasserstein distance.


\subsection{Main results}

We are interested now in the asymptotic behaviour of the mean-field system of BSDEs 
\begin{align*}
\begin{multlined}[0.9\textwidth]
Y^{i,N}_t = \xi^{i,N} + \int^{T}_{t}f \left(s,Y^{i,N}_s,Z^{i,N}_s  c^i_s,\Gamma^{(\mathbb{F}^{1,\dots,N},\overline{X}^i,\Theta^i)}(U^{i,N})_s,L^N(\textbf{Y}^N_s) \right) \,\ud C^{\overline{X}^i}_s\\ 
- \int^{T}_{t}Z^{i,N}_s \,  \ud X^{i,\circ}_s - \int^{T}_{t}\int_{\mathbb{R}^d}U^{i,N}_s(x) \, \widetilde{\mu}^{(\mathbb{F}^{1,\dots,N},X^{i,\natural})}(\ud s,\ud x) - \int^{T}_{t} \,\ud M^{i,N}_s,  
\quad    i\in\mathscr{N},
\end{multlined}
\tag{\ref{mfBSDE_instantaneous}}
\end{align*}
in other words, when the generator depends only on the instantaneous value of the solution process $Y$ and not on the initial segment of the path up to that time. 
Naturally, we will also need the corresponding McKean--Vlasov BSDE, \textit{i.e.}
\begin{align*}
\begin{multlined}[0.9\textwidth]
Y_t = \xi + \int^{T}_{t}f\left(s,Y_s,Z_s 
c^{(\mathbb{G},\overline{X})}_s,\Gamma^{(\mathbb{G},\overline{X},\Theta)}(U)_s,\mathcal{L}(Y_s)\right) \, \ud C^{(\mathbb{G},\overline{X})}_s\\
- \int^{T}_{t}Z_s \,  \ud X^\circ - \int^{T}_{t}\int_{\mathbb{R}^d}U_s \, \widetilde{\mu}^{(\mathbb{G},X^{\natural})}(\ud s,\ud x) - \int^{T}_{t} \,\ud M_s. 
\end{multlined}
\tag{\ref{MVBSDE_instantaneous}}
\end{align*}
In order to proceed, we first need to make some reformulation of assumptions \ref{H1}--\ref{H:prop_contraction}. 
More specifically, \ref{H1}-\ref{H3} remain the same, as are \ref{H5} and \ref{H6}. 
Next, we provide the modification of the remaining assumptions. 

\begin{enumerate}[label=\textup{\textbf{(PC\arabic*${}^{\prime}$)}}]
\setcounter{enumi}{3}
\item\label{H4prime}A generator $f: \mathbb{R}_+ \times \mathbb{R}^d \times \mathbb{R}^{d \times p} \times \mathbb{R}^d \times \mathscr{P}_2(\mathbb{R}^d) {}\longrightarrow \mathbb{R}^d$ such that for any $(y,z,u,\mu) \in \mathbb{R}^d \times \mathbb{R}^{d \times p} \times \mathbb{R}^d \times \mathscr{P}_2(\mathbb{R}^d)$, the map 
    \begin{align*}
        t \longmapsto f(t,y,z,u,\mu) \hspace{0.2cm}\text{is}\hspace{0.2cm}\mathcal{B}(\mathbb{R}_+)\text{--measurable}
    \end{align*}
    and satisfies the following Lipschitz condition 
    \begin{align*}
    \begin{multlined}[0.9\textwidth]
    |f(t,y,z,u,\mu) - f(t,y',z',u',\mu')|^2\\
    \leq \hspace{0.1cm} r(t) \hspace{0.1cm} |y - y'|^2+ \hspace{0.1cm} \vartheta^o(t) \hspace{0.1cm} |z - z'|^2 
     + \hspace{0.1cm} \vartheta^{\natural}(t) \hspace{0.1cm} |u - u'|^2 + \vartheta^*(t) \hspace{0.1cm} W^2_{2,|\cdot|}\left(\mu,\mu'\right),
    \end{multlined}
    \end{align*}
     where  $
         (r,\vartheta^o,\vartheta^{\natural},\vartheta^*): \left(\mathbb{R}_+, \mathcal{B}(\mathbb{R}_+)\right) \longrightarrow \left(\mathbb{R}^4_+,\mathcal{B}\left(\mathbb{R}^4_+\right)\right).
     $
\setcounter{enumi}{6}

\item\label{H7prime} The martingale $\overline{X}^1$ has independent increments.
\item\label{H8prime}For the same $\hat{\beta}$ as in \ref{H2} we have $3 \hspace{0.1cm}\widetilde{M}^\Phi(\hat{\beta}) < 1$.

\end{enumerate}
\begin{remark}\label{rem:cond_H_prime}
\begin{enumerate}
    \item \label{rem:cond_H_prime_1}
The independence of the increments of the martingale $\overline{X}^1$ is equivalent to its associated triplet being deterministic, see \citet[Corollary 7.87]{medvegyev2007stochastic} or \citet[Theorem II.4.15]{jacod2013limit}. 
As a result, recalling the notational simplification for which we argued in 
\cref{rem:comments_H_Cond}.\ref{rem:comments_H_Cond2} and which we will use hereinafter, the process $A^1$ is deterministic.
Indeed, this is immediate by the way we have constructed $C^i$; see \eqref{def_C}. 
In view of \ref{H1}, in particular the fact that we assumed the sequence $\{ \overline{X}^i\}_{i\in\mathbb{N}}$ to be identically distributed, we have that $A^1=A^i$ for every $i\geq 2$, see \cref{rem:about_G6}.
    \item\label{rem:cond_H_prime_2}
Under the set of assumptions \ref{H1}-\ref{H3}, \ref{H4prime}, \ref{H5}, \ref{H6}, \ref{H7prime} and \ref{H8prime} we can verify that \cref{thm:mfBSDE_instantaneous} guarantees the existence of a unique solution for the mean-field BSDE system \eqref{mfBSDE_instantaneous}. 
Additionally, \cref{thm:MVBSDE_instantaneous_second} guarantees the existence of a unique solution of the McKean--Vlasov BSDE \eqref{MVBSDE_instantaneous}.
We will use the same notation as in the previous section for the respective solutions, \emph{i.e.},
for fixed $N\in\mathbb{N}$, $(\textbf{Y}^N,\textbf{Z}^N,\textbf{U}^N,\textbf{M}^N)$ denotes the solution associated to the mean-field BSDE system \eqref{mfBSDE_instantaneous}.
Also, for $i\in\mathbb{N}$, we will call the $i-$th McKean--Vlasov BSDE \eqref{MVBSDE_instantaneous} the one that corresponds to the standard data $\big(\overline{X}^i,\mathbb{F}^i,\Theta,\Gamma,T,\xi^i,f \big)$ under $\hat{\beta}$.
Additionally, we will call the first $N$ McKean--Vlasov BSDEs \eqref{MVBSDE_instantaneous} those that correspond to the set of standard data 
$\big\{\big(\overline{X}^i,\mathbb{F}^i,\Theta,\Gamma,T,\xi^i,f \big)\big\}_{i\in\mathscr{N}}$ under $\hat{\beta}$ with associated solution 
$(\widetilde{\textbf{Y}}^N,
\widetilde{\textbf{Z}}^N,
\widetilde{\textbf{U}}^N,
\widetilde{\textbf{M}}^N)$.


\end{enumerate}
\end{remark}

We are now ready to proceed with the proofs of the propagation of chaos results. 
The method described in \cref{sec:propagation}  in order to prove the backward propagation of chaos in the general setting is transferred \textit{mutatis mutandis} in the present setting.
To this end, we will provide only a sketch of the proof, similarly to \cref{thm:prop_chaos}. 
Before that we introduce the following notation.
Let 
\begin{align*}
    \left(Y,Z,U,M\right) \in  
\mathscr{S}^{2}_{\hat{\beta}}(\mathbb{G},\alpha,C^{(\mathbb{G},\overline{X})};\mathbb{R}^d) \times 
\mathbb{H}^{2}_{\beta}(\mathbb{G},A,X^\circ;\mathbb{R}^{d\times p}) \times \mathbb{H}^{2}_{\beta}(\mathbb{G},A,X^\natural;\mathbb{R}^d) \times  \mathcal{H}^{2}_{\beta}(\mathbb{G},A,\overline{X}^{\perp_{\mathbb{G}}};\mathbb{R}^d).
\end{align*}
then we define (see \eqref{eq 4.5})
\begin{multline*}  
    \|\left(Y,Z,U,M\right) \|^{2}_{\star,\beta,\mathbb{G},\alpha,C^{(\mathbb{G},\overline{X})},\overline{X}} \\
    :=  \| Y\|^2_{\mathscr{S}^{2}_{\hat{\beta}}(\mathbb{G},\alpha,C^{(\mathbb{G},\overline{X})};\mathbb{R}^d)} + \|Z\|^{2}_{\mathbb{H}^{2}_{\beta}(\mathbb{G},A,X^\circ;\mathbb{R}^{d\times p})} + 
    \|U\|^{2}_{\mathbb{H}^{2}_{\beta}(\mathbb{G},A,X^\natural;\mathbb{R}^d)} +  \|M\|^{2}_{\mathcal{H}^{2}_{\beta}(\mathbb{G},A,\overline{X}^{\perp_{\mathbb{G}}};\mathbb{R}^d)}.
\end{multline*}

\begin{theorem}[\textbf{Propagation of chaos for the system}]\label{thm:prop_system_instant}
Assume \ref{H1}-\ref{H3}, \ref{H4prime}, \ref{H5}, \ref{H6}, \ref{H7prime} and \ref{H8prime} are in force. 
The solution of the mean-field BSDE \eqref{mfBSDE_instantaneous}, denoted by $(\textbf{Y}^N,\textbf{Z}^N,\textbf{U}^N,\textbf{M}^N)$, and the solutions of the first $N$ McKean--Vlasov BSDEs \eqref{MVBSDE_instantaneous}, denoted by $(\widetilde{\textbf{Y}}^N, \widetilde{\textbf{Z}}^N, \widetilde{\textbf{U}}^N, \widetilde{\textbf{M}}^N)$,
satisfy
\begin{align}
   \lim_{N \rightarrow \infty} \frac{1}{N} \sum_{i = 1}^{N} \big\| \left(Y^{i,N} - Y^{i},Z^{i,N} - Z^{i},U^{i,N} - U^{i},M^{i,N} - M^i\right) \big\|^{2}_{\star,\hat{\beta},\mathbb{F}^{1,\dots,N},\alpha,C^{\overline{X}^i},\overline{X}^i} = 0.
\end{align}
\end{theorem}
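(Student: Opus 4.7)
The plan is to adapt the strategy of the proof of \cref{thm:prop_chaos_avrg} essentially verbatim, taking advantage of three simplifications that arise in the instantaneous-value setting: first, the generator depends on $Y_s$ instead of $Y|_{[0,s]}$, so the Lipschitz property of \ref{H4prime} produces $|Y^{i,N}_s-Y^i_s|^2$ rather than $\sup_{z\leq s}|Y^{i,N}_z-Y^i_z|^2$, which allows the constant $\widetilde{M}^\Phi(\hat\beta)$ from \cref{lem:a_priori_estimates} and \cref{prop:infima_for_M} to replace $M^\Phi_\star(\hat\beta)$ and the $\mathbb{H}^2_{\hat\beta}$-norm of $\alpha$ times the difference to absorb what was previously a running-supremum term; second, the Wasserstein distance is computed on $\mathbb{R}^d$ with the Euclidean metric, so inequality~\eqref{empiricalineq} directly yields $\tfrac{1}{N}\sum_m|Y^{m,N}_s-Y^m_s|^2$; and third, \ref{H7prime} combined with \cref{independent increments} makes $C^1=C^i$ (and therefore $A^1=A^i$) deterministic for every $i$, which enables Tonelli's theorem and bypasses the machinery of a dominating function $\gamma$, a common $Q$, and the uniform $\Lambda_{\hat\beta}$-bound required in the general setting.

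Concretely, after invoking \cref{lem:conserv_laws} to transfer each McKean--Vlasov solution $(Y^i,Z^i,U^i,M^i)$ to the enlarged filtration $\mathbb{F}^{1,\ldots,N}$, one subtracts the $i$-th McKean--Vlasov BSDE from the $i$-th mean-field equation to obtain a BSDE for the differences $(\delta Y^i,\delta Z^i,\delta U^i,\delta M^i):=(Y^{i,N}-Y^i,Z^{i,N}-Z^i,U^{i,N}-U^i,M^{i,N}-M^i)$, with terminal condition $\xi^{i,N}-\xi^i$ and driver $\psi^i$ obtained as the difference of the two generators. Combining \ref{H4prime}, \cref{lem:Gamma_is_Lipschitz}, the identities \eqref{comp_norm_altern_circ}--\eqref{comp_norm_altern_natural}, the triangle inequality
\[
W_{2,|\cdot|}^2\bigl(L^N(\textbf{Y}^N_s),\mathcal{L}(Y^i_s)\bigr)\leq 2\,W_{2,|\cdot|}^2\bigl(L^N(\textbf{Y}^N_s),L^N(\widetilde{\textbf{Y}}^N_s)\bigr)+2\,W_{2,|\cdot|}^2\bigl(L^N(\widetilde{\textbf{Y}}^N_s),\mathcal{L}(Y^i_s)\bigr),
\]
the empirical inequality~\eqref{empiricalineq}, and Tonelli's theorem (using that $A^i$ is deterministic) yields, after summation over $i$,
\[
\sum_{i=1}^N\Bigl\|\frac{\psi^i}{\alpha}\Bigr\|^2_{\mathbb{H}^2_{\hat\beta}(\mathbb{F}^{1,\ldots,N},A^i,C^i;\mathbb{R}^d)}\leq 3\sum_{i=1}^N\|(\delta Y^i,\delta Z^i,\delta U^i,\delta M^i)\|^2_{\star,\hat\beta,\mathbb{F}^{1,\ldots,N},\alpha,C^i,\overline{X}^i}+\frac{2}{\hat\beta}\sum_{i=1}^N\mathcal{R}^i_N,
\]
where $\mathcal{R}^i_N:=\mathbb{E}\bigl[\int_0^T W_{2,|\cdot|}^2\bigl(L^N(\widetilde{\textbf{Y}}^N_s),\mathcal{L}(Y^i_s)\bigr)\,\ud\mathcal{E}(\hat\beta A^i)_s\bigr]$. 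Substituting this into the combined a priori estimate of \cref{lem:a_priori_estimates} with constant $\widetilde{M}^\Phi(\hat\beta)$, using \ref{H8prime} (i.e.\ $3\widetilde{M}^\Phi(\hat\beta)<1$) to absorb the matching term on the right-hand side, dividing by $N$, and invoking \ref{H2}, the problem reduces to showing that $\tfrac{1}{N}\sum_{i=1}^N\mathcal{R}^i_N\to 0$ as $N\to\infty$.

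Because $\{Y^i\}_{i\in\mathbb{N}}$ are i.i.d.\ under $\mathbb{F}^{1,\ldots,N}$ (\cref{rem:comments_H_Cond}.\ref{rem:comments_Y_ident_distr}) and $A^i=A^1$ is deterministic, one has the identity $\tfrac{1}{N}\sum_{i=1}^N\mathcal{R}^i_N=\int_0^T\mathbb{E}\bigl[W_{2,|\cdot|}^2(L^N(\widetilde{\textbf{Y}}^N_s),\mathcal{L}(Y^1_s))\bigr]\,\ud\mathcal{E}(\hat\beta A^1)_s$. The Strong Law of Large Numbers (in combination with an argument analogous to the one closing the proof of \cref{thm:prop_chaos_avrg}) gives $L^N(\widetilde{\textbf{Y}}^N_s)\to\mathcal{L}(Y^1_s)$ weakly $\mathbb{P}$-a.s., and a second application of the SLLN to the second moments upgrades the convergence to $\mathscr{P}_2(\mathbb{R}^d)$, whence $W_{2,|\cdot|}(L^N(\widetilde{\textbf{Y}}^N_s),\mathcal{L}(Y^1_s))\to 0$ a.s.\ for every $s\in[0,T]$. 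The main obstacle is the concluding dominated-convergence step: unlike the path-dependent case where $\rho_{J_1^d}\leq 1$ provided a free bound, the metric $|\cdot|$ on $\mathbb{R}^d$ is unbounded, so an integrable majorant must be exhibited; this is supplied by the crude estimate $W_{2,|\cdot|}^2(L^N(\widetilde{\textbf{Y}}^N_s),\mathcal{L}(Y^1_s))\leq\tfrac{2}{N}\sum_m|Y^m_s|^2+2\mathbb{E}[|Y^1_s|^2]$, whose expectation is bounded by $4\|Y^1\|^2_{\mathcal{S}^2_{\hat\beta}}$ uniformly in $s$ and $N$ (since $\mathcal{E}(\hat\beta A^1)_{s-}\geq 1$), together with the finite total mass of $\ud\mathcal{E}(\hat\beta A^1)_s$ on $[0,T]$ (a consequence of \ref{H5}--\ref{H6} and of $A^1$ being deterministic). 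Two successive applications of dominated convergence---first on $\Omega$ and then on $[0,T]$ against $\ud\mathcal{E}(\hat\beta A^1)_s$---then close the argument.
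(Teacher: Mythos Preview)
Your overall architecture matches the paper's exactly: subtract the two BSDEs, bound $\|\psi^i/\alpha\|^2_{\mathbb{H}^2_{\hat\beta}}$ via \ref{H4prime}, \cref{lem:Gamma_is_Lipschitz}, the Wasserstein triangle inequality and \eqref{empiricalineq}, sum over $i$, apply \cref{lem:a_priori_estimates} with constant $\widetilde{M}^\Phi(\hat\beta)$, absorb via \ref{H8prime}, and reduce to proving that
\[
\int_0^T \mathbb{E}\Bigl[W_{2,|\cdot|}^2\bigl(L^N(\widetilde{\textbf{Y}}^N_s),\mathcal{L}(Y^1_s)\bigr)\Bigr]\,\ud\mathcal{E}(\hat\beta A^1)_s \xrightarrow[N\to\infty]{}0.
\]
Your upgrade of the SLLN from weak convergence to $\mathscr{P}_2$-convergence via a second SLLN on the second moments is also what the paper does (invoking \cite[Theorem~6.9]{villani2009optimal}).

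The gap is in the ``two successive applications of dominated convergence''. For the \emph{inner} passage on $\Omega$ (from $W_{2,|\cdot|}^2\to 0$ a.s.\ to $\mathbb{E}[W_{2,|\cdot|}^2]\to 0$), your majorant $\tfrac{2}{N}\sum_{m}|Y^m_s|^2+2\mathbb{E}[|Y^1_s|^2]$ depends on $N$; it is not a fixed dominating function, and the fact that its expectation is bounded by $4\mathbb{E}[|Y^1_s|^2]$ uniformly in $N$ gives boundedness in $L^1$, not convergence. The paper closes this step differently: it shows that the sequence $\{W_{2,|\cdot|}^2(L^N(\widetilde{\textbf{Y}}^N_s),\mathcal{L}(Y^1_s))\}_{N}$ is \emph{uniformly integrable}, by observing that with $\mathcal{G}_N:=\sigma(S_N,S_{N+1},\ldots)$, $S_N:=\sum_{m\le N}|Y^m_s|^2$, exchangeability gives $\tfrac{1}{N}S_N=\mathbb{E}\bigl[|Y^1_s|^2\mid\mathcal{G}_N\bigr]$, and conditional expectations of a fixed integrable variable form a uniformly integrable family. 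Vitali's theorem then gives the $L^1$ limit.

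A secondary issue is your \emph{outer} dominated convergence: you bound $\mathbb{E}[W_{2,|\cdot|}^2]$ by the constant $4\|Y^1\|^2_{\mathcal{S}^2_{\hat\beta}}$ and then appeal to finite total mass of $\ud\mathcal{E}(\hat\beta A^1)$ on $[0,T]$. The hypotheses \ref{H5}--\ref{H6} do not furnish $\mathcal{E}(\hat\beta A^1)_T<\infty$ in general (in particular nothing excludes $T=\infty$). The paper instead takes the $s$-dependent majorant $4\mathbb{E}[|Y^1_s|^2]$ and uses $\int_0^T\mathbb{E}[|Y^1_s|^2]\,\ud\mathcal{E}(\hat\beta A^1)_s=\hat\beta\,\|\alpha Y^1\|^2_{\mathbb{H}^2_{\hat\beta}}<\infty$, which is guaranteed by $Y^1\in\mathscr{S}^2_{\hat\beta}$ and requires no bound on $\mathcal{E}(\hat\beta A^1)_T$.
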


\begin{proof}
Let $N\in \mathbb{N}$.
The equation analogous to \eqref{system:prop_syst} takes now the form
\begin{align*}
Y^{i,N}_t - Y^{i}_t 
&= \xi^{i,N} - \xi^i + \int^{T}_{t}f\left(s,Y^{i,N}_s,Z^{i,N}_s  c^i,\Gamma^{(\mathbb{F}^{1,\dots,N},\overline{X}^i,\Theta)}(U^{i,N})_s,L^N( \textbf{Y}^{N}_s)\right)\\
&\hspace{2em}- f\left(s,Y^{i}_s,Z^i_s c^i,\Gamma^{(\mathbb{F}^{1,\dots,N},\overline{X}^i,\Theta)}(U^i)_s,\mathcal{L}( Y^{i}_s)\right) \, \ud C^{\overline{X}^i}_s
\numberthis
\label{system:prop_syst_instantaneous}
\\
&\hspace{2em}- \hspace{0.1cm}\int_{t}^{T}\,\ud\left[\left(Z^{i,N} - Z^{i} \right) \cdot X^{i,\circ} + \left(U^{i,N} - U^{i}\right) \star {\widetilde{\mu}}^{(\mathbb{F}^{1,\dots,N},X^{i,\natural})} + M^{i,N} - M^i\right]_{s},
\end{align*}
for $i\in\mathscr{N}.$
Let us now define $\psi:=(\psi^1,\dots,\psi^N)$ where, for every $i \in \mathscr{N}$, we set
\begin{align*}
\begin{multlined}[0.85\textwidth]
    \psi^{i}_t : = f\left(t,Y^{i,N}_t,Z^{i,N}_{t}  c^i_{t},\Gamma^{(\mathbb{F}^{1,\dots,N},\overline{X}^i,\Theta)}(U^{i,N})_t,L^N( \textbf{Y}^{N}_t)\right)\\
    - f\left(t,Y^{i}_t,Z^i_{t}  c^i_{t},\Gamma^{(\mathbb{F}^{1,\dots,N},\overline{X}^i,\Theta)}(U^{i})_t,\mathcal{L}( Y^{i}_t)\right).
\end{multlined}
\end{align*}  
The properties of the generator and the triangular inequality of the Wasserstein distance yield, for every $i \in \mathscr{N}$, that
\begin{multline*}
\left\|\frac{\psi^{i}}{\alpha}\right\|^{2}_{\mathbb{H}^{2}_{\hat{\beta}}(\mathbb{F}^{1,\dots,N},A^{\overline{X}^i},C^{\overline{X}^i};\mathbb{R}^d)} \leq  \|\alpha \left(Y^{i,N} - Y^{i}\right)\|^2_{\mathbb{H}^{2}_{\hat{\beta}}(\mathbb{G},A^{\overline{X}^i},C^{\overline{X}^i};\mathbb{R}^d)}    +  \|Z^{i,N} - Z^{i}\|^{2}_{\mathbb{H}^{2}_{\hat{\beta}}(\mathbb{F}^{1,\dots,N},A^{\overline{X}^i},X^{i,\circ};\mathbb{R}^{d \times p})}\\
+  2 \hspace{0.1cm} \|U^{i,N} - U^{i}\|^{2}_{\mathbb{H}^{2}_{\beta}(\mathbb{F}^{1,\dots,N},A^{\overline{X}^i},X^{i,\natural};\mathbb{R}^d)}
+  \frac{2}{N} \sum_{m = 1}^{N}\|\alpha \left(Y^{m,N} - Y^{m}\right)\|^2_{\mathbb{H}^{2}_{\hat{\beta}}(\mathbb{G},A^{\overline{X}^m},C^m;\mathbb{R}^d)}\\
+  2 \mathbb{E}\left[\int_{0}^{T}\alpha^2_s\mathcal{E}(\hat{\beta} A^i)_{s-} W_{2,|\cdot|}^2\left(L^N\left({\widetilde{\textbf{Y}}}^{N}_s\right),\mathcal{L}\left( Y^{i}_s\right)\right)\, \ud C^{\overline{X}^i}_s \right]. 
\end{multline*}
Using the \textit{a priori} estimates of \cref{lem:a_priori_estimates}, \cref{lem:equalities_integrals} and adding the above relations with respect to $i \in \mathscr{N}$ we have 
\begin{align*}
   &\sum_{i = 1}^{N} \|\left(Y^{i,N} - Y^{i},Z^{i,N} - Z^{i},U^{i,N} - U^{i},M^{i,N} - M^i\right) \|^{2}_{\star,\hat{\beta},\mathbb{F}^{1,\dots,N},\alpha,C^{\overline{X}^i},\overline{X}^i}\\
   &\leq \Big(26 + \frac{2}{\hat{\beta}}+ (9\hat{\beta} + 2)\Phi \Big) \sum_{i = 1}^N \|\xi^{i,N} - \xi^i\|^2_{\mathbb{L}^2_{\hat{\beta}}(\mathcal{F}^{1,\dots,N}_T, A^{i};\mathbb{R}^d)}\\
&\hspace{0.5cm}+ \hspace{0.1cm }3 \widetilde{M}^{\Phi}(\hat{\beta}) \hspace{0.2cm}   \sum_{i = 1}^{N} \|\left(Y^{i,N} - Y^{i},Z^{i,N} - Z^{i},U^{i,N} - U^{i},M^{i,N} - M^i\right) \|^{2}_{\star,\hat{\beta},\mathbb{F}^{1,\dots,N},\alpha,C^{\overline{X}^i},\overline{X}^i}\\
&\hspace{0.5cm}+ \hspace{0.1cm} 2 \widetilde{M}^{\Phi}(\hat{\beta}) \hspace{0.2cm} \sum_{i = 1}^{N} \mathbb{E}\left[\int_{0}^{T}\alpha^2_s \mathcal{E}\left(\hat{\beta} A^{\overline{X}^i}\right)_{s-}W_{2,|\cdot|}^2\left(L^N\left({\widetilde{\textbf{Y}}}^{N}_s\right),\mathcal{L}\left( Y^{i}_s\right)\right)\, \ud C^i_s \right].
\end{align*}

Hence, from \ref{H8prime} we get
\begin{align*}
  &\frac{1}{N} \sum_{i = 1}^{N} \|\left(Y^{i,N} - Y^{i},Z^{i,N} - Z^{i},U^{i,N} - U^{i},M^{i,N} - M^i\right) \|^{2}_{\star,\hat{\beta},\mathbb{F}^{1,\dots,N},\alpha,C^{\overline{X}^i},\overline{X}^i} \\ 
&\leq \frac{\Big(26 + \frac{2}{\hat{\beta}}+ (9\hat{\beta} + 2)\Phi \Big)}{1 - 3\widetilde{M}^{\Phi}(\hat{\beta})} \frac{1}{N} \sum_{i = 1}^N \|\xi^{i,N} - \xi^i\|^2_{\mathbb{L}^2_{\hat{\beta}}(\mathcal{F}^{1,\dots,N}_T, A^{i};\mathbb{R}^d)}\\
&\hspace{0.5cm}+ \frac{2 \widetilde{M}^{\Phi}(\hat{\beta})}{1 - 3 \widetilde{M}^{\Phi}(\hat{\beta})}\hspace{0.2cm}\mathbb{E}\left[\int_{0}^{T}\frac{1}{N} \sum_{i = 1}^{N}\alpha^2_s \mathcal{E}\left(\hat{\beta} A^{\overline{X}^i}\right)_{s-} W_{2,|\cdot|}^2\left(L^N\left({\widetilde{\textbf{Y}}}^{N}_s\right),\mathcal{L}\left( Y^{i}_s\right)\right)\, \ud C^{\overline{X}^i}_s \right].
\end{align*}

Let us now observe that we need to prove that the right-hand side of the above inequality vanishes as $N$ increases to $\infty$.
The result for the first summand follows from Assumption \ref{H2}.
The remainder of this proof is devoted to arguing about the validity of the desired claim for the second summand.
Essentially, we will use again the dominated convergence theorem in conjunction with the Strong Law of Large Numbers.

Using \cref{rem:comments_H_Cond}.\ref{rem:comments_Y_ident_distr}, we have for every $i,j \in \mathbb{N}$ that $\mathcal{L}(Y^i_s) = \mathcal{L}(Y^j_s)$.
Thus, by \cref{rem:cond_H_prime}.\ref{rem:cond_H_prime_1} and Tonelli's theorem we have
\begin{align*}
&\mathbb{E}\left[\int_{0}^{T}\frac{1}{N} \sum_{i = 1}^{N}\alpha^2_s \mathcal{E}\left(\hat{\beta} A^{\overline{X}^i}\right)_{s-} W_{2,|\cdot|}^2\left(L^N\left({\widetilde{\textbf{Y}}}^{N}_s\right),\mathcal{L}\left( Y^{i}_s\right)\right)\, \ud C^{\overline{X}^i}_s \right] = \\
&\hspace{2em}= \frac{1}{\hat{\beta}}
\mathbb{E}\left[\int_{0}^{T} W_{2,|\cdot|}^2\left(L^N\left({\widetilde{\textbf{Y}}}^{N}_s\right),\mathcal{L}\left( Y^{1}_s\right)\right)\, \ud \mathcal{E}\left(\hat{\beta} A^{\overline{X}^1}\right)_{s}\right]\\
&\hspace{2em}= \frac{1}{\hat{\beta}} \int_{0}^{T} \mathbb{E}\left[ W_{2,|\cdot|}^2\left(L^N\left({\widetilde{\textbf{Y}}}^{N}_s\right),\mathcal{L}\left( Y^{1}_s\right)\right)\right]\, \ud \mathcal{E}\left(\hat{\beta} A^{\overline{X}^1}\right)_{s}. 
\end{align*}
We use once again the triangle inequality for the Wasserstein distance as well as \eqref{empiricalineq}, and we get that
\begin{align}\label{ineq_6.4}
W_{2,|\cdot|}^2\left(L^N\left({\widetilde{\textbf{Y}}}^{N}_s\right),\mathcal{L}\left( Y^{1}_s\right)\right) &\leq \hspace{0.1cm}2\hspace{0.1cm} W_{2,|\cdot|}^2\left(L^N\left({\widetilde{\textbf{Y}}}^{N}_s\right),\delta_0\right) + \hspace{0.1cm}2\hspace{0.1cm} W_{2,|\cdot|}^2\left(\delta_0,\mathcal{L}\left( Y^{1}_s\right)\right)\nonumber\\
&\leq \hspace{0.1cm}2\hspace{0.1cm} \frac{1}{N} \sum_{m = 1}^N |Y^m_s|^2 + \hspace{0.1cm}2\hspace{0.1cm} \mathbb{E}\left[|Y^1_s|^2\right].
\end{align}
Moreover, from \cref{rem:comments_H_Cond}.\ref{rem:comments_Y_ident_distr}, for every $s \in [0,T]$, we have
\begin{align*}
    \mathbb{E}\left[2\hspace{0.1cm} \frac{1}{N} \sum_{m = 1}^N |Y^m_s|^2 + \hspace{0.1cm}2\hspace{0.1cm} \mathbb{E}\left[|Y^1_s|^2\right]\right] = 4 \mathbb{E}[|Y^1_s|^2].
\end{align*}
Furthermore, using again Tonelli's theorem we have
\begin{align*}
\int_{0}^{T}\mathbb{E}\left[|Y^1_s|^2\right]\,\ud \mathcal{E}\big(\hat{\beta} A^{\overline{X}^1}\big)_{s} = \hat{\beta}\hspace{0.1cm} \|\alpha  Y^{1}\|^2_{\mathbb{H}^{2}_{\hat{\beta}}(\mathbb{G},A^{\overline{X}^1},C^{\overline{X}^1};\mathbb{R}^d)}.
\end{align*}
Hence, the Borel--measurable function $]0,T] \ni t \longmapsto 4\mathbb{E}\left[ |Y^1_t|^2\right]$ is $\mathcal{E}\big(\hat{\beta} A^{\overline{X}^1}\big)-$integrable and dominates the sequence 
$\left\{\mathbb{E}\left[W_{2,|\cdot|}^2
\Big(L^N\big({\widetilde{\textbf{Y}}}^{N}_{s}\big),\mathcal{L}\big( Y^{1}_{s}\big)\Big) \right]
\mathds{1}_{]0,T]}(s)\right\}_{N \in \mathbb{N}}$.
In order to apply the dominated convergence theorem, we also need to show that
\begin{align*}
    \lim_{N \rightarrow \infty} \mathbb{E}\left[W_{2,|\cdot|}^2
    \Big(
    L^N\big({\widetilde{\textbf{Y}}}^{N}_{s}\big),
    \mathcal{L}\big( Y^{1}_{s}\big)\Big) \right] = 0, \hspace{0.5cm} \text{for all} \hspace{0.2cm} s \in (0,T].
\end{align*}

To this end, let us fix $s \in (0,T]$.
Our new claim is that the sequence $\left\{W_{2,|\cdot|}^2\left(L^N\left({\widetilde{\textbf{Y}}}^{N}_{s}\right),\mathcal{L}\left( Y^{1}_{s}\right)\right)\right\}_{N \in \mathbb{N}}$ is uniformly integrable. 
In view of \eqref{ineq_6.4},  the above claim holds if the sequence $\left\{\frac{1}{N} \sum_{m = 1}^N |Y^m_s|^2 \right\}_{N \in \mathbb{N}}$ is uniformly integrable.
In order to make the presentation easier, let us define the sequence of random variables $\left\{S_N\right\}_{N \in \mathbb{N}}$, where $S_N := \sum_{m = 1}^N |Y^m_s|^2$. 
Let us also define the sequence of $\sigma-$algebras $\{\mathcal{G}_N\}_{N \in \mathbb{N}}$, where $\mathcal{G}_N:= \sigma\left(S_N,S_{N + 1}\dots\right)$. 
Arguing analogously to  \cref{rem:comments_H_Cond}.\ref{rem:comments_Y_ident_distr}, the random variables $\left\{|Y^N_s|^2\right\}_{N \in \mathbb{N}}$ are integrable, independent and identically distributed. 
Using the symmetry under permutation for the family $\left\{|Y^N_s|^2\right\}_{N \in \mathbb{N}}$ we can easily show that for every $N \in \mathbb{N}$
\begin{align*}
\mathbb{E}\left[|Y^1_s|^2\big|\mathcal{G}_N\right] = \dots = \mathbb{E}\left[|Y^N_s|^2\big|\mathcal{G}_N\right].
\end{align*}
By adding the terms of the above equalities, we get
\begin{align*}
   \frac{1}{N} S_N = \mathbb{E}\left[|Y^1_s|^2\big|\mathcal{G}_N\right].
\end{align*}
Hence, the sequence $\left\{W_{2,|\cdot|}^2\left(L^N\left({\widetilde{\textbf{Y}}}^{N}_{s}\right),\mathcal{L}\left( Y^{1}_{s}\right)\right)\right\}_{N \in \mathbb{N}}$ is indeed uniformly integrable. 
Therefore, it suffices to show that
\begin{align*}
    \lim_{N \rightarrow \infty} W_{2,|\cdot|}^2\left(L^N\left({\widetilde{\textbf{Y}}}^{N}_s\right),\mathcal{L}\left( Y^{1}_s\right)\right) = 0, \hspace{0.5cm} \text{for} \hspace{0.2cm}\mathbb{P} -  \text{a.e.}
\end{align*}
Finally, we apply \citet[Definition 6.8 \textit{(i)} and Theorem 6.9]{villani2009optimal}, with $x_0 = 0$, \cref{thm:weak_conv} and \cref{rem:comments_H_Cond}.\ref{rem:comments_Y_ident_distr} to get from the Strong Law of Large Numbers the desired result, as in the conclusion of the proof of \cref{thm:prop_chaos_avrg}.
\end{proof}

\begin{theorem}[\textbf{Propagation of chaos}]\label{6.2}
Assume \ref{H1}-\ref{H3}, \ref{H4prime}, \ref{H5}, \ref{H6}, \ref{H7prime} and \ref{H8prime} are in force, and let $i\leq N \in \mathbb{N}$. 
The solution of the mean-field BSDE \eqref{mfBSDE_instantaneous}, denoted by $(\textbf{Y}^N,\textbf{Z}^N,\textbf{U}^N,\textbf{M}^N)$, and the solutions of the first $N$ McKean--Vlasov BSDEs \eqref{MVBSDE_instantaneous}, denoted by $(\widetilde{\textbf{Y}}^N, \widetilde{\textbf{Z}}^N, \widetilde{\textbf{U}}^N, \widetilde{\textbf{M}}^N)$,
satisfy
\begin{equation}
\lim_{N \rightarrow \infty} \hspace{0.2cm} \|\left(Y^{i,N} - Y^{i},Z^{i,N} - Z^{i},U^{i,N} - U^{i},M^{i,N} - M^i\right) \|^{2}_{\star,\hat{\beta},\mathbb{F}^{1,\dots,N},\alpha,C^{\overline{X}^i},\overline{X}^i} = 0. 
\end{equation}
\end{theorem}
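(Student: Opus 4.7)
The plan is to adapt the proof of Theorem \ref{thm:prop_chaos} from the path-dependent setting to the present classical one, now using Theorem \ref{thm:prop_system_instant} as the key input. Fix $N$ large enough so that $i\le N$. By Lemma \ref{lem:conserv_laws} (together with Remark \ref{rem:comments_H_Cond}.\ref{rem:comments_H_Cond1}), the $i$-th McKean--Vlasov solution remains a solution when lifted from $\mathbb{F}^i$ to $\mathbb{F}^{1,\dots,N}$, and by Remark \ref{rem:comments_H_Cond}.\ref{rem:comments_H_Cond2} the relevant predictable process $C^{\overline{X}^i}$ is unchanged. Subtracting the lifted $i$-th McKean--Vlasov BSDE \eqref{MVBSDE_instantaneous} from the $i$-th equation of the mean-field system \eqref{mfBSDE_instantaneous} gives, exactly as in \eqref{system:prop_syst_instantaneous}, a BSDE for the quadruple $(Y^{i,N}-Y^i, Z^{i,N}-Z^i, U^{i,N}-U^i, M^{i,N}-M^i)$ with terminal value $\xi^{i,N}-\xi^i$ and driver
\[
\psi^i_t := f\bigl(t,Y^{i,N}_t,Z^{i,N}_t c^i_t,\Gamma^{(\mathbb{F}^{1,\dots,N},\overline{X}^i,\Theta)}(U^{i,N})_t,L^N(\textbf{Y}^N_t)\bigr) - f\bigl(t,Y^i_t,Z^i_t c^i_t,\Gamma^{(\mathbb{F}^{1,\dots,N},\overline{X}^i,\Theta)}(U^i)_t,\mathcal{L}(Y^i_t)\bigr).
\]

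I would then apply Lemma \ref{lem:a_priori_estimates} (in the form used for $\widetilde{M}^\Phi$), and estimate $\|\psi^i/\alpha\|^2_{\mathbb{H}^2_{\hat\beta}}$ via assumption \ref{H4prime}, Lemma \ref{lem:Gamma_is_Lipschitz}, and the triangle inequality
\[
W^2_{2,|\cdot|}\!\bigl(L^N(\textbf{Y}^N_s),\mathcal{L}(Y^i_s)\bigr)\leq 2\,W^2_{2,|\cdot|}\!\bigl(L^N(\textbf{Y}^N_s),L^N(\widetilde{\textbf{Y}}^N_s)\bigr)+ 2\,W^2_{2,|\cdot|}\!\bigl(L^N(\widetilde{\textbf{Y}}^N_s),\mathcal{L}(Y^i_s)\bigr),
\]
controlling the first summand via \eqref{empiricalineq}. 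After absorbing the self-referential $\widetilde{M}^\Phi(\hat\beta)$ term using \ref{H8prime}, this produces an inequality of the schematic form
\begin{align*}
\|(Y^{i,N}-Y^i,\ldots,M^{i,N}-M^i)\|^2_{\star,\hat\beta,\mathbb{F}^{1,\dots,N},\alpha,C^{\overline{X}^i},\overline{X}^i}
&\leq C_1\,\|\xi^{i,N}-\xi^i\|^2_{\mathbb{L}^2_{\hat\beta}(\mathcal{F}^{1,\dots,N}_T,A^i;\mathbb{R}^d)}\\
&\quad + C_2\,\frac{1}{N}\sum_{m=1}^N \|(Y^{m,N}-Y^m,\ldots)\|^2_{\star,\hat\beta,\mathbb{F}^{1,\dots,N},\alpha,C^{\overline{X}^m},\overline{X}^m}\\
&\quad + C_3\,\mathbb{E}\!\left[\int_0^T \alpha_s^2\,\mathcal{E}(\hat\beta A^{\overline{X}^i})_{s-}\,W^2_{2,|\cdot|}\!\bigl(L^N(\widetilde{\textbf{Y}}^N_s),\mathcal{L}(Y^i_s)\bigr)\,\ud C^{\overline{X}^i}_s\right].
\end{align*}

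The three contributions are then driven to zero in turn. The first vanishes by the coordinatewise hypothesis of \ref{H2}. The second vanishes by the newly proved Theorem \ref{thm:prop_system_instant}. For the third, I would repeat essentially verbatim the final portion of the proof of Theorem \ref{thm:prop_system_instant}: since by \ref{H7prime} and Proposition \ref{independent increments} the process $A^{\overline{X}^1}$ is deterministic, Tonelli turns the expectation into $\hat\beta^{-1}\int_0^T \mathbb{E}[W^2_{2,|\cdot|}(L^N(\widetilde{\textbf{Y}}^N_s),\mathcal{L}(Y^1_s))]\,\ud\mathcal{E}(\hat\beta A^{\overline{X}^1})_s$; the integrand is pointwise dominated by $4\,\mathbb{E}[|Y^1_s|^2]$, which is $\mathcal{E}(\hat\beta A^{\overline{X}^1})$-integrable because $\alpha Y^1\in\mathbb{H}^2_{\hat\beta}$, and the pointwise limit is zero by uniform integrability (obtained from the backward-martingale identity $\frac{1}{N}\sum_{m=1}^N|Y^m_s|^2=\mathbb{E}[|Y^1_s|^2\mid \mathcal{G}_N]$) combined with the strong law of large numbers applied to a countable determining family of bounded continuous test functions (Lemma \ref{thm:weak_conv}), using \cite[Theorem 6.9]{villani2009optimal} to upgrade weak to $\mathscr{P}_2$-convergence.

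The main obstacle is purely bookkeeping: ensuring that each a priori estimate is written with the correct $\mathcal{E}(\hat\beta A^{\overline{X}^i})$-weighted Lebesgue--Stieltjes integral and that the change of filtration does not alter any of these predictable characteristics, which is precisely the content of Remark \ref{rem:comments_H_Cond}.\ref{rem:comments_H_Cond2} and Lemma \ref{lem:equalities_integrals}. No new analytic input beyond what was used for Theorem \ref{thm:prop_system_instant} is required.
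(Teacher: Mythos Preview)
Your proposal is correct and mirrors the paper's own proof essentially step for step: subtract the $i$-th McKean--Vlasov BSDE from the $i$-th mean-field equation, apply the \textit{a priori} estimates of Lemma \ref{lem:a_priori_estimates} with $\widetilde{M}^\Phi(\hat\beta)$, use the triangle inequality for the Wasserstein distance together with \eqref{empiricalineq} and the identity $A^i=A^m$ (from \ref{H7prime} and Remark \ref{rem:cond_H_prime}.\ref{rem:cond_H_prime_1}) to extract the averaged term, absorb the self-referential term via \ref{H8prime}, and then dispatch the three remaining contributions respectively by \ref{H2}, Theorem \ref{thm:prop_system_instant}, and the dominated-convergence/SLLN argument from the end of that theorem's proof. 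The paper presents exactly this route, with the only difference being that it spells out the intermediate inequality with explicit constants rather than your schematic $C_1,C_2,C_3$.
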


\begin{proof}
The proof is obviously quite similar to the one of \cref{thm:prop_chaos}, hence we will only provide details for the steps that might not be clear. 
Using \eqref{empiricalineq}, we have arrived at the inequality 
\begin{align*}
    \alpha^2_s \mathcal{E}\big(\hat{\beta} A^i\big)_{s-} W_{2,|\cdot|}^2\big(L^N\big(\textbf{Y}^{N}_s\big),L^N\big({\widetilde{\textbf{Y}}}^{N}_s\big) \big)
    \hspace{0.1cm} \leq \hspace{0.1cm} \alpha^2_s \mathcal{E}\big(\hat{\beta} A^i\big)_{s-} \frac{1}{N}\sum_{m = 1}^{N}\left|Y^{m,N}_s - Y^{m}_s\right|^2.
\end{align*}
Using \ref{H7prime}, and in particular \cref{rem:cond_H_prime}.\ref{rem:cond_H_prime_1}, we get that
\begin{align*}
    &\mathbb{E}\left[
    \int_{0}^{T}\alpha^2_s \mathcal{E}\big(\hat{\beta} A^i\big)_{s-} W_{2,|\cdot|}^2\big(L^N\big(\textbf{Y}^{N}_s\big),L^N\big({\widetilde{\textbf{Y}}}^{N}_s\big) \big)\, \ud C^i_s\right]\\
    &\hspace{2em}\leq \mathbb{E}\left[\int_{0}^{T}\alpha^2_s \mathcal{E}\big(\hat{\beta} A^i\big)_{s-} \frac{1}{N}\sum_{m = 1}^{N}\left|Y^{m,N}_s - Y^{m}_s\right|^2 \ud C^i_s\right]\\
   &\hspace{2em}= \mathbb{E}\left[\int_{0}^{T}\alpha^2_s \mathcal{E}\big(\hat{\beta} A^m\big)_{s-} \frac{1}{N}\sum_{m = 1}^{N}\left|Y^{m,N}_s - Y^{m}_s\right|^2 \ud C^m_s\right]\\
   &\hspace{2em}\leq \frac{1}{N}\sum_{m = 1}^{N}\|\big(Y^{m,N} - Y^{m},Z^{m,N} - Z^{m},U^{m,N} - U^{m},M^{m,N} - M^m\big) \|^{2}_{\star,\hat{\beta},\mathbb{F}^{1,\dots,N},\alpha,C^m,\overline{X}^m}.
\end{align*}
Then, as in the proof of \cref{thm:prop_chaos} we end up in the inequality 
\begin{align*}
&\|\left(Y^{i,N} - Y^{i},Z^{i,N} - Z^{i},U^{i,N} - U^{i},M^{i,N} - M^i\right) \|^{2}_{\star,\hat{\beta},\mathbb{F}^{1,\dots,N},\alpha,C^i,\overline{X}^i}\\
&\leq \Big(26 + \frac{2}{\hat{\beta}}+ (9\hat{\beta} + 2)\Phi \Big)  \|\xi^{i,N} - \xi^i\|^2_{\mathbb{L}^2_{\hat{\beta}}(\mathcal{F}^{1,\dots,N}_T, A^{i};\mathbb{R}^d)}\\
&\hspace{0.5cm } + \hspace{0.2cm } 2 \widetilde{M}^{\Phi}(\hat{\beta})\hspace{0.2cm}  \|\left(Y^{i,N} - Y^{i},Z^{i,N} - Z^{i},U^{i,N} - U^{i},M^{i,N} - M^i\right) \|^{2}_{\star,\hat{\beta},\mathbb{F}^{1,\dots,N},\alpha,C^i,\overline{X}^i} \hspace{0.5cm}\\
&\hspace{0.2cm }\hspace{0.5cm}+ \hspace{0.1cm} 2\widetilde{M}^{\Phi}(\hat{\beta})  \frac{1}{N}\sum_{m = 1}^{N}\|\left(Y^{m,N} - Y^{m},Z^{m,N} - Z^{m},U^{m,N} - U^{m},M^{m,N} - M^m\right) \|^{2}_{\star,\hat{\beta},\mathbb{F}^{1,\dots,N},\alpha,C^m,\overline{X}^m}\\
&\hspace{0.2cm }\hspace{0.5cm}+ \hspace{0.1cm} 2\widetilde{M}^{\Phi}(\hat{\beta})\hspace{0.2cm} \mathbb{E}\left[\int_{0}^{T}\alpha^2_s \mathcal{E}\left(\hat{\beta} A^i\right)_{s-}W_{2,|\cdot|}^2\left(L^N\left({\widetilde{\textbf{Y}}}^{N}_s\right),\mathcal{L}\left( Y^{i}_s\right)\right)\, \ud C^i_s \right].\end{align*}
Then, using \ref{H8prime} we get that
\begin{align*}
 &  \|\left(Y^{i,N} - Y^{i},Z^{i,N} - Z^{i},U^{i,N} - U^{i},M^{i,N} - M^i\right) \|^{2}_{\star,\hat{\beta},\mathbb{F}^{1,\dots,N},\alpha,C^i,\overline{X}^i}\\
 &\leq\frac{\Big(26 + \frac{2}{\hat{\beta}}+ (9\hat{\beta} + 2)\Phi \Big)}{1-2\widetilde{M}^{\Phi}(\hat{\beta})}  \|\xi^{i,N} - \xi^i\|^2_{\mathbb{L}^2_{\hat{\beta}}(\mathcal{F}^{1,\dots,N}_T, A^{i};\mathbb{R}^d)}\\
&\hspace{0.5cm }+ \frac{2 \widetilde{M}^{\Phi}(\hat{\beta})}{1 - 2\widetilde{M}^{\Phi}(\hat{\beta})} \frac{1}{N}\sum_{m = 1}^{N}\|\left(Y^{m,N} - Y^{m},Z^{m,N} - Z^{m},U^{m,N} - U^{m},M^{m,N} - M^m\right) \|^{2}_{\star,\hat{\beta},\mathbb{F}^{1,\dots,N},\alpha,C^m,\overline{X}^m}\\
&\hspace{0.7cm}+\hspace{0.2cm} \frac{2 \widetilde{M}^{\Phi}(\hat{\beta})}{1 - 2 \widetilde{M}^{\Phi}(\hat{\beta})}\hspace{0.2cm}\mathbb{E}\left[\int_{0}^{T}\alpha^2_s \mathcal{E}\left(\hat{\beta} A^i\right)_{s-} W_{2,|\cdot|}^2\left(L^N\left({\widetilde{\textbf{Y}}}^{N}_s\right),\mathcal{L}\left( Y^{i}_s\right)\right)\, \ud C^i_s \right].
\end{align*}
Therefore, from \ref{H7prime} and Tonelli's theorem, our goal becomes to show that 
\begin{align*}
    \lim_{N \rightarrow \infty} \int_{0}^{T}\mathbb{E}\left[ W_{2,|\cdot|}^2\left(L^N\left({\widetilde{\textbf{Y}}}^{N}_s\right),\mathcal{L}\left( Y^{i}_s\right)\right) \right]\,\ud \mathcal{E}\left(\hat{\beta} A^i\right)_{s} = 0,
\end{align*}
since the conclusion for the first summand on the right hand side follows from Assumption \ref{H2}, while the second summand vanishes as $N$ tends to $\infty$ by \cref{thm:prop_system_instant}.
The desired convergence can be derived by following exactly the same arguments as in the end of the proof of \cref{thm:prop_system_instant}.
\end{proof}

An interesting observation is that using our method the next result is a corollary of \cref{thm:prop_system_instant}, while, for example, in \citet[Theorem 2.9]{lauriere2022backward} this is a requirement in order to prove \cref{6.2}.

\begin{corollary}\label{Corollary 6.4}
The solution of the mean-field BSDE \eqref{mfBSDE_instantaneous}, denoted by $(\textbf{Y}^N,\textbf{Z}^N,\textbf{U}^N,\textbf{M}^N)$, satisfies
for every $t \in [0,T]$ that
\begin{align}
\lim_{N \rightarrow \infty}\mathbb{E}\left[W_{2,|\cdot|}^2\left(L^N\left(\textbf{Y}^{N}_t\right),\mathcal{L}(Y^1_t)\right)\right] = 0.
\end{align}
\end{corollary}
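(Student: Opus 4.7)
The plan is to deduce this statement directly from \cref{thm:prop_system_instant}, combined with arguments already appearing in its proof. First, I would apply the triangle inequality for the Wasserstein distance, interposing the empirical measure of the first $N$ McKean--Vlasov solutions:
\begin{align*}
\mathbb{E}\!\left[W_{2,|\cdot|}^2\!\left(L^N(\mathbf{Y}^N_t),\mathcal{L}(Y^1_t)\right)\right]
\leq 2\,\mathbb{E}\!\left[W_{2,|\cdot|}^2\!\left(L^N(\mathbf{Y}^N_t),L^N(\widetilde{\mathbf{Y}}^N_t)\right)\right]
+ 2\,\mathbb{E}\!\left[W_{2,|\cdot|}^2\!\left(L^N(\widetilde{\mathbf{Y}}^N_t),\mathcal{L}(Y^1_t)\right)\right],
\end{align*}
and then handle the two summands separately.

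For the first summand, I would invoke inequality \eqref{empiricalineq}, obtaining the bound $\frac{1}{N}\sum_{i=1}^N\mathbb{E}[|Y^{i,N}_t-Y^i_t|^2]$, and then control this by the running supremum, namely $\frac{1}{N}\sum_{i=1}^N\|Y^{i,N}-Y^i\|^2_{\mathcal{S}^2_{\hat{\beta}}(\mathbb{F}^{1,\dots,N},A^i;\mathbb{R}^d)}$, using that $\mathcal{E}(\hat{\beta}A^i)_{s-}\geq 1$. This last quantity is in turn dominated by the $\star$-norm appearing in \cref{thm:prop_system_instant}, so it vanishes as $N\to\infty$.

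For the second summand, I would transfer essentially verbatim the uniform-integrability plus almost-sure-convergence argument used near the end of the proof of \cref{thm:prop_system_instant}. More precisely, via the triangle inequality and \eqref{empiricalineq} we have $W_{2,|\cdot|}^2(L^N(\widetilde{\mathbf{Y}}^N_t),\mathcal{L}(Y^1_t))\leq \frac{2}{N}\sum_{m=1}^N|Y^m_t|^2+2\,\mathbb{E}[|Y^1_t|^2]$; since $\{Y^m_t\}_{m\in\mathbb{N}}$ are i.i.d.\ and square-integrable (\cref{rem:comments_H_Cond}.\ref{rem:comments_Y_ident_distr}), the exchangeability-plus-reverse-martingale computation gives $\frac{1}{N}\sum_{m=1}^N|Y^m_t|^2=\mathbb{E}[|Y^1_t|^2\mid \mathcal{G}_N]$, establishing uniform integrability of the sequence. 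Almost-sure convergence to $0$ then follows from the fact that $\mathcal{W}_{2,|\cdot|}$ metrizes weak convergence in $\mathscr{P}_2(\mathbb{R}^d)$ together with the Strong Law of Large Numbers applied to the sequence $\{f^{\mathcal{L}(Y^1_t)}_k(Y^m_t)\}_{m\in\mathbb{N}}$ provided by \cref{thm:weak_conv}, exactly as at the end of the proof of \cref{thm:prop_system_instant}.

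There is no substantial obstacle here: the argument is essentially a pointwise-in-$t$ extraction of material already developed for the integrated form in \cref{thm:prop_system_instant}. The only point requiring a little attention is verifying that the uniform integrability argument, which in the proof of \cref{thm:prop_system_instant} is invoked after applying the dominated convergence theorem in the $\ud\mathcal{E}(\hat{\beta}A^1)$-integral, applies equally at a fixed time $t$; this is immediate because that argument was already pointwise in $s$.
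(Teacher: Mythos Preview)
Your proposal is correct and follows essentially the same approach as the paper: decompose via the triangle inequality through $L^N(\widetilde{\mathbf{Y}}^N_t)$, bound the first summand using \eqref{empiricalineq} and \cref{thm:prop_system_instant}, and handle the second by invoking the uniform-integrability and almost-sure arguments from the end of the proof of \cref{thm:prop_system_instant}. The paper's proof is simply a terser version of what you wrote.
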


\begin{proof}
Recall that we denote by $(\widetilde{\textbf{Y}}^N, \widetilde{\textbf{Z}}^N, \widetilde{\textbf{U}}^N, \widetilde{\textbf{M}}^N)$ the solutions of the first $N$ McKean--Vlasov BSDEs \eqref{MVBSDE_instantaneous}. 
Using the triangle inequality of the Wasserstein distance and \eqref{empiricalineq}, we have 
\begin{align*}
W_{2,|\cdot|}^2\left(L^N\left(\textbf{Y}^{N}_t\right),\mathcal{L}(Y^1_t)\right) &\leq 2 \hspace{0.1cm}W_{2,|\cdot|}^2\left(L^N\left(\textbf{Y}^{N}_t\right),L^N\left({\widetilde{\textbf{Y}}}^{N}_t\right) \right) + 2 \hspace{0.1cm}W_{2,|\cdot|}^2\left(L^N\left({\widetilde{\textbf{Y}}}^{N}_s\right),\mathcal{L}\left( Y^{1}_t\right)\right)\\
&\leq 2 \hspace{0.1cm} \frac{1}{N}\sum_{m = 1}^{N}|Y^{m,N}_t - Y^{m}_t|^2 + 2 \hspace{0.1cm}W_{2,|\cdot|}^2\left(L^N\left({\widetilde{\textbf{Y}}}^{N}_t\right),\mathcal{L}\left( Y^{1}_t\right)\right).
\end{align*}
We can then conclude, using \cref{thm:prop_system_instant} for the first summand on the right hand side of the above inequality, and the arguments presented in the last part of the proof of \cref{thm:prop_system_instant} for the second summand.
\end{proof}

Now, we can also provide a version of the Strong Law of Large Numbers with respect to the $\mathbb{L}^2$--convergence for the solutions of the mean-field systems. 
However, notice that for every $N\in \mathbb{N}$ and $t \in [0,T]$ the random variables $Y^{1,N}_t,...,Y^{N,N}_t$ are neither independent nor exchangeable.
\begin{corollary}
We have, for every $t \in [0,T]$,
\begin{align}
    \lim_{N \rightarrow \infty} \sup_{t \in [0,T]}\left\{\mathbb{E}\left[\bigg|\frac{1}{N}\sum_{i = 1}^{N}Y^{i,N}_t - \mathbb{E}[Y^1_t]\bigg|^2\right]\right\} = 0.
\end{align}
\end{corollary}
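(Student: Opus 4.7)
The plan is to apply the triangle inequality and decompose
\begin{align*}
\frac{1}{N}\sum_{i=1}^{N}Y^{i,N}_t - \mathbb{E}[Y^1_t] = \frac{1}{N}\sum_{i=1}^{N}\bigl(Y^{i,N}_t - Y^i_t\bigr) + \Bigl(\frac{1}{N}\sum_{i=1}^{N}Y^i_t - \mathbb{E}[Y^1_t]\Bigr),
\end{align*}
then use $(a+b)^2\le 2a^2+2b^2$ and treat the two contributions separately. The first accounts for the distance between the mean-field system and its McKean--Vlasov I.I.D.\ limit, and will be controlled by \cref{thm:prop_system_instant}. The second is a centred average of i.i.d.\ copies of $Y^1_t$, to which the classical $\mathbb{L}^2$ rate for the Law of Large Numbers applies.

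For the first contribution, Jensen's inequality gives
\begin{align*}
\sup_{t\in[0,T]}\mathbb{E}\Bigl[\Bigl|\frac{1}{N}\sum_{i=1}^{N}(Y^{i,N}_t - Y^i_t)\Bigr|^2\Bigr] \leq \frac{1}{N}\sum_{i=1}^{N}\sup_{t\in[0,T]}\mathbb{E}\bigl[|Y^{i,N}_t - Y^i_t|^2\bigr].
\end{align*}
Since $A^i$ is non-decreasing with $A^i_0=0$, \cref{lem:StochExp}.\ref{item:StochExp4} yields $\mathcal{E}(\hat\beta A^i)_{t-}\ge 1$, whence
\begin{align*}
\sup_{t\in[0,T]}\mathbb{E}\bigl[|Y^{i,N}_t-Y^i_t|^2\bigr] \leq \mathbb{E}\Bigl[\sup_{t\in[0,T]}\mathcal{E}(\hat\beta A^i)_{t-}|Y^{i,N}_t-Y^i_t|^2\Bigr] = \|Y^{i,N}-Y^i\|^2_{\mathcal{S}^{2}_{\hat{\beta}}(\mathbb{F}^{1,\dots,N},A^i;\mathbb{R}^d)}.
\end{align*}
Averaging over $i \in \mathscr{N}$ and invoking \cref{thm:prop_system_instant} shows that this first contribution vanishes as $N\to\infty$.

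For the second contribution, the family $\{Y^i_t\}_{i\in\mathbb{N}}$ is i.i.d.: identically distributed by \cref{rem:comments_H_Cond}.\ref{rem:comments_Y_ident_distr}, and independent because each $Y^i$ is $\mathbb{F}^i$-adapted while the filtrations $\{\mathbb{F}^i\}_{i\in\mathbb{N}}$ are mutually independent by construction (\cref{rem:comments_H_Cond}.\ref{rem:comments_H_Cond1}). The centred summands are therefore orthogonal in $\mathbb{L}^2$, so
\begin{align*}
\mathbb{E}\Bigl[\Bigl|\frac{1}{N}\sum_{i=1}^{N}Y^i_t - \mathbb{E}[Y^1_t]\Bigr|^2\Bigr] = \frac{1}{N}\hspace{0.05cm}\var{Y^1_t} \leq \frac{1}{N}\mathbb{E}\bigl[|Y^1_t|^2\bigr] \leq \frac{1}{N}\|Y^1\|^2_{\mathcal{S}^{2}_{\hat{\beta}}(\mathbb{F}^1,A^1;\mathbb{R}^d)},
\end{align*}
where the last inequality again uses $\mathcal{E}(\hat\beta A^1)_{t-}\ge 1$ together with $Y^1 \in \mathscr{S}^2_{\hat\beta}(\mathbb{F}^1,\alpha,C^1;\mathbb{R}^d)$ (ensured by \cref{thm:MVBSDE_instantaneous_second}). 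The resulting bound is $t$-independent and tends to zero; combining both contributions completes the proof.

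The main subtlety, rather than a true obstacle, lies in exchanging the supremum in $t$ with the expectation. This is cost-free here because the $\mathcal{S}^2_{\hat\beta}$-norm already incorporates a running supremum and the bound $\mathcal{E}(\hat\beta A^i)_{t-}\ge 1$ lets us discard the stochastic-exponential weight; no further uniform-integrability or pathwise-regularity arguments are needed.
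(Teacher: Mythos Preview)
Your proof is correct and follows essentially the same route as the paper: both decompose the error into the mean-field-versus-McKean--Vlasov difference (handled via \cref{thm:prop_system_instant}) and an i.i.d.\ average (handled by $\mathbb{L}^2$ orthogonality of centred independent summands), then pass to a $t$-uniform bound through the $\mathcal{S}^2_{\hat\beta}$-norm. You are slightly more explicit than the paper in justifying $\mathcal{E}(\hat\beta A^i)_{t-}\ge 1$ and the independence of the $Y^i_t$, but the argument is otherwise identical.
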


\begin{proof}
Using the triangle inequality for the Euclidean norm, we get that
    \begin{align*}
        \bigg|\frac{1}{N}\sum_{m = 1}^{N}Y^{i,N}_t - \mathbb{E}[Y^1_t]\bigg| \leq \bigg|\frac{1}{N}\sum_{i = 1}^{N}\left(Y^{i,N}_t - Y^i_t\right)\bigg| + \bigg|\frac{1}{N}\sum_{i = 1}^{N}Y^{i}_t - \mathbb{E}[Y^1_t]\bigg|.
    \end{align*}
    We are going to use the inequality
    \begin{align*}
        \Bigg(\sum_{i = 1}^{N}a_i\Bigg)^2 \leq  N \sum_{i = 1}^{N}a^2_i,
    \end{align*}
    for every set of real numbers $\{a_1,\dots,a_N\}$. 
    Thus, we have
    \begin{align*}
        \bigg|\frac{1}{N}\sum_{m = 1}^{N}Y^{i,N}_t - \mathbb{E}[Y^1_t]\bigg|^2 \leq \frac{2}{N} \sum_{i = 1}^{N}\big|Y^{i,N}_t - Y^i_t\big|^2 + \frac{2}{N^2}\bigg|\sum_{i = 1}^{N}\left(Y^{i}_t - \mathbb{E}[Y^1_t]\right)\bigg|^2.
    \end{align*}
    Now, if we take expectations of the above inequality and use \cref{rem:comments_H_Cond}\ref{rem:comments_Y_ident_distr} for the second term on its right-hand side, we get that
    \begin{align*}
        \mathbb{E}\left[ \bigg|\frac{1}{N}\sum_{m = 1}^{N}Y^{i,N}_t - \mathbb{E}[Y^1_t]\bigg|^2\right] &\leq \frac{2}{N}\mathbb{E}\left[\sum_{i = 1}^{N}\big|Y^{i,N}_t - Y^i_t\big|^2\right] + \frac{2}{N^2}\mathbb{E}\left[\sum_{i = 1}^{N}\big|Y^{i}_t - \mathbb{E}[Y^1_t]\big|^2\right]\\
        &= \frac{2}{N}\sum_{i = 1}^{N}\mathbb{E}\left[\big|Y^{i,N}_t - Y^i_t\big|^2\right] + \frac{2}{N}\mathbb{E}\left[\big|Y^{1}_t - \mathbb{E}[Y^1_t]\big|^2\right]\\
        &\leq \frac{2}{N}\sum_{i = 1}^{N}\mathbb{E}\left[\big|Y^{i,N}_t - Y^i_t\big|^2\right] + \frac{2}{N}\mathbb{E}\left[\sup_{t\in[0,T]}\left\{\big|Y^{1}_t\big|^2\right\}\right].
    \end{align*}
    Applying \cref{thm:prop_system_instant} we can conclude.
\end{proof}



\subsection{Rates of convergence}
\label{Rates of convergence}

Let us consider the setting of the previous subsection, where we have established a propagation of chaos result for particles that satisfy BSDEs. 
We are now interested in deriving convergence rates for this result. 
These rates will be based on the celebrated work of \citet{fournier2015rate} and some additional results presented below.

Assume that conditions \ref{H1}-\ref{H3}, \ref{H4prime}, \ref{H5}, \ref{H6}, \ref{H7prime}, \ref{H8prime} are in force, and furthermore assume there exists a function $R:\mathbb{N} \longrightarrow \mathbb{R}_+$ with $\lim_{N \rightarrow \infty}R(N) = 0$ such that
\begin{align*}
   \sup_{i \in \mathscr{N}} \left\{ \|\xi^{i,N} - \xi^i\|^2_{\mathbb{L}^2_{\hat{\beta}}(\mathcal{F}^{1,\dots,N}_T, A^{(\mathbb{F}^i,\overline{X}^i,f)};\mathbb{R}^d)}\right\} \leq R(N).
\end{align*}
Let $i \in \mathscr{N}$. 
The proofs of Theorems \ref{thm:prop_system_instant} and \ref{6.2} reveal that the solution of the mean-field BSDE \eqref{mfBSDE_instantaneous}, denoted by $(\textbf{Y}^N,\textbf{Z}^N,\textbf{U}^N,\textbf{M}^N)$, and the solutions of the first $N$ McKean--Vlasov BSDEs \eqref{MVBSDE_instantaneous}, denoted by $(\widetilde{\textbf{Y}}^N, \widetilde{\textbf{Z}}^N, \widetilde{\textbf{U}}^N, \widetilde{\textbf{M}}^N)$,  satisfy the following inequalities
\begin{align}\label{6.3}
    &\frac{1}{N}\sum_{m = 1}^{N}\|\left(Y^{m,N} - Y^{m},Z^{m,N} - Z^{m},U^{m,N} - U^{m},M^{m,N} - M^m\right) \|^{2}_{\star,\hat{\beta},\mathbb{F}^{1,\dots,N},\alpha,C^{m},\overline{X}^m} \nonumber\\
    &\leq \frac{\Big(26 + \frac{2}{\hat{\beta}}+ (9\hat{\beta} + 2)\Phi \Big)}{1 - 3\widetilde{M}^{\Phi}(\hat{\beta})} R(N) + \frac{2 \widetilde{M}^{\Phi}(\hat{\beta})}{1 - 3 \widetilde{M}^{\Phi}(\hat{\beta})}\hspace{0.2cm}\frac{1}{\hat{\beta}}\int_{0}^{T}\mathbb{E}\left[W_{2,|\cdot|}^2\left(L^N\left({\widetilde{\textbf{Y}}}^{N}_s\right),\mathcal{L}\left( Y^{1}_s\right)\right)\right] \,\ud \mathcal{E}\left(\hat{\beta} A^{1}\right)_{s}
    \shortintertext{and}
    &\|\left(Y^{i,N} - Y^{i},Z^{i,N} - Z^{i},U^{i,N} - U^{i},M^{i,N} - M^i\right) \|^{2}_{\star,\hat{\beta},\mathbb{F}^{1,\dots,N},\alpha,C^{i},\overline{X}^i} \nonumber\\
    &\leq \frac{\Big(26 + \frac{2}{\hat{\beta}}+ (9\hat{\beta} + 2)\Phi \Big)(2-5\widetilde{M}^{\Phi}(\hat{\beta}))}{(1-2\widetilde{M}^{\Phi}(\hat{\beta}))(1 - 3\widetilde{M}^{\Phi}(\hat{\beta}))} R(N) \nonumber\\
    &\hspace{0.5cm}+ \left(\frac{2 \widetilde{M}^{\Phi}(\hat{\beta})}{1 - 2 \widetilde{M}^{\Phi}(\hat{\beta})}\right)\left( \frac{1 - \widetilde{M}^{\Phi}(\hat{\beta})}{1 - 3 \widetilde{M}^{\Phi}(\hat{\beta})}\right)\frac{1}{\hat{\beta}}\int_{0}^{T}\mathbb{E}\left[W_{2,|\cdot|}^2\left(L^N\left({\widetilde{\textbf{Y}}}^{N}_s\right),\mathcal{L}\left( Y^{1}_s\right)\right)\right] \,\ud \mathcal{E}\left(\hat{\beta} A^{1}\right)_{s}\label{6.7}.
\end{align}
Therefore, in order to apply \citet[Theorem 1]{fournier2015rate} and derive convergence rates for the propagation of chaos results in the present setting, we need to know the finiteness of the following quantity; see \cref{Corollary 6.9.} for the justification. 

\begin{definition}
Let $q > 2$ be a real number and $T \in \mathbb{R}_+ \cup \{\infty\}$ be  deterministic.
Then, we define 
\begin{align*}
\Lambda_{q,T} := \frac{1}{\hat{\beta}}\int_{0}^{T}\left(\mathbb{E}\left[|Y^1_s|^q\right]\right)^{\frac{2}{q}} \ud \mathcal{E}\left(\hat{\beta} A^{1}\right)_s.
\end{align*}
\end{definition}

\begin{remark}
Using \cref{rem:comments_H_Cond}.\ref{rem:comments_Y_ident_distr}, for all $s \in [0,T]$, we have that $\mathbb{E}\big[|Y^i_s|^q\big] = \mathbb{E}\big[|Y^j_s|^q\big]$, for all $i,j \in \mathbb{N}$. 
Moreover, we obviously have that
\[
    \|\alpha Y^{1}\|^2_{\mathbb{H}^{2}_{\hat{\beta}}(\mathbb{G},A^{1},C^{\overline{X}^1};\mathbb{R}^d)} = \frac{1}{\hat{\beta}}\int_{0}^{T}\mathbb{E}\left[|Y^1_s|^2\right] \ud \mathcal{E}\left(\hat{\beta} A^{1}\right)_s \leq \Lambda_{q,T}.
\]
\end{remark}
 
\begin{theorem}\label{Th. 6.5.}
    If $\Lambda_{q,T} < \infty$ for some $q > 2$ and deterministic $T$, then there exists a constant $C_{d,q,2} > 0$, depending on $d,q,2$, such that 
\begin{align*}
    &\frac{1}{N}\sum_{i = 1}^{N}\|\left(Y^{i,N} - Y^{i},Z^{i,N} - Z^{i},U^{i,N} - U^{i},M^{i,N} - M^i\right) \|^{2}_{\star,\hat{\beta},\mathbb{F}^{1,\dots,N},\alpha,C^{\overline{X}^i},\overline{X}^i} \nonumber\\
    &\leq \frac{\Big(26 + \frac{2}{\hat{\beta}}+ (9\hat{\beta} + 2)\Phi \Big)}{1 - 3\widetilde{M}^{\Phi}(\hat{\beta})} R(N) +\frac{2 \widetilde{M}^{\Phi}(\hat{\beta})}{1 - 3 \widetilde{M}^{\Phi}(\hat{\beta})} \hspace{0.1cm} \Lambda_{q,T} \hspace{0.1cm} C_{d,q,2}  {\times}
    \\
    &\hspace{0.5cm}\times
    \begin{cases}
          N^{-\frac{1}{2}} + N^{-\frac{q - 2}{q}}, & \text{if}\hspace{0.2cm}  d < 4\hspace{0.2cm}\text{and}\hspace{0.2cm} q \neq 4\\
        N^{-\frac{1}{2}}\log(1 + N) + N^{-\frac{q - 2}{q}}, & \text{if}\hspace{0.2cm} d = 4 \hspace{0.2cm}\text{and}\hspace{0.2cm} q \neq 4 \\
         N^{-\frac{2}{d}} + N^{-\frac{q - 2}{q}}, & \text{if}\hspace{0.2cm} d > 4. 
    \end{cases}
\end{align*}
\end{theorem}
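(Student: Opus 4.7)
The plan is to start from the already-derived bound \eqref{6.3}, which isolates the single term
\begin{equation*}
\frac{1}{\hat{\beta}} \int_{0}^{T} \mathbb{E}\!\left[W_{2,|\cdot|}^{2}\bigl(L^{N}(\widetilde{\textbf{Y}}^{N}_{s}),\mathcal{L}(Y^{1}_{s})\bigr)\right] \ud \mathcal{E}(\hat{\beta}A^{1})_{s}
\end{equation*}
as the only remaining quantity to be controlled with an explicit rate in $N$; every other term in \eqref{6.3} is either the prescribed $R(N)$ or a constant depending only on $\hat{\beta}$, $\Phi$ and $\widetilde{M}^{\Phi}(\hat{\beta})$.

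The key observation is that, by \cref{rem:comments_H_Cond}.\ref{rem:comments_Y_ident_distr}, for each fixed $s\in[0,T]$ the family $Y^{1}_{s},\dots,Y^{N}_{s}$ consists of i.i.d.\ $\mathbb{R}^{d}$-valued random variables with common law $\mathcal{L}(Y^{1}_{s})$. Hence $L^{N}(\widetilde{\textbf{Y}}^{N}_{s})$ is exactly the empirical measure of $N$ i.i.d.\ samples drawn from $\mathcal{L}(Y^{1}_{s})$, which places us in the setting of Theorem~1 of \citet{fournier2015rate}. Applied with Wasserstein order $2$ and moment exponent $q>2$, that theorem yields, for every $s\in[0,T]$ with $\mathbb{E}[|Y^{1}_{s}|^{q}]<\infty$, the pointwise inequality
\begin{equation*}
\mathbb{E}\!\left[W_{2,|\cdot|}^{2}\bigl(L^{N}(\widetilde{\textbf{Y}}^{N}_{s}),\mathcal{L}(Y^{1}_{s})\bigr)\right]
\le C_{d,q,2}\,\bigl(\mathbb{E}[|Y^{1}_{s}|^{q}]\bigr)^{2/q}\,r_{d,q}(N),
\end{equation*}
where $r_{d,q}(N)$ is exactly the dimension- and moment-dependent rate factor displayed in the statement.

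The final step is to integrate this pointwise estimate against $\ud\mathcal{E}(\hat{\beta}A^{1})_{s}$ on $[0,T]$ and divide by $\hat{\beta}$. Since $A^{1}$ is deterministic by \ref{H7prime} and \cref{rem:cond_H_prime}.\ref{rem:cond_H_prime_1}, the measure $\mathcal{E}(\hat{\beta}A^{1})$ is deterministic, so the $s$-independent factor $C_{d,q,2}\,r_{d,q}(N)$ can be pulled out of the integral. The remaining quantity $\frac{1}{\hat{\beta}}\int_{0}^{T}(\mathbb{E}[|Y^{1}_{s}|^{q}])^{2/q}\,\ud\mathcal{E}(\hat{\beta}A^{1})_{s}$ is, by definition, equal to $\Lambda_{q,T}$. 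Plugging the resulting estimate back into \eqref{6.3} produces the announced bound.

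The main obstacle is justifying the transfer of the Fournier--Guillin inequality under the integral sign: one must verify that $s\mapsto(\mathbb{E}[|Y^{1}_{s}|^{q}])^{2/q}$ is Borel-measurable, which follows from the c\`adl\`ag property of $Y^{1}$ combined with Fubini's theorem, and that $\mathbb{E}[|Y^{1}_{s}|^{q}]$ is finite $\mathcal{E}(\hat{\beta}A^{1})$-almost everywhere on $[0,T]$, which is ensured precisely by the standing hypothesis $\Lambda_{q,T}<\infty$. Once these measurability and finiteness points are secured, no further technicality appears, and the computation is essentially automatic.
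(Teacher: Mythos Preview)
Your proposal is correct and follows essentially the same approach as the paper: the paper's proof simply states that the result is immediate from \eqref{6.3} and \citet[Theorem 1]{fournier2015rate}, and you have spelled out precisely how that application works (pointwise Fournier--Guillin bound, integration against the deterministic measure $\ud\mathcal{E}(\hat{\beta}A^{1})_{s}$, identification with $\Lambda_{q,T}$). Your additional attention to the measurability of $s\mapsto(\mathbb{E}[|Y^{1}_{s}|^{q}])^{2/q}$ and the a.e.\ finiteness of the moments is a welcome rigorous supplement that the paper leaves implicit.
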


\begin{proof}
    This is immediate from \eqref{6.3} and \citet[Theorem 1]{fournier2015rate}. 
    Note that  in the present setting 
    \begin{equation*}
        \mathbb{E}\Big[\sup_{s \in [0,T]}\left\{|Y^i_s|^2\right\}\Big] < \infty.\qedhere
    \end{equation*}
\end{proof}

Analogously, from inequality \eqref{6.7} we also have the following result. 

\begin{theorem}\label{Theorem 6.2}
If $\Lambda_{q,T} < \infty$ for some $q > 2$ and deterministic $T$, then there exists a constant $C_{d,q,2} > 0$, depending on $d,q,2$, such that 
\begin{align*}
    &\|\left(Y^{i,N} - Y^{i},Z^{i,N} - Z^{i},U^{i,N} - U^{i},M^{i,N} - M^i\right) \|^{2}_{\star,\hat{\beta},\mathbb{F}^{1,\dots,N},\alpha,C^{\overline{X}^i},\overline{X}^i} \nonumber\\
    &\leq \frac{\Big(26 + \frac{2}{\hat{\beta}}+ (9\hat{\beta} + 2)\Phi \Big)(2-5\widetilde{M}^{\Phi}(\hat{\beta}))}{(1-2\widetilde{M}^{\Phi}(\hat{\beta}))(1 - 3\widetilde{M}^{\Phi}(\hat{\beta}))} R(N) + \left(\frac{2 \widetilde{M}^{\Phi}(\hat{\beta})}{1 - 2 \widetilde{M}^{\Phi}(\hat{\beta})}\right)\left( \frac{1 - \widetilde{M}^{\Phi}(\hat{\beta})}{1 - 3 \widetilde{M}^{\Phi}(\hat{\beta})}\right)\hspace{0.1cm}\Lambda_{q,T} \hspace{0.1cm}C_{d,q,2} {\times}
    \\
    &\hspace{0.5cm}\times
    \begin{cases}
          N^{-\frac{1}{2}} + N^{-\frac{q - 2}{q}}, & \text{if}\hspace{0.2cm}  d < 4\hspace{0.2cm}\text{and}\hspace{0.2cm} q \neq 4\\
        N^{-\frac{1}{2}}\log(1 + N) + N^{-\frac{q - 2}{q}}, & \text{if}\hspace{0.2cm} d = 4 \hspace{0.2cm}\text{and}\hspace{0.2cm} q \neq 4 \\
         N^{-\frac{2}{d}} + N^{-\frac{q - 2}{q}}, & \text{if}\hspace{0.2cm} d > 4. 
    \end{cases}
\end{align*}
\end{theorem}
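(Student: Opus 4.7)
My plan is to follow the strategy already used for \cref{Th. 6.5.}, simply replacing the averaged \emph{a priori} bound \eqref{6.3} by its pointwise counterpart \eqref{6.7} derived in the proof of \cref{6.2}. The whole result is then essentially a direct combination of \eqref{6.7} with the sharp rate of convergence of empirical measures of i.i.d.\ random vectors towards their common law, established in \citet[Theorem 1]{fournier2015rate}.

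First I would fix $s \in [0,T]$ and recall from \cref{rem:comments_H_Cond}.\ref{rem:comments_Y_ident_distr} that the McKean--Vlasov solutions $\{Y^m\}_{m \in \mathbb{N}}$ are independent and identically distributed, hence so is the family $\{Y^m_s\}_{m \in \mathbb{N}}$ of random vectors, each with common law $\mathcal{L}(Y^1_s)$. The assumption $\Lambda_{q,T} < \infty$ guarantees, in particular, that $\mathbb{E}\big[|Y^1_s|^q\big]$ is finite and that its $\frac{2}{q}$-th power is $\mathcal{E}(\hat{\beta} A^{1})$-integrable on $[0,T]$. I can therefore apply \citet[Theorem 1]{fournier2015rate} to the empirical measure $L^N(\widetilde{\textbf{Y}}^N_s)$ with parameters $p=2$ and the prescribed $q$, obtaining a constant $C_{d,q,2}>0$, depending only on $d,q$ and $2$, such that
\begin{equation*}
\mathbb{E}\left[W_{2,|\cdot|}^2\left(L^N\left(\widetilde{\textbf{Y}}^N_s\right),\mathcal{L}(Y^1_s)\right)\right] \leq C_{d,q,2}\,\left(\mathbb{E}\!\left[|Y^1_s|^q\right]\right)^{\!2/q}\, r_{d,q}(N),
\end{equation*}
where $r_{d,q}(N)$ denotes the case-dependent rate appearing in the right-hand side of the statement.

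Next I would integrate this pointwise bound with respect to $\ud \mathcal{E}(\hat{\beta}A^{1})_s$ on $[0,T]$, invoke Tonelli's theorem and the very definition of $\Lambda_{q,T}$, to get
\begin{equation*}
\frac{1}{\hat{\beta}} \int_{0}^{T} \mathbb{E}\left[W_{2,|\cdot|}^2\left(L^N\left(\widetilde{\textbf{Y}}^N_s\right),\mathcal{L}(Y^1_s)\right)\right] \ud \mathcal{E}(\hat{\beta} A^{1})_s \leq C_{d,q,2}\,\Lambda_{q,T}\, r_{d,q}(N).
\end{equation*}
Substituting this bound into the second summand of \eqref{6.7} yields the claimed inequality. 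The argument is genuinely direct; the only point that requires a moment's thought is to verify that the integrability furnished by $\Lambda_{q,T} < \infty$ is exactly what is needed to convert the pointwise Fournier--Guillin estimate into its $\mathcal{E}(\hat{\beta}A^{1})$-integrated form that matches the structure of \eqref{6.7}. I do not anticipate any serious obstacle beyond this bookkeeping, since both the \emph{a priori} bound and the Fournier--Guillin rate are available as black boxes.
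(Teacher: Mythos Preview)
Your proposal is correct and matches the paper's own approach exactly: the paper simply states that the result follows ``analogously, from inequality \eqref{6.7}'' together with \citet[Theorem 1]{fournier2015rate}, which is precisely the combination you spell out. Your additional remark that $\Lambda_{q,T}<\infty$ is exactly what allows the pointwise Fournier--Guillin bound to be integrated against $\ud\mathcal{E}(\hat\beta A^1)$ is the one piece of bookkeeping the paper leaves implicit.
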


The next result provides sufficient conditions for controlling the quantity of interest in \eqref{6.3} and \eqref{6.7}, and thus to derive convergence rates for the propagation of chaos results.
These conditions are immediate to check, in contrast to the boundedness assumption of $\Lambda_{q,T}$. 
See also \cref{bounded condition} for further discussion in that direction.

\begin{corollary}\label{Corollary 6.9.}
If $\sup_{t \in[0,T]}\left\{\mathbb{E}\left[|Y^1_t|^q\right]\right\} < \infty$ for some $q > 2$ and deterministic $T < \infty$, then there exists a constant $C_{d,q,2} > 0$, depending on $d,q,2$, such that for every $t \in [0,T]$ 
\begin{align*}
&\mathbb{E}\left[W_{2,|\cdot|}^2\left(L^N\left(\textbf{Y}^{N}_t\right),\mathcal{L}(Y^1_t)\right)\right]\\
&\leq \frac{\Big(26 + \frac{2}{\hat{\beta}}+ (9\hat{\beta} + 2)\Phi \Big)}{1 - 3\widetilde{M}^{\Phi}(\hat{\beta})} R(N) + \left(\frac{4 \widetilde{M}^{\Phi}(\hat{\beta})}{1 - 3 \widetilde{M}^{\Phi}(\hat{\beta})} \hspace{0.1cm} \Lambda_{q,T} + 2\left(\mathbb{E}\left[|Y^1_t|^q\right]\right)^{\frac{2}{q}} \right)\hspace{0.1cm} C_{d,q,2} {\times}
    \\
    &\hspace{0.5cm}\times
    \begin{cases}
          N^{-\frac{1}{2}} + N^{-\frac{q - 2}{q}} &, \text{if}\hspace{0.2cm}  d < 4\hspace{0.2cm}\text{and}\hspace{0.2cm} q \neq 4\\
        N^{-\frac{1}{2}}\log(1 + N) + N^{-\frac{q - 2}{q}} &, \text{if}\hspace{0.2cm} d = 4 \hspace{0.2cm}\text{and}\hspace{0.2cm} q \neq 4 \\
         N^{-\frac{2}{d}} + N^{-\frac{q - 2}{q}} &, \text{if}\hspace{0.2cm} d > 4. 
    \end{cases}
\end{align*}
\end{corollary}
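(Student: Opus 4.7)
The plan is to combine the bound on the mean-field deviations from \cref{Th. 6.5.} with the classical quantitative propagation of chaos for i.i.d.\ samples of \citet{fournier2015rate}, using the triangle inequality already displayed in the proof of \cref{Corollary 6.4}.

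First I would verify that, under the assumption $\sup_{t\in[0,T]}\mathbb{E}[|Y^{1}_t|^q]<\infty$ with deterministic $T<\infty$, the quantity $\Lambda_{q,T}$ is finite, so that \cref{Th. 6.5.} applies. By \ref{H7prime} and \cref{rem:cond_H_prime}.\ref{rem:cond_H_prime_1}, the process $A^{1}$ is deterministic, hence $\mathcal{E}(\hat{\beta} A^{1})_T$ is a finite constant, yielding
\[
\Lambda_{q,T}\leq \tfrac{1}{\hat{\beta}}\sup_{s\in[0,T]}\bigl(\mathbb{E}[|Y^{1}_s|^q]\bigr)^{2/q}\bigl(\mathcal{E}(\hat{\beta} A^{1})_T-1\bigr)<\infty.
\]

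Next, fixing $t\in[0,T]$, I would reproduce the triangle decomposition used in \cref{Corollary 6.4}, namely
\[
W^{2}_{2,|\cdot|}\!\bigl(L^{N}(\mathbf{Y}^{N}_t),\mathcal{L}(Y^{1}_t)\bigr)
\leq 2\,W^{2}_{2,|\cdot|}\!\bigl(L^{N}(\mathbf{Y}^{N}_t),L^{N}(\widetilde{\mathbf{Y}}^{N}_t)\bigr)+2\,W^{2}_{2,|\cdot|}\!\bigl(L^{N}(\widetilde{\mathbf{Y}}^{N}_t),\mathcal{L}(Y^{1}_t)\bigr),
\]
and bound each summand after taking expectations. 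For the first summand I apply \eqref{empiricalineq} to obtain $W^{2}_{2,|\cdot|}(L^{N}(\mathbf{Y}^{N}_t),L^{N}(\widetilde{\mathbf{Y}}^{N}_t))\leq \frac{1}{N}\sum_{m=1}^{N}|Y^{m,N}_t-Y^{m}_t|^{2}$; since $\mathcal{E}(\hat{\beta}A^{m})_{t-}\geq 1$, this is dominated (in expectation) by $\frac{1}{N}\sum_{m=1}^{N}\|Y^{m,N}-Y^{m}\|^{2}_{\mathcal{S}^{2}_{\hat{\beta}}(\mathbb{F}^{1,\dots,N},A^{m};\mathbb{R}^d)}$, and hence by the left-hand side of the first inequality in \cref{Th. 6.5.}. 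The estimate from \cref{Th. 6.5.} then produces the first two summands of the stated bound, both of which already carry the Fournier--Guionnet rate $C_{d,q,2}\cdot\text{(rate in }N,d,q)$.

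For the second summand I would use that, by \cref{rem:comments_H_Cond}.\ref{rem:comments_Y_ident_distr}, the random variables $\{Y^{i}_t\}_{i\in\mathscr N}$ are i.i.d.\ with common law $\mathcal{L}(Y^{1}_t)$, whose $q$-th moment is finite by hypothesis. Applying \citet[Theorem 1]{fournier2015rate} to this i.i.d.\ sample yields
\[
\mathbb{E}\bigl[W^{2}_{2,|\cdot|}(L^{N}(\widetilde{\mathbf{Y}}^{N}_t),\mathcal{L}(Y^{1}_t))\bigr]\leq C_{d,q,2}\,\bigl(\mathbb{E}[|Y^{1}_t|^{q}]\bigr)^{2/q}\cdot r_{d,q}(N),
\]
where $r_{d,q}(N)$ is the piecewise rate displayed in the statement. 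Adding the two contributions gives exactly the right-hand side of the corollary.

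The only potentially delicate point is the management of the stochastic-exponential weights when passing from the $\mathcal{S}^{2}_{\hat{\beta}}$-norm bound supplied by \cref{Th. 6.5.} to a pointwise-in-$t$ control of $\mathbb{E}[|Y^{m,N}_t-Y^{m}_t|^{2}]$; this is handled by the trivial lower bound $\mathcal{E}(\hat{\beta}A^{m})_{t-}\geq 1$, valid since $A^{m}$ is non-decreasing with $A^{m}_{0}=0$, so no further estimate is needed. Apart from this observation the argument is purely an assembly of \cref{Th. 6.5.}, the Fournier--Guionnet rate, and the triangle inequality.
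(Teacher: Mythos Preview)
Your proposal is correct and follows essentially the same route as the paper: verify $\Lambda_{q,T}<\infty$, apply the triangle inequality for the Wasserstein distance together with \eqref{empiricalineq}, bound the first summand by \cref{Th. 6.5.} and the second by \citet[Theorem~1]{fournier2015rate} for the i.i.d.\ sample $\{Y^i_t\}_i$. Your extra remark that $\mathcal{E}(\hat{\beta}A^{m})_{t-}\geq 1$ justifies the passage from the pointwise second moment to the $\mathcal{S}^2_{\hat\beta}$-norm is exactly the step the paper leaves implicit; the only slip is the phrase ``both of which already carry the Fournier--Guillin rate'', since the $R(N)$-term does not.
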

\begin{proof}
By definition, $\sup_{t \in[0,T]}\left\{\mathbb{E}\left[|Y^1_t|^q\right]\right\} < \infty$ and $T < \infty$ implies $
\Lambda_{q,T} < \infty$.
Using the triangle inequality for the Wasserstein distance and \eqref{empiricalineq},
we have 
\begin{align*}
W_{2,|\cdot|}^2\left(L^N\left(\textbf{Y}^{N}_t\right),\mathcal{L}(Y^1_t)\right) &\leq 2 \hspace{0.1cm}W_{2,|\cdot|}^2\left(L^N\left(\textbf{Y}^{N}_t\right),L^N\left({\widetilde{\textbf{Y}}}^{N}_t\right) \right) + 2 \hspace{0.1cm}W_{2,|\cdot|}^2\left(L^N\left({\widetilde{\textbf{Y}}}^{N}_t\right),\mathcal{L}\left( Y^{1}_t\right)\right)\\
&\leq 2 \hspace{0.1cm} \frac{1}{N}\sum_{m = 1}^{N}\left|Y^{m,N}_t - Y^{m}_t\right|^2 + 2 \hspace{0.1cm}W_{2,|\cdot|}^2\left(L^N\left({\widetilde{\textbf{Y}}}^{N}_t\right),\mathcal{L}\left( Y^{1}_t\right)\right).
\end{align*}
Hence, from \cref{Th. 6.5.} and \citet[Theorem 1]{fournier2015rate} we can conclude.
\end{proof}

\begin{remark}
\label{bounded condition}
Let $q > 2$ and $T < \infty$ be deterministic, then we have from Jensen's inequality that
\begin{align*}
    |Y^1_t|^q \leq \left(4 \hspace{0.1cm}C^{\overline{X}^1}_T + 4\right)^{\frac{q}{2}} \hspace{0.1cm} \mathbb{E}\left[ |\xi^1|^q + \left(\int^{T}_{0}\left|f\left(s,Y^1_s,Z^1_s  c^{\overline{X}^1}_s,\Gamma^{(\mathbb{F}^1,\overline{X}^1,\Theta^1)}(U^1_s)_s,\mathcal{L}(Y^1_s)\right)\right|^2 \, \ud C^{\overline{X}^1}_s\right)^{\frac{q}{2}} \bigg| \mathcal{F}^1_{t} \right ];
\end{align*}
notice that, from \ref{H7prime}, $C^{\overline{X}^1}$ is deterministic.
Hence, we can satisfy the requirement $\Lambda_{q,T} < \infty$ by an appropriate boundedness condition on $f$ and an advanced integrability condition on $\xi^1$. 
We leave the problem of finding optimal conditions for the requirement $\Lambda_{q,T} < \infty$ open for future research.
Let us also point out that \citet{lauriere2022backward} provide sufficient conditions for deriving rates of convergence in the case of BSDEs driven by Brownian motions.


\end{remark}

\begin{remark}\label{sec_5.3}
The proofs of \cref{thm:prop_chaos_avrg} and \cref{thm:prop_chaos} allow us to deduce that in order to 
derive convergence rates for the path-dependent BSDEs one would have to control the quantities
\begin{align*}
\int_{0}^{T}\mathbb{E}\left[W_{2,\rho_{J_1^{d}}}^2\left(L^N\left({\widetilde{\textbf{Y}}}^{N}|_{[0,s]}\right),\mathcal{L}\left( Y^{1}|_{[0,s]}\right)\right) \gamma_s\right]\,\ud Q_s. 
\end{align*}
This would further require the analysis of the convergence rates of the empirical measure for random variables on the path space with respect to the Wasserstein distance, in the spirit of \citet{fournier2015rate}.
\end{remark}


\appendix
\section{Remainder of the proof of Proposition \ref{prop:infima_for_M}} \label{sec_app:rest_of_comp}
    
Let us define, for every $\gamma \in (0,\beta)$, the functions
\begin{align*}
    g_1(\gamma) &:= \frac{9}{\beta} + 8 \frac{(1 + \gamma\Phi)}{ \gamma} + 9 \hspace{0.1cm} \frac{\beta}{\beta - \gamma}\frac{(1 + \gamma \Phi)^2}{\gamma},\\
    g_2(\gamma) &:= \frac{9}{\beta} +  8 \frac{(1 + \gamma\Phi)}{ \gamma} + \frac{2 + 9\beta}{\beta - \gamma}\hspace{0.1cm} \frac{(1 + \gamma \Phi)^2}{\gamma}. 
\end{align*}
We have 
\begin{align*}
    g_1(\gamma) &= \frac{9}{\beta} + \left(\frac{1}{\gamma} + \Phi\right) \left(8 + 9 \frac{\beta \gamma}{\beta - \gamma} \left(\frac{1}{\gamma} + \Phi\right) \right)\\
    &= \frac{9}{\beta} + \left(\frac{1}{\gamma} + \Phi\right) \left(8 + 9 \frac{\left(\frac{1}{\gamma} + \Phi\right)}{\left(\frac{1}{\gamma} + \Phi\right) - \left(\frac{1}{\beta} + \Phi\right)}  \right).
\end{align*}
Note that $\gamma \in (0,\beta) \hspace{0.2cm}\Longleftrightarrow\hspace{0.2cm} \frac{1}{\gamma} + \Phi \in \left(\frac{1}{\beta} + \Phi,\infty\right)$.
Hence, setting $\frac{1}{\gamma} + \Phi := \lambda \left(\frac{1}{\beta} + \Phi\right)$, for $\lambda \in (1,\infty)$ and we only need to find the minimum of the function
\begin{align*}
    \widetilde{g}_1(\lambda) := \frac{9}{\beta} + \left(\frac{1}{\beta} + \Phi\right)\left(8\lambda + 9 \frac{\lambda^2}{\lambda - 1}\right), \hspace{0.5cm} \lambda \in (1,\infty).
\end{align*}
Trivially, we have that 
\begin{align*}
    \lim_{\lambda \rightarrow 1^+}\left(8\lambda + 9 \frac{\lambda^2}{\lambda - 1}\right) = \lim_{\lambda \rightarrow \infty}\left(8\lambda + 9 \frac{\lambda^2}{\lambda - 1}\right) = \infty.
\end{align*}
Hence, we will calculate the critical points of the function
\begin{align*}
    h(\lambda) := 8\lambda + 9 \frac{\lambda^2}{\lambda - 1}, \hspace{0.5cm} \lambda \in (1,\infty).
\end{align*}
Here $h'(\lambda) = 0 \hspace{0.2cm} \Longleftrightarrow \hspace{0.2cm} 8 + 9 \frac{\lambda^2 - 2\lambda}{(\lambda - 1)^2} = 0 \hspace{0.2cm} \Longleftrightarrow \hspace{0.2cm} 8 (\lambda - 1)^2 + 9\left(\lambda^2 - 2\lambda\right) = 0 \hspace{0.2cm} \Longleftrightarrow \hspace{0.2cm} 17 (\lambda - 1)^2 = 9 \hspace{0.2cm} \overset{ \lambda > 1}\Longrightarrow \lambda = \frac{3}{\sqrt{17}} + 1$. Because 
\begin{align*}
h\left(\frac{3 }{\sqrt{17}} + 1\right) = \left(8  + \left(3 +\sqrt{17}\right)^2\right) \frac{3}{\sqrt{17}} + 8 = \left(8 + 9 + 17 + 6\sqrt{17}\right)\frac{3}{\sqrt{17}} + 8 =  6\sqrt{17} + 26,
\end{align*}
we can conclude that
\begin{align}\label{eq_7.1}
    M^{\Phi}_{\star}(\beta) &= \frac{9}{\beta} + \left(\frac{1}{\beta} + \Phi\right) \left(6\sqrt{17} + 26\right)\nonumber\\
    &= \frac{6\sqrt{17} + 35}{\beta} + \left(6\sqrt{17} + 26\right)\Phi.
\end{align}

Similarly, we have 
\begin{align*}
    g_2(\gamma) &= \frac{9}{\beta} + 8 \left(\frac{1}{\gamma} + \Phi\right) + 2 \frac{\left(\frac{1}{\beta} + \Phi\right) - \Phi}{\left(\frac{1}{\gamma} + \Phi\right) - \left(\frac{1}{\beta} + \Phi\right)}\hspace{0.1cm} \left(\frac{1}{\gamma} + \Phi\right)^2 + 9 \frac{\left(\frac{1}{\gamma} + \Phi\right)^2}{\left(\frac{1}{\gamma} + \Phi\right) - \left(\frac{1}{\beta} + \Phi\right)}\\
    &= \frac{9}{\beta} + 8\left(\frac{1}{\gamma} + \Phi\right) + \frac{\left(\frac{1}{\gamma} + \Phi\right)^2}{\left(\frac{1}{\gamma} + \Phi\right) - \left(\frac{1}{\beta} + \Phi\right)} \left( \frac{2}{\beta} + 9\right).
\end{align*}
As before, we set $\frac{1}{\gamma} + \Phi := \lambda \left(\frac{1}{\beta} + \Phi\right)$, for $\lambda \in (1,\infty)$ and we only need to find the minimun of the function
\begin{align*}
    \widetilde{g}_2(\lambda) := \frac{9}{\beta} +  8\lambda \left(\frac{1}{\beta} + \Phi\right) + \frac{\lambda^2}{\lambda - 1}\left(\frac{1}{\beta} + \Phi\right) \left( \frac{2}{\beta} + 9\right), \hspace{0.5cm} \lambda \in (1,\infty).
\end{align*}
We have
\begin{align*}
   \widetilde{g}_2(\lambda) &= \frac{9}{\beta} +   \left(\frac{1}{\beta} + \Phi\right) \left(8 \lambda + \frac{\lambda^2}{\lambda - 1} \left( \frac{2}{\beta} + 9\right)\right)\\
   \shortintertext{and}
   \widetilde{g}_2'(\lambda) &= 0 \hspace{0.2cm}\Longleftrightarrow \hspace{0.2cm}  8 + \left( \frac{2}{\beta} + 9\right) \frac{\lambda^2 - 2\lambda}{(\lambda - 1)^2} = 0 \\
   &\hspace{0.9cm} \Longleftrightarrow \hspace{0.2cm}  \left( \frac{2}{\beta} + 17\right)(\lambda - 1)^2 = \frac{2}{\beta} + 9\\
    &\hspace{0.9cm} \overset{\lambda > 1}\Longrightarrow \hspace{0.2cm} \lambda = \frac{\sqrt{\frac{2}{\beta} + 9}}{\sqrt{\frac{2}{\beta} + 17} } + 1.
\end{align*}
Finally, we have
\begin{align}\label{eq_7.2}
    M^{\Phi}(\beta) &=  \frac{9}{\beta} +  \left(\frac{1}{\beta} + \Phi\right) \left( \left(8 + \left(\sqrt{\frac{2}{\beta} + 9} + \sqrt{\frac{2}{\beta} + 17}\right)^2 \right)\left(\frac{\sqrt{\frac{2}{\beta} + 9}}{\sqrt{\frac{2}{\beta} + 17} } \right) + 8\right) \nonumber\\
    &= \frac{9}{\beta} +  \left(\frac{1}{\beta} + \Phi\right) \left(\left( 2\left(\frac{2}{\beta} + 17\right) + 2 \sqrt{\frac{2}{\beta} + 9}\sqrt{\frac{2}{\beta} + 17} \right)\left(\frac{\sqrt{\frac{2}{\beta} + 9}}{\sqrt{\frac{2}{\beta} + 17} } \right) + 8\right)\nonumber\\
    &= \frac{9}{\beta} +  \left(\frac{1}{\beta} + \Phi\right) \left( 2 \sqrt{\frac{2}{\beta} + 9}\sqrt{\frac{2}{\beta} + 17}  + \frac{4}{\beta} + 26\right)\nonumber\\
    &= \frac{ 2 \sqrt{\frac{2}{\beta} + 9}\sqrt{\frac{2}{\beta} + 17}  + \frac{4}{\beta} + 35}{\beta} + \left( 2 \sqrt{\frac{2}{\beta} + 9}\sqrt{\frac{2}{\beta} + 17}  + \frac{4}{\beta} + 26\right) \Phi.
\end{align}


\section{Auxiliary results}\label{sec_app:aux_result}

\subsection{Construction of a space satisfying \ref{H1}.}\label{subsec_app:construction}
In this subsection we will show that there exists a space satisfying \ref{H1}. Since we discuss about a sequence of independent identically distributed processes, one naturally expects to construct a countable product space, denoted by  $(\Omega, \mathcal{G},\mathbb{P})$, based on a prototype probability space $(\Omega^1, \mathcal{G}^1,\mathbb{P}^1)$. 
On this prototype probability space we will construct the pair of martingales $\overline{X}^1$ which satisfies the desired properties. 
As one may expect, this is not a condition that is trivially satisfied. 
Hence, we are led to consider specific cases, which nevertheless demonstrate the generality of the framework we are using. We remind that a L\'evy process is square--integrable if and only if for the corresponding L\'evy measure $\nu$ we have that
\begin{align*}
    \int_{\mathbb{R}^d}\mathds{1}_{[1,\infty)}(|x|)|x|^2 \ud \nu(x) < \infty.
\end{align*}

\begin{example}
    Let $(\Omega^1, \mathcal{G}^1,\mathbb{P}^1)$ be the probability space that supports two independent square--integrable L\'evy processes, say $(X^{1,\circ},X^{1,\natural})$,which are martingales with respect to their natural filtrations.
    We further assume that  $X^{1,\natural}$ is purely discontinuous. 
    The independence of the L\'evy processes implies the property of having no common jumps; see
    \cite[Proposition 5.3]{Tankov2003}.
    Hence the desired condition $M_{\mu^{X^{1,\natural}}}[\Delta X^{1,\circ}| \widetilde{\mathcal{P}}^{\mathbb{F}^{1}}]=0$ is trivially satisfied. 
    
    For completeness, we mention that \cite[Proposition 5.3]{Tankov2003} refers to L\'evy processes with no Gaussian part, but this property remains valid to the case we describe. 
\end{example}

\begin{example}\label{example B.2}
Let $({\Omega}^1,{\mathcal{G}}^1,\mathbb{P}^1)$ be a probability space that supports a $p$-dimensional, purely discontinuous square--integrable \emph{L\'{e}vy} process $X^{1,\natural}$, for the construction see \cite[Theorem 4.6.17]{bichteler2002stochastic}. By taking product if necessary, we assume that our probability space supports also a sequence of independent random variables, $\{h^k\}_{k \in \mathbb{N}} \subseteq \mathbb{L}^2(\mathcal{G}^1; \mathbb{R}^n)$, 
such that the $\sigma-$algebras $\bigvee_{t \in \mathbb{R}_+}\sigma\left(X^{1,\circ}_t\right)$ and $\bigvee_{k = 1}^{\infty}\sigma\left(h^k\right)$ are independent and
\begin{align}\label{B.2}
  \mathbb{E}[h^k] = 0, 
  \forall k \in \mathbb{N}, \hspace{0.4cm}  \sum_{k = 1}^{\infty}\mathbb{E}\left[|h^k|^2\right] < \infty.
\end{align}
Furthermore, let $\{t_k\}_{k \in \mathbb{N}}\subseteq \mathbb{R}_+$ be a family of deterministic times indexed in increasing order. 
We define
\begin{align*}
    X^{1,\circ}(\omega^1,t) := \sum_{k = 1}^{\infty}h^k(\omega^1)\mathds{1}_{[t_k,\infty)}(t).
\end{align*}
Then we have that $(X^{1,\circ},X^{1,\natural}) \in \mathcal{H}^2({\mathbb{F}}^1;\mathbb{R}^p) \times \mathcal{H}^{2,d}({\mathbb{F}}^1;\mathbb{R}^n)$, we remind that ${\mathbb{F}}^1$ is the usual augmentation of the natural filtration of the pair. The martingale property for $X^{1,\circ}$ comes from \eqref{B.2}.
 Finally, because \emph{L\'{e}vy} processes are quasi-left-continuous, they jump only at totally inaccessible times. Hence $X^{1,\circ}$ and $X^{1,\natural}$ have no common jumps and again the condition $M_{\mu^{X^{1,\natural}}}[\Delta X^{1,\circ}| \widetilde{\mathcal{P}}^{\mathbb{F}^{1}}]=0$ is trivially satisfied.
\end{example}

\begin{example}
Two independent random walks defined on the same grid. 
Then, if we denote as in \cref{example B.2} by $\{h^k\}_{k \in \mathbb{N}}$ the jumps of $X^{1,o}$, $\{\widetilde{h}^k\}_{k \in \mathbb{N}}$ the jumps of $X^{1,\natural}$ and $\{t_k\}_{k \in \mathbb{N}}\subseteq \mathbb{R}_+$ the grid, by reducing the general case to the one described from a single deterministic time we have the desired property if the jumps are 0 on average. That is because $\mathcal{F}^1_{t_k-} = \left(\bigvee_{\{m \in \mathbb{N}: t_m < t_k\} }\sigma\left(h^m,\widetilde{h}^m\right)\right)
\bigvee \mathcal{N}^1$, here $\mathcal{N}^1$ is the $\sigma-$algebra generated from the null sets under $\mathbb{P}^1$. Note that $X^{1,\natural} \in \mathcal{H}^{2,d}({\mathbb{F}}^1;\mathbb{R}^n)$ due to the fact that has finite variation and \cite[6.23 Theorem 3)]{he2019semimartingale}. 
\end{example}

In the above examples key feature was the concept of independence. We now provide an example which illustrates that independence is not necessary.

\begin{example}
Let $({\Omega}^1,{\mathcal{G}}^1,\mathbb{P}^1)$ be a probability space that supports $h^1 \in \mathbb{L}^2(\mathcal{G}^1; \mathbb{R}^p)$ and $h^2 \in \mathbb{L}^2(\mathcal{G}^1; \mathbb{R}^n)$ such that 
\begin{align}\label{B.1}
    \mathbb{E}\big[h^1\big|\sigma(h^2)\big] = 0 \hspace{0.5cm}\text{and} \hspace{0.5cm}\mathbb{E}\big[h^2\big|\sigma(h^1)\big] = 0.
\end{align}
The relation that is expressed through \eqref{B.1} is a generalization of independence, when the random variables have zero expectation. Let $t_1,t_2 \in \mathbb{R}_+$, we define
\begin{align*}
X^{1,\circ}(\omega^1,t) := \begin{cases}0, \hspace{1cm}\text{if}\hspace{0.1cm} t < t_1\\
                                  h^1(\omega^1), \hspace{0.1cm}\text{if} \hspace{0.1cm} t \geq t_1 \end{cases}
                                  \text{and} \hspace{0.4cm}
X^{1,\natural}(\omega^1,t) := \begin{cases}0, \hspace{1cm}\text{if}\hspace{0.1cm} t< t_2\\
                                  h^2(\omega^1), \hspace{0.1cm}\text{if} \hspace{0.1cm} t \geq t_2. \end{cases}                               
\end{align*}
Then we have that $(X^{1,\circ},X^{1,\natural}) \in \mathcal{H}^2({\mathbb{F}}^1;\mathbb{R}^p) \times \mathcal{H}^{2,d}({\mathbb{F}}^1;\mathbb{R}^n)$, we remind that ${\mathbb{F}}^1$ is the usual augmentation of the natural filtration of the pair. The martingale property comes from \eqref{B.1}. To see that $X^{1,\natural} \in \mathcal{H}^{2,d}({\mathbb{F}}^1;\mathbb{R}^n)$ note that $X^{1,\natural}$ has finite variation and use \cite[6.23 Theorem 3)]{he2019semimartingale}. Finally, from \eqref{B.1} and \cite[Lemma 13.3.15 (\textit{ii})]{cohen2015stochastic} we have $M_{\mu^{X^{\natural,1}}}[\Delta X^{1,\circ}|\widetilde{\mathcal{P}}^{{\mathbb{F}}^1}] = 0$.
\end{example}

In view of the presented examples, we may assume a canonical space $\Omega^1$ such that $({\Omega}^1,{\mathcal{G}}^1,\mathbb{P}^1)$ is a probability space and ${\mathbb{F}}^1$ be the usual augmentation of the natural filtration of a pair $\overline{X}^1 := (X^{1,\circ},X^{1,\natural}) \in \mathcal{H}^2({\mathbb{F}}^1;\mathbb{R}^p) \times \mathcal{H}^{2,d}({\mathbb{F}}^1;\mathbb{R}^n)$ (defined on the canonical space $\Omega^1$)
, with $M_{\mu^{X^{1,\natural}}}[\Delta X^{1,\circ}|\widetilde{\mathcal{P}}^{{\mathbb{F}}^1}] = 0$, where $\mu^{X^{1,\natural}}$ is the random measure generated by the jumps of $X^{1,\natural}$. Additionally, let a random variable $\xi^1 \in \mathbb{L}^2_{\hat{\beta}}({\mathcal{F}}^1_T;\mathbb{R}^d)$ for a deterministic time $T$, which will be assumed fixed from now on.

Then, let $\left\{({\Omega}^i,{\mathcal{G}}^i,{\mathbb{F}}^i,\mathbb{P}^i)\right\}_{i \in \mathbb{N}}$ be copies of the stochastic base $({\Omega}^1,{\mathcal{G}}^1,{\mathbb{F}}^1,\mathbb{P}^1)$, $\{\overline{X}^i := (X^{i,\circ},X^{i,\natural})\}_{i \in \mathbb{N}}$ the corresponding copies of $(X^{1,\circ},X^{1,\natural})$ and $\{\xi^i\}_{i \in \mathbb{N}}$ the corresponding copies of $\xi^1$.
We define the product probability space $\big(\prod_{i = 1}^{\infty} {\Omega}^i, 
\bigotimes_{i = 1}^{\infty}{\mathcal{G}}^i,
\bigotimes_{i = 1}^{\infty} \mathbb{P}^i\big)$. 
We denote by $\widehat{\mathbb{F}}^i$ the augmented natural filtration of the pair $\overline{X}^i$ in the product space $\prod_{i = 1}^{\infty} {\Omega}^i$ under the probability measure $\mathbb{P} := \bigotimes_{i = 1}^{\infty} \mathbb{P}^i$.  
Because the pair $\overline{X}^i$ depends only on ${\omega}^i$ we have
\begin{align}\label{A.3.}
 \widehat{\mathbb{F}}^i 
 = \Big(\mathbb{F}^i \times \prod_{m \in \mathbb{N}\setminus \{i\}}^{\infty} \Omega^m\Big) \bigvee \mathcal{N},
\end{align}
where $\mathcal{N}$ is the $\sigma$--algebra generated from the subsets of the null sets under the measure $\mathbb{P}$.
Using the methods of \cref{cor:same_stoch_integ_wrtIVRM} we get that $\overline{X}^i \in \mathcal{H}^2(\widehat{\mathbb{F}}^i;\mathbb{R}^p) \times \mathcal{H}^{2,d}(\widehat{\mathbb{F}}^i;\mathbb{R}^n)$. 
To prove that $M_{\mu^{X^{i,\natural}}}[\Delta X^{i,\circ}|\widetilde{\mathcal{P}}^{\widehat{\mathbb{F}}^i}]= 0$ we work as in the end of the proof of \cref{lem:conserv_laws}. 
So, let $\{\tau_k\}_{k \in \mathbb{N}}$ be a sequence of disjoint $\mathbb{F}^i-$stopping times that exhausts the jumps of $X^{i,\natural}$ and also satisfies the assumptions of \cite[Lemma 13.3.15 (\textit{ii})]{cohen2015stochastic}; it is known that such a sequence always exists for every $\mathbb{F}^i-$adapted, c\`adl\`ag process, \emph{e.g.}, see \cite[Definition I.1.30, Proposition I.1.32]{jacod2013limit}.
Of course, the aforementioned stopping times when viewed in the product space depend only on $\omega^i$.
Moreover, $X^{i,\circ}$ is also an $\mathbb{F}^i-$martingale. 
Hence, $\Delta X^{i,\circ}_{\tau_k}$ will be measurable with respect to $\mathcal{F}^i_{\infty}$, for every $k \in \mathbb{N}$. 
If we denote by $\widehat{\mathcal{F}}^{i}_{\tau_k -}$ the $\sigma-$algebra of events occurring strictly before the stopping time $\tau_k$ produced under the filtration $\widehat{\mathbb{F}}^{i}$
and with $\mathcal{F}^{i}_{\tau_k -}$ the respective $\sigma-$algebra under the filtration $\mathbb{F}^i$
,
then from \eqref{A.3.} we have 
\begin{gather*}
    \widehat{\mathcal{F}}^{i}_{\tau_k -} = \Big(\mathcal{F}^{i}_{\tau_k -}
    \times \prod_{m \in \mathbb{N}\setminus \{i\}}^{\infty}
    \Omega^m \Big)\bigvee \mathcal{N}
    \shortintertext{and}
    \widehat{\sigma\Big(\Delta X^{i,\natural}_{\tau_k}\Big)} = \sigma(\Delta X^{i,\natural}_{\tau_k}) \times \prod_{m \in \mathbb{N}\setminus \{i\}}^{\infty} \Omega^m .
\end{gather*}
Then, because $\mathcal{N}$ is independent from any other sub $\sigma$--algebra of $\overline{\bigotimes_{i = 1}^{\infty}{\mathcal{G}}^i}$, where we denoted by $\overline{\bigotimes_{i = 1}^{\infty}{\mathcal{G}}^i}$ the completion under the measure $
\mathbb{P}$, we get
\begin{align*}
    &\mathbb{E}^{\mathbb{P}}
    \Big[\Delta X^{i,\circ}_{\tau_k}\Big|
    \widehat{\mathcal{F}}^{i}_{\tau_k -} 
    \bigvee \widehat{\sigma\Big(\Delta X^{i,\natural}_{\tau_k}\Big)} \Big]\\
    &= \mathbb{E}^{\mathbb{P}}
    \Big[\Delta X^{i,\circ}_{\tau_k}\Big|\Big(\big(\mathcal{F}^{i}_{\tau_k -} \bigvee \sigma(\Delta X^{i,\natural}_{\tau_k})\big) \times \prod_{m \in \mathbb{N}\setminus \{i\}}^{\infty} \Omega^m \Big) \bigvee \mathcal{N}\Big]\\
    &= \mathbb{E}^{
    \mathbb{P}}\left[\Delta X^{i,\circ}_{\tau_k}\Bigg|\left(\mathcal{F}^{i}_{\tau_k -} \bigvee \sigma(\Delta X^{i,\natural}_{\tau_k})\right) \times \prod_{m \in \mathbb{N}\setminus \{i\}}^{\infty} \Omega^m \right]\\
    &= \mathbb{E}^{\mathbb{P}^i}\left[\Delta X^{i,\circ}_{\tau_k}\Bigg|\mathcal{F}^{i}_{\tau_k -} \bigvee \sigma(\Delta X^{i,\natural}_{\tau_k}) \right] (\omega^i)\\
    &= 0,
\end{align*}
where we used \cite[Section 9.7, Property (k) on p. 88]{williams1991probability} in the second equality and
\cite[Lemma 13.3.15 (\textit{ii})]{cohen2015stochastic} in the last equality.
 
Lastly, note from \eqref{A.3.} that the sequence $\{\widehat{\mathbb{F}}^i\}_{i\in\mathbb{N}}$ consists of independent filtrations of $\big(\prod_{i = 1}^{\infty} {\Omega}^i,
\overline{\bigotimes_{i = 1}^{\infty}{\mathcal{G}}^i},
\mathbb{P}\big)$, where we abused notation and denoted the extended measure again with $\mathbb{P}$.
 
Next, for every $i \in \mathbb{N}$ define the bimeasurable bijections $g^i :\big(\prod_{i = 1}^{\infty} {\Omega}^i,\overline{\bigotimes_{i = 1}^{\infty}
\mathcal{G}^i}\big) {}\longrightarrow \big(\prod_{i = 1}^{\infty} {\Omega}^i,
\overline{\bigotimes_{i = 1}^{\infty}
\mathcal{G}}^i\big)$ by 
\begin{align*}
    g^i((\omega^1,
    \omega^2, \dots,
    \omega^{i-1},
    \omega^i,
    \omega^{i+1},\dots))
    := (\omega^i,
    \omega^2, \dots,
    \omega^{i-1},
    \omega^1,
    \omega^{i+1},\dots),
\end{align*}
\emph{i.e.}, the function $g^i$ switches the places of ${\omega}^i$ and $\omega^1$. 
It is easy to check the following properties of the sequence $\{g^i\}_{i \in \mathbb{N}}$:
\begin{enumerate}
    \item For every $i \in \mathbb{N}$, we have $g^i \circ g^i = \textrm{Id}_{\prod_{i = 1}^{\infty} \Omega^i}$,
    where $\textrm{Id}_{\prod_{i = 1}^{\infty} \Omega^i}$ is the identity function.
    
    \item For every $i \in \mathbb{N}$ and for every $t \in \mathbb{R}_+,$ 
    we have $ g^i\big(\widehat{\mathcal{F}}^1_t\big)^{-1} = \widehat{\mathcal{F}}^i_t$,



    \item\label{iv} For every $i \in \mathbb{N}$ and for every $A \in 
    \overline{\bigotimes_{i = 1}^{\infty}{\mathcal{G}}^i}$, 
    we have $\mathbb{P}(A) =
    \mathbb{P}(g^i(A)^{-1})$.

    \item  For every $i \in \mathbb{N}$, we have $\overline{X}^i := \overline{X}^1\circ (g^i,\textrm{Id}_{\mathbb{R}_+})$ and $\xi^i := \xi^1 \circ g^i$.
\end{enumerate}
Now, for every $i \in \mathbb{N}$ we have that
\begin{align*}
    \mathcal{P}^{\widehat{\mathbb{F}}^i} = \sigma\left(\left\{A_t \times (t,\infty) : t \in \mathbb{R}_+, A_t \in \widehat{\mathcal{F}}^i_t\right\} \bigcup \left\{A_0 \times \{0\} : A_0 \in \widehat{\mathcal{F}}^i_0 \right\}\right).
\end{align*}
So, from \eqref{A.3.} and \ref{iv} we have that 
\begin{align*}
   Z \in \mathcal{P}^{\widehat{\mathbb{F}}^i} &\Longleftrightarrow  Z \circ (g^i,\textrm{Id}_{\mathbb{R}_+}) \in \mathcal{P}^{\widehat{\mathbb{F}}^1}
   \shortintertext{and}
    M \in \mathcal{H}^2(\widehat{\mathbb{F}}^i;\mathbb{R}^d)
    &\Longleftrightarrow M \circ (g^i,\textrm{Id}_{\mathbb{R}_+}) \in \mathcal{H}^2(\widehat{\mathbb{F}}^1;\mathbb{R}^d).
\end{align*}
From the above properties one can show that
\begin{align*}
   \langle X^{i,\circ} \rangle ^{\widehat{\mathbb{F}}^i} = \langle X^{1,\circ} \rangle ^{\widehat{\mathbb{F}}^1} \circ (g^i,\textrm{Id}_{\mathbb{R}_+}) \hspace{0.4cm} \text{and} \hspace{0.4cm} |I|^2 * \nu^{(\widehat{\mathbb{F}}^i,X^{i,\natural})} = |I|^2 * \nu^{(\widehat{\mathbb{F}}^1,X^{1,\natural})} \circ (g^i,\textrm{Id}_{\mathbb{R}_+}).
\end{align*}
Hence, from \eqref{def_C} we have $C^{(\widehat{\mathbb{F}}^i,\overline{X}^i)} = C^{(\widehat{\mathbb{F}}^1,\overline{X}^1)}\circ (g^i,\textrm{Id}_{\mathbb{R}_+})$ and $
    b^i = b^1 \circ (g^i,\textrm{Id}_{\mathbb{R}_+})$.
    
Assuming $\ref{H3}-\ref{H:prop_contraction}$, note that $\mathcal{E}\left(\hat{\beta} A^{\overline{X}^i}\right)_T = \mathcal{E}\left(\hat{\beta} A^{\overline{X}^1}\right)\circ (g^i,T)$, from the existence and uniqueness \cref{thm:MVBSDE_initial_path} and \cref{thm:MVBSDE_instantaneous_second} due to symmetry we have that for all $i \in \mathbb{N}$
\begin{align*}
  Y^i = Y^1 \circ (g^i,\textrm{Id}_{\mathbb{R}_+}).
\end{align*}
%


\subsection{Auxiliary results}\label{subsec_app:tehnical_lemmata}

In this subsection, we will present some useful technical lemmata and their proofs.
To this end, let $\mathbb{G}$ and $\mathbb{H}$ be filtrations on the probability space $(\Omega,\mathcal{G}^{\circ},\mathbb{P})$ such that $\mathbb{G}$ is immersed in $\mathbb{H}$ and both satisfy the usual conditions. 

\begin{remark}
Special cases of the results presented below appear in \citet{di2022propagation}. 
Although these are sufficient for our purposes, we present here the more general results for completeness.
\end{remark}

\begin{lemma}\label{lem:equalities_integrals}
Let $U \in \widetilde{\mathcal{P}}^{\mathbb{G}}_+$ and $g \in \left(\mathcal{G} \otimes \mathcal{B}(\mathbb{R}_+)\right)_+$.
Consider $C^{(\mathbb{H},\overline{X})}$ as defined in \eqref{def_C} and $K^{(\mathbb{G},\overline{X})}$, $K^{(\mathbb{H},\overline{X})}$ as defined in \eqref{def:Kernels}, 
for every pair $\overline{X}:=(X^\circ,X^\natural)\in \mathcal{H}^2(\mathbb{G};\mathbb{R}^p)\times\mathcal{H}^{2}(\mathbb{G};\mathbb{R}^n)$. 
Then, we have that 
\begin{enumerate}[label=(\roman*)]
    \item \label{eq_int_1} for $\mathbb{P} \otimes C^{(\mathbb{H},\overline{X})}-a.e.$ $(\omega,t)\in\Omega\times \mathbb{R}_+$
\begin{align*}
    \int_{\mathbb{R}^n} U(\omega,t,x)\hspace{0.1cm}K^{(\mathbb{G},\overline{X})}(\omega,t,\ud x) =  \int_{\mathbb{R}^n} U(\omega,t,x)\hspace{0.1cm}K^{(\mathbb{H},\overline{X})}(\omega,t,\ud x).
\end{align*}

\item \label{eq_int_2}  
\begin{align*}
    \int_{\mathbb{R}_+ \times \mathbb{R}^n} g(\omega,t)\hspace{0.1cm}U(\omega, t,x)\hspace{0.1cm}\nu^{(\mathbb{G},X^{\natural})}(\omega,\ud t,\ud x) 
    =  \int_{\mathbb{R}_+ \times \mathbb{R}^n} g(\omega,t)\hspace{0.1cm}U(\omega,t,x)\hspace{0.1cm}\nu^{(\mathbb{H},X^{\natural})}(\omega,\ud t,\ud x), \hspace{0.1cm} \mathbb{P}-a.e.
\end{align*}

\item\label{eq_int_3}
    $U * \nu^{(\mathbb{G},X^{\natural})} =  U * \nu^{(\mathbb{H},X^{\natural})}$, up to evanescence.

\item\label{eq_int_4} 
$    \widehat{U}^{(\mathbb{G},X^{\natural})} = \widehat{U}^{(\mathbb{H},X^{\natural})}$, up to evanescence; see \eqref{hat_U} for their definition.

\item\label{eq_int_5} 
$    \zeta^{(\mathbb{G},X^{\natural})} = \zeta^{(\mathbb{H},X^{\natural})}$,
up to evanescence;  see \eqref{hat_zeta} for their definition.

\end{enumerate}
\end{lemma}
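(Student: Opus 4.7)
The plan hinges on a reinforcement of \cref{rem:immersion_no_change_in_comp}: for every $W \in \widetilde{\mathcal{P}}^{\mathbb{G}}_+$ whose integral $W * \nu^{(\mathbb{G},X^\natural)}$ is locally $\mathbb{P}$-integrable, the two processes $W * \nu^{(\mathbb{G},X^\natural)}$ and $W * \nu^{(\mathbb{H},X^\natural)}$ are indistinguishable. The justification is exactly the one used in that remark, now applied to $W$ instead of $|I|^2$: the process $W * \mu^{X^\natural} - W * \nu^{(\mathbb{G},X^\natural)}$ is a local $\mathbb{G}$-martingale by definition of the compensator, hence a local $\mathbb{H}$-martingale by immersion; since $W * \nu^{(\mathbb{G},X^\natural)}$ is $\mathbb{G}$-predictable and therefore $\mathbb{H}$-predictable, uniqueness of the dual predictable projection of $W * \mu^{X^\natural}$ under $\mathbb{H}$ identifies it with $W * \nu^{(\mathbb{H},X^\natural)}$. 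I would also invoke the remark itself to use $C^{(\mathbb{G},\overline{X})} = C^{(\mathbb{H},\overline{X})}$ repeatedly.

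For \cref{eq_int_1}, I would specialize the reinforced identity to $W(\omega,t,x) := \mathds{1}_B(\omega,t)\, h(x)$ with $B \in \mathcal{P}^{\mathbb{G}}$ and $h \colon \mathbb{R}^n \to \mathbb{R}_+$ bounded Borel satisfying $h \leq |I|^2$. Disintegrating $\nu^{(\cdot)}$ through its kernel and using $C^{(\mathbb{G},\overline{X})} = C^{(\mathbb{H},\overline{X})}$, the equality of the two processes evaluated at every $t$ yields that, $\mathbb{P}$-a.s., the two $\mathbb{R}_+$-measures $\left(\int h\, K^{(\mathbb{G},\overline{X})}\right)\ud C$ and $\left(\int h\, K^{(\mathbb{H},\overline{X})}\right)\ud C$ coincide; hence $\int h\, K^{(\mathbb{G},\overline{X})}(\omega,t,\ud x) = \int h\, K^{(\mathbb{H},\overline{X})}(\omega,t,\ud x)$ for $\mathbb{P} \otimes C^{(\mathbb{H},\overline{X})}$-a.e.\ $(\omega,t)$. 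Letting $h$ range over a countable determining class on $\mathbb{R}^n$ and intersecting countably many exceptional null sets, I would conclude that $K^{(\mathbb{G},\overline{X})}(\omega,t,\cdot) = K^{(\mathbb{H},\overline{X})}(\omega,t,\cdot)$ as Radon measures, $\mathbb{P} \otimes C^{(\mathbb{H},\overline{X})}$-a.e.; integrating an arbitrary $U \in \widetilde{\mathcal{P}}^{\mathbb{G}}_+$ against these now-identical kernels delivers \cref{eq_int_1}.

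For \cref{eq_int_2}, a Fubini argument converts the $\mathbb{P}\otimes C^{(\mathbb{H},\overline{X})}$-null exceptional set from \cref{eq_int_1} into the statement that, for $\mathbb{P}$-a.e.\ $\omega$, its $t$-section has $C^{(\mathbb{H},\overline{X})}_{\cdot}(\omega)$-measure zero. On this full-probability set the disintegration of $\nu^{(\cdot)}$ and Tonelli's theorem, applied to the non-negative integrand $g(\omega,\cdot)\,U(\omega,\cdot,\cdot)$, identify $\int gU\,\ud\nu^{(\mathbb{G},X^\natural)}$ with $\int gU\,\ud\nu^{(\mathbb{H},X^\natural)}$. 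Statement \cref{eq_int_3} then follows from \cref{eq_int_2} by taking $g(\omega,t) := \mathds{1}_{(0,t_0]}(t)$ with $t_0$ ranging over a countable dense subset of $\mathbb{R}_+$: both sides are non-decreasing and right-continuous in $t$, so coincidence at every rational $t_0$ on a common full-probability event extends by right-continuity to indistinguishability.

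The delicate point is \cref{eq_int_4}, because a $\mathbb{P} \otimes C^{(\mathbb{H},\overline{X})}$-a.e.\ statement is \emph{a priori} blind to the atoms of $C^{(\mathbb{H},\overline{X})}$, which is exactly where $\widehat{U}$ lives. The resolution I have in mind is to exhaust the jump times of $C^{(\mathbb{G},\overline{X})}$ by a sequence $\{\sigma_n\}_{n\in\mathbb{N}}$ of $\mathbb{G}$-predictable stopping times; each graph $\llbracket \sigma_n \rrbracket \cap \{\Delta C_{\sigma_n} > 0\}$ carries $\mathbb{P}\otimes C^{(\mathbb{H},\overline{X})}$-mass $\mathbb{E}\bigl[\Delta C_{\sigma_n}\mathds{1}_{\{\sigma_n<\infty\}}\bigr]$, so any $\mathbb{P}\otimes C^{(\mathbb{H},\overline{X})}$-null set, in particular the one underlying the strengthened \cref{eq_int_1}, can meet each graph only on a $\mathbb{P}$-null subset of $\omega$. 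After a countable union, for $\mathbb{P}$-a.e.\ $\omega$ one has $K^{(\mathbb{G},\overline{X})}(\omega,\sigma_n(\omega),\cdot) = K^{(\mathbb{H},\overline{X})}(\omega,\sigma_n(\omega),\cdot)$ for every $n$; multiplying by $\Delta C_{\sigma_n}(\omega)$ and integrating against $U(\omega,\sigma_n(\omega),\cdot)$ produces \cref{eq_int_4}. Finally, \cref{eq_int_5} is just \cref{eq_int_4} applied to the constant function $U \equiv 1 \in \widetilde{\mathcal{P}}^{\mathbb{G}}_+$.
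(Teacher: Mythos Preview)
Your proof is correct and rests on the same core observation as the paper: immersion of $\mathbb{G}$ in $\mathbb{H}$ forces the $\mathbb{G}$-compensator $W*\nu^{(\mathbb{G},X^\natural)}$ (for suitable $W\in\widetilde{\mathcal{P}}^{\mathbb{G}}_+$) to be the $\mathbb{H}$-compensator as well, by the uniqueness of dual predictable projections. The difference is in how this is unpacked. The paper localizes $U$ directly via a $\sigma$-finiteness partition $\{A_m\}$ of $\widetilde{\Omega}$, obtains the process-level identities $(U\mathds{1}_{A_m})*\nu^{(\mathbb{G},X^\natural)}=(U\mathds{1}_{A_m})*\nu^{(\mathbb{H},X^\natural)}$ up to indistinguishability, and then reads off \ref{eq_int_1}--\ref{eq_int_3} by disintegration and monotone convergence; in particular \ref{eq_int_4} falls out immediately as the jump process of these identified processes, via $\widehat{U}^{(\cdot,X^\natural)}=\lim_m\Delta\big((U\mathds{1}_{A_m})*\nu^{(\cdot,X^\natural)}\big)$. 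Your route instead first upgrades the compensator identity to the statement that the kernels $K^{(\mathbb{G},\overline{X})}$ and $K^{(\mathbb{H},\overline{X})}$ coincide as measures $\mathbb{P}\otimes C$-a.e.\ (using a countable determining class of test functions dominated by a multiple of $|I|^2$), and then recovers \ref{eq_int_1}--\ref{eq_int_3} from this; for \ref{eq_int_4} you correctly isolate the potential gap---a $\mathbb{P}\otimes C$-a.e.\ statement does not automatically speak about fixed times---and close it by observing that the jump times of $C$, being exhausted by a sequence of predictable stopping times, carry positive $\mathbb{P}\otimes C$-mass on $\{\Delta C>0\}$ and hence inherit the kernel identification $\mathbb{P}$-a.s. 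Your intermediate ``kernels agree'' statement is conceptually clean and slightly stronger than what the paper writes down, at the price of the determining-class step and the atom argument; the paper's path to \ref{eq_int_4} is more direct.
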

\begin{proof} 
Let us fix $\overline{X}:=(X^\circ,X^\natural)\in \mathcal{H}^2(\mathbb{G};\mathbb{R}^p)\times\mathcal{H}^{2}(\mathbb{G};\mathbb{R}^n)$. 
We remind the reader that the immersion of the filtrations implies that $C^{(\mathbb{G},\overline{X})} = C^{(\mathbb{H},\overline{X})}$; see \cref{rem:immersion_no_change_in_comp}.
Therefore, we may simplify the notation and simply write $C$ for 
$C^{(\mathbb{H},\overline{X})}$.
We proceed to prove our claims:
\begin{enumerate}
    \item[{\ref{eq_int_1}}] 
    We consider $\{B_m\}_{m \in \mathbb{N}}\subseteq \widetilde{\mathcal{P}}^{\mathbb{G}}$ to be a partition of $\Omega \times \mathbb{R}_+ \times \mathbb{R}^n$ that makes $M_{\mu^{X^{\natural}}}$ $\sigma-$integrable with respect to $\widetilde{\mathcal{P}}^{\mathbb{G}}$.
    Then, we define the sequence $\{A_m\}_{m \in \mathbb{N}}$ as $A_m := \left(\bigcup_{k = 1}^m B_k\right) \hspace{0.05cm} \bigcap \hspace{0.05cm}\{|U| \leq m\}$; 
    of course $\mathds{1}_{A_m} \nearrow 1$ for every $(\omega,t,x)$.
For every $m \in \mathbb{N}$, we have that $\left(U\mathds{1}_{A_m}\right) * \mu^{X^{\natural}} - \left(U\mathds{1}_{A_m}\right) * \nu^{(\mathbb{G},X^{\natural})}$ is a $\mathbb{G}-$martingale of  finite variation,
while $\left(U\mathds{1}_{A_m}\right) * \mu^{X^{\natural}} - \left(U\mathds{1}_{A_m}\right) * \nu^{(\mathbb{H},X^{\natural})}$ is an $\mathbb{H}-$martingale of  finite variation. 
    In view of the immersion property, \emph{i.e.}, every $\mathbb{G}-$martingale is also an $\mathbb{H}-$martingale, 
    $\left(U\mathds{1}_{A_m}\right) * \nu^{(\mathbb{G},X^{\natural})} - \left(U\mathds{1}_{A_m}\right) * \nu^{(\mathbb{H},X^{\natural})}$ is a predictable, $\mathbb{H}-$martingale of  finite variation starting at $0$.
In other words, it is 0 up to indistinguishability for every $m\in\mathbb{N}$, which equivalently reads
\begin{align}
    \left(U\mathds{1}_{A_m}\right) * \nu^{(\mathbb{G},X^{\natural})} = \left(U\mathds{1}_{A_m}\right) * \nu^{(\mathbb{H},X^{\natural})}
    \label{eq:pred_comp_for_U_and_m}
\end{align}
up to indistinguishability for every $m\in\mathbb{N}$.
By \eqref{def:Kernels} 
we get
\begin{align*}
&\int_{\mathbb{R}_+}\mathds{1}_{[0,s]}(t)
\int_{\mathbb{R}^n} U(\omega,t,x)\mathds{1}_{A_m}(\omega,t,x)\hspace{0.1cm}K^{(\mathbb{G},\overline{X})}(\omega,t,\ud x)\ud C_t\\
&\hspace{1em}=
\int_{\mathbb{R}_+}\mathds{1}_{[0,s]}(t)
\int_{\mathbb{R}^n} U(\omega,t,x)\mathds{1}_{A_m}(\omega,t,x)\hspace{0.1cm}K^{(\mathbb{H},\overline{X})}(\omega,t,\ud x)\ud C_t,
\end{align*}
up to evanescence,
for every $s \in \mathbb{Q}_+$ and $m\in\mathbb{N}$.
Recalling that $\{[0,s]\}_{s \in \mathbb{Q}_+}$ is a $\pi$-system whose $\lambda-$system produces $\mathcal{B}(\mathbb{R}_+)$, by an application of Dynkin's lemma we can replace $[0,s]$ in the above equality with any set $D \in \mathcal{B}(\mathbb{R}_+)$. 
Using the monotone convergence theorem, with respect to the sequence $\{\mathds{1}_{A_m}\}_{m\in\mathbb{N}}$, we get the desired result.
\item[{\ref{eq_int_2}}]
    Immediate from {\ref{eq_int_1}} and the disintegration formula \eqref{def:Kernels}.
\item[{\ref{eq_int_3}}]
Immediate from \eqref{eq:pred_comp_for_U_and_m} by means of monotone convergence.
\item[{\ref{eq_int_4}}]
 Immediate from \eqref{eq:pred_comp_for_U_and_m} because from \citet[5.27 Theorem, 2) and 11.11 Theorem]{he2019semimartingale} we have by monotone convergence
\begin{align*}
     \widehat{U}^{(\mathbb{G},X^{\natural})} &= \lim_{m \rightarrow \infty} \Delta\left((U\mathds{1}_{A_m}) * \nu^{(\mathbb{G},X^{\natural})}\right)\\
     \shortintertext{and}
     \widehat{U}^{(\mathbb{H},X^{\natural})} &= \lim_{m \rightarrow \infty} \Delta\left((U\mathds{1}_{A_m}) * \nu^{(\mathbb{H},X^{\natural})}\right).
 \end{align*}
%
\item[{\ref{eq_int_5}}]
This is a direct consequence of \ref{eq_int_4} for $U = 1$.
\end{enumerate}

\end{proof}


\begin{corollary}\label{cor:same_stoch_integ_wrtIVRM}
    Let
    $Z \in \mathbb{H}^{2}(\mathbb{G},X^{\circ};\mathbb{R}^{d \times p})$, then 
    $(Z\cdot X^{\circ})^{\mathbb{G}} =(Z\cdot X^{\circ})^{\mathbb{H}}$, up to evanescence.
    Moreover, let
    $U \in G_2(\mathbb{G},\mu^{X^{\natural}})$, 
    then $U\star\widetilde{\mu}^{(\mathbb{G},X^{\natural})} = U\star\widetilde{\mu}^{(\mathbb{H},X^{\natural})}$, up to evanescence.
    In particular, for $\mathbb{R}^n \ni x \overset{\textup{Id}}{\longmapsto} x\in \mathbb{R}^n$ we have that  $X^{\natural} = \textup{Id}\star  \widetilde{\mu}^{(\mathbb{G},X^{\natural})} 
= \textup{Id}\star \widetilde{\mu}^{(\mathbb{H},X^{\natural})}$.
\end{corollary}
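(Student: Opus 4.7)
The plan is to exploit immersion together with the invariance of the relevant compensators and kernels established in \cref{rem:immersion_no_change_in_comp} and \cref{lem:equalities_integrals}, so that each of the two integrals is characterized in a filtration-free way; the two candidate martingales (built under $\mathbb{G}$ and under $\mathbb{H}$) will then coincide because (i) the $\mathbb{G}$-integral is automatically an $\mathbb{H}$-martingale by immersion, and (ii) the data characterizing the integral uniquely are identical under both filtrations.

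For the first claim, $Z\in\mathbb{H}^2(\mathbb{G},X^\circ;\mathbb{R}^{d\times p})$ is in particular $\mathcal{P}^{\mathbb{G}}$-measurable, hence $\mathcal{P}^{\mathbb{H}}$-measurable. By \cref{rem:immersion_no_change_in_comp} we have $\langle X^\circ\rangle^{\mathbb{G}}=\langle X^\circ\rangle^{\mathbb{H}}$ up to evanescence and $X^\circ\in\mathcal{H}^2(\mathbb{H};\mathbb{R}^p)$, so $Z\in\mathbb{H}^2(\mathbb{H},X^\circ;\mathbb{R}^{d\times p})$ with the same norm. I will first treat elementary (simple) predictable processes of the form $Z_t=\zeta\,\mathds{1}_{(s,u]}(t)$ with $\zeta$ bounded and $\mathcal{G}_s$-measurable (hence $\mathcal{H}_s$-measurable): for such $Z$ both integrals reduce to the same pathwise expression $\zeta(X^\circ_{u\wedge\cdot}-X^\circ_{s\wedge\cdot})$, hence they agree. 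Extending by linearity to simple predictable processes and then using the It\^o isometry together with density of simple processes in $\mathbb{H}^2(\mathbb{G},X^\circ)$ (which, by the equality of norms, is also density in $\mathbb{H}^2(\mathbb{H},X^\circ)$), we obtain $\mathcal{H}^2$-convergence of both integrals to the same limit. Because $\mathcal{H}^2$-convergence yields uniform convergence in probability (via Doob's inequality) and hence a subsequence converging uniformly almost surely, indistinguishability of the two integrals follows.

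For the second claim, the measure $\mu^{X^\natural}$ is defined from the jumps of $X^\natural$ via \eqref{random measure}, thus does not depend on any filtration. By \cref{lem:equalities_integrals}\ref{eq_int_4}, $\widehat{U}^{(\mathbb{G},X^\natural)}=\widehat{U}^{(\mathbb{H},X^\natural)}$ up to evanescence, so membership in $G_2$ is filtration-independent, i.e.\ $U\in G_2(\mathbb{G},\mu^{X^\natural})=G_2(\mathbb{H},\mu^{X^\natural})$. Both $N^{\mathbb{G}}:=U\star\widetilde{\mu}^{(\mathbb{G},X^\natural)}$ and $N^{\mathbb{H}}:=U\star\widetilde{\mu}^{(\mathbb{H},X^\natural)}$ are purely discontinuous square--integrable martingales (under $\mathbb{G}$ and $\mathbb{H}$ respectively) whose jumps are characterized by
\[
\Delta N^{\cdot}_t = U(t,\Delta X^\natural_t)\,\mathds{1}_{\{\Delta X^\natural_t\neq 0\}} - \widehat{U}^{(\cdot,X^\natural)}_t,
\]
see \cite[Theorem 11.21]{he2019semimartingale}. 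The right-hand side is identical under $\mathbb{G}$ and $\mathbb{H}$, so $\Delta N^{\mathbb{G}}=\Delta N^{\mathbb{H}}$. By immersion, $N^{\mathbb{G}}$ is also an $\mathbb{H}$-martingale; moreover it remains purely discontinuous under $\mathbb{H}$, since any continuous $\mathbb{H}$-martingale $L$ satisfies $\langle N^{\mathbb{G}},L\rangle^{\mathbb{H}}=0$ (a purely discontinuous martingale is characterized by the vanishing of its covariation with every continuous martingale, and this follows from its jumps together with $\Delta N^{\mathbb{G}}\Delta L=0$). Hence $N^{\mathbb{G}}-N^{\mathbb{H}}$ is a purely discontinuous $\mathbb{H}$-martingale starting at $0$ with vanishing jumps, and is therefore the zero process up to indistinguishability.

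Finally, the identity map $\textrm{Id}:\mathbb{R}^n\to\mathbb{R}^n$ belongs to $G_2(\mathbb{G},\mu^{X^\natural})$ because $X^\natural\in\mathcal{H}^{2,d}(\mathbb{G};\mathbb{R}^n)$ implies $|I|^2*\nu^{(\mathbb{G},X^\natural)}_\infty<\infty$; the identity $X^\natural=\textrm{Id}\star\widetilde{\mu}^{(\mathbb{G},X^\natural)}$ is the standard canonical representation of a purely discontinuous square--integrable martingale (comparing jumps and using purely discontinuous martingale uniqueness exactly as in the previous paragraph). The equality with $\textrm{Id}\star\widetilde{\mu}^{(\mathbb{H},X^\natural)}$ is then a special case of the claim just established. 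The only nontrivial point is the filtration-free characterization of the compensated-jump integral by its jumps; this is where \cref{lem:equalities_integrals}\ref{eq_int_4} does the decisive work, and everything else is routine.
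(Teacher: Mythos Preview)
Your proof is correct and follows essentially the same route as the paper. Both arguments reduce the compensated-jump-measure claim to showing (i) $N^{\mathbb{G}}:=U\star\widetilde{\mu}^{(\mathbb{G},X^\natural)}$ is an $\mathbb{H}$-martingale that is purely discontinuous under $\mathbb{H}$, and (ii) its jumps agree with those of $N^{\mathbb{H}}$ via \cref{lem:equalities_integrals}\ref{eq_int_4}, so that the difference is a purely discontinuous $\mathbb{H}$-martingale with vanishing jumps, hence zero.

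The only notable difference is in step (i). You argue via orthogonality to continuous $\mathbb{H}$-martingales, writing that $\langle N^{\mathbb{G}},L\rangle^{\mathbb{H}}=0$ ``follows from its jumps together with $\Delta N^{\mathbb{G}}\Delta L=0$''; as written this is slightly elliptic, since $\Delta N^{\mathbb{G}}\Delta L=0$ only kills the jump part of $[N^{\mathbb{G}},L]$ and one still needs the continuous part to vanish. The cleanest way to close this---and what the paper does---is a direct norm computation: since $N^{\mathbb{G}}\in\mathcal{H}^{2,d}(\mathbb{G};\mathbb{R}^d)$, \cite[Theorem~6.23]{he2019semimartingale} gives $\|N^{\mathbb{G}}\|_{\mathcal{H}^2}^2=\mathbb{E}\big[\sum_t|\Delta N^{\mathbb{G}}_t|^2\big]=\|(N^{\mathbb{G}})^{\mathbb{H},d}\|_{\mathcal{H}^2}^2$, and comparing with the orthogonal decomposition $\|N^{\mathbb{G}}\|_{\mathcal{H}^2}^2=\|(N^{\mathbb{G}})^{\mathbb{H},c}\|_{\mathcal{H}^2}^2+\|(N^{\mathbb{G}})^{\mathbb{H},d}\|_{\mathcal{H}^2}^2$ forces $(N^{\mathbb{G}})^{\mathbb{H},c}=0$. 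Equivalently, one can argue pathwise that $[N^{\mathbb{G}}]$ is a pure jump process (a filtration-free statement), so $\langle(N^{\mathbb{G}})^{\mathbb{H},c}\rangle=[N^{\mathbb{G}}]^c=0$. For the It\^o integral the paper is terser than you, simply noting that the construction uses only $\langle X^\circ\rangle$, which is unchanged by immersion; your simple-process density argument is a valid expansion of this.
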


\begin{proof}
The claim is immediate for the It\=o stochastic integrals from their definition, see \citet[Definition I.2.1]{jacod2013limit}, and the fact that $C^{(\mathbb{G},\overline{X})} = C^{(\mathbb{H},\overline{X})}$. 

As for the stochastic integrals with respect to the integer--valued measure $\mu^{X^{\natural}}$, let $U \in G_2(\mathbb{G},\mu^{X^{\natural}})$.
Then, we have
\begin{align*}
    \|U\star\widetilde{\mu}^{(\mathbb{G},X^{\natural})}\|^2_{\mathcal{H}^2(\mathbb{G};\mathbb{R}^d)} &= \mathbb{E}\Big[\Big|U\star\widetilde{\mu}^{(\mathbb{G},X^{\natural})}\Big|^2_{\infty}\Big] \\
    & = \|U\star\widetilde{\mu}^{(\mathbb{G},X^{\natural})}\|^2_{\mathcal{H}^2(\mathbb{H};\mathbb{R}^d)}\\
    &\overset{\eqref{SqIntMartDecomp}}= \left\|\left(U\star\widetilde{\mu}^{(\mathbb{G},X^{\natural})}\right)^{(\mathbb{H},c)}\right\|^2_{\mathcal{H}^2(\mathbb{H};\mathbb{R}^d)} + \left\|\left(U\star\widetilde{\mu}^{(\mathbb{G},X^{\natural})}\right)^{(\mathbb{H},d)}\right\|^2_{\mathcal{H}^2(\mathbb{H};\mathbb{R}^d)}.
\end{align*}
Note that we denoted with
\begin{align*}
\left(\left(U\star\widetilde{\mu}^{(\mathbb{G},X^{\natural})}\right)^{(\mathbb{H},c)},\left(U\star\widetilde{\mu}^{(\mathbb{G},X^{\natural})}\right)^{(\mathbb{H},d)}\right)
\end{align*}
the unique pair in $\mathcal{H}^{2,c}(\mathbb{H};\mathbb{R}^d) \times \mathcal{H}^{2,d}(\mathbb{H};\mathbb{R}^d)$ such that 
\begin{align*}
U\star\widetilde{\mu}^{(\mathbb{G},X^{\natural})} = \left(U\star\widetilde{\mu}^{(\mathbb{G},X^{\natural})}\right)^{(\mathbb{H},c)} + \left(U\star\widetilde{\mu}^{(\mathbb{G},X^{\natural})}\right)^{(\mathbb{H},d)}.
\end{align*} 
Using \citet[6.23 Theorem]{he2019semimartingale} we have that
\begin{align*}
\|U\star\widetilde{\mu}^{(\mathbb{G},X^{\natural})}\|^2_{\mathcal{H}^2(\mathbb{G};\mathbb{R}^d)} = \mathbb{E}\left[\sum_{t > 0}\left|\Delta \left(U\star\widetilde{\mu}^{(\mathbb{G},X^{\natural})}\right)_t\right|^2\right] = \left\|\left(U\star\widetilde{\mu}^{(\mathbb{G},X^{\natural})}\right)^{(\mathbb{H},d)}\right\|^2_{\mathcal{H}^2(\mathbb{H};\mathbb{R}^d)}.
\end{align*}
Hence, 
\begin{align*}
\left\|\left(U\star\widetilde{\mu}^{(\mathbb{G},X^{\natural})}\right)^{(\mathbb{H},c)}\right\|^2_{\mathcal{H}^2(\mathbb{H};\mathbb{R}^d)} = 0,
\end{align*}
and $U\star\widetilde{\mu}^{(\mathbb{G},X^{\natural})} \in \mathcal{H}^{2,d}(\mathbb{H};\mathbb{R}^d)$.

Next, from 
\cref{lem:equalities_integrals}.{\ref{eq_int_4}}
\begin{align*}
\Delta\big(U\star\widetilde{\mu}^{(\mathbb{G},X^{\natural})}\big)_t 
&= U(\omega,t,\Delta X^{\natural}_t)\mathds{1}_{\{\Delta X^{\natural}\neq 0\}} - \widehat{U}_t^{(\mathbb{G},X^{\natural})}\\
&= U(\omega,t,\Delta X^{\natural}_t)\mathds{1}_{\{\Delta X^{\natural}\neq 0\}} - \widehat{U}_t^{(\mathbb{H},X^{\natural})}\\
&=\Delta\big(U\star\widetilde{\mu}^{(\mathbb{H},X^{\natural})}\big)_t.
\end{align*} 
Hence, from the above equality, because $U\star\widetilde{\mu}^{(\mathbb{G},X^{\natural})} - U\star\widetilde{\mu}^{(\mathbb{H},X^{\natural})} \in \mathcal{H}^{2,d}(\mathbb{H};\mathbb{R}^d)$, using again \citet[6.23 Theorem]{he2019semimartingale} we conclude that
\begin{align*}
\|U\star\widetilde{\mu}^{(\mathbb{G},X^{\natural})} - U\star\widetilde{\mu}^{(\mathbb{H},X^{\natural})}\|^2_{\mathcal{H}^2(\mathbb{H};\mathbb{R}^d)} =  \mathbb{E}\left[\sum_{t > 0}\left|\Delta \left(U\star\widetilde{\mu}^{(\mathbb{G},X^{\natural})} - U\star\widetilde{\mu}^{(\mathbb{H},X^{\natural})}\right)_t\right|^2\right] = 0,
\end{align*}
thus $U\star\widetilde{\mu}^{(\mathbb{G},X^{\natural})} = U\star\widetilde{\mu}^{(\mathbb{H},X^{\natural})}$, up to indistinguishability.

Finally, for $U = \textup{Id}$ we get the last claim.
\end{proof}
%
%
%
%
%
%
%
%
%
%
%
%
%
%
%


\subsection{Conservation of solutions under immersion of filtrations}
\label{subsec:Conservation_of_solutions}

In this subsection we will identify the solutions of the McKean--Vlasov BSDE \eqref{MVBSDE_with_initial_path} when we fix all the elements of the standard data except for the filtrations.
\cref{lem:conserv_laws} makes precise the previous sentence.

Let us remind the reader of the necessary notation and terminology.
Let us fix $N \in \mathbb{N}$ and assume \ref{H1}--\ref{H:prop_contraction}.
For each $i\in\mathscr{N}$, the McKean--Vlasov BSDE \eqref{MVBSDE_with_initial_path} associated to the standard data
$\big(\overline{X}^i,\mathbb{F}^{i},\Theta,\Gamma,T,\xi^i,f\big)$ under $\hat{\beta}$ admits, by \cref{thm:MVBSDE_initial_path}, a unique solution, which will be denoted by $(Y^i,Z^i,U^i,M^i)$.
Moreover, for later reference, we will say that
$(\widetilde{\textbf{Y}}^N,
\widetilde{\textbf{Z}}^N,
\widetilde{\textbf{U}}^N,
\widetilde{\textbf{M}}^N)$ is the solution of the first $N$ McKean--Vlasov BSDEs, where we define 
\begin{align*}
  \widetilde{\textbf{Y}}^{N}:=(Y^1,\ldots,Y^n),\,  
  \widetilde{\textbf{Z}}^{N}:=(Z^1,\ldots,Z^n),\,  
  \widetilde{\textbf{U}}^{N}:=(U^1,\ldots,U^n)\text{\hspace{0.3em}and\hspace{0.3em}}
  \widetilde{\textbf{M}}^{N}:=(M^1,\ldots,M^n).  
\end{align*}
 
\begin{remark}\label{Remark_for_Gamma}
Let $i \in \mathscr{N}$. 
Under \ref{H1}, \ref{H3} and
 for $U \in \mathbb{H}^{2}(\mathbb{F}^i,X^{i,\natural};\mathbb{R}^d)$ 
 we have from \cref{lem:equalities_integrals}  that
\begin{align*}
    \Gamma^{(\mathbb{F}^i,\overline{X}^i,\Theta)}(U) = \Gamma^{(\mathbb{F}^{1,\dots,N},\overline{X}^i,\Theta^i)}(U), \hspace{0.5cm} \mathbb{P} \otimes C^{\overline{X}^i}-\text{a.e.}
\end{align*}
\end{remark}

\begin{lemma}[\textbf{Conservation of solutions}]\label{lem:conserv_laws}
Assume \textup{\ref{H1}}-\textup{\ref{H:prop_contraction}} and fix $N\in\mathbb{N}$ and $i \in \mathscr{N}$. 
The unique solution of the McKean--Vlasov BSDE \eqref{MVBSDE_with_initial_path} associated to the standard data $\big(\overline{X}^i,\mathbb{F}^i,\Theta,\Gamma,T,\xi^i,f \big)$ under $\hat{\beta}$, 
is also the unique solution of the McKean--Vlasov BSDE \eqref{MVBSDE_with_initial_path} associated to the standard data
$\big(\overline{X}^i,\mathbb{F}^{1,\dots,N},\Theta,\Gamma,T,\xi^i,f\big)$ under $\hat{\beta}$.
\end{lemma}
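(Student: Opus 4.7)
The plan is to proceed in three stages: first I would verify that $(\overline{X}^i,\mathbb{F}^{1,\dots,N},\Theta,\Gamma,T,\xi^i,f)$ indeed forms standard data under $\hat{\beta}$ so that \cref{thm:MVBSDE_initial_path} is applicable in the larger filtration; second I would show that the solution $(Y^i,Z^i,U^i,M^i)$ obtained in $\mathbb{F}^i$ actually satisfies the same BSDE when all objects are reinterpreted under $\mathbb{F}^{1,\dots,N}$; and finally I would invoke uniqueness in the larger filtration to conclude.

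For the first step, the bulk of the assumptions transfer immediately thanks to the immersion property. Indeed, $X^{i,\circ}$ and $X^{i,\natural}$ remain $\mathbb{F}^{1,\dots,N}$-martingales by immersion, and by \cref{cor:same_stoch_integ_wrtIVRM} combined with the characterization via jumps in \cite[6.23 Theorem]{he2019semimartingale} the purely-discontinuous character of $X^{i,\natural}$ is preserved. The crucial verification is the orthogonality relation $M_{\mu^{X^{i,\natural}}}[\Delta X^{i,\circ}\mid \widetilde{\mathcal{P}}^{\mathbb{F}^{1,\dots,N}}]=0$; this I would establish by exhausting the jumps of $X^{i,\natural}$ by an increasing sequence of disjoint $\mathbb{F}^i$-predictable times $\{\tau_k\}_{k\in\mathbb{N}}$ which remain $\mathbb{F}^{1,\dots,N}$-predictable, and applying \cite[Lemma 13.3.15]{cohen2015stochastic} twice: under $\mathbb{F}^i$ the hypothesis gives $\mathbb{E}[\Delta X^{i,\circ}_{\tau_k}\mid \mathcal{F}^i_{\tau_k-}\vee\sigma(\Delta X^{i,\natural}_{\tau_k})]=0$, and by independence of the filtrations $\{\mathbb{F}^j\}_{j\in\mathscr{N}}$ (see \cref{rem:comments_H_Cond}.\ref{rem:comments_H_Cond1}), the $\sigma$-algebra $\mathcal{F}^{1,\dots,N}_{\tau_k-}$ differs from $\mathcal{F}^i_{\tau_k-}$ only by the independent component $\bigvee_{j\neq i}\mathcal{F}^j_\infty$, so conditioning on the larger $\sigma$-algebra does not change the conditional expectation. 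In combination with \cref{rem:immersion_no_change_in_comp}, which guarantees that $C^{(\mathbb{F}^i,\overline{X}^i)}=C^{(\mathbb{F}^{1,\dots,N},\overline{X}^i)}$, all the remaining assumptions \ref{F2}--\ref{F7} immediately carry over.

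For the second step, I would reinterpret each term of the BSDE driven by $(Y^i,Z^i,U^i,M^i)$ in the new filtration. By \cref{cor:same_stoch_integ_wrtIVRM} the It\=o integral $(Z^i\cdot X^{i,\circ})^{\mathbb{F}^i}$ coincides with $(Z^i\cdot X^{i,\circ})^{\mathbb{F}^{1,\dots,N}}$, and $(U^i\star\widetilde{\mu}^{(\mathbb{F}^i,X^{i,\natural})})$ coincides with $(U^i\star\widetilde{\mu}^{(\mathbb{F}^{1,\dots,N},X^{i,\natural})})$. By \cref{Remark_for_Gamma}, the driver term $\Gamma^{(\mathbb{F}^i,\overline{X}^i,\Theta)}(U^i)$ is indistinguishable from $\Gamma^{(\mathbb{F}^{1,\dots,N},\overline{X}^i,\Theta)}(U^i)$ modulo $\mathbb{P}\otimes C^{(\overline{X}^i)}$. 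Thus the driving equation is the same equation after the change of filtration. The delicate point is to verify that $M^i\in\mathcal{H}^{2}(\mathbb{F}^i,\overline{X}^{i,\perp_{\mathbb{F}^i}};\mathbb{R}^d)$ remains in $\mathcal{H}^{2}(\mathbb{F}^{1,\dots,N},\overline{X}^{i,\perp_{\mathbb{F}^{1,\dots,N}}};\mathbb{R}^d)$: its martingale property under $\mathbb{F}^{1,\dots,N}$ follows from immersion, while orthogonality follows from \cref{prop:CharacterOrthogSpace}, since $\langle X^{i,\circ},M^i\rangle^{\mathbb{F}^i}=\langle X^{i,\circ},M^i\rangle^{\mathbb{F}^{1,\dots,N}}=0$ (both sides are continuous predictable parts and coincide by immersion, exactly as in \cref{rem:immersion_no_change_in_comp}), and $M_{\mu^{X^{i,\natural}}}[\Delta M^i\mid\widetilde{\mathcal{P}}^{\mathbb{F}^{1,\dots,N}}]=0$ is obtained by exactly the same stopping-time argument used above for $\Delta X^{i,\circ}$.

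Once all these identifications are in place, $(Y^i,Z^i,U^i,M^i)$ satisfies the BSDE associated to the septuple $(\overline{X}^i,\mathbb{F}^{1,\dots,N},\Theta,\Gamma,T,\xi^i,f)$ in the spaces required by \cref{thm:MVBSDE_initial_path}. Since \ref{H:prop_contraction} ensures the contraction condition of that theorem in $\mathbb{F}^{1,\dots,N}$, the solution in the larger filtration is unique, and must therefore coincide with the one constructed in $\mathbb{F}^i$. The main obstacle I anticipate is not conceptual but technical: rigorously transferring the orthogonality condition $M_{\mu^{X^{i,\natural}}}[\Delta\,\cdot\mid\widetilde{\mathcal{P}}^{\mathbb{F}^i}]=0$ to the enlarged filtration, because it requires carefully combining predictable-section arguments with the independence of the augmenting filtrations $\{\mathbb{F}^j\}_{j\neq i}$.
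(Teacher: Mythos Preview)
Your proposal is correct and follows essentially the same route as the paper: reduce everything via \cref{lem:equalities_integrals}, \cref{cor:same_stoch_integ_wrtIVRM} and \cref{Remark_for_Gamma} to checking $M^i\in\mathcal{H}^2({\overline{X}^i}^{\perp_{\mathbb{F}^{1,\dots,N}}})$, then verify the two conditions of \cref{prop:CharacterOrthogSpace} using immersion for the angle bracket and the Cohen--Elliott stopping-time characterization combined with independence of the filtrations $\{\mathbb{F}^j\}_{j\neq i}$ for the $M_{\mu}$-conditional expectation. One small slip: the exhausting sequence $\{\tau_k\}$ consists of $\mathbb{F}^i$-\emph{stopping} times (from \cite[Proposition I.1.32]{jacod2013limit}), not predictable times, and no predictable-section argument is needed---the tower property with the enlarged $\sigma$-algebra $\mathcal{F}^i_{\tau_k-}\vee\bigvee_{j\neq i}\mathcal{F}^j_\infty$ together with independence suffices directly.
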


\begin{proof}
Let us fix $N\in\mathbb{N}$ and $i\in\mathscr{N}$.
We denote by $(Y^{i},Z^{i},U^{i},M^{i})$ the solution of the McKean--Vlasov BSDE \eqref{MVBSDE_with_initial_path} associated to the standard data 
$\big(\overline{X}^i,\mathbb{F}^i,\Theta,\Gamma,T,\xi^i,f \big)$ under $\hat{\beta}$.
From \cref{lem:equalities_integrals},  \cref{cor:same_stoch_integ_wrtIVRM},
\cref{Remark_for_Gamma} and  \cref{thm:MVBSDE_initial_path} we deduce that it will be enough to show that $M^i \in \mathcal{H}^2({\overline{X}^i}^{\perp_{\mathbb{F}^{1,\dots,N}}})$. From \cref{prop:CharacterOrthogSpace} we will need to check that 
\begin{align}
\label{aux:requirements}
    \langle X^{i,\circ}, M^i\rangle^{\mathbb{F}^{1,\dots,N}} =0 \quad \text{ and } \quad M_{\mu^{X^{i,\natural}}}[\Delta M^i|\widetilde{\mathcal{P}}^{\mathbb{F}^{1,\dots,N}}]=0.
\end{align} 
We remind the reader that $M^i \in \mathcal{H}^2({\overline{X}^i}^{\perp_{\mathbb{F}^i}})$, \emph{i.e.}, $\langle X^{i,\circ}, M^i\rangle^{\mathbb{F}^i} =0$ and $M_{\mu^{X^{i,\natural}}}[\Delta M^i|\widetilde{\mathcal{P}}^{\mathbb{F}^i}]=0$.

For the first equation in \eqref{aux:requirements}, we have that $X^{i,\circ} M^i$ remains an $\mathbb{F}^{1,\dots,N}-$martingale, since $\mathbb{F}^i$ is immersed in $\mathbb{F}^{1,\dots,N}$. 
Hence,  $\langle X^{i,\circ}, M^i\rangle^{\mathbb{F}^{1,\dots,N}}=0.$
%

For the second equation in \eqref{aux:requirements}, we are going to use \citet[Lemma 13.3.15 (\textit{ii})]{cohen2015stochastic}. 
The martingale $X^{i,\natural}$ is adapted to the filtration $\mathbb{F}^i$. 
Let $\{\tau_k\}_{k \in \mathbb{N}}$ be a sequence of disjoint $\mathbb{F}^i-$stopping times that exhausts the jumps of $X^{i,\natural}$ and also satisfies the assumptions of \cite[Lemma 13.3.15 (\textit{ii})]{cohen2015stochastic}; it is known that such a sequence always exists for every $\mathbb{F}^i-$adapted, c\`adl\`ag process, see \emph{e.g.} \citet[Definition I.1.30, Proposition I.1.32]{jacod2013limit}.
Moreover, $M^i$ is also an $\mathbb{F}^i-$martingale. 
Hence, $\Delta M^{i}_{\tau_k}$ will be measurable with respect to $\mathcal{F}^i_{\infty}$, for every $k \in \mathbb{N}$. 
If we denote by $\mathcal{F}^{1,\dots,N}_{\tau_k -}$ the $\sigma-$algebra of events occuring strictly before the stopping time $\tau_k$ produced under the filtration $\mathbb{F}^{1,\dots,N}$ and with $\mathcal{F}^{i}_{\tau_k -}$ the respective $\sigma-$algebra under the filtration $\mathbb{F}^i$, then we have 
\begin{align*}
    \mathcal{F}^{1,\dots,N}_{\tau_k -} \subseteq \mathcal{F}^{i}_{\tau_k -}\bigvee \left(\bigvee_{m \in \mathscr{N}\setminus \{i\}}\mathcal{F}^m_{\infty}\right),\quad \text{ and} \quad
    \sigma(\Delta X^{i,\natural}_{\tau_k}) \subseteq \mathcal{F}^i_{\infty}.
\end{align*}
Finally, we get
\begin{align*}
    \mathbb{E}\big[\Delta M^i_{\tau_k}\big|\mathcal{F}^{1,\dots,N}_{\tau_k -} \bigvee \sigma(\Delta X^{i,\natural}_{\tau_k})\big]
    &= \mathbb{E}\Big[\mathbb{E}\big[\Delta M^i_{\tau_k}\big|\mathcal{F}^{i}_{\tau_k -}\bigvee \big(\bigvee_{m \in \mathscr{N}\setminus \{i\}}\mathcal{F}^m_{\infty}\big) \bigvee \sigma(\Delta X^{i,\natural}_{\tau_k})\big]\Big|\mathcal{F}^{1,\dots,N}_{\tau_k -} \bigvee \sigma(\Delta X^{i,\natural}_{\tau_k})\Big]\\
    &= \mathbb{E}\Big[ 
    \mathbb{E} \big[ \Delta M^i_{\tau_k} \big| \mathcal{F}^i_{\tau_{k-}}\bigvee \sigma(\Delta X^{i,\natural}_{\tau_{k}})\big]
    \Big|\mathcal{F}^{1,\dots,N}_{\tau_k -} \bigvee \sigma(\Delta X^{i,\natural}_{\tau_k})\Big]\\
    &= \mathbb{E}\Big[ 
    M_{\mu^{X^{i,\natural}}}\big[\Delta M^i \big|\widetilde{\mathcal{P}}^{\mathbb{F}^i}\big](\tau_k,\Delta X^{i,\natural}_{\tau_k}) \Big|\mathcal{F}^{1,\dots,N}_{\tau_k -} \bigvee \sigma(\Delta X^{i,\natural}_{\tau_k})\Big]\\
    &= 0,
\end{align*}
where we used the tower property for the first equality, 
\citet[Section 9.7, Property (k) on p. 88]{williams1991probability} for the second one, \cite[Lemma 13.3.15 (\textit{ii})]{cohen2015stochastic} in the second to last equality, and concluded in view of the known information $M_{\mu^{X^{i,\natural}}}[\Delta M^i |\widetilde{\mathcal{P}}^{\mathbb{F}^i}]=0$.
\end{proof}
\begin{lemma}\label{Lemma_5.4}
Let $i\in\mathscr{N}$.
The process 
$W_{2,\rho_{J_1^{d}}}^2\big(
L^N\big({\widetilde{\textbf{Y}}}^{N}|_{[0,\cdot]}\big),
\mathcal{L}( Y^{i}|_{[0,\cdot]})\big)$ is c\`adl\`ag and adapted to the filtration $\mathbb{F}^{1,\dots,N}$.
\end{lemma}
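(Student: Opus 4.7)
The plan is to split the claim into two parts: (a) measurability of
$\omega \mapsto W_{2,\rho_{J_1^d}}^2(L^N(\widetilde{\mathbf{Y}}^N|_{[0,t]}(\omega)),\mathcal{L}(Y^i|_{[0,t]}))$ with respect to $\mathcal{F}^{1,\dots,N}_t$ for each fixed $t$; and (b) pathwise càdlàg regularity of the map $t \mapsto W_{2,\rho_{J_1^d}}^2(L^N(\widetilde{\mathbf{Y}}^N|_{[0,t]}),\mathcal{L}(Y^i|_{[0,t]}))$. Since the deterministic curve $t \mapsto \mathcal{L}(Y^i|_{[0,t]})$ contributes no randomness, both parts reduce to properties of the Skorokhod-valued stopped path $t \mapsto Y^j|_{[0,t]}$ for $j \in \mathscr{N}$.

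For (a), I would first observe that, since $Y^j$ is $\mathbb{F}^j$-adapted and $\mathbb{F}^j \subseteq \mathbb{F}^{1,\dots,N}$, the stopped process $s \mapsto Y^j(s\wedge t)$ is $\mathcal{F}^{1,\dots,N}_t$-adapted; via the identification recorded after \eqref{skoalgebra} combined with the definition \eqref{inpath}, this makes $\omega \mapsto Y^j|_{[0,t]}(\omega)$ into a measurable map from $(\Omega,\mathcal{F}^{1,\dots,N}_t)$ to $(\mathbb{D}^d,\mathcal{B}_{\rho_{J_1^d}}(\mathbb{D}^d))$. Consequently $(Y^1|_{[0,t]},\ldots,Y^N|_{[0,t]})$ is measurable into $(\mathbb{D}^d)^N$. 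The empirical-measure map $(y_1,\ldots,y_N) \mapsto \frac{1}{N}\sum_{j=1}^{N}\delta_{y_j}$ is continuous from $(\mathbb{D}^d)^N$ to $(\mathscr{P}(\mathbb{D}^d),\mathcal{T})$, as $I^f(\tfrac{1}{N}\sum\delta_{y_j}) = \tfrac{1}{N}\sum f(y_j)$ is continuous in each coordinate for every $f \in C_b(\mathbb{D}^d)$. Combining this with \cref{rem:Wasserstein_measurable}, which gives Borel-measurability of $W_{2,\rho_{J_1^d}}$ on $\mathscr{P}(\mathbb{D}^d)\times\mathscr{P}(\mathbb{D}^d)$, and the fact that $\mathcal{L}(Y^i|_{[0,t]}) \in \mathscr{P}(\mathbb{D}^d)$ is deterministic, yields the desired $\mathcal{F}^{1,\dots,N}_t$-measurability.

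For (b), the key tool is the triangle inequality for $W_{2,\rho_{J_1^d}}$, which for $s,t \geq 0$ gives
\begin{multline*}
\big|W_{2,\rho_{J_1^d}}(L^N(\widetilde{\mathbf{Y}}^N|_{[0,t]}),\mathcal{L}(Y^i|_{[0,t]}))
 - W_{2,\rho_{J_1^d}}(L^N(\widetilde{\mathbf{Y}}^N|_{[0,s]}),\mathcal{L}(Y^i|_{[0,s]}))\big|\\
\leq W_{2,\rho_{J_1^d}}(L^N(\widetilde{\mathbf{Y}}^N|_{[0,t]}),L^N(\widetilde{\mathbf{Y}}^N|_{[0,s]}))
 + W_{2,\rho_{J_1^d}}(\mathcal{L}(Y^i|_{[0,t]}),\mathcal{L}(Y^i|_{[0,s]})).
\end{multline*}
The first summand is bounded above by $\tfrac{1}{N}\sum_{j=1}^{N}\rho_{J_1^d}(Y^j|_{[0,t]},Y^j|_{[0,s]})^2$ via \eqref{empiricalineq}, while for the second, choosing the coupling $\omega \mapsto (Y^i|_{[0,t]}(\omega),Y^i|_{[0,s]}(\omega))$ in the definition \eqref{def:wasserstein} gives $W_{2,\rho_{J_1^d}}^2(\mathcal{L}(Y^i|_{[0,t]}),\mathcal{L}(Y^i|_{[0,s]})) \leq \mathbb{E}[\rho_{J_1^d}(Y^i|_{[0,t]},Y^i|_{[0,s]})^2]$. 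Both summands thus reduce to the control of $\rho_{J_1^d}(Y^j|_{[0,t]},Y^j|_{[0,t']})$. Using \eqref{skorokineq} and a direct computation from \eqref{inpath}, I would establish, for $t < t'$,
\begin{equation*}
\rho_{J_1^d}(Y^j|_{[0,t]},Y^j|_{[0,t']}) \;\leq\; \Big(\sup_{s \in [t,t']}|Y^j(s)-Y^j(t)|\Big)\wedge 1,
\end{equation*}
and an analogous bound involving $Y^j(t-)$ for $t'' \uparrow t$, thereby producing the pathwise càdlàg property of $t \mapsto Y^j|_{[0,t]}$ in $\mathbb{D}^d$.

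The main obstacle is the upgrade from this pathwise Skorokhod convergence to the convergence of the laws in the second summand of the triangle inequality. The uniform bound $\rho_{J_1^d}(Y^i|_{[0,t]},Y^i|_{[0,s]}) \leq 1$ from \eqref{skorokineq} allows dominated convergence to produce $\mathbb{E}[\rho_{J_1^d}(Y^i|_{[0,t]},Y^i|_{[0,s]})^2]\to 0$ as $s \downarrow t$ and convergence to $\mathbb{E}[\rho_{J_1^d}(Y^i|_{[0,t]},Y^i|_{[0,t-]})^2]$ as $s \uparrow t$, which together with the empirical part yield the right-continuity and existence of left limits of the target process. The adaptedness from part (a) then upgrades this to the desired conclusion.
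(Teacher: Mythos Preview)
Your proposal is correct and follows essentially the same route as the paper: adaptedness via measurability of the stopped paths plus Borel-measurability of the Wasserstein distance, and the c\`adl\`ag property via the triangle inequality for $W_{2,\rho_{J_1^d}}$ combined with \eqref{empiricalineq}, \eqref{skorokineq} and dominated convergence. One small slip: in your triangle-inequality estimate the first summand is $W_{2,\rho_{J_1^d}}$, not its square, so the bound from \eqref{empiricalineq} should carry a square root (as the paper writes it); also note that your use of $\rho_{J_1^d}\le 1$ as the dominating function is a mild simplification over the paper, which instead invokes $\mathbb{E}[\sup_{z\in[0,T]}|Y^i_z|^2]<\infty$.
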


\begin{proof}
 Using \cref{Re_2.9}, because $\rho_{J_1^d} \leq 1$, we have that the Wasserstein distance of order 2 as a function $W_{2,\rho_{J_1^{d}}} : \mathscr{P}(\mathbb{D}^d) \times \mathscr{P}(\mathbb{D}^d) {}\rightarrow \mathbb{R}_+$ is continuous, if we equip $\mathscr{P}(\mathbb{D}^d)$ with the $\text{weak}$ topology $\mathcal{T}$, as it metrizes it.
 Alternatively, and more generally, one can use \cref{rem:Wasserstein_measurable} to claim the measurability of the Wasserstein distance with respect to $\mathcal{B}_{\mathcal{T}}(\mathscr{P}(\mathbb{D}^d))$.
 
 Then, from \eqref{2.15} for the random measure $L^N\left({\widetilde{\textbf{Y}}}^{N}|_{[0,s]}\right)$ we have $\left(L^N\left({\widetilde{\textbf{Y}}}^{N}|_{[0,s]}\right)\right)^{-1}(S) \in \mathcal{F}^{1,\dots,N}_s$, with $S = \left(I^{f}\right)^{-1}(A)$ for some $A$ open set in the usual topology of $\mathbb{R}$ and $f \in C_b(\mathbb{D}^d)$. To see this note that 
 \begin{align*}
\left(L^N\left({\widetilde{\textbf{Y}}}^{N}|_{[0,s]}\right)\right)^{-1}(S) = \left(\frac{1}{N}\sum_{m = 1}^{N} Y^m|_{[0,s]}\right)^{-1}(A).
\end{align*}
Hence, from \eqref{skoalgebra}, $L^N\left({\widetilde{\textbf{Y}}}^{N}|_{[0,s]}\right)$
is an $\left(\mathcal{F}^{1,\dots,N}_{s}/ \mathcal{B}_{\mathcal{T}}(\mathscr{P}(\mathbb{D}^d))\right)-$measurable function. 
Because $\mathcal{L}\left( Y^{i}|_{[0,s]}\right)$ is constant with respect to $\omega$, by composition of the functions we get that $ W_{2,\rho_{J_1^{d}}}^2\left(L^N\left({\widetilde{\textbf{Y}}}^{N}|_{[0,s]}\right),\mathcal{L}\left( Y^{i}|_{[0,s]}\right)\right)$ is adapted. 
To show that it is c\`adl\`ag choose $t \in [0,\infty)$ and a decreasing sequence $\{s_j\}_{j \in \mathbb{N}}$ such that $s_j \searrow t$. 
Then, for every $j \in \mathbb{N}$ we have from the triangle inequality, \eqref{empiricalineq} and \eqref{skorokineq} that 

\begin{align}\label{ineq_5.9}
\nonumber &\left|W_{2,\rho_{J_1^{d}}}\left(L^N\left({\widetilde{\textbf{Y}}}^{N}|_{[0,{s_j}]}\right),\mathcal{L}\left( Y^{i}|_{[0,{s_j}]}\right)\right) - W_{2,\rho_{J_1^{d}}}\left(L^N\left({\widetilde{\textbf{Y}}}^{N}|_{[0,t]}\right),\mathcal{L}\left( Y^{i}|_{[0,t]}\right)\right) \right|\\ \nonumber
&\hspace{2em}\leq
W_{2,\rho_{J_1^{d}}}\left(L^N\left({\widetilde{\textbf{Y}}}^{N}|_{[0,{s_j}]}\right),L^N\left({\widetilde{\textbf{Y}}}^{N}|_{[0,t]}\right)\right) + W_{2,\rho_{J_1^{d}}}\left(\mathcal{L}\left( Y^{i}|_{[0,{s_j}]}\right),\mathcal{L}\left( Y^{i}|_{[0,t]}\right)\right)\\ 
&\hspace{2em}\leq
\sqrt{\frac{1}{N}\sum_{m = 1}^{N}\sup_{z \in [t,s_j]}\{|Y^{m}_z - Y^{m}_t|\}^2} + \sqrt{\mathbb{E}\Big[\sup_{z \in [t,s_j]}\{|Y^{i}_z - Y^{i}_t|\}^2\Big]}.
\end{align}
The first term in the above inequality goes to zero from the right continuity of $\{Y^{m}\}_{m \in \mathscr{N}}$. 
As for the second term, because $\mathbb{E}\left[\sup_{z \in [0,T]}\{|Y^{i}_z|\}^2\right] < \infty$, by dominated convergence and again from the right continuity of $Y^i$ the term goes to $0$. 
To complete the proof note that the term $\sqrt{\frac{1}{N}\sum_{m = 1}^{N}\sup_{z \in [t,s_j]}\{|Y^{m}_z - Y^{m}_t|\}^2}$ which depends on $\omega$ is non--increasing with respect to time. 
Hence, the convergence holds independent of the choice of the sequence $\{s_j\}_{j \in \mathbb{N}}$.

Similarly, for a $t \in (0,\infty)$ and an increasing sequence $\{s_j\}_{j \in \mathbb{N}}$ such that $s_j \nearrow t$ we can carry out the exact same argument as above with the only difference being that in the inequalities we replace $t$ with $t-$ and then use the existence of left limits for $\{Y^{m}\}_{m \in \mathscr{N}}$.
\end{proof}


\bibliographystyle{abbrvnat}
\bibliography{main}


\end{document}